\documentclass[a4paper,12pt,reqno]{amsart}
\usepackage{graphicx}

\usepackage{color}

\usepackage{amsmath,amstext,amssymb,amsopn,amsthm}
\usepackage{url}

\usepackage[margin=30mm]{geometry}
\usepackage{eucal,mathrsfs,dsfont}
\usepackage{soul}

\newtheorem{theorem}{Theorem}[section]
\newtheorem{corollary}[theorem]{Corollary}
\newtheorem{lemma}[theorem]{Lemma}
\newtheorem{proposition}[theorem]{Proposition}

\theoremstyle{definition}

\theoremstyle{remark}
\newtheorem{remark}[theorem]{Remark}


\newcommand{\eps}{\varepsilon}
\newcommand{\calA}{\mathcal{A}}
\newcommand{\Aa}{\mathcal{A}_{\alpha}}
\newcommand{\calB}{\mathcal{B}}
\newcommand{\calF}{\mathcal{F}}
\newcommand{\calG}{\mathcal{G}}
\newcommand{\calK}{\mathcal{K}}
\newcommand{\calL}{\mathcal{L}}
\newcommand{\calN}{\mathcal{N}}
\newcommand{\calR}{\mathcal{R}}

\newcommand{\R}{\mathds{R}}
\newcommand{\Rd}{\mathds{R}^d}
\newcommand{\N}{{\mathds{N}}}

\newcommand{\Bb}{\calB_b(\Rd)}

\newcommand{\tbfs}[1]{\tilde{b}_{#1 1}(x,y)}
\newcommand{\E}{\mathbb{E}}
\newcommand{\p}{\mathbb{P}}
\newcommand{\scalp}[2]{#1 #2}

\DeclareMathOperator{\supp}{supp}
\DeclareMathOperator{\dist}{dist}

\DeclareMathOperator{\Arg}{Arg}

\title[SDEs driven by multiplicative cylindrical stable noise]{Strong Feller property for SDEs driven by multiplicative cylindrical stable noise}

\author[T. Kulczycki]{Tadeusz Kulczycki}
\author[M. Ryznar]{Micha{\l} Ryznar}
\author[P. Sztonyk]{Pawe{\l} Sztonyk}
\thanks{T. Kulczycki was supported in part by the National Science Centre, Poland, grant no. 2015/17/B/ST1/01233, M. Ryznar was supported in part by the National Science Centre, Poland, grant no. 2015/17/B/ST1/01043, P. Sztonyk was supported in part by the National Science Centre, Poland, grant no. 2017/27/B/ST1/01339.}
\address{Faculty of Pure and Applied Mathematics, Wroc{\l}aw University of Science and Technology, Wyb. Wyspia{\'n}skiego 27, 50-370 Wroc{\l}aw, Poland.}
\email{Tadeusz.Kulczycki@pwr.edu.pl}
\email{Michal.Ryznar@pwr.edu.pl}
\email{Pawel.Sztonyk@pwr.edu.pl}

\pagestyle{headings}

\begin{document}

\begin{abstract} We consider the stochastic differential equation $dX_t = A(X_{t-}) \, dZ_t$, $ X_0 = x$,
driven by cylindrical $\alpha$-stable process $Z_t$ in $\R^d$, where $\alpha \in (0,1)$ and $d \ge 2$. We assume that the determinant of $A(x) = (a_{ij}(x))$ is bounded away from zero, and $a_{ij}(x)$ are bounded and Lipschitz continuous. We show that for any fixed $\gamma \in (0,\alpha)$ the semigroup $P_t$ of the process $X_t$ satisfies $|P_t f(x) - P_t f(y)| \le c t^{-\gamma/\alpha} |x - y|^{\gamma} ||f||_\infty$ for arbitrary bounded Borel function $f$. Our approach is based on  Levi's method.
\end{abstract}

\maketitle

\section{Introduction}
Let $Z_t = (Z_t^{(1)},\ldots,Z_t^{(d)})^T$ be a cylindrical $\alpha$-stable process, that is $Z_t^{(1)}, \ldots, Z_t^{(d)}$ are independent one-dimensional symmetric  standard  $\alpha$-stable processes of index $\alpha \in (0,1)$, $d \in \N$, $d \ge 2$. We consider the stochastic differential equations 
\begin{equation}
\label{main}
dX_t = A(X_{t-}) \, dZ_t, \quad X_0 = x \in \R^d, 
\end{equation}
where $A(x) = (a_{ij}(x))$ is a $d \times d$ matrix and there are constants $\eta_1, \eta_2, \eta_3 > 0$, such that for any $x, y \in \R^d$, $i, j \in \{1,\ldots,d\}$
\begin{equation}
\label{bounded}
|a_{ij}(x)| \le \eta_1,
\end{equation}
\begin{equation}
\label{determinant}
\det(A(x)) \ge \eta_2,
\end{equation}
\begin{equation}
\label{Lipschitz}
|a_{ij}(x) - a_{ij}(y)| \le \eta_3 |x - y|.
\end{equation}

It is well known that SDEs (\ref{main}) has a unique strong solution $X_t$, see e.g. \cite[Theorem 34.7 and Corollary 35.3]{M1982}. By \cite[Corollary 3.3]{SS2010} $X_t$ is a Feller process.

Let $\E^x$ denote the expected value of the process $X$ starting from $x$ and $\Bb$ denote the set of all Borel bounded functions $f: \R^d \to \R$. For any $t \ge 0$, $x \in \R^d$ and $f \in \Bb$ we put 
\begin{equation}
\label{semigroup}
P_t f(x) = \E^x f(X_t).
\end{equation}

The main result of this paper is the following theorem, which gives the strong Feller property of the semigroup $P_t$.
\begin{theorem} 
\label{mainthm} 
For any $\gamma \in (0,\alpha)$, $\tau > 0$, $t \in (0,\tau]$, $x, y \in \R^d$ and $f \in \Bb$ we have
\begin{equation}
\label{Holder}
|P_t f(x) - P_t f(y)| \le c t^{-\gamma/\alpha} \, |x - y|^{\gamma} \, ||f||_\infty,
\end{equation}
where $c$ depends on $\tau, \alpha, d, \eta_1, \eta_2, \eta_3, \gamma$.
\end{theorem}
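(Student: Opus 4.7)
The plan is to construct the transition density $p(t,x,z)$ of $X_t$ via the Levi parametrix method, to establish a H\"older-in-$x$ bound on $p$ with the correct $t$-scaling, and then to deduce \eqref{Holder} by integrating against $f$. The restriction $\gamma<\alpha$ will enter naturally as the integrability threshold of a Beta-type time integral arising in the parametrix series.

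First, for a fixed $y\in\Rd$ I would consider the ``frozen'' process $A(y)Z_t$, whose transition density is explicit:
\begin{equation*}
p^y(t,x,z) = |\det A(y)|^{-1} \prod_{i=1}^{d} g_\alpha\!\bigl(t,(A(y)^{-1}(z-x))_i\bigr),
\end{equation*}
where $g_\alpha$ is the one-dimensional symmetric $\alpha$-stable density. Conditions \eqref{bounded} and \eqref{determinant} give uniform bounds on $A(y)$ and $A(y)^{-1}$, and the self-similarity $g_\alpha(t,u)=t^{-1/\alpha}g_\alpha(1,t^{-1/\alpha}u)$ together with the decay $g_\alpha(1,u)\asymp(1+|u|)^{-1-\alpha}$ give pointwise estimates
\begin{equation*}
|\nabla_x^k p^y(t,x,z)| \le c\,t^{-k/\alpha}\,H_k(t,z-x), \qquad k=0,1,
\end{equation*}
with $H_k(t,\cdot)\in L^1(\Rd)$ of norm uniform in $t\in(0,\tau]$ and $y$. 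Interpolating via $\min(a,b)\le a^\gamma b^{1-\gamma}$ between $k=0$ and $k=1$ then produces, for every $\gamma\in[0,1]$,
\begin{equation*}
\int_{\Rd} |p^y(t,x,z)-p^y(t,x',z)|\,dz \le c\,t^{-\gamma/\alpha}|x-x'|^\gamma
\end{equation*}
uniformly in $y$. Since $\alpha<1$, the generator of $A(y)Z_t$ has no drift correction, which simplifies all later computations.

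Next I would seek the true density in Levi form
\begin{equation*}
p(t,x,z) = p^z(t,x,z) + \int_0^t\!\!\int_{\Rd} p^z(t-s,x,w)\,q(s,w,z)\,dw\,ds,
\end{equation*}
with $q=\sum_{n\ge 1}q_0^{*n}$ determined by $q=q_0+q_0*q$, where $q_0(t,x,z)=(\calL^x-\calL^z)p^z(t,x,z)$ and $\calL^y$ denotes the generator of $A(y)Z_t$. The Lipschitz assumption \eqref{Lipschitz} supplies the critical gain $|q_0(t,x,z)|\le c\,|x-z|\,\widetilde H(t,z-x)$, obtained after splitting the L\'evy integrals defining $\calL^x-\calL^z$ into small-jump and large-jump parts; this factor $|x-z|$ compensates for the singularity of $\calL^x p^z$ and makes the iterated convolutions $q_0^{*n}$ summable on $(0,\tau]$. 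Combining the H\"older estimate for $p^z$ with these bounds on $q$ gives
\begin{equation*}
\int_{\Rd} |p(t,x,z)-p(t,y,z)|\,dz \le c\,t^{-\gamma/\alpha}|x-y|^\gamma,
\end{equation*}
the essential time integral being of Beta type $\int_0^t(t-s)^{-\gamma/\alpha}s^{-\theta}\,ds$, which converges precisely when $\gamma<\alpha$. A standard identification argument (verifying that $(t,x)\mapsto\int p(t,x,z)f(z)\,dz$ solves the Kolmogorov equation associated with \eqref{main} and invoking uniqueness in law for \eqref{main}) shows that $p$ is the transition density of $X_t$, and multiplying by $\|f\|_\infty$ yields \eqref{Holder}.

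The main technical obstacle I anticipate is the analysis of $q_0$ and of the iterated kernels $q_0^{*n}$. Because $Z_t$ is cylindrical, the L\'evy measure of $A(y)Z_t$ is concentrated on the $d$ lines spanned by the columns of $A(y)$, so $p^y$ is strongly anisotropic and the difference $\calL^x-\calL^z$ acts by mixing line-supported singularities against the product-type decay of $p^z$ along axes that themselves depend on $z$. Producing a concrete $L^1$-majorant $H$ that (i)\,absorbs the Lipschitz gain $|x-z|$ against the singularity of $\calL^x p^z$, (ii)\,is stable under convolution with $p^z$, and (iii)\,survives the H\"older interpolation with the correct $t$-scaling uniformly on $(0,\tau]$, is where the bulk of the technical effort would lie.
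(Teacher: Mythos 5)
Your overall road map---frozen kernel, Levi parametrix, H\"older bound on the transition density, identification via the martingale problem---matches the paper's strategy, and several intermediate steps (the product form of the frozen kernel, the interpolation $\min(a,b)\le a^\gamma b^{1-\gamma}$ giving a $\gamma$-H\"older integral bound on $p^y$, and $\gamma<\alpha$ arising from a Beta-type time integral) reappear there (Lemma \ref{pyholder}, Proposition \ref{utfholder}). However, there are two genuine gaps that the authors explicitly identify as the central obstructions and have to work around.

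First, you run the parametrix directly with the frozen kernel built from the \emph{full} one-dimensional $\alpha$-stable density $g_\alpha$. For cylindrical noise this is precisely where the method breaks: the frozen kernel $\det(B(y))\prod_i g_\alpha(t,b_i(y)x)$ decays only polynomially along each column direction, and applying $\calL^x$ (jumps of all sizes along the columns of $A(x)\ne A(y)$) produces a $q_0$ with no $L^1$-majorant that is stable under the self-convolutions $q_0^{*n}$. The paper resolves this by truncating the L\'evy density to a compactly supported $\mu^{(\delta)}$, because the associated one-dimensional kernel $g_t^{(\delta)}$ then has \emph{exponential} tails (Lemma \ref{gtht}, the majorant $h_t^{(\eps)}$), and this is what makes the iterated kernels controllable. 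The truncation then forces a second construction that your proposal has no counterpart to: the removed tail $\nu=\Aa|\cdot|^{-1-\alpha}-\mu^{(\delta)}$ has to be re-installed through the perturbation series $T_t=e^{-\lambda t}\sum_n\Psi_{n,t}$ built from the compound-Poisson operator $\calN$ (Section~4), and only then is $T_t$ identified with $P_t$.

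Second, you posit a pointwise bound $|q_0(t,x,z)|\le c\,|x-z|\,\widetilde H(t,z-x)$ with a convolution-compatible $\widetilde H$ and plan to iterate pointwise. The authors state directly that in this anisotropic setting ``it seems impossible to obtain such precise bounds.'' Indeed, their pointwise estimate \eqref{global} for $q_0$ gains nothing near the diagonal: for $|x-y|<\eps_0$ it just reads $|q_0|\le ct^{-1-d/\alpha}$. The cancellation you are relying on is only visible after integrating, via Proposition \ref{integralq0} (namely $\int|q_0(t,x,y)|\,dy\le ct^{-1/2}$), whose proof hinges on a Clarke-type inverse-function argument for the Lipschitz change of variables $\Psi_x$ (Lemma \ref{inverse}, Lemma \ref{intA}, Corollary \ref{intAl}). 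Without that device, your inductive scheme for $q_0^{*n}$ does not close. Finally, the identification step is also less routine than described: $u(t,x,y)$ is not regular enough to solve the Kolmogorov equation classically, so the paper must use the ``approximate heat equation'' scheme of Knopova--Kulik ($\Lambda_t^{(\xi)}$, $\Upsilon_t^{(\xi)}$, Lemmas \ref{heat_u}, \ref{heat_p}, \ref{max_principle}) before invoking uniqueness of the martingale problem.
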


Strong Feller property for SDEs driven by additive cylindrical L{\'e}vy processes have been intensively studied recently (see e.g. \cite{PZ2011,WZ2015,DPSZ2016}). The SDE (\ref{main}) (with multiplicative noice) was studied by Bass and Chen in \cite{BC2006}. They proved existence and uniqueness of weak solutions of SDE (\ref{main}) under very mild assumptions on matrices $A(x)$ (i.e. they assumed that $A(x)$ are continuous and bounded in $x$ and nondegenerate for each $x$). In \cite{KR2017} SDE (\ref{main}) was studied for diagonal matrices $A(x)$, which diagonal coefficients are bounded away from zero, from infinity and H{\"o}lder continuous. Under these assumptions the corresponding transition density $p^A(t,x,y)$ was constructed and H{\"o}lder estimates $x \to p^A(t,x,y)$ were obtained. These estimates imply strong Feller property of the corresponding semigroup.

The case of non-diagonal matrices $A(x)$, treated in this paper, is much more difficult. The strong Feller property for semigroups generated by solutions to SDEs is often obtained by suitable versions of the Bismut-Elworthy-Li formula. We were not able to get such formula but  we use instead Levi's method to construct the semigroup $P_t$ and to obtain Theorem \ref{mainthm}. However there are many problems in applying this method to the case of non-diagonal matrices $A(x)$. Therefore we had to introduce some new ideas. Below we briefly describe the main steps in our approach.

The first problem with Levi's method in our case is that the standard approximation of the transition density (the so-called ``frozen density'') does not have good integrability properties. To overcome this we truncate the L{\'e}vy measure of the process $Z_t$ in a convenient way.  Then, using Levi's method, we construct the transition density (denoted by $u(t,x,y)$) of the solution of (\ref{main}) driven by this truncated process. As usual we represent $u(t,x,y)$ as a series $\sum_{n=0}^{\infty} q_n(t,x,y)$. Typically, in many papers using Levi's method, the first step was to obtain precise bounds for $q_0(t,x,y)$ which allow to estimate $q_n(t,x,y)$ inductively 
point-wise. In our case it seems impossible to obtain such precise bounds, hence  we prove (see Proposition \ref{integralq0}) some different kind of results for $q_0(t,x,y)$, which are sufficient for our purposes. The main tools to prove Proposition \ref{integralq0} are Lemma \ref{intA} and the estimates (\ref{gdelta3}). These key estimates (\ref{gdelta3}) are proven using the techniques and results from \cite{KR2016}, \cite{KR2018} and \cite{S2017}. After constructing the transition density $u(t,x,y)$ we use the technique developed by Knopova and Kulik \cite{KK2018} to show that $u(t,x,y)$ satisfies the appropriate heat equation in the so-called approximate setting. In the next step we construct the semigroup $T_t$ for the solution of SDE (\ref{main}) (driven by the not truncated process). Roughly speaking, this construction is based on adding long jumps to the truncated process. Next we show that $u(t,x) := T_t f(x)$ satisfies the appropriate heat equation in the approximate setting (see Lemma \ref{heat_p}),  which allows to prove that the constructed semigroup $T_t$ is in fact the semigroup $P_t$.

Our current technique is restricted to the case $\alpha \in (0,1)$. The main difficulty for $\alpha \in [1,2)$ is that in such case one has to effectively estimate the expression
\begin{equation}
\label{alpha2}
p_y(t,x+a_i(x)w) + p_y(t,x-a_i(x)w) - 2p_y(t,x)
\end{equation} 
instead of 
\begin{equation}
\label{alpha1}
p_y(t,x+a_i(x)w) -p_y(t,x),
\end{equation} 
where $p_y(t,x)$ is the frozen density for the truncated process (see Section 3 for the precise definition of $p_y(t,x)$) and $a_i(x) = (a_{1i}(x),\ldots, a_{di}(x))$. Our crucial estimate (\ref{gdelta3}) allows suitable  estimate of (\ref{alpha1}) but fails  to bound (\ref{alpha2}) in a way sufficient for our purpose. 

It is worth mentioning  that strong Feller property and gradient estimates for the semigroups associated to SDEs driven by L{\'e}vy processes in $\R^d$ with jumps, with absolutely continuous  L{\'e}vy measures,  have been studied for many years (see e.g. \cite{T2010,SSW2012,KM2014,Z2013,WXZ2015,XZ2017}). 

One may ask about further regularity properties of the semigroup $P_t$, in particular about boundedness of the operators $P_t: L^1(\R^d) \to L^{\infty}(\R^d)$, which is related to the boundedness of the transition densities of $P_t$.   It turns out that for some choices of matrices $A(x)$ (satisfying (\ref{bounded}), (\ref{determinant}), (\ref{Lipschitz})) and for some $t > 0$ the operators $P_t: L^1(\R^d) \to L^{\infty}(\R^d)$ are not bounded (see Remark \ref{example1} and Remark \ref{example1Uwaga}). Nevertheless we have the following regularity result.
\begin{theorem} 
\label{PtL1Linfty} 
For any $\gamma \in (0,\alpha/d)$, $\tau > 0$, $t \in (0,\tau]$, $x \in \R^d$ and $f \in L^1(\R^d) \cap L^{\infty}(\R^d)$ we have
\begin{equation*}
|P_t f(x)| \le c t^{-\gamma d/\alpha} \, \|f\|_\infty^{1-\gamma} \, \|f\|_1^{\gamma},
\end{equation*}
where $c$ depends on $\tau, \alpha, d, \eta_1, \eta_2, \eta_3, \gamma$.
\end{theorem}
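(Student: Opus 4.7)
The plan is to reduce the inequality to a uniform $L^p$-bound on the transition density of $X_t$, then apply H\"older's inequality together with the elementary interpolation $\|f\|_{p'}\le \|f\|_\infty^{1-1/p'}\|f\|_1^{1/p'}$.

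The Levi-series construction developed in the proof of Theorem~\ref{mainthm} yields a transition density $u(t,x,y)$ of $P_t$, so that $P_tf(x)=\int_{\R^d}u(t,x,y)f(y)\,dy$. The key quantitative input I would need is, for every $p\in\bigl[1,\tfrac{d}{d-\alpha}\bigr)$,
\[
\Bigl(\int_{\R^d}u(t,x,y)^p\,dy\Bigr)^{\!1/p}\le c\,t^{-d(1-1/p)/\alpha},\qquad x\in\R^d,\; t\in(0,\tau],
\]
with $c$ depending on $\tau,\alpha,d,\eta_1,\eta_2,\eta_3,p$. The threshold $p<d/(d-\alpha)$ is consistent with the observation made just before the theorem statement that $u$ is not bounded in $L^\infty$ in general: the diagonal singularity of $u$ is expected to be no worse than $|x-y|^{-(d-\alpha)}$, which is $L^p_{\mathrm{loc}}$ precisely up to this endpoint, and the $\alpha$-self-similarity of the driving noise forces the scaling $t^{-d(1-1/p)/\alpha}$ in the time variable.

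Granted this bound, fix $\gamma\in(0,\alpha/d)$ and set $p=1/(1-\gamma)\in(1,d/(d-\alpha))$ with conjugate $p'=1/\gamma$. H\"older's inequality gives
\[
|P_tf(x)|\le \|u(t,x,\cdot)\|_{p}\,\|f\|_{p'}\le c\,t^{-\gamma d/\alpha}\,\|f\|_{p'},
\]
and the pointwise bound $|f|^{p'}=|f|^{p'-1}\cdot|f|\le \|f\|_\infty^{p'-1}|f|$, integrated and then raised to the power $1/p'$, produces the standard interpolation $\|f\|_{p'}\le\|f\|_\infty^{1-1/p'}\|f\|_1^{1/p'}=\|f\|_\infty^{1-\gamma}\|f\|_1^{\gamma}$. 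Combining these two displays yields exactly the claim.

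The main obstacle is establishing the $L^p$-bound on $u(t,x,\cdot)$. For the leading term $q_0$ of the Levi series $u=\sum_n q_n$ (the frozen density of the truncated process), the bound should be extracted directly from the estimates~(\ref{gdelta3}) and Proposition~\ref{integralq0}, whose stated purpose is to control the singular behaviour of $q_0$ along the anisotropic directions dictated by $A$. The correction terms $q_n$, $n\ge 1$, must then be bounded inductively by Minkowski-type manipulations of the Levi recursion, using the boundedness, non-degeneracy, and Lipschitz continuity of $A$, and with constants that sum to a finite total. This inductive propagation of the singular-integrable $L^p$-scaling through the series is the technical heart of the argument; the remainder of the proof is a one-line application of H\"older's inequality.
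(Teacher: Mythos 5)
There is a genuine gap in your proposal, rooted in a misidentification of what $u(t,x,y)$ is. In the paper, $u(t,x,y)$ is \emph{not} the transition density of $P_t$. It is the kernel obtained from the Levi series for the operator $\calL$, which is built from the \emph{truncated} jump density $\mu^{(\delta)}$; it gives the operator $U_t$. The full semigroup $T_t$ (shown in (\ref{pttt}) to equal $P_t$) is not $U_t$ but is produced from $U_t$ by a second, outer Picard iteration $T_t f = e^{-\lambda t}\sum_n \Psi_{n,t} f$ that adds back the long jumps via $\calN$; its density is $p(t,x,y)$ from Theorem \ref{FellerT} (vi). Your proposed $L^p$-bound on $u(t,x,\cdot)$ with the threshold $p<d/(d-\alpha)$ is therefore misdirected: Corollary \ref{uintegral} already gives the much stronger uniform bound $|u(t,x,y)|\le c\,t^{-d/\alpha}$, so $u(t,x,\cdot)$ lies in $L^p$ for every $p\in[1,\infty]$, with no endpoint restriction and with no work involving (\ref{gdelta3}) or Proposition \ref{integralq0} beyond what has already been proved. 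Conversely, the true density $p(t,x,y)$ is the one that fails to be bounded (Remark \ref{example1}); an $L^p$-bound on $p(t,x,\cdot)$ would be the correct object if you wanted to run a H\"older/duality argument, but the paper never establishes such a bound and your proposal does not indicate how to obtain it either -- appealing to the Levi recursion for $u$ does not touch the outer $\Psi_n$-iteration where the genuine difficulty and the constraint $\gamma<\alpha/d$ actually live.

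The paper's argument avoids both issues by working at the level of operators rather than kernels. From $\|U_t f\|_\infty\le c\|f\|_\infty$ and $\|U_t f\|_\infty\le c\,t^{-d/\alpha}\|f\|_1$ (both direct from Corollary \ref{uintegral}), the elementary interpolation $\min(a,b)\le a^{1-\gamma}b^\gamma$ gives $|U_t f(x)|\le c\,t^{-\gamma d/\alpha}\|f\|_\infty^{1-\gamma}\|f\|_1^{\gamma}$ for every $\gamma\in(0,1)$. The restriction $\gamma<\alpha/d$ enters only when this is fed into the Picard step $\Psi_{1,t}f=\int_0^t U_{t-s}(\calN(U_s f))\,ds$: the factor $s^{-\gamma d/\alpha}$ must be integrable near $s=0$. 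Iterating as in Lemma \ref{psint} produces $|\Psi_{n,t}f(x)|\le c^n t^{n-1}\|f\|_\infty^{1-\gamma}\|f\|_1^\gamma/(n-1)!$ and summing gives the stated bound for $T_t=P_t$. Your closing H\"older-plus-interpolation step is correct arithmetic and amounts to the same inequality, but it is not the heart of the matter; the heart is the propagation through the $\Psi_n$-iteration, which your proposal does not touch.
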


The paper is organized as follows. In Section 2 we study properties of the transition density of a suitably truncated one-dimensional stable process. These properties are crucial in the sequel. In Section 3 we construct the transition density $u(t,x,y)$ of the solution of (\ref{main}) driven by the truncated process. We also show that it satisfies the appropriate equation in the approximate setting. In Section 4 we construct the transition semigroup of the solution of (\ref{main}). We also prove Theorems \ref{mainthm} and \ref{PtL1Linfty}.

\section{Preliminaries}
All constants appearing in this paper are positive and finite. In the whole paper we fix $\tau > 0$, $\alpha \in (0,1)$, $d \in \N$, $d \ge 2$, $\eta_1, \eta_2, \eta_3$, where $\eta_1, \eta_2, \eta_3$ appear in (\ref{bounded}), (\ref{determinant}) and (\ref{Lipschitz}). We adopt the convention that constants denoted by $c$ (or $c_1, c_2, \ldots$) may change their value from one use to the next. In the whole paper, unless is explicitly stated otherwise, we understand that constants denoted by $c$ (or $c_1, c_2, \ldots$) depend on $\tau, \alpha, d, \eta_1, \eta_2, \eta_3$. We  also understand that they may depend on the choice of the constants $\eps$ and $\gamma$. We write $f(x) \approx g(x)$ for $x \in A$ if $f, g \ge 0$ on $A$ and there is a constant $c \ge 1$ such that $c^{-1} f(x) \le g(x) \le c f(x)$ for $x \in A$. The standard inner product for $x, y \in \R^d$ we denote by $xy$. 

For any $t > 0$, $x \in \Rd$ we define the measure $\sigma_t(x,\cdot)$ by 
\begin{equation}
\label{sigmatx}
\sigma_t(x,A) = \p^x(X_t \in A),
\end{equation}
for any Borel set $A \subset \R^d$. $\p^x$ denotes the distribution of the process $X$ starting from $x\in \R^d$. For any $t > 0$, $x \in \Rd$ we have 
\begin{equation}
\label{semigroup_p}
P_t f(x) = \int_{\R^d} f(y) \sigma_t(x,dy), \quad f \in \Bb.
\end{equation}

It is well known that the density of the L{\'e}vy measure of the one-dimensional symmetric standard $\alpha$-stable process is given by $\calA_{\alpha} |x|^{-1-\alpha}$, where $\calA_{\alpha} = 2^{\alpha} \Gamma((1+\alpha)/2)/(\pi^{1/2} |\Gamma(-\alpha/2)|)$. In the sequel we will need to truncate this density. The truncated density will be denoted by $\mu^{(\delta)}(x)$.  Let $\mu^{(\delta)}: \R \setminus \{0\} \to [0,\infty)$ where $\delta \in (0,1]$. For $x \in (0,\delta]$ we put $\mu^{(\delta)}(x) = \Aa |x|^{-1-\alpha}$, for $x \in (\delta, 2 \delta)$ we put $\mu^{(\delta)}(x) \in (0,\Aa |x|^{-1-\alpha})$ and for $x \ge 2 \delta$ we put $\mu^{(\delta)}(x) = 0$. Moreover, $\mu^{(\delta)}$ is defined so that it is weakly decreasing, weakly convex and $C^1$ on $(0,\infty)$ and satisfies $\mu^{(\delta)}(-x) =\mu^{(\delta)}(x)$ for $x \in (0,\infty)$. 

We also define
$$
\calG^{(\delta)} f(x) = \lim_{\zeta \to 0^+} \int_{|w| > \zeta} (f(x+w) - f(x)) \mu^{(\delta)}(w) \, dw.
$$
By $g_t^{(\delta)}$ we denote the heat kernel corresponding to $\calG^{(\delta)}$ that is 
$$
\frac{\partial}{\partial t} g_t^{(\delta)}(x) = \calG^{(\delta)} g_t^{(\delta)}(x), \quad t > 0, \, x \in \R,
$$
$$
\int_{R} g_t^{(\delta)}(x) \, dx = 1, \quad t > 0.
$$
It is well known that $g_t^{(\delta)}$ belongs to $C^1((0,\infty))$ as a function of $t$ and  belongs to $C^2(\R)$ as a function of $x$.  

For any $\eps \in (0,1], \tau >0 $, $t \in (0,\infty)$ and $x \in \R$ we define
\begin{equation}
\label{hdefinition}
h_t^{(\eps)}(x) = \left\{              
\begin{array}{lll}                   
\frac t{(|x|+t^{1/\alpha})^{1+\alpha}}&\text{for}& |x| < \eps, \\  
c_{\eps} t^{1+ (d-1)/\alpha} e^{-|x|} &\text{for}& |x| \ge \eps,
\end{array}       
\right. 
\end{equation}
where $c_{\eps} =  \frac{e^\eps}{(1+\tau^{1/\alpha})^{1+\alpha}\tau^{(d-1)/\alpha}}$. 
 The constant $c_{\eps}$ is chosen so that for any $t \in (0,\tau]$ the function $x \to h_t^{(\eps)}(x)$ is nonincreasing on $[0,\infty)$.

\begin{lemma}
\label{gtht}
For any $\eps \in (0,1],$ there exist $c$ such that for 
$\delta =\varepsilon\min\{ \frac{\alpha}{8(\alpha+d+2)},\frac{1}{4d(\eta_1\vee 1)^2}\}$, and any $t \in (0,\tau]$, $x,y \in \R$, we have
\begin{align}
\label{gdelta1}
g_t^{(\delta)}(x) 
& \le c h_t^{(\eps)}(|x|),\\
|g_t^{(\delta)}(x) - g_t^{(\delta)}(y)| 
\label{gdelta3}
& \le c|x-y| \left(\frac{h_t^{(\eps)}(|x|)}{t^{1/\alpha} + |x|}+ \frac{h_t^{(\eps)}(|y|)}{t^{1/\alpha} + |y|}\right).
\end{align}
\end{lemma}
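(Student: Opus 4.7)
The plan is to combine Fourier-inversion bounds for $g_t^{(\delta)}$ with a bounded-jump large-deviation argument. The bulk region $|x|<\eps$ should behave like a standard $\alpha$-stable density because $\mu^{(\delta)}$ coincides with $\calA_\alpha|w|^{-1-\alpha}$ on $(-\delta,\delta)$, while the exponential tail for $|x|\ge\eps$ is forced by the fact that every jump of the truncated driver is of size at most $2\delta$, so reaching $|x|\ge\eps$ requires many consecutive same-sign jumps.

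First I would write $g_t^{(\delta)}(x) = \frac{1}{2\pi}\int_\R e^{-i\xi x}e^{-t\psi^{(\delta)}(\xi)}\,d\xi$ with symbol $\psi^{(\delta)}(\xi) = \int_\R(1-\cos(\xi w))\mu^{(\delta)}(w)\,dw$. Comparing $\mu^{(\delta)}$ with the stable L\'evy density on $(-\delta,\delta)$ yields $\psi^{(\delta)}(\xi)\ge c|\xi|^\alpha$ for $|\xi|\ge 1/\delta$, while the finite second moment of the truncated measure gives $\psi^{(\delta)}(\xi)\le c\delta^{2-\alpha}|\xi|^2$ near zero. A dyadic decomposition of the inversion integral, combined with one integration by parts to gain a factor $1/|x|$ when $|x|$ is large, gives the global bound $g_t^{(\delta)}(x)\le c\,t/(|x|+t^{1/\alpha})^{1+\alpha}$, which covers the bulk case of (\ref{gdelta1}). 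Differentiating once under the integral sign yields $|(g_t^{(\delta)})'(x)|\le c\,t/(|x|+t^{1/\alpha})^{2+\alpha}$, and the fundamental theorem of calculus then produces (\ref{gdelta3}) in the bulk. These steps follow closely the analyses in \cite{KR2016,KR2018,S2017}.

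Second, for $|x|\ge\eps$ I would exploit the bounded support of $\mu^{(\delta)}$. By an Esscher (exponential-tilt) argument with tilt parameter $\lambda=\mathrm{sgn}(x)$ the density factors as $g_t^{(\delta)}(x)\le e^{-|x|+t\Lambda(1)}\,p_\lambda(t,x)$, where $\Lambda(\lambda)=\int_\R(e^{\lambda w}-1)\mu^{(\delta)}(w)\,dw$ is finite because $\mu^{(\delta)}$ is compactly supported and $p_\lambda(t,\cdot)$ is the density of the tilted L\'evy process. Applying the Step-1 bulk bound to the tilted density at $x$ and using the smallness of $\delta$ to control $t\Lambda(1)$ produces the required exponential decay with prefactor $t^{1+(d-1)/\alpha}$; the specific quantitative choice of $\delta$ in terms of $\eps,d,\alpha,\eta_1$ enters at precisely this point, to simultaneously pin down the coefficient $1$ in $e^{-|x|}$ and the power of $t$. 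The tail case of (\ref{gdelta3}) follows from the analogous analysis of $\xi\,e^{-t\psi^{(\delta)}(\xi)}$.

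The main obstacle I anticipate is this last quantitative bookkeeping: getting the exponent $1$ in $e^{-|x|}$ and the power $1+(d-1)/\alpha$ of $t$ simultaneously, rather than some $\delta$-dependent rate, is what forces the explicit formula for $\delta$ in the hypothesis, and tracking all constants through the tilt and dyadic Fourier estimates requires care. Matching the bulk and tail bounds at the junction $|x|=\eps$ is eased by the fact that $c_\eps$ is chosen precisely so that $x\mapsto h_t^{(\eps)}(x)$ is nonincreasing on $[0,\infty)$, allowing residual constants to be absorbed across the join.
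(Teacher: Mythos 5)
Your overall structure (bulk versus tail, stable-like estimate inside $|x|<\eps$, exponential tail from bounded jumps) matches the paper's plan, but two of the three technical pillars you propose don't bear the load and are exactly the obstacles the authors route around. First, for the gradient bound underlying (\ref{gdelta3}): differentiating under the Fourier integral sign and doing dyadic/IBP estimates on $\xi\,e^{-t\Phi_\delta^{(1)}(\xi)}$ yields, for the truncated symbol, a bound of the form $|(g_t^{(\delta)})'(x)|\le c\,t^{-2/\alpha}(1+|x|)^{-n}$ (this is precisely what \cite{KS2015} gives, cited in Lemma~\ref{g_derivatives}). What is needed is the \emph{spatially} rescaled bound $|(g_t^{(\delta)})'(x)|\le c\,h_t^{(\eps)}(|x|)/(t^{1/\alpha}+|x|)$, which for intermediate $|x|$ is vastly sharper and does not follow from the naive Fourier argument because the truncated process has no exact self-similarity. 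The paper obtains it via Theorem 1.5 of \cite{KR2016}: one produces an isotropic unimodal L\'evy process in $\R^3$ whose radial density satisfies $g_t^{(\delta,3)}(r)=-\frac{1}{2\pi r}\frac{d}{dr}g_t^{(\delta,1)}(r)$, applies the same stable/tail bounds in dimension $3$, and reads off the derivative bound. Your sketch skips this device and does not supply a substitute.

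Second, the tail. A single Esscher tilt with $\lambda=\mathrm{sgn}(x)$ gives at best $g_t^{(\delta)}(x)\le e^{-|x|+C t}\,p_\lambda(t,x)$, i.e.\ a $t$-independent (or order-one) prefactor in front of $e^{-|x|}$. But the target (\ref{gdelta1}) requires the very specific prefactor $c_\eps\,t^{1+(d-1)/\alpha}$ for $|x|\ge\eps$. In the paper this positive power of $t$ is \emph{not} a constant-tilt artifact: it comes from Lemma 4.2 of \cite{S2017}, which gives $g_t^{(\delta,n)}(x)\le\bigl(\tfrac{tm_0}{\delta|x|}\bigr)^{|x|/(8\delta)}g_t^{(\delta,n)}(0)$ (a ``you need at least $|x|/(2\delta)$ same-sign jumps'' bound), and then the explicit choice $\delta\le\eps\alpha/(8(\alpha+d+2))$ ensures $|x|/(8\delta)-1/\alpha\ge 1+(d-1)/\alpha$ for $|x|\ge\eps$. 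Recovering this power from a tilting argument would require $\lambda\to\infty$ as $t\to 0$ with careful control of the tilted drift, essentially re-deriving Sztonyk's lemma. So while you correctly diagnosed that the choice of $\delta$ is what makes the constants work out, your outlined mechanism (fixed tilt) cannot produce the required $t$-power, and your gradient step is the known hard spot that the 3-dimensional trick is designed to circumvent.
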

\begin{proof}

First we consider a general case of heat kernels $g^{(\delta,n)}$ on $\R^n$ for $\delta \in (0,1]$,  
$n\in\{1,2,...\}$ such that
$$
  g_t^{(\delta,n)}(x) = \frac{1}{(2\pi)^n} \int_{\R^n} e^{-i\scalp{x}{u}} e^{-t\Phi_\delta^{(n)}(u)}\, du,
$$
where
$$
  \Phi_\delta^{(n)} (u) 
	=  \int_{\R^n} (1 - \cos(uy)) \mu^{(\delta,n)}(y)\, dy,\quad u \in\R^n,
$$
and $\mu^{(\delta,n)}(y)=\mu^{(\delta,n)}(|y|)$ 
is isotropic unimodal L{\'e}vy density such that $\mu^{(\delta,n)}(y) \approx |y|^{-n-\alpha}$
for $|y|\leq \delta$, and $\mu^{(\delta,n)}=0$ for $|y|\geq 2\delta$. In the proof of this lemma we assume that constants $c$ may additionally depend on $n$. 
It follows from Lemma 1 in \cite{BGR2014} that
$$
  \tfrac{2}{n\pi^2} H(|u|) \leq \Phi_\delta^{(n)} (u) \leq 2 H(|u|),\quad u\in\R^n,
$$
where
$$
  H(r) = \int_{\R^n} (1\wedge (r|y|)^2 ) \mu^{(\delta,n)}(y)\, dy,\quad r\geq 0,
$$
hence we easily obtain
$$
  \Phi_\delta^{(n)} (u) 
	 \geq c 
	|u|^2 \int_0^{\frac{1}{|u|}\wedge \delta} r^{1-\alpha}\, dr 
	 = c \tfrac{1}{\delta^{\alpha}} \left((\delta|u|)^2\wedge (\delta|u|)^\alpha\right), \quad u\in\R^n.
$$
Similarly
$$
  \Phi_\delta^{(n)}(u) \leq c
	\tfrac{1}{\delta^{\alpha}} \left((\delta|u|)^2\wedge (\delta|u|)^\alpha\right), \quad u\in\R^n.
$$
In particular the symbol $\Phi_\delta^{(n)}$ has global weak lower scaling property with index $\alpha$
(see \cite{BGR2014}).
This yields, by Theorem 21 of  \cite{BGR2014},
\begin{equation*}
  g_t^{(\delta,n)}(x) \leq c \min\{(H^{-1}(1/t))^n, tH(1/|x|) |x|^{-n}\}  ,\quad t>0, x\in\R^n.
\end{equation*}
Observing that $H(r)\le c r^\alpha$ for  $r\ge 0 $ and $H^{-1}(1/t)\le  c \frac 1{t^{1/\alpha}}$, for $t\le \tau $, we have

\begin{equation}\label{stableestg}
g_t^{(\delta,n)}(x) \leq c \frac t{(|x|+t^{1/\alpha})^{n+\alpha}}\quad  x\in\R^n, t\in(0,\tau].
\end{equation}

Let $t\le 1\wedge\tau$.
Using Lemma 4.2 from \cite{S2017} we get 
\begin{align*}
  g_t^{(\delta,n)}(x) 
	& \leq  e^{\frac{-|x|}{8\delta}\log\left(\frac{\delta |x|}{tm_0}\right)} g_t^{(\delta,n)}(0)   =   
	        \left(\tfrac{tm_0}{\delta|x|}\right)^{\frac{|x|}{8\delta}} g_{t}^{(\delta,n)}(0) \\ 
	& \leq  c  t^{\frac{|x|}{8\delta}-\frac{n}{\alpha}}   e^{\frac{-|x|}{8\delta}\log\left(\frac{\delta |x|}{m_0}\right)}, \quad |x| \geq \tfrac{em_0}{\delta} t,
\end{align*}
where $m_0 = \int_{\R^n} |y|^2  \mu^{(\delta,n)}(y)\, dy\approx \delta^{2-\alpha}.$ This yields
$$
  g_t^{(\delta,n)}(x) \leq c   t^{1+\frac{d-1}{\alpha}} e^{\frac{-|x|}{8\delta}\log\left(\frac{\delta |x|}{m_0}\right)},
$$
provided $|x| \geq \max\{8\delta(1+\frac{n+d-1}{\alpha}),\tfrac{em_0}{\delta} t\}.$ We observe that there exists $c_1=c_1(\delta,\alpha,d,n)$ such that
$$
  e^{\frac{-|x|}{8\delta}\log\left(\frac{\delta |x|}{m_0}\right)} \leq c_1 e^{-(\alpha+d)|x|/\alpha}, \quad x\in\R^n,
$$
so  we obtain
\begin{equation}\label{estexpg}
  g_t^{(\delta,n)}(x) \leq c_2 t^{1+\frac{d-1}{\alpha}} e^{-(\alpha+d)|x|/\alpha}, \quad |x| \geq \max\{8\delta(1+\tfrac{n+d-1}{\alpha}),\tfrac{em_0}{\delta} t\},
\end{equation}
with $c_2=c_2(\delta,\alpha,d,n).$

Let $1\le t\le \tau$.
Using again  Lemma 4.2 from \cite{S2017} we get 
\begin{align}
  g_t^{(\delta,n)}(x) 
	& \leq  e^{\frac{-|x|}{8\delta}\log\left(\frac{\delta |x|}{tm_0}\right)} g_t^{(\delta,n)}(0) \leq   e^{\frac{-|x|}{8\delta}}g_1^{(\delta,n)}(0)\nonumber\\  
	& \leq  c t^{1+\frac{d-1}{\alpha}}  e^{\frac{-|x|}{8\delta}}, \quad |x| \geq \tfrac{em_0}{\delta} t.
\label{estexpg2}\end{align}

Let $\delta < \frac{\varepsilon\alpha}{8(\alpha+d+2)}.$
Now, (\ref{gdelta1}) follows from \eqref{stableestg}, \eqref{estexpg} and \eqref{estexpg2} with $n=1$.

The function $\mu^{(\delta)}$ satisfies the assumption of Theorem 1.5 in \cite{KR2016} which yields that there exists a L\'evy process $X_t^{(3)}$ in $\R^3$ with
the characteristic exponent $\Phi^{(3)}_\delta(u) = \Phi^{(3)}_\delta(|u|),$ $u\in\R^3$ and the radial, radially nonincreasing transition density
$g_t^{(\delta,3)}(x) = g_t^{(\delta,3)}(|x|)$ satisfying
\begin{displaymath}
	g_t^{(\delta,3)}(r) = \frac{-1}{2\pi r}\frac{d}{dr} g_t^{(\delta,1)}(r),\quad r>0.
\end{displaymath}
Furthermore it follows from the proof of Theorem 1.5 in \cite{KR2016} that the L\'evy measure of the process $X_t^{(3)}$ is given by 
$\mu^{(\delta,3)}(dx) = \mu^{(\delta,3)}(x)\, dx $, where $\mu^{(\delta,3)}(R) = \frac{-1}{2\pi R} \frac{d\mu^{(\delta,1)}}{dR}(R).$ In particular $\mu^{(\delta,3)}(R)$ is nonincreasing on $(0,\infty)$ and we have 
$\mu^{(\delta,3)}(x) = \frac{\Aa}{2\pi(2+\alpha)}|x|^{-3-\alpha}$ for $|x|\leq \delta$, $\mu^{(\delta,3)}(x)\leq \frac{\Aa}{2\pi(2+\alpha)}\delta^{-3-\alpha},$
for $\delta \leq |x| \leq 2\delta,$ and $\mu^{(\delta,3)}(x)=0$ for $|x|>2\delta.$

By \eqref{stableestg}, \eqref{estexpg} and \eqref{estexpg2}, with $n=3$, we obtain
$$
  g_t^{(\delta,3)}(x) \leq  c \frac t{(|x|+t^{1/\alpha})^{3+\alpha}}\quad  x\in\R^3, t\in(0,\tau]
$$
and
$$
  g_t^{(\delta,3)}(x) \leq c t^{1+\frac{d-1}{\alpha}}e^{-(\alpha+d)|x|/\alpha}\le c \frac{ t^{1+\frac{d-1}{\alpha}}}{(|x|+t^{1/\alpha})^2}e^{-|x|}, \quad |x|>\max\{\varepsilon,c_3 t\},  t\in (0,\tau].
$$
The  above two inequalities  yield
$$
  g_t^{(\delta,3)}(x) \leq c \frac{h_t^{(\varepsilon)}(|x|)}{(|x|+t^{1/\alpha})^2},\quad x\in\R^3,t\in(0,\tau],
$$
and
$$
  \left|\frac{d}{dr} g_t^{(\delta,1)}(r)\right| \leq c \frac{rh_t^{(\varepsilon)}(r)}{(r+t^{1/\alpha})^2} \leq c \frac{h_t^{(\varepsilon)}(r)}{r+t^{1/\alpha}},\quad r>0,t\in(0,\tau].
$$
 Since $h_t^{(\varepsilon)}$ is nonincreasing, by the Lagrange theorem, we get
$$
  | g_t^{(\delta,1)}(x) - g_t^{(\delta,1)}(y) | \leq c |x-y| \left( \frac{h_t^{(\varepsilon)}(|x|)}{|x| + t^{1/\alpha}}
	+ \frac{h_t^{(\varepsilon)}(|y|)}{|y| + t^{1/\alpha}}\right),\quad x,y\in\R.
$$
\end{proof}

\begin{lemma}
\label{xxprime}
Let $\eps \in (0,1]$. For any $t \in (0,\tau]$, 
$x, x' \in \R$ if $|x- x'| \le t^{1/\alpha}$ and $|x - x'| \le \eps/4$ then
$$
h_t^{(\eps)}(x') \le c h_t^{(\eps)}(x/2).
$$
\end{lemma}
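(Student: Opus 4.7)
The plan is to run a case analysis driven by two dichotomies: whether $|x|$ is large or small compared to $t^{1/\alpha}$, and which branch of the piecewise definition of $h_t^{(\eps)}$ each of the arguments $|x'|$ and $|x|/2$ lands in. The key structural fact I would lean on throughout is that $r\mapsto h_t^{(\eps)}(r)$ is nonincreasing on $[0,\infty)$, which holds precisely because of the choice of $c_\eps$.

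First I would dispose of the easy regime $|x|\ge 2t^{1/\alpha}$. The hypothesis $|x-x'|\le t^{1/\alpha}$ gives $|x'|\ge |x|-t^{1/\alpha}\ge |x|/2$, so global monotonicity alone yields $h_t^{(\eps)}(x')\le h_t^{(\eps)}(x/2)$ with $c=1$.

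The interesting regime is $|x|<2t^{1/\alpha}$, which forces $|x|/2<t^{1/\alpha}$ and $|x'|<3t^{1/\alpha}$, and which I would split into three nonempty subcases. If $|x'|<\eps$, then the constraint $|x-x'|\le\eps/4$ forces $|x|<5\eps/4$, hence $|x|/2<\eps$ as well; both values then lie in the polynomial branch, and bounding the denominator of $h_t^{(\eps)}(x/2)$ from above by $2t^{1/\alpha}$ and that of $h_t^{(\eps)}(x')$ from below by $t^{1/\alpha}$ produces a ratio of at most $2^{1+\alpha}$. If $|x'|\ge\eps$ but $|x|/2<\eps$, monotonicity across the junction at $\eps$ gives $h_t^{(\eps)}(x')\le t/(\eps+t^{1/\alpha})^{1+\alpha}$, while $h_t^{(\eps)}(x/2)=t/(|x|/2+t^{1/\alpha})^{1+\alpha}\ge t/(\eps+t^{1/\alpha})^{1+\alpha}$, so $c=1$ suffices. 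If both $|x'|\ge\eps$ and $|x|/2\ge\eps$, both arguments live in the exponential branch; the reverse triangle inequality combined with $|x|/2<t^{1/\alpha}$ gives $|x|/2-|x'|\le t^{1/\alpha}-\eps\le \tau^{1/\alpha}$, so the ratio of the two exponentials is bounded by $e^{\tau^{1/\alpha}}$. The fourth potential subcase, $|x'|<\eps$ with $|x|/2\ge\eps$, is vacuous, since it would force $|x-x'|\ge |x|-|x'|>\eps$, contradicting $|x-x'|\le\eps/4$.

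There is no genuine obstacle here; the only delicate point is that a possible jump of $h_t^{(\eps)}$ at the cusp $\eps$ would destroy the second subcase, and that is exactly what the paper's calibration of $c_\eps$ is designed to rule out. All the comparisons that appear are then either monotonicity or crude size estimates, and the final constant $c$ can be taken to depend only on $\alpha$ and $\tau^{1/\alpha}$.
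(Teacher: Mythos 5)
Your proof is correct. You and the paper rely on the same two ingredients --- the global monotonicity of $r\mapsto h_t^{(\eps)}(r)$ (guaranteed by the calibration of $c_\eps$) and direct size comparisons on the polynomial/exponential branches --- but the case decomposition is organized differently. The paper splits once, on $|x|\wedge|x'|\lessgtr \eps/2$: when $|x|\wedge|x'|>\eps/2$ the hypothesis $|x-x'|\le\eps/4<|x|/2$ already pushes $|x'|$ above $|x|/2$ so monotonicity finishes in one line; and when $|x|\wedge|x'|\le\eps/2$ both $|x|$ and $|x'|$ are forced below $\eps$, so the polynomial formula applies to each and the triangle inequality together with $|x-x'|\le t^{1/\alpha}$ gives $t^{1/\alpha}+|x|\le 2(t^{1/\alpha}+|x'|)$. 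You instead split on $|x|\lessgtr 2t^{1/\alpha}$ (in the large case $|x-x'|\le t^{1/\alpha}\le|x|/2$ again forces $|x'|\ge|x|/2$, same monotonicity argument), and then work through the branch combinations in the small case. The mathematical content is identical; the paper's criterion collapses all the near-the-origin subcases into a single use of the polynomial formula, making it two short cases rather than four, but your extra work --- including checking that $c_\eps$ makes the cusp at $\eps$ harmless, exactly what the monotonicity hypothesis is for --- is all correct.
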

\begin{proof}
Assume first that $|x| \wedge |x'| \le \eps/2$. Then by the definition of $h_t^{(\eps)}(x)$ we have
$$
h_t^{(\eps)}(x) = \frac{t}{(t^{1/\alpha} + |x|)^{1+\alpha}}
\ge \frac{t}{(t^{1/\alpha} + |x - x'| + |x'|)^{1+\alpha}}
\ge \frac{t}{(2 t^{1/\alpha} + 2|x'|)^{1+\alpha}}
\ge c h_t^{(\eps)}(x').
$$
Assume now that $|x| \wedge |x'| > \eps/2$. Then we have $|x'| \ge |x| - |x - x'| \ge |x| - |x|/2 = |x|/2$. Hence $h_t^{(\eps)}(x') \le 
h_t^{(\eps)}(x/2)$.
\end{proof}

\begin{lemma}
\label{product_gt}
Let $\eps \in (0,1]$, $\delta =\varepsilon\min\{ \frac{\alpha}{8(\alpha+d+2)},\frac{1}{4d(\eta_1\vee 1)^2}\}$. For any 
$t \in (0,\tau]$, $x, x' \in \R^d$ if $|x- x'| \le t^{1/\alpha}$ and $|x - x'| \le \delta$ then
\begin{equation}
\label{product_gt1}
\left| \prod_{i=1}^d  g_t^{(\delta)}\left({x_i}\right) - 
\prod_{i=1}^d  g_t^{(\delta)}\left({x'_i}\right)\right|
\le c  \left(\prod_{i=1}^d h_t^{(\eps)}(x_i/2) \right)\left[1\wedge \sum_{j = 1}^d t^{-1/\alpha} |x_j - x'_j|\right].
\end{equation}
\end{lemma}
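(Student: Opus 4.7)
The plan is to reduce the $d$-fold product comparison to the one-dimensional estimates already established in Lemma \ref{gtht}, by means of a standard telescoping identity. Write
\begin{equation*}
\prod_{i=1}^d g_t^{(\delta)}(x_i) - \prod_{i=1}^d g_t^{(\delta)}(x'_i)
= \sum_{j=1}^d \Bigl(\prod_{i<j} g_t^{(\delta)}(x_i)\Bigr)\bigl[g_t^{(\delta)}(x_j)-g_t^{(\delta)}(x'_j)\bigr]\Bigl(\prod_{i>j} g_t^{(\delta)}(x'_i)\Bigr).
\end{equation*}
To the unaltered one-dimensional factors I apply \eqref{gdelta1} from Lemma \ref{gtht}, giving $g_t^{(\delta)}(x_i) \le c\, h_t^{(\eps)}(|x_i|)$ and similarly for $x'_i$. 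To the single difference factor I apply \eqref{gdelta3}, followed by the crude bound $(t^{1/\alpha}+|z|)^{-1}\le t^{-1/\alpha}$, which yields $|g_t^{(\delta)}(x_j)-g_t^{(\delta)}(x'_j)|\le c\, t^{-1/\alpha}|x_j-x'_j|\bigl(h_t^{(\eps)}(|x_j|)+h_t^{(\eps)}(|x'_j|)\bigr)$.

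Next I need to pass from $h_t^{(\eps)}(|x'_i|)$ to $h_t^{(\eps)}(|x_i|/2)$ uniformly in $i$. For this I invoke Lemma \ref{xxprime}: its hypotheses $|x_i-x'_i|\le t^{1/\alpha}$ and $|x_i-x'_i|\le\eps/4$ follow from $|x_i-x'_i|\le |x-x'|\le t^{1/\alpha}$ and $|x-x'|\le\delta$, together with $\delta\le\eps/4$ (indeed $\delta\le\eps\,\alpha/(8(\alpha+d+2))<\eps/32$ since $\alpha<1$ and $d\ge 2$). Consequently $h_t^{(\eps)}(|x'_i|)\le c\, h_t^{(\eps)}(|x_i|/2)$, and since $h_t^{(\eps)}$ is nonincreasing on $[0,\infty)$ one also has $h_t^{(\eps)}(|x_i|)\le h_t^{(\eps)}(|x_i|/2)$. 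Plugging these bounds into every factor of the telescoping sum gives
\begin{equation*}
\Bigl|\prod_{i=1}^d g_t^{(\delta)}(x_i) - \prod_{i=1}^d g_t^{(\delta)}(x'_i)\Bigr|
\le c\, \Bigl(\prod_{i=1}^d h_t^{(\eps)}(x_i/2)\Bigr)\sum_{j=1}^d t^{-1/\alpha}|x_j-x'_j|,
\end{equation*}
which is the bound corresponding to the sum in \eqref{product_gt1}.

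To obtain the competing bound of $1$ in the minimum, I do not use the telescoping sum at all: the triangle inequality combined with \eqref{gdelta1} and the same monotonicity/Lemma \ref{xxprime} argument give
\begin{equation*}
\Bigl|\prod_{i=1}^d g_t^{(\delta)}(x_i) - \prod_{i=1}^d g_t^{(\delta)}(x'_i)\Bigr|
\le \prod_{i=1}^d g_t^{(\delta)}(x_i) + \prod_{i=1}^d g_t^{(\delta)}(x'_i)
\le c \prod_{i=1}^d h_t^{(\eps)}(x_i/2).
\end{equation*}
Taking the minimum of the two estimates yields \eqref{product_gt1}.

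The argument is essentially bookkeeping; the only nontrivial point is verifying that the smallness assumptions $|x_i-x'_i|\le t^{1/\alpha}$ and $|x_i-x'_i|\le\eps/4$ required by Lemma \ref{xxprime} hold simultaneously for every coordinate, which is where the specific choice of $\delta$ in terms of $\eps$ is used. I do not expect any genuine obstacle beyond this.
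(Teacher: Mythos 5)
Your proof is correct and follows essentially the same route as the paper's: telescope the difference of products, bound the altered factor by \eqref{gdelta3} and the crude estimate $(t^{1/\alpha}+|z|)^{-1}\le t^{-1/\alpha}$, bound the unaltered factors by \eqref{gdelta1}, and pass to $h_t^{(\eps)}(x_i/2)$ via Lemma \ref{xxprime} (the paper packages the intermediate step through $h_t^{(\eps)}(|x_i|\wedge|x'_i|)$ and then invokes \ref{xxprime}, but this is only a bookkeeping difference). The verification that $\delta\le\eps/4$ making Lemma \ref{xxprime} applicable coordinatewise is exactly the point the paper leaves implicit.
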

\begin{proof}
By Lemma \ref{gtht} we get
\begin{eqnarray*}
&&
\left| \prod_{i=1}^{d}  g_t^{(\delta)}\left({x_i}\right) - 
\prod_{i=1}^{d}  g_t^{(\delta)}\left({x'_i}\right)\right|\\
&&
\le  \sum_{j=1}^d \left[\left|g_t^{(\delta)}\left(x_{j}\right) - g_t^{(\delta)}\left(x'_{j}\right)\right| 
\prod_{i\ne j, 1\le i\le d} g_t^{(\delta)}\left(|x_{i}|\wedge |x'_{i}|\right) \right]\\
&& \le c \left(\prod_{i=1}^d h_t^{(\eps)}(|x_i|\wedge |x'_i|) \right) \sum_{j = 1}^d  
\frac{|x_j-x'_j|}{t^{1/\alpha}}.
\end{eqnarray*}

Clearly we have
$$\left| \prod_{i=1}^{d}  g_t^{(\delta)}\left({x_i}\right) - 
\prod_{i=1}^{d}  g_t^{(\delta)}\left({x'_i}\right)\right|\le \prod_{i=1}^d g_t^{(\delta)}(|x_i|\wedge |x'_i|).$$

Now the assertion follows from Lemmas \ref{gtht} and \ref{xxprime}.
\end{proof}

\begin{lemma}
\label{integralgt}
Let $\eps \in (0,1]$, $\delta =\varepsilon\min\{ \frac{\alpha}{8(\alpha+d+2)},\frac{1}{4d(\eta_1\vee 1)^2}\}$ and let $a\in\R$ be such
that $|a|\leq \tfrac{\eps}{4\delta}$. Then there exists $c$ such that for any $t \in (0,\tau]$, $x \in \R$ we have
\begin{equation}
\label{gdelta31}
\int_{\R} |g_t^{(\delta)}(x + aw) - g_t^{(\delta)}(x)| \mu^{(\delta)}(w) \, dw \le \frac{c |a|^{\alpha} h_t^{(\eps)}\left(\tfrac{x}{2}\right)}{t}.
\end{equation}
\end{lemma}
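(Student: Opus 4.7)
My plan is to combine the two bounds of Lemma \ref{gtht} into a single pointwise inequality and then reduce the integral to a one-variable stable-type estimate via the substitution $u=aw$. Starting from \eqref{gdelta3} and using $t^{1/\alpha}+|z|\ge t^{1/\alpha}$, together with the trivial bound $|g_t^{(\delta)}(x+aw)-g_t^{(\delta)}(x)|\le g_t^{(\delta)}(x+aw)+g_t^{(\delta)}(x)$ and \eqref{gdelta1}, one obtains
\[
|g_t^{(\delta)}(x+aw)-g_t^{(\delta)}(x)|\le c\min\!\Bigl\{\tfrac{|aw|}{t^{1/\alpha}},\,1\Bigr\}\bigl(h_t^{(\eps)}(x)+h_t^{(\eps)}(x+aw)\bigr).
\]
This splits the problem into a \emph{symmetric} contribution (with $h_t^{(\eps)}(x)$) and a \emph{shifted} contribution (with $h_t^{(\eps)}(x+aw)$).

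The symmetric contribution is straightforward: I pull $h_t^{(\eps)}(x)$ outside and estimate $\int\min\{|aw|/t^{1/\alpha},1\}\mu^{(\delta)}(w)\,dw$ by splitting at $|w|=t^{1/\alpha}/|a|$ and using $\mu^{(\delta)}(w)\le \Aa|w|^{-1-\alpha}$; the routine stable calculation yields $c|a|^\alpha/t$, and monotonicity of $h_t^{(\eps)}$ replaces $h_t^{(\eps)}(x)$ by $h_t^{(\eps)}(x/2)$. For the shifted contribution I substitute $u=aw$: the inequality $\mu^{(\delta)}(y)\le \Aa|y|^{-1-\alpha}$ on the support gives $\mu^{(\delta)}(u/a)/|a|\le c|a|^\alpha|u|^{-1-\alpha}$, so the problem reduces to bounding
\[
\int_{|u|\le 2\delta|a|}\min\!\Bigl\{\tfrac{|u|}{t^{1/\alpha}},\,1\Bigr\}h_t^{(\eps)}(x+u)|u|^{-1-\alpha}\,du\le c\,h_t^{(\eps)}(x/2)/t.
\]
I would partition $u$ into three regions: $|u|\le t^{1/\alpha}$, $t^{1/\alpha}<|u|\le|x|/2$, and $|u|>|x|/2$. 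In the first two, whenever $|x|\ge 2t^{1/\alpha}$ the inequality $|x+u|\ge|x|/2$ and monotonicity allow $h_t^{(\eps)}(x+u)\le h_t^{(\eps)}(x/2)$; in the opposing case $|x|<2t^{1/\alpha}$ the direct bound $h_t^{(\eps)}(x/2)\ge c\,t^{-1/\alpha}$ takes over. In the third region I pull out $|u|^{-1-\alpha}\le c(|x|/2\vee t^{1/\alpha})^{-1-\alpha}$ and invoke $\int_\R h_t^{(\eps)}\le c$.

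The delicate step is the third region when $|x|<2t^{1/\alpha}$: here $|x+u|$ can be arbitrarily close to zero, so $h_t^{(\eps)}(x+u)$ is not pointwise controlled by $h_t^{(\eps)}(x/2)$. The saving is that $|u|^{-1-\alpha}\le t^{-(1+\alpha)/\alpha}$ together with $\int h_t^{(\eps)}\le c$ produces a bound of order $|a|^\alpha t^{-(1+\alpha)/\alpha}$, which matches $c|a|^\alpha h_t^{(\eps)}(x/2)/t$ because $h_t^{(\eps)}(x/2)\ge c\,t^{-1/\alpha}$ in this regime. This small-$|x|$ case is exactly why the statement features $x/2$ on the right-hand side of \eqref{gdelta31} rather than $x$.
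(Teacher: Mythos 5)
Your proposal is correct, and while it has the same skeleton as the paper's argument (derive a pointwise Lipschitz-type bound from \eqref{gdelta1} and \eqref{gdelta3}, substitute $u=aw$ to extract $|a|^\alpha$, then bound a shifted $h_t^{(\eps)}$-integral against a stable-type kernel), it diverges at the key step. The paper controls $\int_{t^{1/\alpha}\le|s|\le 2\delta|a|}h_t^{(\eps)}(x+s)|s|^{-1-\alpha}\,ds$ by passing to the true $\alpha$-stable density $g_t^{(\infty)}$: it uses $h_t^{(\eps)}(y)\le c\,g_t^{(\infty)}(y)$ and $|s|^{-1-\alpha}\le c\,g_t^{(\infty)}(s)/t$ on $|s|\ge t^{1/\alpha}$ and then invokes the Chapman--Kolmogorov identity $\int g_t^{(\infty)}(x+s)g_t^{(\infty)}(s)\,ds=g_{2t}^{(\infty)}(x)$, with a separate elementary treatment of $|x|\ge\eps$. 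You avoid any semigroup machinery: you split by the size of $|u|$ relative to $|x|/2$ and $t^{1/\alpha}$, exploit monotonicity of $h_t^{(\eps)}$ wherever $|x+u|\ge|x|/2$, and in the remaining tail pull out $|u|^{-1-\alpha}\le c\bigl((|x|/2)\vee t^{1/\alpha}\bigr)^{-1-\alpha}$ and use $\int_{\R}h_t^{(\eps)}\le c$, rescuing the small-$|x|$ regime via $h_t^{(\eps)}(x/2)\ge c\,t^{-1/\alpha}$ when $|x|\le 2t^{1/\alpha}$. The paper's convolution identity packages several cases at once and is somewhat shorter; your version is more self-contained and makes explicit where the $x/2$ on the right-hand side is actually needed. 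One small caution on phrasing: your ``third region'' $|u|>|x|/2$ overlaps region~1 when $|x|<2t^{1/\alpha}$, and your stated bound $|u|^{-1-\alpha}\le c((|x|/2)\vee t^{1/\alpha})^{-1-\alpha}$ is only valid on $|u|>(|x|/2)\vee t^{1/\alpha}$; since that is clearly the intended reading (you later use $|u|^{-1-\alpha}\le t^{-(1+\alpha)/\alpha}$), the argument goes through, but the partition should be stated as $|u|\le t^{1/\alpha}$, $t^{1/\alpha}<|u|\le|x|/2$, $|u|>(|x|/2)\vee t^{1/\alpha}$.
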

\begin{proof}
First we note that
\begin{displaymath}
	\int_{\R} |g_t^{(\delta)}(x + aw) - g_t^{(\delta)}(x)| \mu^{(\delta)}(w) \, dw \leq \int_{|w|<2\delta} |g_t^{(\delta)}(x + aw) - g_t^{(\delta)}(x)| \Aa |w|^{-1-\alpha} \, dw,
\end{displaymath}
and by the substitution $s=aw$ we have
\begin{displaymath}
	\int_{|w|<2\delta} |g_t^{(\delta)}(x + aw) - g_t^{(\delta)}(x)| |w|^{-1-\alpha} \, dw 
	= |a|^{\alpha} \int_{|s|<2\delta|a|} |g_t^{(\delta)}(x + s) - g_t^{(\delta)}(x)| |s|^{-1-\alpha} \, ds.
\end{displaymath}

Now we estimate the latter integral. Let
\begin{align*}
	&\int_{|s|<2\delta|a|} |g_t^{(\delta)}(x + s) - g_t^{(\delta)}(x)| |s|^{-1-\alpha} \, ds 
	  =  \int_{|s|<t^{1/\alpha} \wedge (2\delta|a|)} |g_t^{(\delta)}(x + s) - g_t^{(\delta)}(x)| |s|^{-1-\alpha} \, ds \\
	  & \quad \quad \quad \quad \quad \quad \quad \quad \quad \quad \quad \quad \quad \quad  + \int_{t^{1/\alpha} \wedge (2\delta|a|) \leq |s|<2\delta|a|} |g_t^{(\delta)}(x + s) - g_t^{(\delta)}(x)| |s|^{-1-\alpha} \, ds \\
		& \quad \quad \quad \quad \quad \quad \quad \quad \quad \quad \quad \quad \quad \quad  =: I_1 + I_2.
\end{align*}

Using (\ref{gdelta3}) we get
\begin{align*}
  I_1
	& \leq  c \int_{|s|< t^{1/\alpha} \wedge (2\delta|a|) } |s|^{-\alpha} \left(\frac{h_t^{(\varepsilon)}(x+s)}{t^{1/\alpha} + |x+s|} + 
	  \frac{h_t^{(\varepsilon)}(x)}{t^{1/\alpha} + |x|}\right)\, ds \\
	& \leq  c t^{-1/\alpha} \int_{|s|< t^{1/\alpha} \wedge (2\delta|a|) } |s|^{-\alpha} \left(h_t^{(\varepsilon)} (|x+s|) + h_t^{(\varepsilon)} (|x|)\right) \, ds.
\end{align*}
If $|x|\geq 2t^{1/\alpha}$ then for $|s|<t^{1/\alpha}$ we have $h_t^{(\varepsilon)}(|x+s|)\leq h_t^{(\varepsilon)}\left(\frac{|x|}{2}\right)$, since $h_t^{(\varepsilon)}$ is nonincreasing. If $|x|\leq 2t^{1/\alpha}$ and $|s|<t^{1/\alpha}$ then $h_t^{(\varepsilon)} \left(\frac{|x|}{2}\right) \geq c t^{1/\alpha}$ and $h_t^{(\varepsilon)} (|x+s|)\leq t^{1/\alpha}$, hence we obtain $h_t^{(\varepsilon)} (|x+s|) \leq c h_t^{(\varepsilon)} \left(\frac{|x|}{2}\right)$.
This yields
\begin{displaymath}
	I_1 \leq c  t^{-1/\alpha}  h_t^{(\varepsilon)} \left(\tfrac{|x|}{2}\right) \int_{|s|< t^{1/\alpha} } |s|^{-\alpha}\, ds = \frac{c}{t}  h_t^{(\varepsilon)} \left(\tfrac{|x|}{2}\right).
\end{displaymath}

Now we estimate $I_2$. If $t^{1/\alpha}>2\delta|a|$ then $I_2=0$ so we assume that $t^{1/\alpha}\leq 2\delta|a|$ and using \eqref{gdelta1} we obtain
\begin{displaymath}
	I_2 \leq c \int_{  t^{1/\alpha} \leq|s|\leq  2\delta|a|} \left( h_t^{(\varepsilon)}(|x+s|) + h_t^{(\varepsilon)}(|x|)\right)|s|^{-1-\alpha}\, ds.
\end{displaymath}

Since we have
\begin{displaymath}
	\int_{|s|\geq t^{1/\alpha} } h_t^{(\varepsilon)}(|x|) |s|^{-1-\alpha}\, ds = \frac{c}{t}h_t^{(\varepsilon)}(|x|),
\end{displaymath}
we only need to estimate
\begin{displaymath}
	J = \int_{t^{1/\alpha} \leq|s|\leq  2\delta|a|} h_t^{(\varepsilon)}(|x+s|) |s|^{-1-\alpha}\, ds.
\end{displaymath}

Let $g^{(\infty)}_t$ denote the transition density of the one-dimensional symmetric standard $\alpha$-stable process.
It follows from \cite{BG60} that $g^{(\infty)}_t(x) \approx \frac{t}{(t^{1/\alpha} + |x|)^{1+\alpha}}$, hence
$h_t^{(\eps)}(y) \leq c g^{(\infty)}_t(y),$ for all $t \in (0,\tau]$, $y\in\R$. 
Noting also that $ |s|^{-1-\alpha}\le c \frac{g_t^{(\infty)}(s)}t$ for $|s|\ge t^{1/\alpha},$ and
using Chapman-Kolmogorov equation for $g^{(\infty)}_t$ we get

\begin{align*}
J
&\le 
 \int_{ |s|\ge t^{1/\alpha}} h_t^{(\eps)}(x + s)|s|^{-1-\alpha} \, ds \\
&\le 
 \frac ct \int_{ \R}g_t^{(\infty)}(x + s)g_t^{(\infty)}(s)\, ds
= \frac ct g_{2t}^{(\infty)}(x), 
\end{align*}
which yields $J\leq \tfrac{ch_t^{(\eps)}(x)}{t}$ for $|x|\leq |\eps|.$

Let now $|x|\geq \eps \geq 4\delta|a|\geq 2t^{1/\alpha}.$ Then $|x+s|\geq |x|/2$ for $s\leq 2\delta|a|$, and we obtain
\begin{align*}
  J
	& \leq  \int_{t^{1/\alpha} \leq|s|\leq  2\delta|a|} h_t^{(\varepsilon)}(|x+s|) |s|^{-1-\alpha}\, ds 
	\leq  h_t^{(\varepsilon)}\left(\tfrac{|x|}{2}\right) \int_{t^{1/\alpha} \leq|s| } |s|^{-1-\alpha}\, ds \\
	& \leq  \frac{c}{t} h_t^{(\varepsilon)}\left(\tfrac{|x|}{2}\right).
\end{align*}
\end{proof}

\begin{lemma}
\label{g_derivatives}
For any $\eps \in (0,1]$, $\delta =\varepsilon\min\{ \frac{\alpha}{8(\alpha+d+2)},\frac{1}{4d(\eta_1\vee 1)^2}\}$ and $m,n\in\N$, $n \ge 2$ there exists 
$c = c(m, n, \tau, \alpha, d, \eta_1, \eta_2, \eta_3, \eps, \delta)$ such that for any $t \in (0,\tau]$, $x \in \R$ we have
$$
  \left|\frac{d^m}{dx^m} g_t^{(\delta)}(x)\right| \le c t^{-(1+m)/\alpha} (1 + |x|)^{-n}.
$$
\end{lemma}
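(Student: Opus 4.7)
Write $\Phi(u) := \Phi_\delta^{(1)}(u) = \int_\R (1-\cos(uy))\mu^{(\delta)}(y)\,dy$ for the L\'evy symbol of $\calG^{(\delta)}$. My plan rests on Fourier inversion
\[
g_t^{(\delta)}(x) = \frac{1}{2\pi}\int_\R e^{-ixu}\,e^{-t\Phi(u)}\,du,
\]
differentiation under the integral to produce the factor $(-iu)^m$, and $n$ integrations by parts in $u$ to produce the factor $(ix)^{-n}$.

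Two preparatory estimates are needed. First, $\Phi\in C^\infty(\R)$ and all its derivatives are \emph{uniformly bounded} on $\R$: for $k\ge 1$, $\Phi^{(k)}(u)$ equals (up to sign) $\int_\R y^k \cos(uy)\mu^{(\delta)}(y)\,dy$ or the analogous sine integral, and $\int_\R |y|^k \mu^{(\delta)}(y)\,dy<\infty$ because $\mu^{(\delta)}$ is supported in $[-2\delta,2\delta]$ and behaves like $|y|^{-1-\alpha}$ near the origin (for $k=1$ this is where $\alpha<1$ is used). Second, from the lower bound $\Phi(u)\ge c\delta^{-\alpha}((\delta|u|)^2\wedge (\delta|u|)^\alpha)$ established in the proof of Lemma \ref{gtht}, in particular $\Phi(u)\ge c|u|^\alpha$ for $|u|\ge 1/\delta$, the substitution $v=t^{1/\alpha}u$ on the tail together with a trivial bound on $|u|\le 1/\delta$ (absorbed into $t^{-(j+1)/\alpha}$ via $t\le\tau$) yields the moment estimate
\[
\int_\R |u|^j\,e^{-t\Phi(u)}\,du \,\le\, c_j\,t^{-(j+1)/\alpha},\qquad t\in(0,\tau],\ j\ge 0.
\]

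For $|x|\le 1$ the bound is immediate from the moment estimate with $j=m$, since $(1+|x|)^{-n}$ is bounded below by $2^{-n}$. For $|x|\ge 1$ I would integrate by parts $n$ times in $u$; all boundary terms at $\pm\infty$ vanish because $e^{-t\Phi(u)}$ decays faster than any polynomial (and its derivatives pick up only bounded factors of $\Phi^{(k)}$), giving
\[
\frac{d^m}{dx^m}g_t^{(\delta)}(x) = \frac{1}{2\pi(ix)^n}\int_\R e^{-ixu}\,\frac{d^n}{du^n}\bigl[(-iu)^m e^{-t\Phi(u)}\bigr]\,du.
\]
Leibniz together with Fa\`a di Bruno expand the inner derivative into a finite sum of terms of the form $c\cdot u^{m-k}\bigl(\prod_\ell \Phi^{(a_\ell)}(u)\bigr)\,t^r e^{-t\Phi(u)}$ with $0\le k\le m\wedge n$ and $r\le n$. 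Uniform boundedness of each $\Phi^{(a_\ell)}$, the bound $t^r\le\tau^n$, and the moment estimate with $j=m-k$ yield integral at most $c\,t^{-(m-k+1)/\alpha}\le c'\,t^{-(m+1)/\alpha}$ for each term. Combining with the prefactor $|x|^{-n}\le 2^n(1+|x|)^{-n}$ (valid on $\{|x|\ge 1\}$) completes the proof.

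The only real subtlety is the uniform-in-$u$ control of $\Phi^{(k)}$, which relies essentially on the compact support of $\mu^{(\delta)}$ forced by the truncation (without it the higher moments $\int |y|^k \mu^{(\delta)}$ would diverge and the Leibniz expansion would be useless) and, for $k=1$, on $\alpha<1$; everything else is routine Fourier-analytic manipulation.
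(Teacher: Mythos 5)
Your proof is correct, but it follows a genuinely different route from the paper's. The paper proves this lemma by a one-line citation of \cite[Theorem 3]{KS2015}: the authors observe that $\mu^{(\delta)}$ is dominated by $f(s)=\Aa s^{-1-\alpha}$ for $s\leq\delta$ and by $f(s)=\Aa\delta^{n-1-\alpha}s^{-n}$ for $s>\delta$, check the two hypotheses of that black-box theorem, and read off the bound $|g_t^{(\delta),(m)}(x)|\le c\,t^{-(1+m)/\alpha}\min\{1,\,t^{1+1/\alpha}f(|x|)+(1+|x|/t^{1/\alpha})^{-n}\}$, which for $|x|\geq 1$ and $t\leq\tau$ collapses to $c\,t^{-(1+m)/\alpha}|x|^{-n}$. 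You instead give a self-contained Fourier-analytic argument: differentiate the inversion formula under the integral to produce $u^m$, integrate by parts $n$ times to produce $x^{-n}$, and control the resulting Leibniz--Fa\`a di Bruno expansion using (a) the uniform boundedness of $\Phi^{(k)}$ for $k\ge 1$, which exploits both the truncation (compact support of $\mu^{(\delta)}$) and the assumption $\alpha<1$ for $k=1$, and (b) the moment estimate $\int_\R |u|^j e^{-t\Phi(u)}\,du\leq c_j t^{-(j+1)/\alpha}$, derived from the lower scaling of $\Phi$ already established in Lemma \ref{gtht}. All the steps you outline are sound: the boundary terms in the integration by parts vanish because $e^{-t\Phi(u)}$ and its $u$-derivatives decay like $e^{-ct|u|^\alpha}$, the extra powers $t^r$ with $r\leq n$ are harmless for $t\leq\tau$, and the loss of a power $t^{k/\alpha}$ from $j=m-k$ rather than $j=m$ is absorbed by $t\leq\tau$. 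The trade-off is clear: the paper's proof is shorter but outsources the real work to \cite{KS2015}; yours is longer but self-contained and exposes exactly where the truncation and $\alpha<1$ are used, which is valuable for a reader who does not have \cite{KS2015} at hand.
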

\begin{proof} In the proof we assume that constants $c$ may additionally depend on $m$ and $n$.
We use Theorem 3 of \cite{KS2015}. Let $f(s)=\Aa s^{-1-\alpha}$ for $s\le \delta$ and
$f(s)=\Aa \delta^{n-1-\alpha} s^{-n}$ for $s>\delta$. It is then obvious that the assumptions
(1) and (2) of Theorem 3 in \cite{KS2015} hold and it follows that
$$
  \left|\frac{d^m}{dx^m} g_t^{(\delta)}(x)\right| \le c t^{-(1+m)/\alpha}\min\left\{1,t^{1+1/\alpha}f(|x|)+\left(1+\frac{|x|}{t^{1/\alpha}}\right)^{-n}\right\}, \quad x\in\R, t\in (0,\tau].
$$
Clearly, for $|x| \ge 1$ and $t \in (0,\tau]$ we have $f(|x|) \approx |x|^{-n}$ and $\left(1+\frac{|x|}{t^{1/\alpha}}\right)^{-n} \le c |x|^{-n}$. This implies the assertion of the lemma.
\end{proof}

\begin{lemma}
\label{levy1} There is a constant $C=C(\alpha)$ such that for $ a\ge 0$, and any $t>0$, 

$$\int_{\R} \left(\frac{(a+|w|)|w|}{ t^{1/\alpha}}\wedge1\right)\mu^{(\delta)}(w) \, dw\le C \frac {t^{1/2}+a^{\alpha}}t.
$$
\end{lemma}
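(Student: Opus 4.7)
Write $(a+|w|)|w| = a|w| + |w|^2$ and apply the elementary subadditivity $(u+v)\wedge 1 \le (u\wedge 1)+(v\wedge 1)$ for $u,v\ge 0$ (check separately the cases $u+v\le 1$, $\max(u,v)\ge 1$, and $u,v<1<u+v$). This reduces the statement to showing $I_1 + I_2 \le C(\alpha)(a^\alpha+t^{1/2})/t$, where
\begin{equation*}
I_1 := \int_{\R}\!\left(\tfrac{a|w|}{t^{1/\alpha}}\wedge 1\right)\mu^{(\delta)}(w)\,dw,
\qquad
I_2 := \int_{\R}\!\left(\tfrac{|w|^2}{t^{1/\alpha}}\wedge 1\right)\mu^{(\delta)}(w)\,dw.
\end{equation*}
The key observation is that $\mu^{(\delta)}(w)\le \Aa|w|^{-1-\alpha}$ for all $w\neq 0$: equality holds on $(0,\delta]$, the inequality holds on $(\delta,2\delta)$ by definition, and $\mu^{(\delta)}\equiv 0$ for $|w|\ge 2\delta$. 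After this pointwise domination by the untruncated symmetric $\alpha$-stable density, the parameter $\delta$ disappears from all subsequent computations, which explains why the constant may be taken to depend only on $\alpha$.

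For $I_1$ (with $a>0$; the case $a=0$ gives $I_1=0$), split the domain at the balance point $|w|=t^{1/\alpha}/a$. On the inner region use the linear branch of the minimum to obtain, by symmetry, a bound of $\frac{2a\Aa}{t^{1/\alpha}}\int_0^{t^{1/\alpha}/a} w^{-\alpha}\,dw = \frac{2\Aa}{1-\alpha}\,a^\alpha t^{-1}$; on the outer region bound the minimum by $1$ and obtain $2\Aa\int_{t^{1/\alpha}/a}^\infty w^{-1-\alpha}\,dw = \frac{2\Aa}{\alpha}\,a^\alpha t^{-1}$. For $I_2$ proceed identically with balance point $|w|=t^{1/(2\alpha)}$: the inner region contributes $\frac{2\Aa}{t^{1/\alpha}}\int_0^{t^{1/(2\alpha)}} w^{1-\alpha}\,dw = \frac{2\Aa}{2-\alpha}\,t^{-1/2}$, and the outer region contributes $\frac{2\Aa}{\alpha}\,t^{-1/2}$. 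Adding the four contributions yields the claimed estimate.

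I do not anticipate a real obstacle: once the two decisions have been made — splitting the minimum additively via subadditivity, and dominating $\mu^{(\delta)}$ by the full stable density $\Aa|w|^{-1-\alpha}$ — each of the four integrals is elementary and reduces the proof to direct computation. The only mild subtlety is the uniformity in $t>0$, but since the dominating density is scale-invariant, all estimates scale correctly in $t$ without any additional case analysis.
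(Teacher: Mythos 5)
Your proof is correct and follows essentially the same route as the paper: split $(a+|w|)|w|$ via subadditivity of $u\mapsto u\wedge 1$, dominate $\mu^{(\delta)}(w)$ by $\Aa|w|^{-1-\alpha}$, and evaluate each resulting integral by splitting at the balance point. The computations check out and match the paper's argument.
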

\begin{proof}

We have $$\left(\frac{(a+|w|)|w|}{ t^{1/\alpha}}\wedge1\right)\le \left(\frac{a|w|}{ t^{1/\alpha}}\wedge1\right)+\left(\frac{w^2}{ t^{1/\alpha}}\wedge1\right).$$
Hence, using $\mu^{(\delta)}(w)\le \frac c{|w|^{1+\alpha}}$, we obtain 

$$\int_{\R} \left(\frac{(a+|w|)|w|}{ t^{1/\alpha}}\wedge1\right)\mu^{(\delta)}(w) \, dw\le c\int_{\R}\left(\frac{a|w|}{ t^{1/\alpha}}\wedge1\right) \frac {dw}{|w|^{1+\alpha}} \,  + c\int_{\R}\left(\frac{w^2}{ t^{1/\alpha}}\wedge1\right)\frac {dw}{|w|^{1+\alpha}}  .
$$
Next,

$$\int_{\R}\left(\frac{a|w|}{ t^{1/\alpha}}\wedge1\right)\frac {dw}{|w|^{1+\alpha}}= 2\int_0^{\frac{ t^{1/\alpha}}a}\frac{aw}{ t^{1/\alpha}} \frac {dw}{w^{1+\alpha}} + 2\int_{\frac{ t^{1/\alpha}}a}^\infty  \frac {dw}{w^{1+\alpha}} = c \frac {a^{\alpha}}t.$$

Similar calculations show that 
$$\int_{\R}\left(\frac{w^2}{ t^{1/\alpha}}\wedge1\right)\frac {dw}{|w|^{1+\alpha}}= c t^{-1/2}.$$

\end{proof}
In the sequel we will need a version of the inverse map theorem for a Lipschitz function $f:\R^n \to \R^n, n\in \N$. The corresponding theorem is the main result  in \cite{C1976}, however it is not formulated in a suitable  way for our purpose. Below, closely following the arguments from  \cite{C1976},   we  provide a version we need.    

It is well known that $y$ almost surely the Jacobi matrix ${\mathcal J}_{f}(y)$ of $f$ exists. For any  $y_0\in \R^n$ we define (see Definition 1 in \cite{C1976}) the generalized Jacobian denoted $\partial f(y_0)$ as the convex hull of the set of matrices which can be obtained as limits of ${\mathcal J}_{f}(y_n)$, when $y_n\to y_0 $.   

We denote by $B(x,r)$ an open  ball of the
center $x\in \R^n$ and radius $r>0$. For any matrix $M$  we denote by $||M ||_{\infty} $ the maximum of its entries. 

\begin{lemma}\label{inverse} Let $f:\R^n \to \R^n$ be a Lipschitz map and $x\in \R^n$.  Suppose that for any $y\in \R^n$, the generalized Jacobian  $\partial f (y)$ consist of the matrices which can be represented as  $M(x)+ R $, where matrices $M(x),  R$ satisfy the following conditions: there are positive $\beta$ and $\eta$ such that   $||R||_{\infty}\le \eta |x-y|$
 and $|v  M(x)^T|\ge 2\beta$ for every $v\in \R^n, |v|=1$. Then  $f$ is injective on $B(x,\beta/(n\eta))$ and we have 
$B(f(x),  \beta^2/(2n \eta))\subset f(B(x,  \beta/(n\eta)))$.
\end{lemma}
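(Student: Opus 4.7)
The plan is to follow Clarke's classical inverse-function-theorem strategy with the quantitative constants made explicit. First I would unpack the hypotheses: $|vM(x)^T|\ge 2\beta$ for every unit $v$ says that $M(x)^T$, and hence $M(x)$, has smallest singular value at least $2\beta$, so $|M(x)w|\ge 2\beta|w|$ and $|vM(x)^T|\ge 2\beta|v|$ for all $v,w\in\R^n$. The entrywise bound $\|R\|_\infty\le\eta|x-y|$ translates via Cauchy--Schwarz into the operator bound $|Rw|\le n\|R\|_\infty|w|$. Combining these, for any $z\in\overline{B(x,\beta/(n\eta))}$ and any $J=M(x)+R\in\partial f(z)$ one has $\|R\|_\infty\le\beta/n$, whence
$$
|Jw|\ge|M(x)w|-|Rw|\ge 2\beta|w|-\beta|w|=\beta|w|.
$$
Writing a convex combination $\bar J=\sum_i\lambda_iJ_i$ as $M(x)+\bar R$ with $\bar R=\sum_i\lambda_iR_i$ shows $\|\bar R\|_\infty\le\beta/n$ is preserved by averaging, so the same lower bound, and by the symmetric argument the bound $|v\bar J^T|\ge\beta|v|$, persist over the whole convex hull of $\partial f$ restricted to the ball.

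Next, for injectivity on $B(x,\beta/(n\eta))$, I would invoke Lebourg's mean value theorem for Lipschitz maps: for $y_1,y_2$ in that ball, $f(y_2)-f(y_1)$ lies in the convex hull of $\{J(y_2-y_1):J\in\partial f(z),\,z\in[y_1,y_2]\}$. Applying the uniform lower bound with $w=y_2-y_1$ yields
$$
|f(y_2)-f(y_1)|\ge\beta|y_2-y_1|,
$$
which gives injectivity and, as a useful by-product, the estimate $|f(y)-f(x)|\ge\beta^2/(n\eta)$ for every $y$ on the boundary sphere of $B(x,\beta/(n\eta))$.

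For the inclusion $B(f(x),\beta^2/(2n\eta))\subset f(B(x,\beta/(n\eta)))$, I would fix $z$ in this target ball and minimize $g(y):=|f(y)-z|^2$ on the closed ball $\overline{B(x,\beta/(n\eta))}$. A minimizer $y^{*}$ exists by compactness, and on the boundary
$$
|f(y)-z|\ge\beta^2/(n\eta)-|f(x)-z|>\beta^2/(2n\eta)>|f(x)-z|,
$$
so $g(y)>g(x)$ on the boundary and $y^{*}$ must lie in the interior. Clarke's chain rule then gives $0\in\partial g(y^{*})\subset 2\,\mathrm{conv}(\partial f(y^{*}))^{T}(f(y^{*})-z)$, producing an element $J_0$ in the convex hull of $\partial f(y^{*})$ with $J_0^{T}(f(y^{*})-z)=0$. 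The transpose lower bound $|vJ_0^{T}|\ge\beta|v|$ established above forces $f(y^{*})=z$, completing the surjectivity claim.

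I expect the only delicate step to be citing the correct pieces of Clarke's generalized-Jacobian calculus---Lebourg's mean value theorem and the chain rule for $g=|f(\cdot)-z|^2$---in the precise form required (this is essentially where one must closely follow \cite{C1976}, as the statement advertises). Once those tools are in place, every quantitative estimate reduces to the triangle inequality applied to the decomposition $J=M(x)+R$, combined with the singular-value lower bound on $M(x)$, and the constants $\beta/(n\eta)$ and $\beta^2/(2n\eta)$ fall out in precisely the way tracked above.
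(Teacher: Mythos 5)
Your argument is correct, and the estimates produce exactly the constants $\beta/(n\eta)$ and $\beta^2/(2n\eta)$ in the statement, but you reach them by a somewhat different route than the paper. The paper stays very close to Clarke's machinery: after reducing every $M\in\partial f(y)$ to the form $M(x)+R$, it fixes (for each unit $v$) the single direction $w^*=vM(x)^T/|vM(x)^T|$ and verifies that $w^*(vM^T)\ge\beta$ uniformly over all $M\in\partial f(y)$, $y\in B(x,\beta/(n\eta))$; this is precisely the transversality hypothesis of Clarke's Lemmas 3--4, which then yield the Lipschitz lower bound $|f(y_1)-f(y_2)|\ge\beta|y_1-y_2|$, and the full-rank condition $|vM^T|\ge\beta$ feeds into Clarke's Lemma 5 (a degree/index-type argument) for the ball inclusion. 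You instead translate the hypothesis on $M(x)$ into a singular-value lower bound $|M(x)w|\ge2\beta|w|$, propagate the lower bound $|Jw|\ge\beta|w|$ through convex combinations (by exploiting $\|\bar R\|_\infty\le\beta/n$ for averaged residuals), and then deduce injectivity from Lebourg's vector-valued mean value theorem and the ball inclusion by minimizing $|f(\cdot)-z|^2$ on the closed ball, combined with Fermat's rule and Clarke's chain rule. Both arguments hinge on the same decomposition $J=M(x)+R$ and the same arithmetic $2\beta-n\eta|x-y|\ge\beta$; the paper's version is shorter because it delegates the heavy lifting to Clarke's named lemmas, while yours is more self-contained and replaces the topological argument of Clarke's Lemma 5 with an elementary variational one. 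One minor point worth making explicit: when you claim $|vM(x)^T|\ge2\beta$ forces $|M(x)w|\ge2\beta|w|$, you are passing the singular-value bound from $M(x)^T$ to $M(x)$; this is correct but deserves a word, and similarly the conversion of the entrywise bound $\|R\|_\infty\le\eta|x-y|$ into $|Rw|\le n\|R\|_\infty|w|$ (which you do state) is where the factor of $n$ enters.
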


\begin{proof}
Let $v$ be an arbitrary unit vector in $\R^n$. Let $M\in \partial f (y)$ and let $z= v  M(x)^T$.
Since $M^T= M(x)^T+ R^T$ the scalar product of $z$ and $w= v M^T= z+vR^T$ can be estimated as follows 
$$zw= z(z+vR^T)= |z|^2 + z(vR^T)\ge |z|^2- n\eta|z| |x-y|. $$
Next, taking $w^*= z/|z|$ we have for $|x-y|\le \beta/(n\eta)$, 
$$w^*(v M^T) \ge |z|- n\eta |x-y|\ge \beta. $$
Using this fact we can apply Lemma 3 and Lemma 4 of \cite{C1976} to claim that 
for every $y_1, y_2\in B(x,\beta/(n\eta))$  we have 
$$|f(y_1)-f(y_2)|\ge \beta|y_1-y_2|,$$
which shows that $f$ is injective in a ball $B(x,  \beta/(n\eta))$.
Next, by similar arguments, we show that 
$$|v M^T| \ge |v M(x)^T| - |v R^T|\ge 2\beta- n\eta |x-y|\ge \beta,\ |y-x|\le  \beta/(n\eta),$$
which proves that all matrices from the set $ \partial f (y)$ are of full rank if $|y-x|\le  \beta/(n\eta)$. 

Finally, we can apply Lemma 5 of \cite{C1976} to show that 
the  $f$ image of the ball $B(x,  \beta/(n\eta))$ contains the ball $B(f(x),  \beta^2/(2n\eta))$.
\end{proof}

\section{Construction and properties of the transition density of the solution of (\ref{main}) driven by the truncated process}

The approach in this section is based on Levi's method (cf. \cite{L1907,F1975,LSU1968}). This method was applied  in the framework of pseudodifferential operators by Kochubei \cite{K1989} to construct a fundamental solution to the related Cauchy problem as well as  transition density for the  corresponding Markow process. In recent years it was used in several papers to study transition densities of L{\'e}vy-type processes see e.g. \cite{CZ2016, KSV2018, CZ2017, J2017, GS2018, BSK2017, KK2017, KK2018, K2017}. Levi's method was also used to study gradient and Schr{\"o}dinger perturbations of fractional Laplacians see e.g. \cite{BJ2007,CKS2012,XZ2014}.

We first introduce the generator of the process $X_t$. We define $\calK f(x)$ by the following formula
$$
\calK f(x) =  \Aa \sum_{i = 1}^d  \lim_{\zeta \to 0^+}  \int_{|w| > \zeta} \left[f(x + a_i(x) w) - f(x)\right] \, \frac{dw}{|w|^{1 + \alpha}},
$$
for any Borel function $f: \R^d \to \R$ and any $x \in \R^d$ such that all the limits on the right hand side exist. Recall that $a_i(x) = (a_{1i}(x),\ldots, a_{di}(x))$. It is well known that $\calK f(x)$ is well defined for any $f \in C_b^2(\Rd)$ and any $x \in \R^d$. By (\cite[Proposition 4.1]{BC2006}) we know that if $f \in C_b^2(\Rd)$ then $f(X_t) - f(X_0) - \int_0^t \calK f(X_s) \, ds$ is a martingale. 

Let us fix $\eps \in (0,1]$ (it will be chosen later). 
Recall that for given $\eps$ the constant $\delta$ is chosen according to Lemma \ref{gtht}. For such fixed $\eps$, $\delta$ we abbreviate $\mu(x) = \mu^{(\delta)}(x)$, $\calG = \calG^{(\delta)}$, $g_t(x) = g_t^{(\delta)}(x)$, $h_t(x) = h_t^{(\eps)}(x)$.

We divide $\calK$ into two parts
\begin{equation}
\label{KLR}
\calK f(x) = \calL f(x) + \calR f(x),
\end{equation}
where
$$
\calL f(x) =  \sum_{i = 1}^d  \lim_{\zeta \to 0^+}  \int_{|w| > \zeta} \left[f(x + a_i(x) w) - f(x)\right] \, \mu(w) \, dw,
$$ 
for any Borel function $f: \R^d \to \R$ and any $x \in \R^d$ such that all the limits on the right hand side exists.
Our first aim will be to construct the heat kernel $u(t,x,y)$ corresponding to the operator $\calL$. This will be done by using the Levi's method. 

For each $z \in \R^d$ we introduce the ``freezing'' operator 
$$
\calL^z f(x) =  \sum_{i = 1}^d  \lim_{\zeta \to 0^+}  \int_{|w| > \zeta} \left[f(x + a_i(z) w) - f(x)\right] \, \mu(w) \, dw.
$$

Let $G_t(x) = g_t(x_1) \ldots g_t(x_d)$ and $H_t(x) = h_t(x_1) \ldots h_t(x_d)$ for $t > 0$ and $x = (x_1,\ldots,x_d) \in \R^d$. We also denote $B(x) = (b_{ij}(x)) = A^{-1}(x)$. Note that the coordinates of $B(x)$ satisfy conditions 
(\ref{bounded}) and  (\ref{Lipschitz}) with possibly different constants $\eta_1^*$ and $\eta_3^*$, but taking maximums we can assume that $\eta_1^*=\eta_1$ and $\eta_3^*=\eta_3$.

  For any $y \in \R^d$, $i = 1, \ldots, d$ we put
\begin{equation*}
b_i(y) = (b_{i1}(y),\ldots, b_{id}(y)).
\end{equation*}

We also denote $\|B\|_{\infty} = \max\{|b_{ij}|: \, i, j \in \{1,\ldots,d\}\}$.

For any $t > 0$, $x, y \in \R^d$ we define
\begin{eqnarray*}
p_y(t,x) &=& \det(B(y)) G_t(x(B(y))^T)\\
&=& \det(B(y)) g_t(b_1(y) x) \ldots g_t(b_d(y) x).
\end{eqnarray*}
It may be easily checked that for each fixed $y \in \R^d$ the function $p_y(t,x)$ is the heat kernel of $\calL^y$ that is 
$$
\frac{\partial}{\partial t} p_y(t,x) = \calL^y p_y(t,\cdot)(x), \quad t > 0, \, x \in \R^d,
$$
$$
\int_{\R^d} p_y(t,x) \, dx = 1, \quad t > 0.
$$
For any $t > 0$, $x, y \in \R^d$ we also define
\begin{eqnarray*}
r_y(t,x) &=&  H_t(x(B(y))^T)\\
&=&  h_t(b_1(y) x) \ldots h_t(b_d(y) x).
\end{eqnarray*}
For $x, y \in \R^d$, $t > 0$, let
$$
q_0(t,x,y) = \calL^{x}p_y(t,\cdot)(x-y) - \calL^{y} p_y(t,\cdot)(x-y),
$$
and for $n \in \N$ let
\begin{equation}
\label{defqn}
q_n(t,x,y) = \int_0^t \int_{\R^d} q_0(t-s,x,z)q_{n-1}(s,z,y) \, dz \, ds.
\end{equation}
For $x, y \in \R^d$, $t > 0$ we define
\begin{equation}
\nonumber
q(t,x,y) = \sum_{n = 0}^{\infty} q_n(t,x,y)
\end{equation}
and 
\begin{equation}
\label{defu}
u(t,x,y) = p_y(t,x-y) + \int_0^t \int_{\R^d} p_z(t-s,x-z)q(s,z,y) \, dz \, ds.
\end{equation}

In this section we will show that $q_n(t,x,y)$, $q(t,x,y)$, $u(t,x,y)$ are well defined and we will obtain estimates of these functions. First, we will get some simple properties of $p_y(t,x)$ and $r_y(t,x)$.

\begin{lemma}
\label{pyholder}
Choose $\gamma \in (0,1]$. For any $t \in (0,\tau]$, $x, x', y \in \R^d$ we have
$$
|p_y(t,x) - p_y(t,x')| \le c (1 \wedge (t^{-\gamma/\alpha} |x - x'|^{\gamma})) (r_y(t,x/2) + r_y(t,x'/2)).
$$
\end{lemma}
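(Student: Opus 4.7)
The plan is to reduce the estimate to Lemma \ref{product_gt} by passing to the linearly transformed variables $\xi_i := b_i(y)\cdot x$ and $\xi'_i := b_i(y)\cdot x'$, so that
$$p_y(t,x) = \det(B(y))\,\prod_{i=1}^d g_t(\xi_i), \qquad r_y(t,x/2) = \prod_{i=1}^d h_t(\xi_i/2).$$
Since $|b_{ij}(y)| \le \eta_1$ one has $|\xi_j - \xi'_j| \le \eta_1 \sqrt d\,|x-x'|$ uniformly in $y$, and $\det(B(y)) \le \eta_2^{-1}$, so every estimate on the $\xi$-side transfers back to the $x$-side up to multiplicative constants.

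I would then split into two regimes. In the \emph{small-displacement} regime, where $\eta_1\sqrt d\,|x-x'| \le t^{1/\alpha} \wedge \delta$, the hypothesis of Lemma \ref{product_gt} is satisfied by the points $\xi,\xi'\in\R^d$, which yields
$$\Bigl|\prod_{i=1}^d g_t(\xi_i) - \prod_{i=1}^d g_t(\xi'_i)\Bigr| \le c\,r_y(t,x/2)\,\Bigl(1 \wedge \sum_{j=1}^d t^{-1/\alpha}|\xi_j - \xi'_j|\Bigr) \le c\, r_y(t,x/2)\,\bigl(1 \wedge t^{-1/\alpha}|x-x'|\bigr).$$
To promote the linear factor to the $\gamma$-th power I would apply the elementary inequality $1 \wedge s \le 1 \wedge s^\gamma$ (valid for $s \ge 0$ and $\gamma \in (0,1]$), which gives the claim with just $r_y(t,x/2)$ on the right-hand side. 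In the complementary \emph{large-displacement} regime, at least one of the inequalities $\eta_1 \sqrt d\,|x-x'| \le t^{1/\alpha}$ or $\eta_1 \sqrt d\,|x-x'| \le \delta$ fails. In the first subcase $t^{-\gamma/\alpha}|x-x'|^\gamma$ is bounded below by a constant; in the second subcase, using $t \le \tau$, one gets $t^{-\gamma/\alpha}|x-x'|^\gamma \ge c(\tau,\delta,\gamma) > 0$. Either way $1 \wedge t^{-\gamma/\alpha}|x-x'|^\gamma$ stays above a positive constant, and it suffices to use the crude bound $|p_y(t,x) - p_y(t,x')| \le p_y(t,x) + p_y(t,x')$, then pass from $p_y$ to $r_y$ via the pointwise estimate $g_t \le c h_t$ from \eqref{gdelta1}, and finally invoke monotonicity of $h_t$ to replace $r_y(t,x)$ by $r_y(t,x/2)$ (and similarly for $x'$).

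The only genuinely delicate point is the bookkeeping in the large-displacement regime: one has to verify that $1 \wedge t^{-\gamma/\alpha}|x-x'|^\gamma$ really is bounded away from zero whenever the hypothesis of Lemma \ref{product_gt} fails, and this is exactly what forces the constant $c$ in the conclusion to depend on $\tau$. Apart from that the argument is a routine combination of Lemmas \ref{gtht} and \ref{product_gt} with the uniform boundedness of $B(y)$ and the interpolation inequality $1 \wedge s \le 1 \wedge s^\gamma$.
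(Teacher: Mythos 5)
Your proof is correct and follows essentially the same route as the paper: write $p_y(t,\cdot)$ in terms of the transformed variables $\xi_i = b_i(y)\cdot x$, split according to whether $|x-x'|$ is below or above the threshold under which Lemma \ref{product_gt} applies, use Lemma \ref{product_gt} in the small-displacement case and the crude bound plus \eqref{gdelta1} plus monotonicity of $h_t$ in the large-displacement case, then pass from the linear modulus to the $\gamma$-modulus via $1\wedge s \le 1\wedge s^\gamma$. The paper compresses this by first reducing to $\gamma=1$, but the substance is identical.
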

\begin{proof}
Of course, we may assume that $\gamma = 1$. We have
$$
p_y(t,x) - p_y(t,x') = \det(B(y)) \left(\prod_{i=1}^d  g_t\left(b_i(y)x\right) - \prod_{i=1}^d  g_t\left(b_i(y)x'\right)\right)
$$
If $|x - x'| \ge t^{1/\alpha}/\|B\|_{\infty}$ or $|x - x'| \ge \delta/\|B\|_{\infty}$ then the assertion clearly holds.
So, we may assume that $|x - x'| \le t^{1/\alpha}/\|B\|_{\infty}$ and $|x - x'| \le \delta/\|B\|_{\infty}$. Then the assertion follows easily from Lemma \ref{product_gt}.
\end{proof}

For  $x, y \in \R^d$ we have
$$
|B(y) x^T|^2 = | xB(y)^T|^2= (b_1(y) x)^2 + \ldots + (b_d(y) x)^2.
$$

\begin{lemma}
\label{b1b2}
For any $x, y \in \R^d$ and $i \in \{1,\ldots,d\}$ we have
\begin{equation*}
\max_{1\le i\le d}(b_i(y) x)^2 \ge \frac1{ \eta^2_1 d^3} |x|^2.
\end{equation*}
\end{lemma}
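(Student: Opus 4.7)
The plan is to obtain the lower bound by combining three elementary observations: (i) the maximum of $d$ nonnegative numbers is at least their average; (ii) the sum of the squares $(b_i(y)x)^2$ is exactly $|B(y)x^T|^2$; and (iii) the operator norm of $A(y)$ is controlled by the entrywise bound $\eta_1$, so $|B(y)x^T|^2$ cannot be much smaller than $|x|^2$.

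First I would use the identity
\[
\sum_{i=1}^{d}(b_i(y)x)^2 \;=\; |xB(y)^T|^2 \;=\; |B(y)x^T|^2,
\]
already noted in the excerpt just before the lemma. This immediately gives
\[
\max_{1\le i\le d}(b_i(y)x)^2 \;\ge\; \frac{1}{d}\,|B(y)x^T|^2,
\]
so it suffices to show $|B(y)x^T|^2 \ge |x|^2/(\eta_1^2 d^2)$.

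For that, I would write $x^T = A(y)B(y)x^T$ (since $B(y)=A(y)^{-1}$) and bound $|A(y)v|^2$ for an arbitrary $v\in\R^d$. By the Cauchy–Schwarz inequality and (\ref{bounded}),
\[
|(A(y)v)_i|^2 \;=\; \Bigl(\sum_{j=1}^d a_{ij}(y)v_j\Bigr)^2 \;\le\; d\,\eta_1^2\,|v|^2,
\]
and summing over $i$ gives $|A(y)v|^2 \le d^2\eta_1^2|v|^2$. Applying this with $v = B(y)x^T$ yields $|x|^2 \le d^2\eta_1^2\,|B(y)x^T|^2$, i.e.\ $|B(y)x^T|^2\ge |x|^2/(d^2\eta_1^2)$. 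Combined with the first step, this gives the claimed bound $\max_i(b_i(y)x)^2 \ge |x|^2/(\eta_1^2 d^3)$.

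There is no real obstacle here: the argument uses only the uniform entrywise bound (\ref{bounded}) and the algebraic identity $A(y)B(y)=I$; the nondegeneracy (\ref{determinant}) is not needed, because invertibility of $A(y)$ is what actually matters for defining $B(y)$, and once it is defined the bound follows automatically. The main thing to be careful about is tracking factors of $d$: one factor comes from averaging the $d$ coordinates, and one factor of $d^2$ comes from bounding $|A(y)v|$ by summing $d$ Cauchy–Schwarz estimates, each of which costs an extra $d$.
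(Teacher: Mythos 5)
Your proof is correct and follows essentially the same route as the paper: both derive the operator-norm bound $|vA(y)^T|\le \eta_1 d\,|v|$ from the entrywise bound (\ref{bounded}), apply it with $v=xB(y)^T$ to get $|xB(y)^T|\ge |x|/(\eta_1 d)$, and then use the fact that one of the $d$ squared coordinates must be at least $1/d$ of the total. The only difference is that you spell out the Cauchy--Schwarz computation behind the norm bound, which the paper simply asserts.
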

\begin{proof}
 Indeed,  for any $u,x$ we have   $|uA(y)^T|\le \eta_1 d |u|$. Setting $u= xB(y)^T$ we obtain that 
$$|xB(y)^T|\ge \frac1{\eta_1 d} |x|.$$
 Since $$       |xB(y)^T|^2= |b_1(y) x| ^2 +|b_2(y) x|^2+\dots+ |b_d(y) x|^2, $$ it follows that there is $1\le k\le d$ such that $|b_k(y) x|\ge \frac1{ \eta_1 d\sqrt{d}} |x|$. 
\end{proof}

\begin{corollary}
\label{estimate_pytx} Assume that $ \eps \le \frac1{ \eta_1 d\sqrt{d}}$. 
For any $t \in (0,\tau+1]$, $x, y \in \R^d$, we have
\begin{equation}
\label{htht1}
r_y(t,x-y)
\le c_1 t^{-d/\alpha} e^{-c |x - y|}.
\end{equation}
For any $t \in (0,\tau+1]$, $x, y \in \R^d$, $|x-y| \ge \eps\eta_1 d^{3/2}$, we have
\begin{equation}
\label{htht2}
r_y(t,x-y)
\le c_1 t e^{-c |x - y|}.
\end{equation}
\end{corollary}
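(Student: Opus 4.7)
The plan is to analyze the product
$$
r_y(t,x-y) = h_t(b_1(y)(x-y)) \cdots h_t(b_d(y)(x-y))
$$
factor by factor, using the piecewise definition of $h_t = h_t^{(\eps)}$ from (\ref{hdefinition}).

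First I would establish a crude uniform bound $h_t^{(\eps)}(w) \le c\, t^{-1/\alpha}$ valid for every $w \in \R$ and every $t \in (0,\tau+1]$. In the polynomial branch $|w|<\eps$ this is immediate from $t/(|w|+t^{1/\alpha})^{1+\alpha}\le t\cdot t^{-(1+\alpha)/\alpha}=t^{-1/\alpha}$, while in the exponential branch one uses $c_\eps\, t^{1+(d-1)/\alpha}\,e^{-|w|}\le c_\eps(\tau+1)^{1+d/\alpha}\, t^{-1/\alpha}$. Multiplying the $d$ factors in $r_y(t,x-y)$ then gives the unconditional estimate $r_y(t,x-y)\le c\, t^{-d/\alpha}$.

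Next I would invoke Lemma \ref{b1b2} to choose an index $k=k(x,y)$ with
$$
|b_k(y)(x-y)|\ \ge\ \frac{|x-y|}{\eta_1 d^{3/2}}.
$$
The hypothesis $\eps\le 1/(\eta_1 d\sqrt d)$ forces $\eps\eta_1 d^{3/2}\le 1$, so for $|x-y|\ge \eps\eta_1 d^{3/2}$ one has $|b_k(y)(x-y)|\ge \eps$, meaning the $k$-th factor falls into the exponential branch:
$$
h_t(b_k(y)(x-y))\ \le\ c_\eps\, t^{1+(d-1)/\alpha}\, e^{-|x-y|/(\eta_1 d^{3/2})}.
$$
Combining this with the bound $c\, t^{-1/\alpha}$ for each of the remaining $d-1$ factors yields
$$
r_y(t,x-y)\ \le\ c\, t^{1+(d-1)/\alpha}\cdot t^{-(d-1)/\alpha}\, e^{-c|x-y|}\ =\ c\, t\, e^{-c|x-y|},
$$
which is (\ref{htht2}).

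Finally, (\ref{htht1}) follows by combining the two regimes. If $|x-y|<\eps\eta_1 d^{3/2}$ then $e^{-c|x-y|}$ is bounded below by a positive constant depending only on $\eps,\eta_1,d$, so the crude bound $r_y(t,x-y)\le c\, t^{-d/\alpha}$ already implies (\ref{htht1}). If instead $|x-y|\ge \eps\eta_1 d^{3/2}$, we use (\ref{htht2}) together with the elementary inequality $t\le(\tau+1)^{1+d/\alpha}\, t^{-d/\alpha}$, valid for $t\in(0,\tau+1]$. The argument is essentially routine bookkeeping; the only point requiring care is that the transition at $|w|=\eps$ between the two branches of $h_t$ must not cause any hidden loss, but this is guaranteed by the monotonicity of $x\mapsto h_t^{(\eps)}(x)$ and the explicit choice of $c_\eps$ built into (\ref{hdefinition}). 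No substantive obstacle is expected.
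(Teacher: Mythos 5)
Your proof is correct and follows essentially the same route as the paper: identify via Lemma~\ref{b1b2} a coordinate on which $|b_k(y)(x-y)|$ is comparable to $|x-y|$, extract the exponential decay from that factor, and bound the remaining $d-1$ factors by $c\,t^{-1/\alpha}$. The only cosmetic difference is for (\ref{htht1}), where the paper uses the single unified estimate $h_t(z)\le c\,t^{-1/\alpha}e^{-|z|}$ applied to every factor, while you recover it by a case split (near/far) combining the crude bound with (\ref{htht2}); both are valid.
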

\begin{proof}
For any $t \in (0,\tau+1]$, $z \in \R$ by definition of $h_t$ we have
\begin{equation}
\label{ht_estimate}
h_t(z) \le c t^{-1/\alpha} e^{-|z|}.
\end{equation}
Fix $x, y \in \R^d$, $t \in (0,\tau+1]$. By Lemma \ref{b1b2} there exists $i \in \{1,\ldots,d\}$ such that 
$|b_i(y)(x-y)| \ge  \frac1{ \eta_1 d\sqrt{d}} |x - y|$. Using this and (\ref{ht_estimate}) we get (\ref{htht1}).
For any $t \in (0,\tau+1]$, $z \in \R$, $|z| \ge \eps$ by definition of $h_t$ we have
\begin{equation*}
h_t(z) \le c t^{1 + (d-1)/\alpha} e^{-|z|}.
\end{equation*}
If $|x-y| \ge \eps\eta_1 d^{3/2}$ then $ \frac1{ \eta_1 d\sqrt{d}} |x - y|\ge \eps$ hence, by the same arguments as above, we get (\ref{htht2}).
\end{proof}

Using the definition of $p_y(t,x)$ and properties of $g_t(x)$ we obtain the following regularity properties of $p_y(t,x)$.
\begin{lemma}
\label{pycontinuity}
The function $(t,x,y) \to p_y(t,x)$ is continuous on $(0,\infty) \times \R^d \times \R^d$. The function $t \to p_y(t,x)$ is in $C^1((0,\infty))$ for each fixed $x, y \in \R^d$. The function $x \to p_y(t,x)$ is in $C^2(\R^d)$ for each fixed $t > 0$, $y \in \R^d$.
\end{lemma}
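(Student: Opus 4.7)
The plan is to reduce every assertion to three ingredients: the regularity of $g_t = g_t^{(\delta)}$ already recorded in Section~2 (namely $g_t \in C^1((0,\infty))$ in $t$, $g_t \in C^2(\R)$ in $x$, plus joint continuity in $(t,x)$ which I will verify below), the continuity of the matrix $B(y) = A^{-1}(y)$ in $y$, and the Leibniz and chain rules applied to
$$p_y(t,x) = \det B(y) \prod_{i=1}^d g_t(b_i(y) x).$$

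First I would check that $B(y)$ depends continuously on $y$. Because the entries $a_{ij}$ are Lipschitz by \eqref{Lipschitz} and $\det A(y) \ge \eta_2 > 0$ by \eqref{determinant}, Cramer's rule $B = \mathrm{adj}(A)/\det A$ makes each $b_{ij}(y)$ and $\det B(y)$ Lipschitz on $\R^d$, so $(x,y) \mapsto b_i(y) x$ is jointly continuous on $\R^d \times \R^d$. Next I would establish joint continuity of $(t,x) \mapsto g_t(x)$ on $(0,\infty) \times \R$; this is not stated explicitly in Section~2, but it follows from the Fourier representation $g_t(x) = (2\pi)^{-1} \int_\R e^{-ixu} e^{-t\Phi_\delta^{(1)}(u)}\, du$ together with the coercivity bound $\Phi_\delta^{(1)}(u) \ge c \delta^{-\alpha}((\delta|u|)^2 \wedge (\delta|u|)^\alpha)$ derived in the proof of Lemma \ref{gtht}: for $(t,x)$ in any compact subset of $(0,\infty) \times \R$ the integrand is dominated by the integrable function $e^{-t_0 \Phi_\delta^{(1)}(u)}$ (with $t_0>0$ the minimum of $t$ on the compact set), and dominated convergence yields continuity. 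Composing with the continuous $(x,y) \mapsto b_i(y) x$ and taking the product of $d$ such factors together with the continuous $\det B(y)$ gives joint continuity of $p_y(t,x)$ on $(0,\infty)\times\R^d\times\R^d$.

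For the $C^1$-in-$t$ claim (fixed $x, y$), the product rule gives
$$\partial_t p_y(t,x) = \det B(y) \sum_{k=1}^d \Bigl(\partial_t g_t(b_k(y) x) \prod_{i \ne k} g_t(b_i(y) x)\Bigr),$$
each factor of which is continuous in $t$ because Section~2 states that $t \mapsto g_t(z)$ is $C^1$ for every fixed $z$ (and the same dominated-convergence argument applied to $-\Phi_\delta^{(1)}(u) e^{-t\Phi_\delta^{(1)}(u)}$ gives continuity of $\partial_t g_t$). For the $C^2$-in-$x$ claim (fixed $t, y$), the chain rule applied to the linear inner map $x \mapsto b_i(y) x$ produces
$$\partial_{x_j}\bigl[g_t(b_i(y) x)\bigr] = b_{ij}(y) g_t'(b_i(y) x), \qquad \partial^2_{x_j x_k}\bigl[g_t(b_i(y) x)\bigr] = b_{ij}(y) b_{ik}(y) g_t''(b_i(y) x),$$
which are continuous in $x$ because $g_t', g_t''$ are continuous on $\R$ (from $g_t \in C^2(\R)$); the Leibniz rule then assembles the first and second $x$-derivatives of the product.

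I do not expect a genuine obstacle. The only point that requires a moment of thought is the joint continuity of $g_t(x)$ in $(t,x)$, since Section~2 records only separate regularity; but the Fourier representation and the coercivity bound used in the proof of Lemma \ref{gtht} handle this immediately, and everything else is a mechanical application of the product and chain rules.
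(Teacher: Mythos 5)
Your proof is correct and follows the same route the paper intends: the paper states Lemma \ref{pycontinuity} without a proof block, prefacing it only with ``Using the definition of $p_y(t,x)$ and properties of $g_t(x)$,'' and you supply precisely the omitted details (Lipschitz continuity of $B(y)=A^{-1}(y)$ via Cramer's rule under \eqref{bounded}--\eqref{Lipschitz}, joint continuity of $(t,x)\mapsto g_t(x)$ from the Fourier representation and the coercivity bound in the proof of Lemma \ref{gtht}, and then the product and chain rules). No gap.
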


\begin{lemma}
\label{properties_pytx}
For any $y \in \R^d$ we have
$$
\left|\frac{\partial}{\partial x_i} p_y(t,x-y) \right| \le \frac{c}{t^{(d+1)/\alpha} (1+|x -y|)^{d+1}}, 
\quad i \in \{1,\ldots,d\}, \, t \in (0,\tau], \, x \in \R^d,
$$ 
$$
\left|\frac{\partial^2}{\partial x_i \partial x_j} p_y(t,x-y) \right| \le \frac{c}{t^{(d+2)/\alpha} (1+|x - y|)^{d+1}}, 
\quad i, j \in \{1,\ldots,d\}, \, t \in (0,\tau], \, x \in \R^d.
$$
\end{lemma}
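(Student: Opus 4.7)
The plan is to differentiate the explicit product formula
\[
p_y(t, x-y) = \det(B(y)) \prod_{k=1}^{d} g_t(b_k(y)(x-y))
\]
using the chain and product rules, and then bound each factor via the one-dimensional estimates already at hand. Since $\partial_{x_i}[b_k(y)(x-y)] = b_{ki}(y)$, the first partial reads
\[
\frac{\partial}{\partial x_i} p_y(t, x-y) = \det(B(y)) \sum_{k=1}^d b_{ki}(y)\, g_t'(b_k(y)(x-y)) \prod_{\ell \ne k} g_t(b_\ell(y)(x-y)),
\]
and the second partial expands similarly into a sum of $O(d^2)$ terms, each being a product over $\ell = 1,\ldots,d$ in which either exactly one factor is $g_t''$ (and the others are $g_t$), or exactly two factors are $g_t'$ (and the remaining $d-2$ are $g_t$). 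In particular, every such term contains, for each index $\ell \in \{1,\ldots,d\}$, a factor $g_t^{(m_\ell)}(b_\ell(y)(x-y))$ with $\sum_\ell m_\ell \in \{1,2\}$.

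Next I would invoke Lemma \ref{g_derivatives} with $m \ge 1$ to obtain $|g_t^{(m)}(z)| \le c\, t^{-(1+m)/\alpha}(1+|z|)^{-n}$ for any $n \ge 2$, and observe that the analogous bound with $m = 0$, namely $g_t(z) \le c_n t^{-1/\alpha}(1+|z|)^{-n}$, also holds: for $|z| < \varepsilon$ it is trivial since $(1+|z|)^{-n}$ is bounded away from $0$ there and $g_t(z) \le c t^{-1/\alpha}$ by Lemma \ref{gtht}, while for $|z| \ge \varepsilon$ the exponential decay of $h_t^{(\varepsilon)}$ dominates any polynomial. Plugging these estimates in, together with $|\det(B(y))| \le d!\,\eta_1^d$ and $|b_{ki}(y)| \le \eta_1$, each term in the expansion of $\partial_{x_i} p_y$ (respectively $\partial^2_{x_i x_j} p_y$) is majorized by
\[
c\, t^{-(d+1)/\alpha} \prod_{\ell=1}^d (1+|b_\ell(y)(x-y)|)^{-n_\ell}\qquad \text{(resp. }c\, t^{-(d+2)/\alpha}\prod_{\ell=1}^d (1+|b_\ell(y)(x-y)|)^{-n_\ell}\text{)},
\]
for any choice of exponents $n_\ell \ge 2$.

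The final step is to trade $\prod_\ell (1+|b_\ell(y)(x-y)|)^{-n_\ell}$ for the desired $(1+|x-y|)^{-(d+1)}$ factor. Here I would apply Lemma \ref{b1b2}: for fixed $y$ and $x-y$, there exists an index $k_0 = k_0(y,x-y) \in \{1,\ldots,d\}$ satisfying $|b_{k_0}(y)(x-y)| \ge (\eta_1 d^{3/2})^{-1}|x-y|$. Since every term in either expansion contains a factor indexed by $k_0$, I simply choose the exponent $n_{k_0} = d+1$ for that factor and use the trivial bound $(1+|b_\ell(y)(x-y)|)^{-n_\ell} \le 1$ for $\ell \ne k_0$; this yields the advertised bound. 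The only mild obstacle is the bookkeeping needed to verify that the full index set $\{1,\ldots,d\}$ (and in particular $k_0$) appears in each term of the second-derivative expansion, which is automatic because the product rule preserves all indices.
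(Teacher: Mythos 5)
Your proof is correct and follows essentially the same route as the paper's one-line argument: differentiate the product formula, bound each factor via Lemma \ref{g_derivatives}, and invoke Lemma \ref{b1b2} (exactly as in the proof of (\ref{htht1})) to extract the spatial decay $(1+|x-y|)^{-(d+1)}$ from the one factor whose argument is comparable to $|x-y|$. Your separate verification of the $m=0$ bound $g_t(z)\le c\,t^{-1/\alpha}(1+|z|)^{-n}$ via Lemma \ref{gtht} is a correct and useful piece of bookkeeping that the paper leaves implicit.
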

\begin{proof}
The estimates follow from Lemma \ref{g_derivatives} and the same arguments as in the proof of (\ref{htht1}).
\end{proof}

\begin{lemma}\label{intA}
Let  $ b^*_{i}(x,y), x, y \in \Rd; i=1, \dots, d$, be real functions such that there are positive  $\eta_4, \eta_5$ and 
\begin{equation}
\label{bounded1}
|b^*_{i}(x,y)| \le \eta_4, \quad x, y \in \R^d,
\end{equation}
\begin{equation}
\label{Lipschitz1}
|b^*_{i}(x,y) - b^*_{i}(\overline{x},\overline{y})| \le \eta_5(|x-\overline{x}|+ |y-\overline{y}|), \quad x, y, \overline{x}, \overline{y} \in \R^d.
\end{equation}
Let, for fixed $x\in \R^d$, $\Psi_x$ be a map $\R^{d+1}\mapsto  \R^{d+1}$ given by

$$\Psi_x(w, y)= (w, \xi_1, \dots, \xi_d)\in \R^{d+1}, \quad w \in \R, y\in \R^{d}, $$
where  $\xi_i=b_i(y)(x-y) +b^*_{i}(x,y)w$.

There is a positive  $\eps_0= \eps_0(\eta_1,\eta_3,\eta_4,\eta_5,d)\le \frac1{2\eta_5}$ such that 
the map $\Psi_x$  and its   Jacobian determinant   denoted by $J_{\Psi_x}(w,y)$ has   the property
\begin{eqnarray*}
|\Psi_x(w, y)|&\le& 1,\\
2 |det B(y)| \ge |J_{\Psi_x}(w,y)|&\ge& (1/2)|det B(y)|, \end{eqnarray*}
 for $ |x-y|\le \eps_0, |w|\le \eps_0$, $(w,y)$ almost surely. Moreover the map $\Psi_x$ is injective on the set $\{(w,y)\in \R^{d+1}; |x-y|\le \eps_0, |w|\le \eps_0\}$.

If, for fixed $y\in \R^d$, $\Phi_y$ be a map $\R^{d+1}\mapsto  \R^{d+1}$ given by 
$$\Phi_y(w, x)=\Psi_x(w, y), \quad w \in \R, x\in \R^{d}, $$ then 
the  Jacobian of $\Phi_y$ denoted by $J_{\Phi_y}(w,x)$ has the property
$$2 |det B(y)| \ge |J_{\Phi_y}(w,x)|\ge (1/2)|det B(y)|,$$
for $ |x-y|\le \eps_0, |w|\le \eps_0$, $(w,x)$ almost surely.  Moreover the map $\Phi_y$ is injective on the set $\{(w,x)\in \R^{d+1}; |x-y|\le \eps_0, |w|\le \eps_0\}$.
\end{lemma}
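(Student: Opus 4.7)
The lemma is a quantitative inverse function theorem for the Lipschitz map $\Psi_x$, and the plan is to combine an explicit Jacobian computation with Lemma~\ref{inverse}. First I would verify $|\Psi_x(w,y)|\le 1$: the first coordinate is $w$ with $|w|\le\eps_0$, and for $i=1,\dots,d$ one has $|\xi_i|\le |b_i(y)|\,|x-y|+|b^*_i(x,y)|\,|w|\le(\sqrt d\,\eta_1+\eta_4)\eps_0$, which forces $|\Psi_x(w,y)|\le 1$ once $\eps_0$ is small enough in terms of $d,\eta_1,\eta_4$.

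Since $B=A^{-1}$ inherits boundedness and Lipschitz continuity with constants $\eta_1$ and $\eta_3$ (by the remark preceding the lemma) and $b^*$ is Lipschitz by (\ref{Lipschitz1}), the map $\Psi_x$ is Lipschitz, hence Rademacher gives the Jacobian almost everywhere. Direct computation shows that this Jacobian is block-triangular with top row $(1,0,\dots,0)$ and bottom-right $d\times d$ block
$$M_{ij}=-b_{ij}(y)+\sum_{k=1}^d \partial_{y_j}b_{ik}(y)\,(x_k-y_k)+\partial_{y_j}b^*_i(x,y)\,w,$$
so $M=-B(y)+E$ with $|E_{ij}|\le d\eta_3|x-y|+\eta_5|w|\le(d\eta_3+\eta_5)\eps_0$. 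Factoring $-B(y)+E=-B(y)(I-A(y)E)$ gives $J_{\Psi_x}=\det M=(-1)^d\det B(y)\cdot\det(I-A(y)E)$; since the entries of $A(y)E$ are controlled by $d\eta_1(d\eta_3+\eta_5)\eps_0$, the standard perturbation estimate $|\det(I-K)-1|\le C(d)\|K\|_{\infty}$ yields $\det(I-A(y)E)\in[1/2,2]$ for $\eps_0$ small in terms of $d,\eta_1,\eta_3,\eta_5$, and the two-sided bound on $|J_{\Psi_x}|$ follows.

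For the injectivity of $\Psi_x$ I would apply Lemma~\ref{inverse} with $n=d+1$ and reference point $(0,x)$. At the center, the Jacobian is
$$M_0=\begin{pmatrix}1 & 0\\ b^*(x,x) & -B(x)\end{pmatrix},\qquad M_0^{-1}=\begin{pmatrix}1 & 0\\ A(x)b^*(x,x) & -A(x)\end{pmatrix},$$
so $\|M_0^{-1}\|_{\mathrm{op}}\le C_0(d,\eta_1,\eta_4)$ and $|vM_0^T|\ge 1/C_0=:2\beta$ for every unit $v\in\R^{d+1}$. At any other $(w,y)$ where the Jacobian exists, its difference with $M_0$ has vanishing top row, while the remaining entries are bounded by $C_1(d,\eta_3,\eta_5)|(w,y)-(0,x)|$ using Lipschitz continuity of $b^*$ (for the $b^*$-column) and of $B$ together with the $E$-bound above (for the bottom-right block); by convexity the same estimate passes to the generalized Jacobian $\partial\Psi_x(w,y)$. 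Lemma~\ref{inverse} then gives injectivity on $B((0,x),\beta/((d+1)C_1))$, and taking $\eps_0\le\min\{1/(2\eta_5),\,\beta/(\sqrt 2\,(d+1)C_1),\ldots\}$ (the dots standing for the earlier smallness requirements) ensures $\{(w,y):|x-y|\le\eps_0,|w|\le\eps_0\}$ is contained in this ball. The analogous argument for $\Phi_y$ uses reference point $(0,y)$, center Jacobian $\begin{pmatrix}1&0\\ b^*(y,y)&B(y)\end{pmatrix}$, and perturbation bounded by $\eta_5|w|$ in the $B(y)$-block and $\eta_5|x-y|$ in the $b^*$-column, which makes the argument go through identically (indeed slightly more cleanly, since the block itself is perturbed only by $w$).

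The chief technical care is bookkeeping the dependence of all constants on only $\eta_1,\eta_3,\eta_4,\eta_5,d$ rather than also on $\eta_2$; this is made possible by the preceding remark that $B=A^{-1}$ can be taken to satisfy the same $\eta_1$-boundedness and $\eta_3$-Lipschitz estimates as $A$. The only conceptually delicate step is ensuring that Lemma~\ref{inverse}'s entry-wise perturbation hypothesis genuinely applies to the whole generalized Jacobian $\partial\Psi_x$ (not merely at points of classical differentiability), which follows from Rademacher's theorem together with the convex-hull definition of $\partial\Psi_x$.
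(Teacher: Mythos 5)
Your proposal is correct and follows essentially the same route as the paper's proof: verify the $|\Psi_x|\le1$ bound from boundedness of $B$ and $b^*$, compute the almost-everywhere Jacobian (block-triangular with bottom block $-B(y)+E$ and $\|E\|_\infty\lesssim|x-y|+|w|$), obtain the two-sided determinant bound via perturbation, and then decompose every element of the generalized Jacobian $\partial\Psi_x(w,y)$ as a fixed matrix at the center $(0,x)$ plus an entry-wise small remainder, so that Lemma~\ref{inverse} (Clarke's Lipschitz inverse function theorem) applies. The only cosmetic differences are that you derive the determinant bound through the factorization $\det(-B(y)+E)=(-1)^d\det B(y)\det(I-A(y)E)$ rather than by directly expanding $\det M=(-1)^d\det B(y)+R$ as the paper does, and you obtain the lower bound $|vM_0^T|\ge2\beta$ by explicitly computing $M_0^{-1}$ and its operator norm rather than by an unquantified assertion; both versions are correct and equivalent.
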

\begin{proof} In the proof we assume that constants $c$ may additionally depend on $\eta_4, \eta_5$. We  prove the statement for the map $\Psi_x$, only.
Since $|\Psi_x(w, y)|\le \sqrt{ d}(1+\eta_1 + \eta_4)(|w|+|x-y|)$ we have 

\begin{equation}\label{upper}|\Psi_x(w, y)|\le 1, \ \text{if} \ \ |w|+|x-y|\le \frac1 {\sqrt{ d}(1+\eta_1 + \eta_4)}.\end{equation}

Next, we observe that $(w,y)$ almost surely
$$\frac{\partial \xi_k} {\partial y_l}= -b_{kl}(y)+ (x-y) \cdot \frac{\partial b_k} {\partial y_l}
+w\frac{\partial b^*_k} {\partial y_l}, \quad 1\le l,k\le d.$$
Since $|(y-x) \cdot\frac{\partial b_k} {\partial y_l}+w\frac{\partial b^*_k} {\partial y_l}|\le (\eta_3+\eta_5) (|y-x|+|w|)$,   it follows that 
$$J_{\Psi_x}(w,y)= (-1)^d det B(y)+ R(x,y,w), \quad   |R(x,y,w)|\le c (|y-x|+|w|), $$
with $c=c(d,\eta_1,\eta_3,\eta_5)$.
Since $|\det B(y)|=\frac1 {|\det A(y)|}\ge \frac1 {d!\eta_1^d}  $, we have for sufficiently small $\eps_1= \eps_1(\eta_1, \eta_3,\eta_4,\eta_5,d)$, $(w,y)$ almost surely
\begin{equation}\label{Jacobian}J_{\Psi_x}(w,y)=  (-1)^d\kappa(x,y,w) det B(y),\ |y-x|\le \varepsilon_1, \ |w|\le \varepsilon_1,\end{equation}
where $\frac12\le \kappa\le 2.$ 

Let ${\mathcal J}_{\Psi_x}(w,y)$ be the Jacobi matrix for the map $\Psi_x$ which is defined $(w,y)$ almost surely. Let 
$\partial \Psi_x(w,y)$ denote the generalized Jacobian  of $\Psi_x$ at the point $(w,y)$. Then from the form of  ${\mathcal J}_{\Psi_x}$ it is clear that every  matrix ${\mathcal M}\in \partial \Psi_x(w,y)$ can be written as
$${\mathcal M}=  {\mathcal B}(x)+ {\mathcal R},  $$
where the coordinates ${\mathcal B}_{kl}(x), 0\le k,l\le d$ of the matrix $ {\mathcal B}(x)$   are
\begin{align*}{\mathcal B}_{kl}(x)&=  -b_{kl}(x), \,\,\, k,l\ge 1 , \\
{\mathcal B}_{00}(x)&=1;\ {\mathcal B}_{0l}(x)=0;\ {\mathcal B}_{l0}(x) = b_l^*(x,x),  \,\,\, 1\le l\le d,\end{align*}  while all the entries of ${\mathcal R}$ satisfy   $|{\mathcal R}_{kl}|\le c\sqrt{ |w|^2 +|x-y|^2}$ with $c= c(\eta_3, \eta_5)$.

Now,  for every $(u,z), u\in \R, z \in \R^{d}: |u|^2 +|z|^2=1 $ we have 
$$|(u,z){\mathcal B}(x)^T|\ge 2\beta>0,$$
with $\beta = \beta(d,\eta_1,\eta_4)$. 
Since $||{\mathcal R}||_\infty\le c\sqrt{ |w|^2 +|x-y|^2}$ we can apply 
Lemma \ref{inverse} with $n=d+1$ to show on the set  $\{(w,y);\sqrt{ |w|^2 +|x-y|^2}\le \beta/(c(d+1))\}$ the map $\Psi_x$ is injective. This fact, combined with (\ref{upper}) and (\ref{Jacobian}), completes the proof if we choose  
$\eps_0= \eps_1 \wedge \frac1 {2\sqrt{ d}(1+\eta_1 + \eta_4)}\wedge \frac{\beta}{2(d+1)c}$.
\end{proof}

\begin{remark} \label{intA1}Let for $x\in \R^d$, $\tilde{\Psi}_x$ be the map $\R^{d}\mapsto  \R^{d}$ given by 
$$\tilde{\Psi}_x( y)= (\xi_1, \dots, \xi_d)\in \R^{d}, \quad  y\in \R^{d}, $$
where  $\xi_i=b_i(y)(x-y)$. Then using the same arguments as in the above proof we can find $\eps_0$ such  that all the assertions of Lemma \ref{intA} are true and additionally
$$2 |det B(y)| \ge |J_{\tilde{\Psi}_x}(y)|\ge (1/2)|det B(y)|,$$
for $ |x-y|\le \eps_0$, $y$ almost surely. Moreover,  the map $\tilde{\Psi}_x$ is injective on $B(x, \eps_0)$. We can also find $\delta_1= \delta_1(\eta_1, \eta_3, \eta_4, \eta_5, d)>0$ and  $\delta_2= \delta_1(\eta_1, \eta_3, \eta_4, \eta_5, d)>0$ such that the $\tilde{\Psi}_x$ image of the ball $B(x, \delta_1)$  contains $B(0, \delta_2)$. To this end we apply the last assertion of Lemma \ref{inverse}.
\end{remark}

Let $b^*_i(x,y)$ be the functions introduced in Lemma \ref{intA}.
We will use the following abbreviations
\begin{eqnarray*}
&& z_i=B_i(x,y) = b_i(y)(x-y) = b_{i1}(y)  (x_1-y_1) + ... + b_{id}(y) (x_d-y_d),\\
&&b^*_i= b^*_i(x,y),\\
&&b^*_{i0}= b^*_i(x,x). 
\end{eqnarray*}
Let for $l=1,...,d$, 
\begin{equation*}
\text{A}_l = {A}_l(x,y)= \int_{\R}  \prod_{i\ne l} g_t(z_i +b^*_{i}w) \left|g_t(z_{l} +b^*_{l}w) - g_t(z_{l} +b^*_{l0}w)\right|
  \mu(w) \, dw.
\end{equation*}
 
\begin{corollary} Assume that $2\delta< \eps_0$, where $\eps_0$ is from Lemma \ref{intA}. With the assumptions of Lemma \ref{intA} we have for $t\le \tau$,
\label{intAl}
$$\int_{|y-x|\le \eps_0}\text{A}_ldy\le ct^{-1/2}, \quad x \in \R^d,$$
and 
$$\int_{|y-x|\le \eps_0}\text{A}_ldx\le ct^{-1/2}, \quad y \in \R^d,$$
where $c = c(\tau, \alpha, d, \eta_1, \eta_2, \eta_3, \eta_4, \eta_5, \eps, \delta)$.
\end{corollary}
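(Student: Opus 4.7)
The plan is to reduce the estimate to an application of Lemma \ref{levy1} with $a=|\xi|$ after changing variables $y\mapsto\xi$ via the map supplied by Lemma \ref{intA}. I will detail the first inequality; the second follows symmetrically using $\Phi_y$ instead of $\Psi_x$, and the Jacobian bound for $\Phi_y$ is the same as for $\Psi_x$.

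The first step will be to combine the Lipschitz estimate \eqref{gdelta3}, the pointwise bound \eqref{gdelta1} $g_t\le c h_t$, and the Lipschitz condition $|b^*_l-b^*_{l0}|\le\eta_5|x-y|$ from \eqref{Lipschitz1} (applied with $(\overline{x},\overline{y})=(x,x)$) to produce the key pointwise estimate
$$|g_t(z_l+b^*_l w)-g_t(z_l+b^*_{l0}w)|\le c\bigl[h_t(z_l+b^*_l w)+h_t(z_l+b^*_{l0}w)\bigr]\min\Bigl(1,\tfrac{|x-y|\,|w|}{t^{1/\alpha}}\Bigr).$$
The minimum arises from using \eqref{gdelta3} in one regime and the trivial bound in the other, together with $t^{1/\alpha}+|\cdot|\ge t^{1/\alpha}$. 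Combined with $g_t(z_i+b^*_i w)\le c h_t(z_i+b^*_i w)$ for $i\ne l$, this majorizes the integrand of $\text{A}_l$ by a symmetric sum of expressions of the form $\prod_{i=1}^d h_t(z_i+\tilde b_i w)\cdot\min(1,|x-y|\,|w|/t^{1/\alpha})$, where each $\tilde b_i$ is either $b^*_i$ or $b^*_{i0}$.

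Next, fix $w$ with $|w|\le 2\delta<\eps_0$ and change variables $y\mapsto\xi=(z_i+b^*_i w)_{i=1}^d$. By Lemma \ref{intA} this map is injective on $\{|y-x|\le\eps_0\}$ with Jacobian comparable to $|\det B(y)|$, hence bounded above and below by constants depending only on $\eta_1,\eta_2,d$. Since $z=\xi-b^* w$ and $|x-y|\le c|z|$ (from $x-y=A(y)z^T$ and the bound $|a_{ij}|\le\eta_1$), one gets $|x-y|\le c(|\xi|+|w|)$. After the change of variables and enlargement of the $\xi$-domain to $\R^d$,
$$\int_{|y-x|\le\eps_0}\prod_i h_t(z_i+b^*_i w)\min\Bigl(1,\tfrac{|x-y|\,|w|}{t^{1/\alpha}}\Bigr)dy\le c\int_{\R^d}\prod_i h_t(\xi_i)\min\Bigl(1,\tfrac{(|\xi|+|w|)|w|}{t^{1/\alpha}}\Bigr)d\xi.$$
Now apply Fubini in $\mu(w)dw\otimes d\xi$ and invoke Lemma \ref{levy1} with $a=|\xi|$, bounding the inner $w$-integral by $C(t^{1/2}+|\xi|^\alpha)/t$. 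The $t^{1/2}/t$ piece gives $c t^{-1/2}\int\prod_i h_t(\xi_i)d\xi\le c t^{-1/2}$, using $\int_\R h_t(\xi)d\xi\le c$. For the $|\xi|^\alpha/t$ piece I use $|\xi|^\alpha\le c\sum_i|\xi_i|^\alpha$ (valid since $\alpha<1$) together with the one-dimensional estimate $\int_\R|\xi|^\alpha h_t(\xi)d\xi\le c t(1+\log(1/t))$ (by splitting at the scales $t^{1/\alpha}$ and $\eps$ and using the definition of $h_t$); this yields a further contribution bounded by $c(1+\log(1/t))\le c t^{-1/2}$ for $t\in(0,\tau]$. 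Summation delivers $\int_{|y-x|\le\eps_0}\text{A}_l\,dy\le c t^{-1/2}$.

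The main obstacle is producing the $\min(1,|x-y|\,|w|/t^{1/\alpha})$ factor in step one. Using only \eqref{gdelta3} yields a contribution of order $t^{-1/\alpha}$, which exceeds $t^{-1/2}$ when $\alpha<1$; using only the pointwise bound $g_t\le ch_t$ yields a non-integrable $\int\mu(w)dw$ in the $w$ integration. The correct $t^{-1/2}$ scaling emerges only once the minimum is taken, because Lemma \ref{levy1} then absorbs the $w^2/t^{1/\alpha}$ contribution against the Lévy density $\mu$ (the $a=0$ case $\int\min(1,w^2/t^{1/\alpha})\mu(w)dw\le C t^{-1/2}$).
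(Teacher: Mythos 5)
Your argument is correct and runs along essentially the same lines as the paper's: the same key pointwise bound $|g_t(z_l+b^*_l w)-g_t(z_l+b^*_{l0}w)|\le c(h_t(\cdot)+h_t(\cdot))\min(1,|x-y||w|/t^{1/\alpha})$ obtained from (\ref{gdelta3}) plus the trivial bound, the change of variables supplied by Lemma~\ref{intA}, the inequality $|x-y|\le c(|\xi|+|w|)$, and the application of Lemma~\ref{levy1}. The only cosmetic differences are that the paper performs a genuine $(d+1)$-dimensional change $(w,y)\mapsto(w,\xi)$ and then uses the constraint $|\xi|\le 1$ together with the majorant $g_t^{(\infty)}$, whereas you slice at fixed $w$ and change only $y$ (valid because $\Psi_x$ preserves the $w$-coordinate and its Jacobian factors through the $y$-block) and enlarge to all of $\R^d$ using $h_t$, which you can afford because $h_t$ has exponential tails; your explicit $\int_\R|\xi|^\alpha h_t(\xi)\,d\xi\le ct(1+\log(1/t))$ and the absorption $\log(1/t)\le c(\tau)t^{-1/2}$ correctly flesh out the step the paper leaves implicit in its final display.
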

\begin{proof} In the proof we assume that constants $c$ may additionally depend on $\eta_4, \eta_5$.  For $x,y\in\Rd$ we get  $|b^*_{l}-b^*_{l0}|\le \eta_5 |x-y|$. Hence,  from (\ref{gdelta3}), we have for $w\in \R$, $1\le l\le d$,
\begin{eqnarray*} 
&&\left|g_t(z_{l} +b^*_{l}w) - g_t(z_{l} +b^*_{l0}w)\right|\le   c\left(\frac{|b^*_{l}-b^*_{l0}|| w|}{ t^{1/\alpha}}\wedge1\right)
(h_t(z_{l} +b^*_{l}w)+h_t(z_{l} +b^*_{l0}w))\\
&&  \quad \quad \quad \quad \quad \quad \quad \quad \quad \quad \quad
\le c \left(\frac{|x-y|| w|}{ t^{1/\alpha}}\wedge1\right)
(g^{(\infty)}_t(z_{l} +b^*_{l}w)+g^{(\infty)}_t(z_{l} +b^*_{l0}w)).
\end{eqnarray*}
This implies that 
   $$\text{A}_{l}\le c(\text{A}^1_{l}+\text{A}^2_{l}),$$
where 
$$
\text{A}^1_{l}=   \int_{\R} \left( \prod_{i=1}^{d}g^{(\infty)}_t(z_i + b^*_{i}w)\right) \left(\frac{|x-y|| w|}{ t^{1/\alpha}}\wedge1\right)\mu(w) \, dw $$ and
$$
\text{A}^2_{l}=   \int_{\R} \left( \prod_{i=1}^{d} g^{(\infty)}_t(z_i + \hat{b}_{i}w)\right) \left(\frac{|x-y|| w|}{ t^{1/\alpha}}\wedge1\right)\mu(w) \, dw ,$$  
with $\hat{b}_i= b^*_{i}, i\ne l$ and  $\hat{b}_l= b^*_{l0}$. Note that the functions $\hat{b}_i= \hat{b}_i(x,y) $ have the same properties (\ref{bounded1},  \ref{Lipschitz1}) as $b^*_{i}$.
To evaluate the integral $\int_{|x-y|\le \eps_0}\text{A}^1_{l}dy$ we introduce new variables in $\R^{d+1}$, given by  $(w,\xi)=\Psi_x(w, y)$, where $   \xi_i=z_i + b^*_iw, i=1,\dots,d$ ( or $\xi_i=z_i + \hat{b}_iw$ if $\text{A}^2_{l}$ is treated). Note that the vector $\xi= (\xi_1, \dots, \xi_d) $ can be written as
$$\xi= (x-y)B(y)^T+ w b^*,$$
where $b^* = (b^*_1, \dots, b^*_d)$, hence
$$(\xi-w b^*)A(y)^T= x-y.$$ From this we infer that
$$|w||x-y|\le c(|\xi|+|w|)|w|.$$

Let $Q_x=\{(w,y):|y-x|\le \eps_0, \ |w|\le \eps_0\}$.  Due to Lemma \ref{intA}, almost surely on $Q_x$, the absolute value of the  Jacobian determinant  of the map $\Psi_x$ is bounded from below and above by two positive constants and $\Psi_x$ is an injective transformation. Let $V_x= \Psi_x(Q_x)$. Observing that the support of the measure $\mu$ is contained in $[-\eps_0, \eps_0]$ and then  applying the above change of variables, we have 
\begin{eqnarray*}\int_{|y-x|\le \eps_0}\text{A}^1_ldy&\le& 
c \int_{|y-x|\le \eps_0} \int_{\R} \left(\prod_{i=1}^{d} g^{(\infty)}_t(\xi_i )\right)
 \left(\frac{(|\xi|+|w|)|w|}{ t^{1/\alpha}}\wedge1\right)\mu(w) \, dw \, d y \\ 
&\le& c \int_{|y-x|\le \eps_0} \int_{\R} \left(\prod_{i=1}^{d} g^{(\infty)}_t(\xi_i )\right)
 \left(\frac{(|\xi|+|w|)|w|}{ t^{1/\alpha}}\wedge1\right)\\
&& \times \,\, \mu(w)|J_{\Psi_x}(w,y)| \, dw \, d y \\ 
&=& c \int_{V_x} \prod_{i=1}^{d} g^{(\infty)}_t(\xi_i ) \left(\frac{(|\xi|+|w|)|w|}{ t^{1/\alpha}}\wedge1\right)\mu(w) \, dw \, d \xi\end{eqnarray*}
where the last equality follows from the general  change of variable formula  
for injective  Lipschitz maps (see e.g. \cite[Theorem 3]{H1993}). 
Since $|\xi|\le 1$ for $(w,\xi)\in V_x $,   we get 
$$\int_{|y-x|\le \eps_0}\text{A}^1_ldy\le c \int_{|\xi|\le1} \prod_{i=1}^{d} g^{(\infty)}_t(\xi_i )\int_{\R} \left(\frac{(|\xi|+|w|)|w|}{ t^{1/\alpha}}\wedge1\right)\mu(w) \, dw \, d \xi.   $$
Applying Lemma \ref{levy1} we have 
$$\int_{\R} \left(\frac{(|\xi|+|w|)|w|}{ t^{1/\alpha}}\wedge1\right)\mu(w) \, dw\le c \frac {t^{1/2}+|\xi|^{\alpha}}t.
$$
 Finally,

$$\int_{|y-x|\le \eps_0}\text{A}^1_{l}dy\le c  \int_{|\xi|\le1} \prod_{i=1}^{d} g^{(\infty)}_t(\xi_i )\frac {t^{1/2}+|\xi|^{\alpha}}t \, d \xi  \le c t^{-1/2}. $$
Similarly we obtain 
$$\int_{|y-x|\le \eps_0}\text{A}^2_{l}dy  \le c t^{-1/2}, $$
which completes the proof of the first bound. To estimate $\int_{|y-x|\le \eps_0}\text{A}_ldx$ we proceed exactly in the same way.
\end{proof}

For fixed $l \in \{1,\ldots,d\}$ let us consider a family  of functions $b_i^*(x,y)= b_{i}(y) a_{l}(x), i \in \{1,\dots,d\}$. They satisfy the conditions (\ref{bounded1}) and (\ref{Lipschitz1}) with $\eta_4= d\eta_1^2$ and 
$\eta_5= d\eta_1\eta_3$. Let  $\eps_0=\eps_0(\eta_1,\eta_3,\eta_4,\eta_5,d)$ be as found in Lemma \ref{intA} and Remark \ref{intA}. 
Finally we choose $\eps=\eps(\eta_1,\eta_3,d)= \frac{\eps_0}{4d^{3/2}(\eta_1\vee1)} $. From now on we keep 
$\eps_0, \eps$ fixed as above. Recall that if we fixed $\eps$ we fix $\delta$ according to Lemma \ref{gtht}.

\begin{proposition}
\label{integralq0}
 For any $x,y \in \R^d$, $t \in (0,\tau]$  we have 
\begin{equation}\label{global}
|q_0(t,x,y)| \le    c t^{-1-{(d-1)}/\alpha}h_t\left((\eps/\eps_0)|x-y|\mathbf{1}_{[\eps_0, \infty)}(|x-y|)\right).
\end{equation}
In particular for  $x,y \in \R^d$, $t \in (0,\tau]$, $|y-x|\ge \eps_0$ we have\begin{equation*}
|q_0(t,x,y)| \le    c e^{(-\eps/\eps_0)|x-y|}.
\end{equation*}
For any $t \in (0,\tau]$, $x \in \R^d$ we have
\begin{equation}\label{q0int}
\int_{\R^d} |q_0(t,x,y)| \, dy \le c t^{- 1/2}.
\end{equation}
For any $t \in (0,\tau]$, $y \in \R^d$ we have

\begin{equation}\label{q0intx}
\int_{\R^d} |q_0(t,y,x)| \, dx \le c t^{ - 1/2}.
\end{equation}

\end{proposition}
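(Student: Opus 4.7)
The plan is to start by substituting the definition of $p_y$ into $q_0$ and using the identity $b_j(y)a_l(y) = \delta_{jl}$ (which holds because $B(y)A(y)=I$) to rewrite
$$
q_0(t,x,y) = \det B(y) \sum_{l=1}^d \int_\R \left[\prod_{j=1}^d g_t(z_j + b_j(y)a_l(x)w) - \prod_{j=1}^d g_t(z_j + \delta_{jl}w)\right]\mu(w)\,dw,
$$
where $z_j = b_j(y)(x-y)$. For each fixed $l$ I set $b^*_j(x,y) = b_j(y)a_l(x)$, so that $b^*_j(x,x)=\delta_{jl}$ and the family $\{b^*_j\}$ satisfies (\ref{bounded1}) and (\ref{Lipschitz1}) with $\eta_4=d\eta_1^2$, $\eta_5=d\eta_1\eta_3$. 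The next step is a $d$-fold telescoping of each inner difference, producing $d$ terms $T^{(l)}_k$ in which the $k$-th factor is a difference $g_t(z_k+b^*_k w)-g_t(z_k+\delta_{kl}w)$ and the remaining $d-1$ factors are $g_t(z_j+\hat b_j w)$ with $\hat b_j=b^*_j$ for $j<k$ and $\hat b_j=\delta_{jl}$ for $j>k$. To each difference factor I apply the Lipschitz bound (\ref{gdelta3}) and to each pure factor the pointwise bound (\ref{gdelta1}).

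For the global pointwise bound (\ref{global}), I split at $|x-y|=\eps_0$. If $|x-y|<\eps_0$, every $g^{(\infty)}_t$-factor is at most $ct^{-1/\alpha}$ and Lemma \ref{levy1} yields $\int_\R|T^{(l)}_k|\mu(w)\,dw\le ct^{-1-d/\alpha}\eps_0^\alpha$, which matches $ct^{-1-(d-1)/\alpha}h_t(0)$. If $|x-y|\ge\eps_0$, Lemma \ref{b1b2} provides $j_0$ with $|z_{j_0}|\ge|x-y|/(\eta_1 d^{3/2})$. The choice $\eps=\eps_0/(4d^{3/2}(\eta_1\vee 1))$ together with $\delta\le\eps/(4d(\eta_1\vee 1)^2)$ ensures that for $|w|\le 2\delta$ the shift of $z_{j_0}$ is at most $\eps/2$, so $|z_{j_0}+\text{shift}|\ge 2(\eps/\eps_0)|x-y|\ge\eps$. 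The exponential branch of $h_t$ then gives $g_t(z_{j_0}+\cdot)\le ct^{1+(d-1)/\alpha}e^{-(\eps/\eps_0)|x-y|}$. I split into two subcases: when $j_0\neq k$, combining this with (\ref{gdelta3}) on the difference factor, the $t^{-1/\alpha}$ bound on the remaining factors, and Lemma \ref{levy1}, yields an exponentially decaying bound; when $j_0=k$, both endpoints of the difference have argument of magnitude $\ge c|x-y|$, so (\ref{gdelta3}) directly produces a factor $c|w|t^{1+(d-1)/\alpha}e^{-(\eps/\eps_0)|x-y|}$, whose $\mu$-integral is finite because $\int|w|\mu(w)\,dw<\infty$ (here we use $\alpha<1$). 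Summing over $l,k$ gives $|q_0(t,x,y)|\le ce^{-(\eps/\eps_0)|x-y|}$, equivalently $ct^{-1-(d-1)/\alpha}h_t((\eps/\eps_0)|x-y|)$ on the large-distance regime.

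For the integral bound (\ref{q0int}), I split at $|y-x|=\eps_0$. The tail contributes $\le c\le ct^{-1/2}$ by the exponential bound just proved. On $\{|y-x|\le\eps_0\}$, I sum over $(l,k)$ and apply the change of variables $y\mapsto\xi=(z_j+\hat b_j w)_{j=1}^d$ from Lemma \ref{intA} and Remark \ref{intA1} to each term: because every $\hat b_j$ is either $b_j(y)a_l(x)$ (Lipschitz) or a constant $\delta_{jl}$, the Jacobian analysis of Lemma \ref{intA} applies and yields a Jacobian comparable to $|\det B(y)|$, bounded below by a positive constant. Using $|x-y|\le c(|\xi|+|w|)$ after the change of variables and Lemma \ref{levy1}, the double integral reduces, as in the proof of Corollary \ref{intAl}, to
$$
c\int_{|\xi|\le 1}\prod_{j=1}^d g^{(\infty)}_t(\xi_j)\,\frac{t^{1/2}+|\xi|^\alpha}{t}\,d\xi\le ct^{-1/2}.
$$
The symmetric assertion (\ref{q0intx}) follows in the same way using the $\Phi_y$ part of Lemma \ref{intA}. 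The main obstacle is ensuring that the change-of-variables argument covers all $d$ telescoping terms uniformly: each $T^{(l)}_k$ involves its own mixture of $b_j(y)a_l(x)$'s and $\delta_{jl}$'s in the auxiliary factors, so one must verify that every such mixed family keeps the Jacobian of the associated map $\Psi_x$ non-degenerate and bounded. Once this is checked, the sum over $l,k$ contributes only a bounded constant.
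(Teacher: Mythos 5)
Your proposal is correct and follows essentially the same route as the paper: decompose $q_0$ componentwise, telescope the product difference, use Lemma~\ref{integralgt}/(\ref{gdelta3}) per factor for the pointwise bound, and apply the Lipschitz change of variables of Lemma~\ref{intA}/Corollary~\ref{intAl} together with Lemma~\ref{levy1} for the $dy$- and $dx$-integrals. The only place you depart slightly is in the derivation of the exponential decay in~(\ref{global}): you run a direct case analysis on whether the large coordinate $z_{j_0}$ coincides with the difference position $k$, using the exponential branch of $h_t$ in both subcases, whereas the paper reduces to $R_1$, observes that the decomposition $R_1=\sum_l \text{I}_l$ is invariant under permutations of $z_2,\dots,z_d$ to capture $\max_{2\le i\le d}|z_i|$, and then introduces the dual telescoping $\sum_l \text{I}'_l$ to also capture $|z_1|$; your variant is more direct and avoids the permutation-invariance device at the cost of a second subcase, and both correctly land on $e^{-(\eps/\eps_0)|x-y|}$ given the paper's choice of $\eps,\delta$. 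Your flagged obstacle — checking that every mixed family $\hat b_j$ (Lipschitz functions for $j<k$, constants $\delta_{jl}$ for $j>k$) keeps the map $\Psi_x$ nondegenerate — is precisely what the paper verifies when it applies Corollary~\ref{intAl} with $b^*_i=k_i\mathbf{1}_{\{i\le l\}}$; constants satisfy (\ref{bounded1})--(\ref{Lipschitz1}) trivially, so this is not a gap.
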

\begin{proof}
We have
\begin{eqnarray*}
 q_0(t,x,y) &=&  \sum_{i = 1}^d  \lim_{\zeta \to 0^+}  \int_{|w| > \zeta} \left[p_y(t,x-y + a_i(x) w) 
- p_y(t,x-y + a_i(y) w)\right] \, \mu(w) \, dw.
\end{eqnarray*}
For $i = 1, \ldots, d$ we put 
\begin{eqnarray} \label{term1}
   R_i &=& \lim_{\zeta \to 0^+}  \int_{|w| > \zeta} \left[p_y(t,x-y + a_i(x) w)- p_y(t,x-y + a_i(y) w)\right] \, \mu(w) \, dw.
\end{eqnarray}
We have $q_0(t,x,y) =  R_1 + \ldots + R_d$. It is clear that it is enough to handle $R_1$ alone.
Note that 
\begin{eqnarray}
\nonumber
&&   R_1 = \det(B(y)) \lim_{\zeta \to 0^+}  \int_{|w| > \zeta} \left[G_t\left((x-y + w e_1 (A(x))^T) (B(y))^T\right) \right.\\
\label{formulaR1}
&& \quad\quad\quad\quad\quad\quad\quad\quad\quad\quad  -\left. G_t\left((x-y + w e_1 (A(y))^T) (B(y))^T\right)\right] \, \mu(w) \, dw.
\end{eqnarray}

We will use the following abbreviations
\begin{eqnarray*}
&& z_i=B_i(x,y) = b_i(y)(x-y) = b_{i1}(y)  (x_1-y_1) + ... + b_{id}(y) (x_d-y_d),\\
&&k_i= \tbfs{i} = b_{i}(y) a_{1}(x),\\ 
&&k_{i0}=\tilde{b}_{i1}(x,x). \end{eqnarray*}
Note that $k_{10}=1$ and $ k_{i0}=0,\ 2\le i\le d$.

We can rewrite (\ref{formulaR1}) as
\begin{eqnarray*}
   R_1&=& \det(B(y)) \lim_{\zeta \to 0^+}  \int_{|w| > \zeta} \left[\prod_{i=1}^{d} g_t(z_i + k_i w)-\prod_{i=1}^{d} g_t(z_i + k_{i0} w)\right]\, \mu(w) \, dw.
\end{eqnarray*}
%
Hence,
$$
R_1 = \det(B(y))(\text{I}_1 +...+\text{I}_{d})= \det(B(y))(\text{I}^\prime_1 +...+\text{I}^\prime_{d}),
$$
where
\begin{eqnarray*}
\text{I}_l &=& \int_{\R} \left( \prod_{i=1}^{l-1} g_t(z_i + k_i w)\right) \left[g_t(z_{l} + k_{l} w) - g_t(z_{l}+k_{l0}w)\right]
 \left( \prod_{i=l+1}^d  g_t(z_i)\right) \mu(w) \, dw,\\
\text{I}^\prime_l &=& \int_{\R} \left( \prod_{i=1}^{l-1} g_t(z_i + k_{i0} w)\right) \left[g_t(z_{l} + k_{l} w) - g_t(z_{l}+k_{l0}w)\right]\\
&& \times \left( \prod_{i=l+1}^d  g_t(z_i + k_i w)\right) \mu(w) \, dw
\end{eqnarray*}
for $l=1,...,d$ (with convention that $ \prod_{i=1}^{0} =1=\prod_{i=d+1}^d$).


We start with the proof of the bound of $q_0$.
%
We observe that $|k_l|, |k_{l0}| \le d \eta_1^2 \le \eps/ (4\delta)$, $l \in \{1,\ldots,d\}$.   
 By Lemmas \ref{gtht} and  \ref{integralgt}, we obtain for $l\le d-1$,
\begin{eqnarray*}
|\text{I}_{l}| &\le& g_t(z_d) g^{d-2}_t(0) \int_{\R}  \left|g_t(z_{l} + k_{l} w) - g_t(z_{l}+k_{l0} w)\right| \mu(w)\, dw\\&\le& c (|k_{l}|+|k_{l0}|)^\alpha g_t(z_d) g^{d-2}_t(0) \frac  {h_t(0)}t\\&\le& c t^{-1-(d-1)/\alpha}h_t(z_d).
\end{eqnarray*}
The same argument leads to 
\begin{eqnarray*}
|\text{I}_{d}| &\le & g^{d-1}_t(0) \int_{\R}  \left|g_t(z_{d} + k_{d} w) - g_t(z_{d})\right| \mu(w)\, dw
\\&\le&
c g^{d-1}_t(0)  |k_{d}|^\alpha  \frac  {h_t(z_d/2)}t
\\&\le& c t^{-1-{(d-1)}/\alpha}h_t(z_d/2).
\end{eqnarray*}
The above inequalities yield 
\begin{equation*}
|R_1| \le c t^{-1-{(d-1)}/\alpha}h_t(z_d/2).
\end{equation*}
Since $R_1$ is invariant with respect to permutations of $z_2, \dots, z_d$ we infer that 

\begin{equation}\label{R1est}
|R_1| \le c t^{-1-{(d-1)}/\alpha}\inf_{2\le i\le d}h_t(z_i/2)= c t^{-1-{(d-1)}/\alpha}h_t\left(\max_{2\le i\le d}|z_i|/2\right) .
\end{equation}
On the other hand,  again by Lemma \ref{integralgt},
\begin{eqnarray*}
|\text{I}^{\prime}_{1}| &\le&  g^{d-1}_t(0) \int_{\R}  \left|g_t(z_{1} + k_{1} w) - g_t(z_{1}+ w)\right| \mu(w)\, dw\\&\le& c (|k_{1}|+1)^\alpha g^{d-1}_t(0) \frac  {h_t(z_1/2)}t\\&\le& c t^{-1-(d-1)/\alpha}h_t(z_1/2).
\end{eqnarray*}
For $l\ge 2$, a similar argument leads to 
\begin{eqnarray*}
|\text{I}^{\prime}_{l}| &\le& \sup_{|w|\le 2\delta} g_t(z_1+w) g^{d-2}_t(0) \int_{\R}  \left|g_t(z_{l} + k_{l} w) - g_t(z_{l})\right| \mu(w)\, dw\\&\le& c \sup_{|w|\le 2\delta} g_t(z_1+w)  g^{d-2}_t(0) \frac  {h_t(0)}t\\&\le& c t^{-1-(d-1)/\alpha}\sup_{|w|\le 2\delta}g_t(z_1+w).
\end{eqnarray*}
Observing that $|z_1+w|\ge |z_1|/2$ for $|w|\le 2\delta\le 4\delta\le |z_1|$ we conclude that
\begin{equation}\label{R1est2}
|R_1| \le c t^{-1-{(d-1)}/\alpha}h_t\left(\frac {|z_1|}2\mathbf{1}_{[ 4\delta, \infty)}(|z_1|)\right).
\end{equation}
Combining  (\ref{R1est}) and (\ref{R1est2}) we arrive at
\begin{equation*}
|R_1| \le c t^{-1-{(d-1)}/\alpha}h_t\left(\frac{\max_{1\le i\le d}|z_i|}2\mathbf{1}_{[ 4\delta, \infty)}(\max_{1\le i\le d}|z_i|)\right).
\end{equation*}
By Lemma \ref{b1b2}, $\max_{1\le i\le d}|z_i|\ge \frac1{\eta_1d^{3/2}}|x-y|$, and by the choice of $\eps, \eps_0, \delta$ we have $\frac{1}{2\eta_1d^{3/2}} \ge \eps/\eps_0$ and $4\delta\eta_1d^{3/2}\le \eps_0$,  hence
\begin{eqnarray*}
|R_1| &\le& c t^{-1-{(d-1)}/\alpha}h_t\left(\frac{|x-y|}{2\eta_1d^{3/2}}\mathbf{1}_{[ 4\delta\eta_1d^{3/2}, \infty)}(|x-y|)\right)\\
&\le& c t^{-1-{(d-1)}/\alpha}h_t\left((\eps/\eps_0)|x-y|\mathbf{1}_{[ \eps_0, \infty)} (|x-y|)\right).
\end{eqnarray*}
Finally, for  $|y-x|\ge \eps_0  $ we have 
\begin{equation*}
|R_1| \le  ce^{-(\eps/\eps_0)|x-y|}.
\end{equation*}

Next, we prove the bound of the integral (\ref{q0int}). Let $x \in \R^d$ be fixed. 
Applying Lemma \ref{intAl} with $b^*_i= k_i \mathbf{1}_{\{i\le l\}}$ we have that 

$$\int_{|y-x|\le \eps_0}|\text{I}_{l}|dy \le c t^{-1/2}. $$

Hence $$\int_{|x-y|\le \eps_0} |q_0(t,x,y)| \, dy \le c t^{- 1/2}.$$ 

For $|y-x|\ge \eps_0$ we have $|q_0(t,x,y)| \le    ce^{-(\eps/\eps_0)|x-y|}$ which implies that 
$$\int_{|x-y|\ge \eps_0} |q_0(t,x,y)| \, dy \le c. $$ This completes the proof of (\ref{q0int}). The estimate (\ref{q0intx}) is proved exactly in the same way.
\end{proof}

Using similar arguments as in the proof of Proposition \ref{integralq0} we obtain the following result.
\begin{proposition}
\label{pyintegral}
For any $t \in (0,\tau]$, $x \in \R^d$ we have
\begin{equation} \label{pyintegral1}
\int_{\R^d} p_y(t,x-y) \, dy \le c,
\end{equation}
\begin{equation} \label{ryintegral1}
\int_{\R^d} r_y(t,(x-y)/2) \, dy \le c.
\end{equation}

For any  $\delta_1 > 0$, 
\begin{equation}
\label{supsup}
\lim_{t\to 0^+} \sup_{x \in \R^d} \int_{B^c(x,\delta_1)} p_y(t,x-y) \, dy =0.
\end{equation}

We have
\begin{equation}
\label{limp_y}
\lim_{t \to 0^+} \int_{\R^d} p_y(t,x-y) \, dy = 1,
\end{equation}
uniformly with respect to $x \in \R^d$.
\end{proposition}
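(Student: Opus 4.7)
The plan is to deduce all four bounds from a single change-of-variable argument, the one already used in Corollary \ref{intAl} and the proof of Proposition \ref{integralq0}, combined with a split of the integration domain into the near region $\{|y-x|\le \eps_0\}$ and the far region $\{|y-x|>\eps_0\}$. On the far region Corollary \ref{estimate_pytx} together with $g_t\le c h_t$ (Lemma \ref{gtht}) yields the exponential bound $p_y(t,x-y)\le c t e^{-c|x-y|}$, whose integral is at most $ct$; an identical bound holds for $r_y(t,(x-y)/2)$. On the near region the substitution $\xi=\tilde\Psi_x(y)=(x-y)B(y)^T$ from Remark \ref{intA1} is a bi-Lipschitz bijection with image contained in $\{|\xi|\le 1\}$ and Jacobian determinant comparable to $\det B(y)$, so that
$$\int_{|y-x|\le \eps_0} p_y(t,x-y)\,dy=\int_{\tilde\Psi_x(B(x,\eps_0))}\frac{\det B(y(\xi))}{|J_{\tilde\Psi_x}(y(\xi))|}\prod_{i=1}^d g_t(\xi_i)\,d\xi.$$

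For \eqref{pyintegral1} this gives a bound by $c\int_{\R^d}\prod g_t(\xi_i)\,d\xi=c$, and \eqref{ryintegral1} follows identically using $\int_\R h_t(\xi)\,d\xi\le c$ (a direct computation from the definition of $h_t$) in place of the probabilistic normalisation of $g_t$. For \eqref{supsup} one may assume $\delta_1\le \eps_0$. On $\{\delta_1<|y-x|\le \eps_0\}$, the elementary lower bound $|\xi|=|(x-y)B(y)^T|\ge |x-y|/(\eta_1 d)$ (deduced from $|uA(y)^T|\le \eta_1 d|u|$, as in Lemma \ref{b1b2}) confines the new variable to $|\xi|\ge \delta_1/(\eta_1 d)$. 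Hence the near contribution is at most $c\int_{|\xi|\ge c\delta_1}\prod g_t(\xi_i)\,d\xi$, which tends to $0$ uniformly in $x$ because the bound $g_t(\xi)\le c t/(|\xi|+t^{1/\alpha})^{1+\alpha}$ from Lemma \ref{gtht} gives $\int_{|\xi|\ge r}g_t(\xi)\,d\xi\le ct/r^\alpha\to 0$ as $t\to 0$.

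For \eqref{limp_y}, in view of \eqref{supsup} it is enough to show $\int_{B(x,\delta_1)}p_y(t,x-y)\,dy\to 1$ uniformly in $x$ as $t\to 0$, for each fixed small $\delta_1$. The expansion $J_{\tilde\Psi_x}(y)=(-1)^d\det B(y)+O(|x-y|)$ derived in the proof of Lemma \ref{intA} (and valid verbatim for $\tilde\Psi_x$ via Remark \ref{intA1}) sharpens the Jacobian ratio to $1+O(|x-y|)=1+O(\delta_1)$ uniformly on $B(x,\delta_1)$, so
$$\int_{B(x,\delta_1)}p_y(t,x-y)\,dy=(1+O(\delta_1))\int_{\tilde\Psi_x(B(x,\delta_1))}\prod_{i=1}^d g_t(\xi_i)\,d\xi.$$
The bi-Lipschitz lower bound from Lemma \ref{inverse} (applied to $\tilde\Psi_x$ at $x$, using $-B(x)^T$ as the linearisation) combined with a degree/Jordan-Brouwer argument gives $\tilde\Psi_x(B(x,\delta_1))\supset B(0,c\delta_1)$ with $c$ independent of $x$, and the concentration estimate of the previous paragraph then shows that the inner integral tends to $1$ uniformly in $x$ as $t\to 0$. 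A standard $\varepsilon$-$\delta$ argument — first pick $\delta_1$ small to absorb the $O(\delta_1)$ error from the Jacobian ratio, then $t$ small to make both the tail integral and the deficit of $\int\prod g_t$ over the image small — finishes the proof. The main delicate point is uniformity in $x$, but all constants entering the argument (the radius $\eps_0$, the bi-Lipschitz constants of $\tilde\Psi_x$, and the linear expansion of $J_{\tilde\Psi_x}$) depend only on $\eta_1,\eta_2,\eta_3,d$, so this uniformity is automatic.
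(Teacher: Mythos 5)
Your proposal is correct and follows essentially the same route as the paper: the same substitution $\tilde\Psi_x(y)=(x-y)B(y)^T$ with the Jacobian bounds from Remark \ref{intA1}, the same near/far split using (\ref{htht2}) for the tail, and the same device of expanding $J_{\tilde\Psi_x}$ as $\pm\det B(y)+O(|x-y|)$ together with the containment $B(0,\delta_2)\subset\tilde\Psi_x(B(x,\delta_1))$ for the uniform limit in (\ref{limp_y}). The only cosmetic difference is in the bookkeeping for (\ref{limp_y}): you absorb the Jacobian discrepancy into a multiplicative $1+O(\delta_1)$ factor to be killed by choosing $\delta_1$ first, whereas the paper writes $\det B=|J_{\tilde\Psi_x}|-R$ and bounds the remainder term by $c\int_{|u|\le d\eta_1\delta_1}|u|G_t(u)\,du\to 0$ as $t\to0^+$ with $\delta_1$ fixed; both orders of quantifiers work.
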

\begin{proof}
For fixed $x \in \R^d$ we introduce new variables $u= \tilde{\Psi}_x(y)$ given by 

$$u=  (x-y)B(y)^T.  $$
Note that 
\begin{equation} \label{normy}\frac1 {d\eta_1}|x-y|\le |u|=  |(x-y)B(y)^T|\le d\eta_1|x-y|.  \end{equation}
For $r>0$, let $V_x(r)$ be the $\tilde{\Psi}_x$ image of the ball  $B(x,r)$. 
 By Remark \ref{intA1} we have almost surely
$$|J_{\tilde{\Psi}_x}(y)|\ge  (1/2) |\det B(y)|\ge c, \quad   |y-x|\le \eps_0, $$
and  $\tilde{\Psi}_x$ is an injective map on $B(x,\eps_0)$.
%
Hence, for $0<\delta_1< \eps_0$, by the change of variables formula (see e.g. \cite[Theorem 3]{H1993}), and then by (\ref{normy}) we obtain
\begin{eqnarray*}
 \int_{\delta_1\le |x-y|\le \eps_0} r_y(t,(x-y)/2) \, dy  &\le& c \int_{\delta_1\le |x-y|\le \eps_0 }H_t(u/2) |J_{\tilde{\Psi}_x}(y)|\, dy\\&=&
c\int_{ V_x(\eps_0)\setminus V_x(\delta_1) }H_t(u/2)\, du \\
&\le&
c\int_{|u|\ge \frac {\delta_1} {2d\eta_1}  }H_t(u/2)\, du = I(t,\delta_1) . 
\end{eqnarray*}
It is clear that  $$\lim_{\delta_1\to 0^+}I(t,\delta_1)\le c,\ t\le \tau.$$ 
%
%
If $|x-y|\ge \eps_0$ then $|x-y|/2\ge \eps\eta_1d^{3/2} $, hence, by (\ref{htht2}),  we obtain
$$
 \int_{|x-y|\ge \eps_0} r_y(t,(x-y)/2) \, dy \le c_1t \int_{|x-y|\ge \eps_0} e^{-c|x-y|} \, dy = c_2 t . 
$$
The last two inequalities  prove that
$$
 \sup_{x\in \Rd}\sup_{t\le \tau}\int_{\Rd} r_y(t,(x-y)/2) \, dy <\infty. 
$$
Noting that $\lim_{t\to 0^+}I(t,\delta_1)=0$ we obtain
$$
 \lim_{t \to 0^+}\sup_{x\in \Rd}\int_{|x-y|\ge \delta_1} r_y(t,(x-y)/2) \, dy =0 . 
$$
Since  $p_y(t,x-y)\le cr_y(t,(x-y)/2) $ for $t\le \tau,\ x, y \in \Rd$,
the proof of (\ref{pyintegral1}, \ref{ryintegral1}, \ref{supsup}) is completed. 

Note that the coordinates of the matrix $B(y)$ have partial derivatives $y$ almost surely,  bounded uniformly. We can calculate  the absolute value of  Jacobian determinant $J_{\tilde{\Psi}_x}(y)$, $y$ almost surely, as
\begin{equation}
\label{J}
\left|J_{\tilde{\Psi}_x}(y)\right|=  \det B(y)+ R(x,y), \quad    |R(x,y)|\le c |y-x|. \end{equation}
Next, 
\begin{eqnarray*}
  \int_{|x-y|\le \delta_1} p_y(t,x-y) \, dy &=&   \int_{|x-y|\le \delta_1}G_t(u) {\det B(y)} dy \\ 
 &=& \int_{|x-y|\le \delta_1 }G_t(u) \left|J_{\tilde{\Psi}_x}(y)\right| dy- \int_{|x-y|\le \delta_1}G_t(u) R(x,y)dy\\
&=&I_1+I_2. 
\end{eqnarray*}
Applying (\ref{J}), (\ref{normy}) and the change of variable formula we obtain 
%
%
\begin{eqnarray*}|I_2|&\le& c\int_{|x-y|\le \delta_1 }|x-y|G_t(u) dy\\&\le& c\int_{|x-y|\le \delta_1}|u|G_t(u) \left|J_{\tilde{\Psi}_x}(y)\right|dy \\
&=&c\int_{V_x(\delta_1) }|u|G_t(u) du\\
&\le& c\int_{|u|\le d\eta_1 \delta_1 }|u|G_t(u)du \to 0,\ \text{if}\ t\to 0^+. \end{eqnarray*}
%
Now we can pick, independenly of $x$, positive $\delta_1 $ and $\delta_2$ such that $ B(0, \delta_2)\subset V_x(\delta_1)$ (see Remark \ref{intA1}). 
Applying again the change of variable formula we obtain  
\begin{eqnarray*}I_1= \int_{V_x(\delta_1) }G_t(u) du&\ge&  \int_{|u|\le \delta_2 }G_t(u)  du \to 1,\ \text{if}\ t\to 0^+. 
\end{eqnarray*}
This completes the proof that  uniformly with respect to $x$,
$$\lim_{t\to 0^+}\int_{|x-y|\le \delta_1} p_y(t,x-y) dy=1,$$
which combined with (\ref{supsup}) proves (\ref{limp_y}).
\end{proof}

In the sequel we will use the following standard estimate. For any $\gamma \in (0,1]$, $\theta_0 > 0$ there exists $c = c(\gamma,\theta_0)$ such that for any $\theta \ge \theta_0$, $t > 0$ we have
\begin{equation}
\label{gammatheta}
\int_0^t (t - s)^{\gamma-1} s^{\theta - 1} \, ds \le \frac{c}{\theta^{\gamma}} t^{(\gamma-1)+(\theta-1)+1}.
\end{equation}

\begin{lemma}
\label{integralqn}
For any $t > 0$, $x \in \R^d$ and $n \in \N$ the kernel $q_n(t,x,y)$ is well defined. For any $t \in (0,\tau]$, $x \in \R^d$ and $n \in \N$ we have
\begin{equation}
\label{integralqn1}
\int_{\R^d} |q_n(t,x,y)| \, dy \le \frac{c_1^{n+1} t^{(n+1)/2 - 1}}{(n!)^{1/2}},
\end{equation}
\begin{equation}
\label{integralqn2}
\int_{\R^d} |q_n(t,y,x)| \, dy \le \frac{c_1^{n+1} t^{(n+1)/2 - 1}}{(n!)^{1/2}}.
\end{equation}
For any $t \in (0,\tau]$, $x, y \in \R^d$ and $n \in \N$ we have
\begin{equation}
\label{qnestimate1}
|q_n(t,x,y)|  \le c_1 \frac{c_2^n t^{n/2 - 1}}{(n!)^{1/2} t^{d/\alpha}}.
\end{equation}
For any $t \in (0,\tau]$, $x, y \in \R^d$ and $n \in \N$, $|x - y| \ge n + 1$ we have
\begin{equation}
\label{qnestimate2}
|q_n(t,x,y)|  \le c_1 \frac{c_2^n t^{n/2}}{(n!)^{1/2}} e^{-\frac{\lambda |x - y|}{n+1}},
\end{equation}
where $\lambda=\eps/\eps_0$.
\end{lemma}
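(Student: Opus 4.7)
My plan is to prove well-definedness together with the four estimates simultaneously by induction on $n$, treating the integral bounds (\ref{integralqn1})--(\ref{integralqn2}) first, then the pointwise bound (\ref{qnestimate1}), and finally the spatial-decay bound (\ref{qnestimate2}). The base cases at $n=0$ all come directly from Proposition~\ref{integralq0}: (\ref{q0int}) and (\ref{q0intx}) give (\ref{integralqn1})--(\ref{integralqn2}) at $n=0$; the global bound (\ref{global}), combined with $h_t(0)=t^{-1/\alpha}$, gives $|q_0(t,x,y)|\le c t^{-1-d/\alpha}$ which matches (\ref{qnestimate1}) at $n=0$; and the exponential decay of $q_0$ for $|x-y|\ge \eps_0$ (which we may assume $\le 1$) yields (\ref{qnestimate2}) at $n=0$.

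For the inductive step of (\ref{integralqn1}), Fubini applied to (\ref{defqn}) yields
\[
\int_{\R^d} |q_n(t,x,y)|\,dy \le \int_0^t \int_{\R^d} |q_0(t-s,x,z)|\left(\int_{\R^d} |q_{n-1}(s,z,y)|\,dy\right) dz\,ds.
\]
Plugging in the inductive hypothesis (\ref{integralqn1}) for $q_{n-1}$ and (\ref{q0int}), and then applying (\ref{gammatheta}) with $\gamma=1/2$ and $\theta=n/2$, gives the desired bound once $c_1$ is chosen large enough to absorb the constants; (\ref{integralqn2}) is obtained identically using (\ref{q0intx}). Well-definedness of $q_n$ is automatic from the absolute convergence of the iterated integrals that these estimates provide.

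For (\ref{qnestimate1}) the inductive step uses only the pointwise bound on $q_{n-1}$ together with the integral bound on $q_0$:
\[
|q_n(t,x,y)|\le \int_0^t \Bigl(\sup_{z}|q_{n-1}(s,z,y)|\Bigr)\int_{\R^d}|q_0(t-s,x,z)|\,dz\,ds \le \frac{c\,c_1 c_2^{n-1}}{\sqrt{(n-1)!}}\int_0^t \frac{s^{(n-1)/2-1-d/\alpha}}{(t-s)^{1/2}}\,ds.
\]
For $n>2d/\alpha$ this $s$-integral is a Beta integral equal to $B((n-1)/2-d/\alpha,\,1/2)\,t^{n/2-1-d/\alpha}$; the Stirling asymptotic $B((n-1)/2-d/\alpha,\,1/2)\sim\sqrt{2\pi/n}$ supplies exactly the $1/\sqrt{n}$ needed to turn the inductive $1/\sqrt{(n-1)!}$ into the required $1/\sqrt{n!}$, and the leftover numerical constant is absorbed into $c_2$. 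For the finitely many indices $n\le 2d/\alpha$, where the Beta integral diverges at $s=0$, I fall back on the standard split at $s=t/2$ (pointwise $q_0$ with integral $q_{n-1}$ on $[0,t/2]$, and the reverse on $[t/2,t]$); these exceptional cases contribute only $n$-independent constants. The main obstacle here is that the naive $t/2$-split alone generates a spurious $\sqrt{n}$ factor that would prevent the induction from closing with $n$-independent $c_1,c_2$, which is precisely why the direct Beta-integral argument is needed for large $n$.

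For (\ref{qnestimate2}), fix $|x-y|\ge n+1$ and split the $z$-integration using $A_1=\{z:|x-z|\ge|x-y|/(n+1)\}$ and $A_2=\R^d\setminus A_1$. On $A_1$, $|x-z|\ge 1\ge\eps_0$, so Proposition~\ref{integralq0} gives $|q_0(t-s,x,z)|\le c\,e^{-\lambda|x-z|}\le c\,e^{-\lambda|x-y|/(n+1)}$, which I pair with (\ref{integralqn2}) applied to $\int |q_{n-1}(s,z,y)|\,dz$ and $\int_0^t s^{n/2-1}\,ds=2t^{n/2}/n$, producing a $t^{n/2}/\sqrt{n\cdot n!}$ factor. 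On $A_2$ the triangle inequality gives $|z-y|>n|x-y|/(n+1)\ge n$, so the inductive bound (\ref{qnestimate2}) for $q_{n-1}$ applies with $e^{-\lambda|z-y|/n}\le e^{-\lambda|x-y|/(n+1)}$; pairing this pointwise bound with the integral bound on $q_0$ gives $\int_0^t s^{(n-1)/2}(t-s)^{-1/2}\,ds=B((n+1)/2,1/2)\,t^{n/2}\sim \sqrt{2\pi/n}\,t^{n/2}$. Both $1/\sqrt{n}$ factors combine with $1/\sqrt{(n-1)!}$ to give the required $1/\sqrt{n!}$, and the residual constants are absorbed into $c_2$ chosen sufficiently large.
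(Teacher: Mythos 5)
Your handling of the base cases, well-definedness, the integral bounds (\ref{integralqn1})--(\ref{integralqn2}), and the decay bound (\ref{qnestimate2}) matches the paper; the only cosmetic differences are that you run the induction from $n-1$ to $n$ rather than $n$ to $n+1$, and you name the Beta integrals explicitly where the paper invokes (\ref{gammatheta}). The split of the $z$-region by $\{|x-z|\ge |x-y|/(n+1)\}$ in (\ref{qnestimate2}), the verification that $|z-y|\ge n$ on the complement so the inductive hypothesis for $q_{n-1}$ applies, and the gain of $1/\sqrt n$ from $B((n+1)/2,1/2)$ are all correct.

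The problem is in your treatment of (\ref{qnestimate1}). Your claim that ``the naive $t/2$-split alone generates a spurious $\sqrt n$ factor that would prevent the induction from closing with $n$-independent $c_1,c_2$'' is wrong, and it is exactly the $t/2$-split that the paper runs for every $n$ with uniform constants. The point you are missing is that on $[t/2,t]$ one does not bound $s^{n/2-1}$ by its supremum; one enlarges to $\int_0^t(t-s)^{-1/2}s^{n/2-1}\,ds$ and applies (\ref{gammatheta}), which is precisely the Beta-function bound $B(1/2,n/2)\le p/(n+1)^{1/2}$, uniformly in $n$. The $1/\sqrt n$ gain you want to import via an unsplit Beta integral is already available inside the paper's $t/2$-split. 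So your case distinction between ``small $n$'' and ``large $n$'' cures a disease that does not exist.

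Moreover, your replacement has its own defects near the threshold. The integral $\int_0^t s^{(n-1)/2-1-d/\alpha}(t-s)^{-1/2}\,ds$ converges only for $n>2d/\alpha+1$, not $n>2d/\alpha$ as you write, so at least one integer $n$ is covered by neither regime. Worse, since $B(a,1/2)$ behaves like $1/a$ as $a\to 0^+$, the quantity $\sqrt n\,B\bigl((n-1)/2-d/\alpha,1/2\bigr)$ is \emph{unbounded} as $n$ decreases toward $2d/\alpha+1$, so for those $n$ the induction does not close with any fixed $c_2$. You would have to declare all $n\le N_0$ exceptional for some $N_0$ strictly larger than $2d/\alpha+1$ (depending on $c_2$) and run the $t/2$-split there. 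That is repairable, but it is strictly more work than running the paper's single, uniform $t/2$-split with (\ref{gammatheta}) for all $n$.
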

\begin{proof} By  Proposition \ref{integralq0} there is a constant $c^*$ such that 
for any $x,y \in \R^d$, $t \in (0,\tau]$  we have 
\begin{equation}\label{global1}
|q_0(t,x,y)| \le    c^*\frac 1{t^{1+d/\alpha}},
\end{equation}
\begin{equation}\label{global2}
|q_0(t,x,y)| \le    c^* e^{-\lambda|x-y|}, |x-y|\ge 1 .
\end{equation}
\begin{equation}\label{q0int1}
\int_{\R^d} |q_0(t,x,u)| \, du \le c^* t^{- 1/2},
\end{equation}
\begin{equation}\label{q0intx1}
\int_{\R^d} |q_0(t,u,x)| \, du \le c^* t^{ - 1/2}.
\end{equation}
It follows from (\ref{gammatheta}) there is  $p \ge 1$  such that for $n \in \N$,
\begin{equation*}
\int_0^t (t - s)^{-1/2} s^{n/2-1/2} \, ds \le \frac{p}{(n+1)^{1/2}} t^{n/2}, 
\end{equation*}
\begin{equation*}
\int_{t/2}^t (t - s)^{-1/2} s^{n/2 - 1} \, ds \le \frac{p}{(n+1)^{1/2}} t^{(n+1)/2 -1},
\end{equation*}
\begin{equation*}
\int_0^t (t - s)^{-1/2} s^{n/2} \, ds \le \frac{p}{(n+1)^{1/2}} t^{(n+1)/2}.
\end{equation*}
We define $c_1= pc^*\ge c^*$ and $c_2= 2^{d/\alpha + 1} c_1(2+p)> c_1 $.

We will prove (\ref{integralqn1}), (\ref{integralqn2}), (\ref{qnestimate1}) simultaneously by induction. They are true for $n = 0$ by  (\ref{global1}, \ref{q0int1}, \ref{q0intx1}) and the choice of $c_1$. Assume that (\ref{integralqn1}), (\ref{integralqn2}), (\ref{qnestimate1}) are true for $n \in \N$, we will show them for $n + 1$. By the definition of $q_{n}(t,x,y)$ and the induction hypothesis we obtain
\begin{eqnarray*}
|q_{n+1}(t,x,y)| &\le& 
c_1 \frac{2^{d/\alpha + 1}}{t^{d/\alpha + 1}} \int_0^{t/2} \int_{\R^d} |q_n(s,z,y)| \, dz \, ds\\
&& + c_1 \frac{c_2^n 2^{d/\alpha + 1}}{(n!)^{1/2} t^{d/\alpha}} \int_{t/2}^t \int_{\R^d} |q_0(t - s,x,z)| \, dz s^{n/2 - 1} \, ds\\
&&  \le c_1 \frac{c_1^{n+1} 2^{d/\alpha + 1}}{(n!)^{1/2} t^{d/\alpha + 1}} \int_0^{t/2} s^{(n+1)/2 - 1} \, ds\\
&& + c_1 \frac{c_2^n 2^{d/\alpha + 1} c_1}{(n!)^{1/2} t^{d/\alpha}} \int_{t/2}^t (t-s)^{-1/2} s^{n/2 - 1} \, ds\\
&& \le c_1 \frac{c_2^n t^{(n+1)/2}}{((n+1)!)^{1/2} t^{d/\alpha + 1}} \left(2 c_1 2^{d/\alpha + 1}+ c_1 2^{d/\alpha + 1} 
p \right)\\
&& = c_1 \frac{c_2^{n+1} t^{(n+1)/2}}{((n+1)!)^{1/2} t^{d/\alpha + 1}}. 
\end{eqnarray*}
Hence we get (\ref{qnestimate1}) for $n+1$. In particular this gives that the kernel $q_{n+1}(t,x,y)$ is well defined.

By the definition of $q_{n}(t,x,y)$, (\ref{q0int1}) and the induction hypothesis we obtain 
\begin{eqnarray*}\int_{\R^d} |q_{n+1}(t,x,y)| \, dy &\le&   \int_0^{t} \int_{\R^d}\int_{\R^d} |q_0(t - s,x,z)||q_n(s,z,y)| \, dz \, dy \, ds \\ &\le&
 c^* \frac{c_1^{n+1}}{(n!)^{1/2}} \int_0^{t}(t-s)^{-1/2}  s^{(n+1)/2 - 1}\, ds 
\\ &\le&
 c^* \frac{c_1^{n+1}}{(n!)^{1/2}} \frac{p}{(n+1)^{1/2}} t^{n/2}\\ &=&
  \frac{c_1^{n+2}}{((n+1)!)^{1/2}}  t^{n/2},
\end{eqnarray*}            
which proves (\ref{integralqn1}) for $n+1$. Similarly we get (\ref{integralqn2}).

Now we will show (\ref{qnestimate2}). For $n = 0$ this follows from  (\ref{global2}). Assume that (\ref{qnestimate2}) is true for $n \in \N$, we will show it for $n+1$. 

Using our induction hypothesis, (\ref{integralqn1}) and (\ref{integralqn2}) we get for $|x - y| \ge n+2$
\begin{eqnarray*}
 |q_{n+1}(t,x,y)| &=&
\left|\int_0^t \int_{|x-z| \ge \frac{|x-y|}{n+2}} q_0(t-s,x,z) q_n(s,z,y) \, dz \, ds\right. \\
&& + \left. \int_0^t \int_{|x-z| \le \frac{|x-y|}{n+2}} q_0(t-s,x,z) q_n(s,z,y) \, dz \, ds\right| \\
&& \le c_1 e^{-\frac{\lambda |x - y|}{n+2}} \int_0^t \int_{\R^d} |q_n(s,z,y)| \, dz \, ds\\
&& + c_1 \frac{c_2^n}{(n!)^{1/2}} e^{-\frac{\lambda |x - y|}{n+2}}
\int_0^t \int_{\R^d} |q_0(t-s,x,z)| \, dz s^{n/2} \, ds\\
&& \le c_1 \frac{2 c_1^{n+1} t^{(n + 1)/2}}{((n + 1)!)^{1/2}} e^{-\frac{\lambda |x - y|}{n+2}} 
+
c_1 \frac{c_2^n c^* t^{(n+1)/2} p}{((n+1)!)^{1/2}} e^{-\frac{\lambda |x - y|}{n+2}}\\
&& = (2c_1  c_1^{n+1} +c_2^n c_1^2)\frac{ t^{(n + 1)/2}}{((n + 1)!)^{1/2}} e^{-\frac{\lambda |x - y|}{n+2}} 
,
\end{eqnarray*}
which proves  (\ref{qnestimate2}) for $n+1$ since by the choice of constants $2c_1  c_1^{n+1} +c_2^n c_1^2\le c_1c_2^{n+1}$.

\end{proof}

By standard estimates one easily gets
\begin{equation}
\label{sumsigma}
\sum_{n=k}^{\infty} \frac{C^n}{(n!)^{1/2}} \le \frac{C^k}{(k!)^{1/2}}\sum_{n=k}^{\infty} \frac{C^{n-k}}{((n-k)!)^{1/2}} \le C_1 e^{-k}, \quad k \in \N,
\end{equation}
where $C_1$ depends on $C$.

\begin{proposition}
\label{qestimate}
For any $t \in (0,\infty)$, $x, y \in \R^d$ the kernel $q(t,x,y)$ is well defined. For any $t \in (0,\tau]$, $x,y \in \R^d$ we have
$$
|q(t,x,y)| \le \frac{c}{t^{d/\alpha+1}} e^{-c_3 \sqrt{|x-y|}} \le \frac{c}{t^{d/\alpha + 1} (1 + |x - y|)^{d+1}}.
$$
There exists $a > 0$ ($a$ depends on $\tau, \alpha, d, \eta_1, \eta_2, \eta_3$) such that for any $t \in (0,\tau]$, $x,y \in \R^d$, $|x-y| \ge a$ we have
$$
|q(t,x,y)| \le c e^{-c_3 \sqrt{|x-y|}}.
$$
For any $t \in (0,\tau]$ and $x \in \R^d$ we have
\begin{equation}
\label{integralq1}
\int_{\R^d} |q(t,x,y)| \, dy \le c t^{-1/2},
\end{equation}
\begin{equation}
\label{integralq2}
\int_{\R^d} |q(t,y,x)| \, dy \le c t^{-1/2}.
\end{equation}
\end{proposition}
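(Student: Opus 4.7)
The plan is to decompose $q=\sum_n q_n$ and combine the two complementary bounds of Lemma~\ref{integralqn}: the uniform estimate \eqref{qnestimate1}, valid for every $n$, and the exponential estimate \eqref{qnestimate2}, valid only once $|x-y|\ge n+1$. Well-definedness and the integral bounds are essentially free from these estimates. Summing \eqref{qnestimate1} yields $|q(t,x,y)|\le c_1 t^{-d/\alpha-1}\sum_{n\ge 0} (c_2 t^{1/2})^n/(n!)^{1/2}$, which converges for every fixed $t>0$ and in particular shows that the series defining $q$ is absolutely convergent. Summing \eqref{integralqn1} (resp.\ \eqref{integralqn2}) against the finite series $\sum_n (c_1 t^{1/2})^n/(n!)^{1/2}$ gives \eqref{integralq1} and \eqref{integralq2}.

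The heart of the argument is the exponential decay for large separation. Fix $t\in(0,\tau]$, write $r=|x-y|$, and assume $r\ge a$, where $a\ge 2d/\alpha+3$ will be fixed shortly. With $N=\lfloor\sqrt{r}\rfloor$ I split
\[
q(t,x,y)=\sum_{0\le n\le N}q_n(t,x,y)+\sum_{N<n\le r-1}q_n(t,x,y)+\sum_{n\ge\lceil r\rceil}q_n(t,x,y).
\]
For $n\le N$ both $n+1\le r$ and $\lambda r/(n+1)\ge (\lambda/2)\sqrt{r}$ hold for $a$ large enough, so \eqref{qnestimate2} gives $|q_n|\le c_1(c_2\tau^{1/2})^n(n!)^{-1/2}e^{-(\lambda/2)\sqrt{r}}$, with total $\le Ce^{-(\lambda/2)\sqrt r}$. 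For $N<n\le r-1$ I again invoke \eqref{qnestimate2} but simply discard the positive exponential factor; \eqref{sumsigma} then bounds the tail sum by $Ce^{-N}\le Ce^{1-\sqrt{r}}$. Finally, on $n\ge\lceil r\rceil$ the condition $|x-y|\ge n+1$ of \eqref{qnestimate2} fails, so I return to \eqref{qnestimate1}. Since $n\ge r\ge a\ge 2d/\alpha+3$ gives $n/2-1-d/\alpha\ge 0$, the factor $t^{n/2-1-d/\alpha}$ is bounded by $(1+\tau)^{n/2}$, hence $|q_n|\le\tilde c^n/(n!)^{1/2}$ with $\tilde c=c_2(1+\tau)^{1/2}$; one more application of \eqref{sumsigma} bounds this piece by $Ce^{-r}\le Ce^{-\sqrt r}$. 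Summing the three contributions yields $|q(t,x,y)|\le Ce^{-c_3\sqrt r}$ for all $r\ge a$.

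The remaining claims are now easy. For $r\le a$ we have $e^{-c_3\sqrt r}\ge e^{-c_3\sqrt a}$, while termwise summation of \eqref{qnestimate1} gives $|q(t,x,y)|\le Ct^{-d/\alpha-1}$; combining these yields the bound $Ct^{-d/\alpha-1}e^{-c_3\sqrt r}$. For $r>a$ the previous paragraph together with $1\le (\tau/t)^{d/\alpha+1}$ delivers the same bound. The refined polynomial inequality $\le c/(t^{d/\alpha+1}(1+r)^{d+1})$ is immediate from the elementary estimate $e^{-c_3\sqrt r}\le C(1+r)^{-d-1}$. The principal obstacle, and the reason for the three-piece partition, is that \eqref{qnestimate2} loses its decay precisely as $n$ climbs toward $r$, so the clean ``no $t^{-d/\alpha-1}$'' bound for $r\ge a$ forces a switch back to \eqref{qnestimate1} on the terms $n\ge\lceil r\rceil$, combined with the observation that these indices are large enough to absorb the negative power of $t$.
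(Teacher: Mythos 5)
Your proof is correct and follows essentially the same strategy as the paper's: decompose $q=\sum_n q_n$, split the sum near $\sqrt{|x-y|}$, apply (\ref{qnestimate2}) to the small $n$ and (\ref{qnestimate1}) together with (\ref{sumsigma}) to the tail, and enlarge the threshold $a$ so the negative power of $t$ is absorbed once $n\ge\sqrt{a}$. The only difference is your three-piece partition, where the paper's simpler two-piece split at $[\sqrt{|x-y|}]$ already suffices -- your intermediate range $N<n\le r-1$ could have been folded into the tail handled by (\ref{qnestimate1}).
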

\begin{proof}
By (\ref{qnestimate1}) we clearly get $\sum_{n = 0}^{\infty} \left| q_n(t,x,y) \right| \le c t^{-d/\alpha -1}$. This gives that $q(t,x,y)$ is well defined and we have $|q(t,x,y)| \le c t^{-d/\alpha -1}$. 

For $|x - y| \ge 1$ by (\ref{qnestimate1}), (\ref{qnestimate2}) and (\ref{sumsigma}) we get
\begin{eqnarray*}
 |q(t,x,y)| &=&
\left|\sum_{n = 0}^{\left[\sqrt{|x-y|}-1\right]} q_n(t,x,y) + \sum_{n = \left[\sqrt{|x-y|}\right]}^{\infty} q_n(t,x,y)\right| \\
&& \le
c_1 \sum_{n = 0}^{\left[\sqrt{|x-y|}-1\right]} \frac{c_2^n \tau^{n/2}}{(n!)^{1/2}} e^{-\lambda \sqrt{|x-y|}} +
c_1 \sum_{n = \left[\sqrt{|x-y|}\right]}^{\infty} \frac{c_2^n \tau^{n/2}}{(n!)^{1/2} t^{d/\alpha + 1}}\\
&& \le \frac{c}{t^{d/\alpha+1}} e^{-c_3 \sqrt{|x-y|}},
\end{eqnarray*}
where $[z]$ denotes the integer part of $[z]$. Take the smallest $n_0 \in \N$ such that $n_0/2 - 1 \ge d/\alpha$ and $a = n_0^2$. For $\sqrt{|x-y|} \ge \sqrt{a} = n_0$ we get
\begin{eqnarray*}
|q(t,x,y)| &\le&
c_1 \sum_{n = 0}^{\left[\sqrt{|x-y|}-1\right]} \frac{c_2^n \tau^{n/2}}{(n!)^{1/2}} e^{-\lambda \sqrt{|x-y|}} +
c_1 \sum_{n = \left[\sqrt{|x-y|}\right]}^{\infty} \frac{c_2^n t^{n/2}}{(n!)^{1/2} t^{d/\alpha + 1}}\\
&\le& c e^{-c_3 \sqrt{|x-y|}}.
\end{eqnarray*}

(\ref{integralq1}) and (\ref{integralq2}) follows easily from (\ref{integralqn1}) and (\ref{integralqn2}).
\end{proof}

By (\ref{defu}), Corollary \ref{estimate_pytx}, Proposition \ref{pyintegral} and Proposition \ref{qestimate} we immediately obtain the following result.
\begin{corollary}
\label{uintegral}
For any $t \in (0,\infty)$, $x, y \in \R^d$ the kernel $u(t,x,y)$ is well defined. For any $t \in (0,\tau]$, $x,y \in \R^d$ we have
$$
|u(t,x,y)| \le \frac{c}{t^{d/\alpha}} e^{-c_1 \sqrt{|x-y|}} \le \frac{c}{t^{d/\alpha} (1 + |x - y|)^{d+1}}.
$$
There exists $a > \eps > 0$ ($a$ depends on $\tau, \alpha, d, \eta_1, \eta_2, \eta_3$) such that for any $t \in (0,\tau]$, $x,y \in \R^d$, $|x-y| \ge a$ we have
$$
|u(t,x,y)| \le c e^{-c_2 \sqrt{|x-y|}}.
$$
For any $t \in (0,\tau]$ and $x \in \R^d$ we have
\begin{equation}
\label{integralu1}
\int_{\R^d} |u(t,x,y)| \, dy \le c,
\end{equation}
\begin{equation}
\label{integralu2}
\int_{\R^d} |u(t,y,x)| \, dy \le c.
\end{equation}
\end{corollary}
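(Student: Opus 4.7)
The plan is to use the decomposition $u(t,x,y) = p_y(t,x-y) + I(t,x,y)$ from (\ref{defu}), with $I(t,x,y) = \int_0^t\int_{\R^d} p_z(t-s,x-z)q(s,z,y)\,dz\,ds$, and handle each piece by combining the pointwise and $L^1$ bounds on $p_y$ and $q$ already established. The first summand is controlled by Lemma \ref{gtht} together with Corollary \ref{estimate_pytx} (\ref{htht1}): $p_y(t,x-y) \le c r_y(t,x-y) \le c t^{-d/\alpha} e^{-c|x-y|}$, which dominates the claimed pointwise bound since $e^{-c|x-y|} \le C e^{-c_1\sqrt{|x-y|}}$.

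For $I(t,x,y)$ I would split the time integral at $s=t/2$. On $(t/2,t)$, apply the pointwise bound $|q(s,z,y)| \le c s^{-d/\alpha-1} e^{-c_3\sqrt{|z-y|}} \le c t^{-d/\alpha-1} e^{-c_3\sqrt{|z-y|}}$ from Proposition \ref{qestimate}, followed by $\int p_z(t-s,x-z)\,dz \le c$ from (\ref{pyintegral1}). On $(0,t/2)$, use the uniform bound $p_z(t-s,x-z) \le c t^{-d/\alpha}$ together with $\int|q(s,z,y)|\,dz \le c s^{-1/2}$ from Proposition \ref{qestimate}, and note $\int_0^{t/2} s^{-1/2}\,ds \le c\tau^{1/2}$. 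To recover the stretched-exponential factor in $|x-y|$, I split each $z$-integral into $\{|z-y| \ge |x-y|/2\}$, where $e^{-c_3\sqrt{|z-y|}} \le e^{-c_3\sqrt{|x-y|/2}}$ pulls the factor out of $|q|$, and its complement, where $|x-z| \ge |x-y|/2$ and the sharper bound $p_z(t-s,x-z) \le c(t-s)^{-d/\alpha} e^{-c|x-z|}$ from Corollary \ref{estimate_pytx} pulls it out of $p_z$. The polynomial form $c t^{-d/\alpha}(1+|x-y|)^{-(d+1)}$ is then immediate from the elementary inequality $e^{-c_1\sqrt{r}} \le c_d(1+r)^{-(d+1)}$. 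For $|x-y|\ge a$ with $a$ taken as in Proposition \ref{qestimate}, the $t^{-d/\alpha}$ factor disappears because (\ref{htht2}) replaces $t^{-d/\alpha}$ by $\tau$ in the bound for $p_z$ in the relevant regime, and Proposition \ref{qestimate} gives $|q(s,z,y)| \le c e^{-c_3\sqrt{|z-y|}}$ (no inverse-$s$ factor) for $|z-y|\ge a$; running the same case split on $z$ then yields $|u(t,x,y)| \le c e^{-c_2\sqrt{|x-y|}}$.

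The integral estimates (\ref{integralu1})--(\ref{integralu2}) are immediate by Fubini:
\begin{equation*}
\int_{\R^d}|u(t,x,y)|\,dy \le \int_{\R^d} p_y(t,x-y)\,dy + \int_0^t\!\!\int_{\R^d} p_z(t-s,x-z)\!\left[\int_{\R^d}|q(s,z,y)|\,dy\right]dz\,ds.
\end{equation*}
The first term is $\le c$ by (\ref{pyintegral1}); for the second, apply $\int|q(s,z,y)|\,dy \le c s^{-1/2}$ from Proposition \ref{qestimate}, then $\int p_z(t-s,x-z)\,dz \le c$ from (\ref{pyintegral1}), and finally $\int_0^t s^{-1/2}\,ds \le c\tau^{1/2}$. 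The bound (\ref{integralu2}) is symmetric using (\ref{integralq2}).

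The main obstacle is coordinating the $s$- and $z$-splits in the pointwise bound on $I$ so that the $t^{-d/\alpha}$ singularity and the stretched exponential in $|x-y|$ are extracted simultaneously: neither the crude pointwise bound on $p_z$ nor the sharp exponential bound suffices on its own, and one must switch between them region-by-region. Once the case analysis is set up, every resulting integrand falls under one of the cited estimates and the conclusion is routine.
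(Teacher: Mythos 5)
Your decomposition of $u$ and your handling of the integral bounds (\ref{integralu1})--(\ref{integralu2}) are exactly the paper's argument, and your treatment of the pointwise decay for $|x-y|\ge a$ (using the $t$\mbox{-}non-singular bound (\ref{htht2}) for $p_z$ once $|x-z|$ exceeds a fixed threshold, and the $s$\mbox{-}uniform estimate $|q(s,z,y)|\le c\,e^{-c_3\sqrt{|z-y|}}$ once $|z-y|\ge a$) is also the paper's; splitting at $|x-y|/2$ instead of $|x-y|$ is harmless.

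There is, however, a real gap in the way you claim to extract the stretched exponential in the bound
$|u(t,x,y)|\le c\,t^{-d/\alpha}e^{-c_1\sqrt{|x-y|}}$ for general $|x-y|$. The $z$-split you propose interacts badly with the $s$\mbox{-}split. Concretely, on $s\in(t/2,t)$ restricted to $\{|x-z|\ge|x-y|/2\}$ you pull the exponential out of $p_z$ by bounding $p_z(t-s,x-z)\le c(t-s)^{-d/\alpha}e^{-c|x-z|}\le c(t-s)^{-d/\alpha}e^{-c|x-y|/2}$; but this is a supremum over $z$, so the $L^1_z$ control of $p_z$ is gone, and what survives is $\int_{t/2}^t(t-s)^{-d/\alpha}s^{-1/2}\,ds$, which diverges because $d/\alpha>1$. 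The estimate (\ref{htht2}), which would replace $(t-s)^{-d/\alpha}$ by $(t-s)$, only applies when $|x-z|\ge\eps\eta_1 d^{3/2}$, i.e.\ only when $|x-y|$ is bounded away from zero, so it cannot repair this region for small $|x-y|$. Symmetrically, on $s\in(0,t/2)$ restricted to $\{|z-y|\ge|x-y|/2\}$, keeping the exponential of $q$ forces you to use the pointwise bound $|q(s,z,y)|\le c\,s^{-d/\alpha-1}e^{-c_3\sqrt{|z-y|}}$, and $\int_0^{t/2}s^{-d/\alpha-1}\,ds$ also diverges.

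The paper sidesteps this by \emph{not} trying to prove the product bound in one shot: it first proves $|I(t,x,y)|\le c\,t^{-d/\alpha}$ unconditionally (this is exactly your first step with the $t/2$ time split and no $z$-split), and then proves $|I(t,x,y)|\le c\,e^{-c_1\sqrt{|x-y|}}$ only for $|x-y|$ large (your second step). The combined bound $c\,t^{-d/\alpha}e^{-c_1\sqrt{|x-y|}}$ is then immediate: for $|x-y|$ below the large-distance threshold the exponential is bounded away from $0$, and for $|x-y|$ above it one simply uses $t^{-d/\alpha}\ge\tau^{-d/\alpha}$. If you reorganize your write-up so that the $z$-split is invoked \emph{only} in the large-$|x-y|$ regime, where (\ref{htht2}) and the non-singular $q$-bound of Proposition \ref{qestimate} are actually available, your argument becomes correct and is then identical in substance to the paper's.
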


\begin{proof} By Corollary \ref{estimate_pytx}, Proposition \ref{pyintegral}, (\ref{defu}), we only need to prove the corresponding bounds for $$I(t,x,y)=\int_0^t \int_{\R^d} p_z(t-s,x-z)|q(s,z,y)| \, dz \, ds.$$
For $0<s<t/2$ we have 
$$p_z(t-s,x-z)\le \frac c{t^{d/\alpha}},$$
and, by Proposition \ref{qestimate}, for $t/2<s<t$,
$$
|q(s,z,y)| \le \frac{c}{t^{d/\alpha+1}}.
$$
Hence, 
\begin{eqnarray}
\nonumber
 I(t,x,y)
&=&\int_0^{t/2} \int_{\R^d}p_z(t-s,x-z)|q(s,z,y)| \, dz \, ds\\
&&   + \int_{t/2}^t \int_{\R^d} p_z(t-s,x-z)|q(s,z,y)| \, dz \, ds \nonumber\\
&\le&\frac c{t^{d/\alpha}}\int_0^{t/2} \int_{\R^d} |q(s,z,y)| \, dz \, ds + \frac{c}{t^{d/\alpha+1}}\int_{t/2}^t \int_{\R^d} p_z(t-s,x-z) \, dz \, ds \nonumber\\
&\le&\frac c{t^{d/\alpha}}, \label{ubound1}\end{eqnarray}
where   (\ref{integralq2}) and  Proposition \ref{pyintegral} were applied to estimate the integrals with respect to the space variable. 

Let $a$ the constant found in Proposition \ref{qestimate}.
Assume that  $|x-y|\ge 1+a$. By Corollary
\ref{estimate_pytx} for $0<s<t$ we have 
$$p_z(t-s,x-z)\le  c  e^{-c_1|x-y|},\quad |x-z|> |x-y|>1.$$
Proposition \ref{qestimate} implies that  for $0<s<t$,
$$
|q(s,z,y)| \le  c e^{-c_1\sqrt{|x-y|}}, \quad |y-z|> |x-y|>a
$$
Hence,
\begin{eqnarray}
 I(t,x,y)
&\le&\int_0^{t} \int_{|x-z|> |x-y|}\dots  \, dz \, ds + \int_0^{t} \int_{|y-z|> |x-y|}\dots  \, dz \, ds\nonumber\\
&\le& c e^{-c_1|x-y|}\int_0^{t} \int_{\R^d} |q(s,z,y)| \, dz \, ds \nonumber \\&+& c e^{-c_1\sqrt{|x-y|}}\int_{0}^t \int_{\R^d} p_z(t-s,x-z) \, dz \, ds\nonumber\\
&\le& c{t^{1/2}} e^{-c_1|x-y|}+ ct e^{-c_1\sqrt{|x-y|}}\nonumber\\
&\le& c  e^{-c_1\sqrt{|x-y|}}\label{ubound2}.\end{eqnarray}
Combining (\ref{ubound1}) and (\ref{ubound2}) we obtain the desired pointwise estimates of $u(t,x,y)$.

Next, (\ref{integralu1}) and (\ref{integralu2}) immediately follow from (\ref{integralq1}), (\ref{integralq2}) and  Proposition \ref{pyintegral}.
\end{proof}

For any $\zeta > 0$ and $x, y \in \R^d$ we put
$$
\calL_{\zeta} f(x) = 
\sum_{i = 1}^d   \int_{|w| > \zeta} \left[f(x + a_i(x) w) - f(x)\right] \, \mu(w) \, dw,
$$
$$
\calL_{\zeta}^y f(x) = 
\sum_{i = 1}^d  \int_{|w| > \zeta} \left[f(x + a_i(y) w) - f(x)\right] \, \mu(w) \, dw.
$$

\begin{lemma}
\label{Lxpyestimate} 
For any $\xi \in (0,1]$, $\zeta > 0$, $x, y, v \in \R^d$ and $t \in (\xi,\tau+\xi]$ we have
\begin{eqnarray}
\label{est1}
\sum_{i = 1}^d   \int_{\R} \left|p_y(t,x-y + a_{i}(v) w) - p_y(t,x-y)\right| \, \mu(w) \, dw
&\le& c(\xi) e^{-c |x - y|},\\
\label{est2}
\sum_{i = 1}^d   \int_{|w| \le \zeta} \left|p_y(t,x-y + a_{i}(v) w) - p_y(t,x-y)\right| \, \mu(w) \, dw
&\le& c(\xi) \zeta^{1 - \alpha}.
\end{eqnarray}
where $c(\xi)$ is a constant depending on $\xi, \tau, \alpha, d, \eta_1, \eta_2, \eta_3, \eps, \delta$.
\end{lemma}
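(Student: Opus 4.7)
The crucial observation is that since $\mu = \mu^{(\delta)}$ is supported in $[-2\delta,2\delta]$, both integrals reduce to integrals over $|w|\le 2\delta$. On this range $|a_i(v)w|\le 2\delta\eta_1 d$ is bounded, so the plan is to apply the mean value theorem in the variable $x$ together with suitable estimates on $\nabla_x p_y$, and then to exploit the fact that for $\alpha\in(0,1)$ the first moment $\int_{|w|\le\zeta}|w|\mu(w)\,dw\le c\zeta^{1-\alpha}$ is finite.

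The first step is to derive, for $t\in(\xi,\tau+\xi]$, the pointwise estimate
\[
|\nabla_x p_y(t,u)|\le c(\xi)\bigl(r_y(t,u/2)\bigr)^{1/2}\quad\text{or, more directly,}\quad |\nabla_x p_y(t,u)|\le c(\xi)e^{-c|u|}\text{ for }|u|\ge M,
\]
and $|\nabla_x p_y(t,u)|\le c(\xi)$ uniformly, where $M$ depends only on the basic parameters. Differentiating the product $p_y(t,z)=\det(B(y))\prod_{i=1}^d g_t(b_i(y)z)$ yields a sum of $d$ terms, each containing one derivative $g'_t(b_k(y)z)$ and $d-1$ factors $g_t(b_i(y)z)$. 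Lemma \ref{g_derivatives} gives $|g'_t(x)|\le c(\xi)(1+|x|)^{-n}$ for $t\ge\xi$, and Lemma \ref{gtht} together with the definition of $h_t$ shows that $g_t(x)\le c(\xi)e^{-|x|}$ for $|x|\ge\eps$. Using Lemma \ref{b1b2} there is always an index $k$ with $|b_k(y)u|\ge c|u|$, so at least one factor in the product is exponentially small once $|u|$ is large enough, exactly as in the proof of Corollary \ref{estimate_pytx}. This yields the claimed pointwise gradient bound.

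With this in hand, the mean value theorem gives, for $|w|\le 2\delta$ and any $i$,
\[
|p_y(t,x-y+a_i(v)w)-p_y(t,x-y)|\le |a_i(v)||w|\sup_{\theta\in[0,1]}|\nabla_x p_y(t,x-y+\theta a_i(v)w)|.
\]
For (\ref{est2}) we insert the global bound $c(\xi)$ for the gradient and integrate; since $\mu(w)\le c|w|^{-1-\alpha}$ on the support,
\[
\int_{|w|\le\zeta}|a_i(v)||w|\cdot c(\xi)\mu(w)\,dw\le c(\xi)\int_{|w|\le\zeta}|w|^{-\alpha}\,dw=c(\xi)\zeta^{1-\alpha},
\]
which after summing over $i=1,\dots,d$ gives (\ref{est2}).

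For (\ref{est1}), distinguish cases according to the size of $|x-y|$. If $|x-y|\le M':=M+2\delta\eta_1 d$ then the uniform gradient bound combined with $\int_{|w|\le 2\delta}|w|\mu(w)\,dw\le c$ already produces a constant bound that can be absorbed into $c(\xi)e^{-c|x-y|}$ (the exponential is bounded away from $0$ on this range). If $|x-y|>M'$ then for every $\theta\in[0,1]$ and every $|w|\le 2\delta$ we have $|x-y+\theta a_i(v)w|\ge|x-y|-2\delta\eta_1 d\ge|x-y|/2\ge M/2$, so the exponential gradient bound applies and
\[
|p_y(t,x-y+a_i(v)w)-p_y(t,x-y)|\le c(\xi)|w|e^{-c|x-y|/2}.
\]
Integrating against $\mu$ over $|w|\le 2\delta$ and summing over $i$ yields (\ref{est1}) with a possibly smaller constant in the exponent.

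The main obstacle is the gradient estimate on $\nabla_x p_y(t,u)$ with exponential decay in $|u|$: we must combine the product-rule expansion with Lemmas \ref{gtht}, \ref{b1b2}, \ref{g_derivatives} in the same spirit as Corollary \ref{estimate_pytx}. Once this estimate is in place, (\ref{est1}) and (\ref{est2}) are straightforward.
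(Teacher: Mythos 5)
Your overall strategy (compact support of $\mu$, a Lipschitz estimate on $p_y$ in the space variable, exponential decay for far $x-y$, and $\int_{|w|\le\zeta}|w|\mu(w)\,dw\le c\zeta^{1-\alpha}$) is the same one the paper uses, but the gradient bound you rely on has a gap. When you expand $\nabla_x p_y(t,u) = \det(B(y))\sum_{k} b_{k\cdot}(y)\, g_t'(b_k(y)u)\prod_{i\ne k}g_t(b_i(y)u)$ and single out the index $k_0$ for which $|b_{k_0}(y)u|\ge c|u|$ (Lemma \ref{b1b2}), the summand with $k=k_0$ has the differentiated factor at the large coordinate. There the exponentially small factor has to be $g_t'(b_{k_0}(y)u)$ itself, but Lemma \ref{g_derivatives} gives only $|g_t'(x)|\le c(\xi)(1+|x|)^{-n}$, i.e.\ arbitrary polynomial, not exponential, decay. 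So for that term your argument only produces $(1+|x-y|)^{-n}$, and after integrating in $w$ you cannot reach the claimed $c(\xi)e^{-c|x-y|}$ in (\ref{est1}). (Compare Lemma \ref{properties_pytx}, which likewise only gives polynomial decay for $\nabla_x p_y$.)

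The fix is to avoid the pointwise gradient altogether, which is what the paper does: it invokes Lemma \ref{pyholder} with $\gamma=1$, which bounds $|p_y(t,x)-p_y(t,x')|$ by $c\bigl(1\wedge t^{-1/\alpha}|x-x'|\bigr)\bigl(r_y(t,x/2)+r_y(t,x'/2)\bigr)$, i.e.\ a Lipschitz factor times the kernel $r_y$ evaluated at the two \emph{endpoints}. Corollary \ref{estimate_pytx} then gives $r_y(t,\cdot)\le c_1 t^{-d/\alpha}e^{-c|\cdot|}$, producing the exponential in (\ref{est1}) directly, and the estimate $\mu(w)\le c\,1_{[-2\delta,2\delta]}(w)|w|^{-1-\alpha}$ gives both (\ref{est1}) and (\ref{est2}). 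Alternatively, your mean-value-theorem route can be rescued by using the derivative bound $|\tfrac{d}{dr}g_t^{(\delta,1)}(r)|\le c\,h_t^{(\eps)}(r)/(r+t^{1/\alpha})$ established inside the proof of Lemma \ref{gtht} (which \emph{does} decay exponentially for $r\ge\eps$) instead of Lemma \ref{g_derivatives}; but as written, with Lemma \ref{g_derivatives}, the bound (\ref{est1}) does not follow.
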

\begin{proof}
We estimate the term for $i = 1$.  By Lemma \ref{pyholder} for $\gamma = 1$ we get for $w \in \R$
\begin{eqnarray*}
&&\left|p_y(t,x-y + a_{1}(v) w) - p_y(t,x-y)\right|\\
&& \le c t^{-1/\alpha} |w| \left(r_y\left(t,\frac{x-y}{2}\right) + r_y\left(t,\frac{x-y + a_{1}(v) w}{2}\right)\right). 
\end{eqnarray*}
Recall that if $|w| \ge 2 \delta$ then $\mu(w) = 0$. So we may assume that $|w| \le 2 \delta$. By Corollary \ref{estimate_pytx} we get
$$
r_y\left(t,\frac{x-y}{2}\right) + r_y\left(t,\frac{x-y + a_{1}(v) w}{2}\right) \le c_1 t^{-d/\alpha} e^{-c |x-y|}.
$$
Now (\ref{est1}) and (\ref{est2}) follow by the fact that $\mu(w) \le c 1_{[-2\delta,2\delta]}(w) |w|^{-1-\alpha}$.
\end{proof}

\begin{lemma}
\label{pyeps_limit}
Let $\tau_2 > \tau_1 > 0$ and assume that a function $f_t(x)$ is bounded and uniformly continuous on $[\tau_1,\tau_2] \times \R^d$. Then 
$$
\sup_{t \in [\tau_1,\tau_2], \, x \in \R^d} \left|\int_{\R^d} p_y(\eps_1,x-y) f_t(y) \, dy - f_t(x)\right| \to 0 \quad \text{as} \quad \eps_1 \to 0^+.
$$
\end{lemma}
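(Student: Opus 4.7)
The plan is to treat $p_y(\eps_1, x-y)\, dy$ as an approximate identity, exploiting the three properties recorded in Proposition \ref{pyintegral}: the uniform integrability bound (\ref{pyintegral1}), the uniform off-diagonal smallness (\ref{supsup}), and the uniform total-mass convergence (\ref{limp_y}).

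First I would write the standard decomposition
\begin{equation*}
\int_{\R^d} p_y(\eps_1,x-y) f_t(y)\, dy - f_t(x)
= \int_{\R^d} p_y(\eps_1,x-y)\bigl[f_t(y) - f_t(x)\bigr]\, dy
+ f_t(x)\left[\int_{\R^d} p_y(\eps_1,x-y)\, dy - 1\right].
\end{equation*}
The second term is bounded by $\|f\|_\infty$ times the quantity in (\ref{limp_y}), and hence tends to zero as $\eps_1 \to 0^+$, uniformly in $(t,x) \in [\tau_1,\tau_2] \times \R^d$.

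For the first term, given $\eta > 0$, by uniform continuity of $f_t(x)$ on $[\tau_1,\tau_2]\times\R^d$ I choose $\delta_1 > 0$ such that $|f_t(y) - f_t(x)| \le \eta$ whenever $|x-y| \le \delta_1$, uniformly in $t \in [\tau_1,\tau_2]$. Splitting the integral at $|x-y| = \delta_1$, the contribution from $|x-y| \le \delta_1$ is bounded by $\eta \int_{\R^d} p_y(\eps_1,x-y)\, dy \le c\eta$ via (\ref{pyintegral1}), and the contribution from $|x-y| > \delta_1$ is bounded by $2\|f\|_\infty \int_{B^c(x,\delta_1)} p_y(\eps_1,x-y)\, dy$, which by (\ref{supsup}) tends to $0$ as $\eps_1 \to 0^+$ uniformly in $x$. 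Since $\eta$ was arbitrary, this completes the proof.

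There is no real obstacle here; the entire argument is a routine approximate-identity estimate, and the only subtlety—namely that $p_y$ depends on the variable of integration $y$ rather than only on $x-y$, so it is not a convolution kernel—has already been absorbed into Proposition \ref{pyintegral} via the change-of-variables argument built around Lemma \ref{intA} and Remark \ref{intA1}.
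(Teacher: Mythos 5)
Your proof is correct and follows exactly the route the paper intends: the paper's own proof of Lemma \ref{pyeps_limit} is the single sentence ``The lemma follows easily by Proposition \ref{pyintegral},'' and your write-up is simply the standard approximate-identity argument spelled out, using precisely the three ingredients (\ref{pyintegral1}), (\ref{supsup}), and (\ref{limp_y}) from that proposition.
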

\begin{proof}
The lemma follows easily by Propostion  \ref{pyintegral}.
\end{proof}

For any $t > 0$, $x,y \in \R^d$ we define
$$
\varphi_y(t,x) = \int_0^t \int_{\R^d} p_z(t-s,x-z) q(s,z,y) \, dz  \, ds.
$$
Clearly we have 
$$
u(t,x,y) = p_y(t,x-y) + \varphi_y(t,x).
$$
For any $t > 0$, $x,y \in \R^d$, $f \in \Bb$ we define
$$
\Phi_t f(x) = \int_{\R^d} \varphi_y(t,x) f(y) \, dy,
$$
$$
U_t f(x) = \int_{\R^d} u(t,x,y) f(y) \, dy.
$$
$$
Q_t f(x) = \int_{\R^d} q(t,x,y) f(y) \, dy.
$$

Now following ideas from \cite{KK2018} we will define the so-called approximate solutions. 

For any $t \ge 0$, $\xi \in [0,1]$, $t + \xi > 0$, $x,y \in \R^d$ we define
$$
\varphi_y^{(\xi)}(t,x) = \int_0^t \int_{\R^d} p_z(t-s+\xi,x-z) q(s,z,y) \, dz  \, ds
$$
and
$$
u^{(\xi)}(t,x,y) = p_y(t+\xi,x-y) + \varphi_y^{(\xi)}(t,x).
$$
For any $t \ge 0$, $\xi \in [0,1]$, $t + \xi > 0$, $x,y \in \R^d$, $f \in \Bb$ we define
$$
\Phi_t^{(\xi)} f(x) = \int_{\R^d} \varphi_y^{(\xi)}(t,x) f(y) \, dy,
$$
$$
U_t^{(\xi)} f(x) = \int_{\R^d} u^{(\xi)}(t,x,y) f(y) \, dy,
$$
$$
\Phi_0 f(x) = 0, \quad U_0^{(0)} f(x) = U_0 f(x) = f(x).
$$

By the same arguments as Corollary \ref{uintegral} we obtain the following result.
\begin{corollary}
\label{ueps_estimate}
For any $t \in [0,\infty)$, $\xi \in [0,1]$, $t + \xi > 0$, $x, y \in \R^d$ the kernel $u^{(\xi)}(t,x,y)$ is well defined. For any $t \in (0,\tau]$, $\xi \in [0,1]$, $x,y \in \R^d$ we have
$$
|u^{(\xi)}(t,x,y)| \le \frac{c}{(t + \xi)^{d/\alpha} (1 + |x - y|)^{d+1}}.
$$
For any $t \in (0,\tau]$, $\xi \in [0,1]$ and $x \in \R^d$ we have
\begin{equation*}
\int_{\R^d} |u^{(\xi)}(t,x,y)| \, dy \le c,
\end{equation*}
\begin{equation*}
\int_{\R^d} |u^{(\xi)}(t,y,x)| \, dy \le c.
\end{equation*}
\end{corollary}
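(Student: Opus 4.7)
The plan is to follow the proof of Corollary \ref{uintegral} essentially verbatim, replacing the time argument $t$ by $t+\xi$ in every factor of the form $p_\bullet(\cdot,\cdot)$ that appears. The shift from $t-s$ to $t-s+\xi$ inside $p_z$ is an improvement rather than an obstacle: when $\xi>0$ this factor no longer blows up as $s\to t$, and when $\xi=0$ the whole corollary reduces to Corollary \ref{uintegral}. Using the decomposition $u^{(\xi)}(t,x,y) = p_y(t+\xi, x-y) + \varphi_y^{(\xi)}(t,x)$, I would bound the two summands separately.

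For the pointwise bound, Corollary \ref{estimate_pytx} applied at time $t+\xi\in(0,\tau+1]$ yields $p_y(t+\xi, x-y) \le c(t+\xi)^{-d/\alpha} e^{-c|x-y|}$, which is dominated by $c/((t+\xi)^{d/\alpha}(1+|x-y|)^{d+1})$. For the convolution term $\varphi_y^{(\xi)}$, I would mimic the split $[0,t/2]\cup[t/2,t]$ used in the proof of Corollary \ref{uintegral}, but distinguish two regimes depending on the relative sizes of $\xi$ and $t$. If $\xi\le t$, then $t+\xi\le 2t$, so the original $c/t^{d/\alpha}$ bound produced by that proof is equivalent to $c/(t+\xi)^{d/\alpha}$. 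If $\xi>t$, then $t-s+\xi\ge\xi$ uniformly on $s\in[0,t]$, giving $p_z(t-s+\xi, x-z) \le c/\xi^{d/\alpha} \le c\cdot 2^{d/\alpha}/(t+\xi)^{d/\alpha}$; combined with $\int_0^t\int|q(s,z,y)|\,dz\,ds \le c\sqrt{\tau}$ from (\ref{integralq1}) this yields the desired bound. The polynomial decay factor $(1+|x-y|)^{-(d+1)}$ is recovered by the off-diagonal argument in Corollary \ref{uintegral}: for $|x-y|$ large, split the $z$-integration into $|x-z|>|x-y|/2$ (on which $p_z$ decays exponentially in $|x-y|$ by Corollary \ref{estimate_pytx}) and $|y-z|\ge|x-y|/2$ (on which $q$ has stretched-exponential decay by Proposition \ref{qestimate}).

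For the integral bounds, Fubini gives
$$\int_{\R^d}|u^{(\xi)}(t,x,y)|\,dy \le \int_{\R^d}p_y(t+\xi, x-y)\,dy + \int_0^t\int_{\R^d}p_z(t-s+\xi, x-z)\int_{\R^d}|q(s,z,y)|\,dy\,dz\,ds.$$
Proposition \ref{pyintegral} applied at time $t+\xi\in(0,\tau+1]$ controls the first term. For the second, inequality (\ref{integralq1}) bounds the inner $y$-integral by $cs^{-1/2}$; Proposition \ref{pyintegral} then bounds $\int p_z(t-s+\xi, x-z)\,dz$ by a constant uniformly in $s$ and $\xi$; the remaining $\int_0^t s^{-1/2}\,ds$ is finite and absorbed into the constant. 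The symmetric bound for $\int|u^{(\xi)}(t,y,x)|\,dy$ follows identically, using (\ref{integralq2}) in place of (\ref{integralq1}) together with the exact identity $\int_{\R^d} p_z(t-s+\xi, y-z)\,dy = 1$ (since $p_z(\cdot,\cdot)$ is a probability density in its spatial argument). The only subtle point, and the main thing to keep track of, is the case split between $\xi\le t$ and $\xi>t$ in the pointwise estimate; beyond that no new ingredient is required.
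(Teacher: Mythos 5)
Your proposal is correct and is exactly the elaboration of what the paper tacitly intends by ``the same arguments as Corollary \ref{uintegral}'': replace every time argument by its $\xi$-shift, reuse Corollary \ref{estimate_pytx}, Proposition \ref{pyintegral} and Proposition \ref{qestimate}, and observe that the shift only improves integrability. The case split $\xi\le t$ versus $\xi>t$ is the one genuinely new (and necessary) step for the on-diagonal factor $(t+\xi)^{-d/\alpha}$, and you identify it correctly; the only blemish is a harmless label slip early on, where the bound on $\int_0^t\int|q(s,z,y)|\,dz\,ds$ comes from (\ref{integralq2}) rather than (\ref{integralq1}).
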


\begin{lemma}
\label{qtf_uniform}
Let $f \in C_0(\R^d)$ and $\tau \ge \tau_2 > \tau_1 > 0$. Then $Q_t f(x)$ as a function of $(t,x)$ is uniformly continuous on $[\tau_1,\tau_2] \times \R^d$. We have $\lim_{|x| \to \infty} Q_t f(x) = 0$ uniformly in $t \in [\tau_1,\tau_2]$. For each $t > 0$ we have $Q_t f \in C_0(\R^d)$.
\end{lemma}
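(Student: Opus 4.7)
The plan is to establish, in sequence: (a) joint continuity of $(t,x)\mapsto Q_tf(x)$ on $(0,\infty)\times\R^d$, (b) uniform decay $Q_tf(x)\to 0$ as $|x|\to\infty$ uniformly for $t\in[\tau_1,\tau_2]$, (c) uniform continuity on $[\tau_1,\tau_2]\times\R^d$ via a compactness argument combining (a) and (b), and (d) the $C_0$ property at each fixed $t>0$ as an immediate consequence.

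For (a), on any strip $t\in[\tau_1,\tau_2]$, Proposition \ref{qestimate} provides the $y$-integrable dominating bound $|q(t,x,y)|\le c(1+|x-y|)^{-d-1}$ with $c$ independent of $t$ in the strip. Hence it is enough to prove joint continuity of $(t,x)\mapsto q(t,x,y)$ for each $y$. For $q_0$ this follows from Lemma \ref{pycontinuity} applied inside formula (\ref{term1}), passing the limit under the $w$-integral by means of the integrable majorant supplied in the proof of Lemma \ref{Lxpyestimate}; for $n\ge 1$, the joint continuity of $q_n$ propagates inductively through (\ref{defqn}) by dominated convergence using (\ref{qnestimate1}); finally the series $q=\sum q_n$ converges uniformly on $[\tau_1,\tau_2]\times\R^d\times\R^d$ by (\ref{qnestimate1}) together with (\ref{sumsigma}).

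For (b), fix $\epsilon>0$ and pick $R$ so that $|f(y)|<\epsilon$ for $|y|>R$. Split
\begin{equation*}
|Q_tf(x)|\le \|f\|_\infty\int_{|y|\le R}|q(t,x,y)|\,dy+\epsilon\int_{|y|>R}|q(t,x,y)|\,dy.
\end{equation*}
The second term is at most $c\epsilon\tau_1^{-1/2}$ by (\ref{integralq1}), uniformly in $t\in[\tau_1,\tau_2]$. For $|x|\ge R+a$ (with $a$ from Proposition \ref{qestimate}) and $|y|\le R$, one has $|x-y|\ge |x|-R\ge a$, so the exponential bound of Proposition \ref{qestimate} gives $|q(t,x,y)|\le c\, e^{-c_3\sqrt{|x|-R}}$, and the first term tends to $0$ as $|x|\to\infty$, uniformly in $t\in[\tau_1,\tau_2]$. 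Letting $\epsilon\to 0$ afterwards gives (b).

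For (c), given $\epsilon>0$, part (b) yields $M$ such that $|Q_tf(x)|<\epsilon$ for all $|x|>M$ and $t\in[\tau_1,\tau_2]$. Heine--Cantor applied to the restriction of the continuous function $(t,x)\mapsto Q_tf(x)$ to the compact set $[\tau_1,\tau_2]\times\overline{B(0,M+1)}$ produces $\delta\in(0,1)$ such that $|Q_tf(x)-Q_sf(y)|<\epsilon$ on this set whenever $|t-s|+|x-y|<\delta$. A short case analysis using $\delta<1$ (so that $|x-y|<\delta$ forces either both of $x,y$ to lie in $\overline{B(0,M+1)}$ or both to lie outside $\overline{B(0,M)}$) shows $|Q_tf(x)-Q_sf(y)|<2\epsilon$ in all remaining cases. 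Part (d) then follows from (a) and (b) applied at $\tau_1=\tau_2=t$, since continuity plus uniform decay at spatial infinity is precisely the $C_0$ property.

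The main obstacle is the bookkeeping required for joint continuity of $q$ through the Levi series $\sum_n q_n$; once this is secured, the remaining claims reduce to a standard splitting/compactness routine driven by the already established estimates of Proposition \ref{qestimate}.
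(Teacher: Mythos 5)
Your plan for parts (b)–(d) is sound and essentially matches the paper's use of Proposition \ref{qestimate} for decay plus a compactness argument. The base case $q_0$ in (a) is also fine (modulo the fact that one also needs a uniform tail estimate on the truncated $w$-integral, as via (\ref{est2}), not merely a pointwise integrable majorant). The gap is in the inductive step for $n\ge 1$.

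You claim that joint continuity of $q_n(t,x,y)$ in $(t,x)$ propagates through
$q_n(t,x,y)=\int_0^t\int_{\R^d} q_0(t-s,x,z)\,q_{n-1}(s,z,y)\,dz\,ds$
``by dominated convergence using (\ref{qnestimate1}).'' Estimate (\ref{qnestimate1}) is purely pointwise, and for $n=1$ it yields
$|q_0(t-s,x,z)\,q_0(s,z,y)|\le c\,(t-s)^{-1-d/\alpha}\,s^{-1-d/\alpha}\,e^{-c|x-z|}e^{-c|y-z|}$.
Although the $z$-integral of this is finite, the resulting $s$-bound $\int_0^t (t-s)^{-1-d/\alpha}s^{-1-d/\alpha}\,ds$ diverges, so there is no integrable dominating function derivable from (\ref{qnestimate1}) alone, and your DCT step as stated is not justified. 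The well-definedness and continuity here genuinely depend on the $L^1$-in-$z$ bounds $\int_{\R^d}|q_0(s,z,y)|\,dz\le cs^{-1/2}$ from (\ref{q0int})--(\ref{q0intx}) (and their iterates (\ref{integralqn1})--(\ref{integralqn2})), used in exactly the two-sided split that appears in the proof of Lemma \ref{integralqn}. Your argument becomes correct once you replace the appeal to (\ref{qnestimate1}) by those integral estimates and carry out a split of $\int_0^t$ near both endpoints.

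Worth noting: the paper avoids this entirely by promoting the induction to the operator level, proving continuity of $Q_{n,t}f(x)=\int_{\R^d}q_n(t,x,y)f(y)\,dy$ directly via $Q_{n,t}f(x)=\int_0^t\int q_0(t-s,x,z)Q_{n-1,s}f(z)\,dz\,ds$. There $Q_{n-1,s}f$ obeys the tame pointwise bound (\ref{qnt}) of order $s^{n/2-1}$, and the tail $\int_{t-\eps_1}^t$ is controlled uniformly by (\ref{q0int}); no pointwise continuity of the raw kernel is ever needed. Your route proves something slightly stronger (joint continuity of $q$ itself, for each fixed $y$) but at the cost of a more delicate domination argument; the paper's route is more economical because the $y$-integration has been absorbed from the outset.
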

\begin{proof}
For any $\zeta > 0$, $y \in \R^d$ by Lemma \ref{pycontinuity} we obtain that
$$
(t,x) \to \calL_{\zeta}^{x}p_y(t,\cdot)(x-y) - \calL_{\zeta}^{y} p_y(t,\cdot)(x-y)
$$
is continuous on $(0,\infty) \times \R^d$. Using this and (\ref{est2}) we obtain that 
\begin{equation}
\label{q0_cont}
(t,x) \to q_0(t,x,y) \quad \text{is continuous on $(0,\infty) \times \R^d$.}
\end{equation}
By Proposition \ref{integralq0} we have
\begin{equation}
\label{q0_est}
|q_0(t,x,y)| \le \frac{c}{t^{1+d/\alpha}} e^{-c_1|x-y|}.
\end{equation}

For any $n \in \N$, $t > 0$, $x \in \R^d$ denote
$$
Q_{n,t} f(x) = \int_{\R^d} q_n(t,x,y) f(y) \, dy.
$$
By (\ref{q0_cont}), (\ref{q0_est}) and the dominated convergence theorem we obtain that $(t,x) \to Q_{0,t} f(x)$ is continuous on $(0,\infty) \times \R^d$. By Lemma \ref{integralqn} for any $t \in (0,\tau]$, $x \in \R^d$, $n \in \N$ we have
\begin{equation}
\label{qnt}
|Q_{n,t} f (x)| \le \frac{c_1^{n+1} t^{(n+1)/2 - 1}}{(n!)^{1/2}} \|f\|_{\infty}.
\end{equation}
Note that for any $t > 0$, $x \in \R^d$, $n \in \N$, $n \ge 1$ we have
$$
Q_{n,t} f (x) = \int_0^t \int_{\R^d} q_0(t-s,x,z) Q_{n-1,s}f(z) \, dz \, ds.
$$
For any $\eps_1 \in (0,\tau_1/2)$ using (\ref{q0_cont}), (\ref{q0_est}) and (\ref{qnt}) we obtain that
$$
(t,x) \to \int_0^{t - \eps_1} \int_{\R^d} q_0(t-s,x,z) Q_{n-1,s}f(z) \, dz \, ds
$$
is continuous on $[\tau_1,\tau_2] \times \R^d$. Note also that for any $\eps_1 \in (0,\tau_1/2)$, $t \in [\tau_1,\tau_2]$, $x \in \R^d$, $n \in \N$, $n \ge 1$ we have by (\ref{q0int})
\begin{eqnarray*}
\left| \int_{t - \eps_1}^t \int_{\R^d} q_0(t-s,x,z) Q_{n-1,s}f(z) \, dz \, ds \right| 
&\le& c \|f\|_{\infty} \int_{t - \eps_1}^t (t-s)^{-1/2} s^{-1/2} \, ds\\ 
&\le& c  \tau_1^{-1/2} \eps_1^{1/2} \|f\|_{\infty}.
\end{eqnarray*}
This implies that $(t,x) \to Q_{n,t}f(x)$ is continuous on $[\tau_1,\tau_2] \times \R^d$. Using this and (\ref{qnt}) we obtain that $(t,x) \to Q_t f(x) = \sum_{n =0}^{\infty} Q_{n,t}f(x)$ is continuous on $[\tau_1,\tau_2] \times \R^d$. By Proposition \ref{qestimate} we obtain that $\lim_{|x| \to \infty} Q_t f(x) = 0$ uniformly in $t \in [\tau_1,\tau_2]$. This implies the assertion of the lemma.
\end{proof}

\begin{proposition}
\label{utfholder}
Choose $\gamma \in (0,\alpha)$. For any $t \in (0,\tau]$, $x, x' \in \R^d$, $f \in \Bb$ we have
$$
|U_tf(x) - U_tf(x')| \le c t^{-\gamma/\alpha} |x - x'|^{\gamma} \|f\|_{\infty}.
$$
\end{proposition}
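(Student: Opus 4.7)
The plan is to split
\begin{equation*}
U_t f(x) = \int_{\R^d} p_y(t,x-y)\, f(y)\, dy + \Phi_t f(x)
\end{equation*}
and to estimate the difference of each summand at $x$ and $x'$ separately, using Lemma \ref{pyholder} as the main tool, combined with the integral bounds in Proposition \ref{pyintegral} and the $L^1$-in-$y$ bound for $q(s,z,y)$ from Proposition \ref{qestimate}. Note that the hypothesis $\gamma \in (0,\alpha)$ and $\alpha \in (0,1)$ gives $\gamma \in (0,1)$, so Lemma \ref{pyholder} applies with this exponent.

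For the first summand, I would apply Lemma \ref{pyholder} pointwise to $p_y(t,x-y) - p_y(t,x'-y)$, getting the bound
\begin{equation*}
c\,(1 \wedge t^{-\gamma/\alpha}|x-x'|^{\gamma})\bigl(r_y(t,(x-y)/2) + r_y(t,(x'-y)/2)\bigr),
\end{equation*}
multiply by $|f(y)| \le \|f\|_\infty$, integrate in $y$, and invoke (\ref{ryintegral1}) to kill the $y$-integrals. That immediately gives the required bound $c\,t^{-\gamma/\alpha}|x-x'|^{\gamma}\|f\|_\infty$ for this piece.

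For the second summand, I would use Fubini to rewrite
\begin{equation*}
\Phi_t f(x) = \int_0^t \int_{\R^d} p_z(t-s,x-z)\, Q_s f(z)\, dz\, ds,
\end{equation*}
apply Lemma \ref{pyholder} again to $p_z(t-s,x-z) - p_z(t-s,x'-z)$ (now with the time variable $t-s$), and use the uniform pointwise bound $|Q_s f(z)| \le c\,s^{-1/2}\|f\|_\infty$, which follows from (\ref{integralq1}). The integral in $z$ is then absorbed by (\ref{ryintegral1}), reducing the task to controlling
\begin{equation*}
|x-x'|^{\gamma}\,\|f\|_\infty \int_0^t (t-s)^{-\gamma/\alpha}\, s^{-1/2}\, ds.
\end{equation*}
Because $\gamma/\alpha < 1$ and $1/2 < 1$, the Beta integral converges and equals a constant times $t^{1/2 - \gamma/\alpha}$. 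Since $t \le \tau$, the extra factor $t^{1/2}$ is bounded and we obtain the same $c\,t^{-\gamma/\alpha}|x-x'|^{\gamma}\|f\|_\infty$ bound for $\Phi_t$.

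There is no substantive obstacle; the proof is a bookkeeping assembly of Lemma \ref{pyholder}, (\ref{ryintegral1}) and (\ref{integralq1}), with the sole point requiring attention being the integrability at $s=t$ of $(t-s)^{-\gamma/\alpha}$, which is exactly where the restriction $\gamma<\alpha$ is used. Summing the two contributions yields the claimed estimate.
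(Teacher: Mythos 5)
Your proof is correct and follows essentially the same route as the paper: the same decomposition of $U_t f$ via $p_y$ and $\Phi_t$, pointwise application of Lemma~\ref{pyholder}, integration in space via Proposition~\ref{pyintegral}, the pointwise bound $|Q_s f| \le c\,s^{-1/2}\|f\|_\infty$ from Proposition~\ref{qestimate}, and the convergent Beta integral in $s$ using $\gamma < \alpha$. No meaningful differences.
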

\begin{proof}
We have
\begin{eqnarray*}
U_tf(x) - U_tf(x') 
&=& \int_{\R^d} (p_y(t,x-y) - p_y(t,x'-y)) f(y) \, dy\\
&& + \int_0^t \int_{\R^d} (p_z(t-s,x-z) - p_z(t-s,x'-z)) Q_s f(z) \, dz \, ds\\
&=& \text{I} + \text{II}.
\end{eqnarray*}
By Lemma \ref{pyholder} and Proposition \ref{pyintegral} we get
$$
|\text{I}| \le c \|f\|_{\infty} |x-x'|^{\gamma} t^{-\gamma/\alpha} \int_{\R^d} (r_y(t,(x-y)/2) + r_y(t,(x'-y)/2)) \, dy
\le c \|f\|_{\infty} |x-x'|^{\gamma} t^{-\gamma/\alpha}.
$$
By Lemma \ref{pyholder} and Propositions \ref{pyintegral}, \ref{qestimate} we obtain
\begin{eqnarray*}
|\text{II}| &\le& 
c \|f\|_{\infty} |x-x'|^{\gamma}
\int_0^t \int_{\R^d} (t - s)^{-\gamma/\alpha} \left(r_z(t-s,(x-z)/2) \right.\\
&& \quad \quad \quad \quad \quad \quad \quad \quad \quad \quad  + \left. r_z(t-s,(x'-z)/2)\right) s^{-1/2} \, dz \, ds\\
&\le& c \|f\|_{\infty} |x-x'|^{\gamma} \int_0^t (t - s)^{-\gamma/\alpha} s^{-1/2} \, ds\\
&\le& c \|f\|_{\infty} |x-x'|^{\gamma} t^{1/2 -\gamma/\alpha}.
\end{eqnarray*}
\end{proof}

Note that by Lemma \ref{Lxpyestimate} for any $\xi \in (0,1]$, $t \in [\xi,\tau+\xi]$, $x, z \in \R^d$ we have
\begin{equation}
\label{tderivative}
\left|\frac{\partial p_z(t,x-z)}{t}\right| = \left|\calL^z p_z(t,\cdot)(x-z)\right|\le c(\xi) e^{-c|x-z|},  
\end{equation}
where $c(\xi)$ is a constant depending on $\xi, \tau, \alpha, d, \eta_1, \eta_2, \eta_3, \eps, \delta$.

The next lemma is similar to \cite[Lemma 4.1]{KK2018}. 
\begin{lemma}
\label{regularity_uteps}
(i) For every $f \in C_0(\R^d)$, $\xi \in (0,1]$ the function $U_t^{(\xi)} f(x)$ belongs to $C^1((0,\infty))$ as a function of $t$ and to $C_0^2(\R^d)$ as a function of $x$. Moreover we have
\begin{equation}
\label{tderivative_estimate}
\left|\frac{\partial}{\partial t} (U_t^{(\xi)} f)(x)\right| \le c({\xi}) t^{-1/2} \|f\|_{\infty},
\end{equation}
for each $f \in C_0(\R^d)$, $t \in (0,\tau]$, $x \in \R^d$, $\xi \in (0,1]$, where $c(\xi)$ depends on $\xi, \tau, \alpha, d, \eta_1, \eta_2, \eta_3$.

(ii) For every $f \in C_0(\R^d)$ we have
\begin{equation*}
\lim_{t,\xi \to 0^+}\|U_t^{(\xi)} f - f\|_{\infty} = 0.
\end{equation*}

(iii) For every $f \in C_0(\R^d)$ we have
\begin{equation*}
U_t^{(\xi)} f(x) \to 0, \quad \text{as} \quad |x| \to \infty,
\end{equation*}
uniformly in $t \in [0,\tau]$, $\xi \in [0,1]$.

(iv) For every $f \in C_0(\R^d)$ we have
\begin{equation*}
\|U_t^{(\xi)} f - U_t f\|_{\infty} \to 0, \quad \text{as} \quad \xi \to 0^+,
\end{equation*}
uniformly in $t \in [0,\tau]$. 
\end{lemma}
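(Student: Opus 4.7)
The overall plan is to exploit the time shift $\xi>0$ as a regularizer: for $u\ge\xi$ the frozen kernel $p_y(u,\cdot)$ is as smooth as we want, and the derivative estimates of Lemmas \ref{pycontinuity}, \ref{properties_pytx} and \ref{Lxpyestimate} only lose a factor $c(\xi)$ but remain uniform in the remaining variables. Throughout I would split $U_t^{(\xi)} f(x) = \int_{\R^d} p_y(t+\xi,x-y) f(y)\, dy + \Phi_t^{(\xi)} f(x)$, and control each piece using $\|Q_s f\|_\infty \le c s^{-1/2}\|f\|_\infty$ (from (\ref{integralq1})) together with $\int p_z(u,x-z)\,dz \le c$ (Proposition \ref{pyintegral}). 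For (i), $C^2$ regularity in $x$ follows from differentiating under the integral, justified by Lemma \ref{properties_pytx} since $|\partial_x^\beta p_z(u,x-z)|\le c(\xi)(1+|x-z|)^{-d-1}$ is integrable in $z$; the decay at infinity that upgrades $C^2$ to $C_0^2$ is a byproduct of the argument sketched below for (iii). For the time derivative I would apply Leibniz to $\Phi_t^{(\xi)}$, obtaining a boundary term $\int p_z(\xi,x-z) Q_t f(z)\, dz$ bounded by $c t^{-1/2}\|f\|_\infty$, plus an interior term whose integrand $\mathcal{L}^z p_z(t-s+\xi,\cdot)(x-z)$ is controlled by $c(\xi) e^{-c|x-z|}$ through Lemma \ref{Lxpyestimate}. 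Combined with the analogous bound $c(\xi)\|f\|_\infty$ for $\partial_t\int p_y(t+\xi,x-y) f(y)\,dy$, this yields (\ref{tderivative_estimate}).

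Part (ii) is a standard approximate-identity computation: uniform continuity of $f\in C_0(\R^d)$ together with (\ref{supsup}) and (\ref{limp_y}) give $\sup_x|\int p_y(t+\xi,x-y) f(y)\,dy - f(x)|\to 0$ as $t+\xi\to 0^+$, while $|\Phi_t^{(\xi)} f(x)|\le c\|f\|_\infty\int_0^t s^{-1/2}\,ds = c t^{1/2}\|f\|_\infty$ vanishes uniformly in $\xi$. For (iii), fix $\kappa>0$ and pick $R$ with $|f(y)|<\kappa$ on $|y|>R$. The crucial observation is that for $|x-y|$ large, Corollary \ref{estimate_pytx}(\ref{htht2}) combined with $p_y(u,x-y)\le c\,r_y(u,(x-y)/2)$ yields $p_y(u,x-y)\le c e^{-c|x-y|/2}$, \emph{uniformly} in $u\in(0,\tau+1]$, which bounds the $|y|\le R$ piece of the free kernel by a quantity vanishing as $|x|\to\infty$. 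For $\Phi_t^{(\xi)} f(x)$, split the $z$-integral at $|z|=|x|/2$: on $|z|\le |x|/2$ use the same uniform exponential bound on $p_z$ together with $|Q_s f(z)|\le c s^{-1/2}\|f\|_\infty$; on $|z|>|x|/2$ decompose $Q_s f(z)=\int q(s,z,y) f(y)\,dy$ at $|y|=R$, using $\int|q(s,z,y)|\,dy\le c s^{-1/2}$ on the tail and the exponential bound $|q(s,z,y)|\le c e^{-c_3\sqrt{|z-y|}}$ for $|z-y|\ge a$ from Proposition \ref{qestimate} on the core, to obtain $|Q_s f(z)|\le c\kappa s^{-1/2} + c e^{-c_3\sqrt{|x|/4}}\|f\|_\infty$ for $|x|$ sufficiently large.

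The principal obstacle is (iv), because the naive identity $\int_t^{t+\xi}\partial_s p_y(s,\cdot)\,ds$ demands a positive lower bound on $s$, which is unavailable as $t\to 0^+$. I would fix an auxiliary $\kappa\in(0,t)$ and decompose
\begin{align*}
\Phi_t^{(\xi)} f(x)-\Phi_t f(x) &= \int_0^{t-\kappa}\!\int_{\R^d}\bigl(p_z(t-s+\xi,x-z)-p_z(t-s,x-z)\bigr) Q_s f(z)\,dz\,ds \\
&\quad + \int_{t-\kappa}^{t}\!\int_{\R^d}\bigl(p_z(t-s+\xi,x-z)-p_z(t-s,x-z)\bigr) Q_s f(z)\,dz\,ds.
\end{align*}
On the first piece $t-s\ge\kappa$, so Lemma \ref{Lxpyestimate} together with the fundamental theorem of calculus gives $|p_z(t-s+\xi,x-z)-p_z(t-s,x-z)|\le c(\kappa)\xi e^{-c|x-z|}$, totalling at most $c(\kappa)\xi\|f\|_\infty$. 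On the second piece use $\|Q_s f\|_\infty\le c s^{-1/2}\|f\|_\infty$ and $\int p_z(u,x-z)\,dz\le c$ together with the concavity estimate $\int_{t-\kappa}^t s^{-1/2}\,ds = 2(t^{1/2}-(t-\kappa)^{1/2})\le 2\kappa^{1/2}$ (valid uniformly in $t$), producing a bound $c\kappa^{1/2}\|f\|_\infty$. The free-kernel difference $\int(p_y(t+\xi,x-y)-p_y(t,x-y))f(y)\,dy$ is handled analogously: for $t\le t_0$ invoke the approximate-identity argument from (ii), while for $t\ge t_0$ bound $|\partial_s p_y(s,x-y)|\le c(t_0) e^{-c|x-y|}$ via Lemma \ref{Lxpyestimate}. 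Sending $\xi\to 0^+$ first and then $\kappa\to 0^+$ closes the argument.
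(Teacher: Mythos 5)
Your proof is correct and follows the paper's route for parts (i) and (ii): same Leibniz decomposition of $\Phi_t^{(\xi)}$ into the boundary term $\int p_z(\xi,x-z)Q_tf(z)\,dz$ plus an interior integral controlled via Lemma~\ref{Lxpyestimate}, same approximate-identity computation with $|\Phi_t^{(\xi)}f(x)|\le c\|f\|_\infty t^{1/2}$.

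For (iii) and (iv) you take a genuinely different and more hands-on route, and it is worth noting how the paper saves effort. For (iii) the paper simply combines (ii) with Corollary~\ref{ueps_estimate}: for $t+\xi\ge\sigma$ the combined kernel bound $|u^{(\xi)}(t,x,y)|\le c\,\sigma^{-d/\alpha}(1+|x-y|)^{-d-1}$ gives the decay in $x$ directly, while for $t+\xi<\sigma$ one is within $\eps_1$ of $f\in C_0$; your version reproves this from scratch by unwinding $p_y$ and $q$ into their exponential tails, which works but re-derives what the pointwise kernel estimate already packages. For (iv) the paper does not split time at $\kappa$ at all: it establishes joint continuity of $(\xi,t,x)\mapsto U_t^{(\xi)}f(x)$ on $[0,1]\times(0,\tau]\times\R^d$ via dominated convergence, uses (\ref{phitxi}) to extend continuity to $t=0$, and then invokes (iii) to reduce to a compact set where joint continuity automatically yields uniform convergence as $\xi\to 0^+$. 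Your explicit decomposition at $t-\kappa$ \textemdash{} with the fundamental theorem of calculus and (\ref{tderivative}) on $[0,t-\kappa]$ and the uniform bound $\int_{t-\kappa}^t s^{-1/2}\,ds\le 2\kappa^{1/2}$ on $[t-\kappa,t]$, then sending $\xi\to 0^+$ before $\kappa\to 0^+$ \textemdash{} is more elementary and even yields a quantitative rate, but is longer; the paper's softer topological argument is shorter at the cost of routing through (iii). One small point to tighten: when you claim in (i) that $C_0^2$ in $x$ is "a byproduct of (iii)," you should make explicit that after differentiating under the integral the decay of $\partial_x^\beta\Phi_t^{(\xi)}f(x)$ also requires decay of $Q_sf$ at infinity (the paper cites Lemma~\ref{qtf_uniform} for exactly this), which your (iii) argument does establish implicitly but does not say so.
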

\begin{proof}
(i) Let $f \in C_0(\R^d)$, $t \in (0,\tau]$, $\xi \in (0,1]$ and $x \in \R^d$. We have
\begin{eqnarray*}
&& \lim_{h \to 0^+} \frac{\Phi_{t+h}^{(\xi)} f(x) - \Phi_{t}^{(\xi)} f(x)}{h}\\
&=& \lim_{h \to 0^+} \frac{1}{h} \int_t^{t+h} \int_{\R^d} p_z(t+h-s+\xi,x-z) Q_sf(z) \, dz \, ds\\
&& + \lim_{h \to 0^+} \int_0^t \int_{\R^d} \frac{p_z(t+h-s+\xi,x-z) - p_z(t-s+\xi,x-z)}{h}
Q_sf(z) \, dz \, ds\\
&=& \text{I} + \text{II}.
\end{eqnarray*}
By Lemmas \ref{pycontinuity}, \ref{qtf_uniform}, Corollary \ref{estimate_pytx} and Proposition \ref{qestimate} we get
$$
\text{I} = \int_{\R^d} p_z(\xi,x-z) Q_t f(z) \, dz.
$$

By Lemma \ref{pycontinuity}, the dominated convergence theorem, (\ref{tderivative}) and Proposition \ref{qestimate} we get
$$
\text{II} = \int_0^{t} \int_{\R^d}  \frac{\partial p_z(t-s+\xi,x-z)}{\partial t} Q_sf(z) \, dz \, ds.
$$
By similar arguments we get the analogous result for $\lim_{h \to 0^-} \left(\Phi_{t+h}^{(\xi)} f(x) - \Phi_{t}^{(\xi)} f(x)\right)/h$.

By (\ref{tderivative}) we get
$$
\frac{\partial}{\partial t} \int_{\R^d} p_z(t+\xi,x-z) f(z) \, dz 
= \int_{\R^d} \frac{\partial}{\partial t} p_z(t+\xi,x-z) f(z) \, dz.
$$
Hence we have
\begin{eqnarray}
\nonumber
\frac{\partial}{\partial t} (U_t^{(\xi)} f)(x)
&=& \int_{\R^d} \frac{\partial}{\partial t} p_z(t+\xi,x-z) f(z) \, dz\\
\nonumber
&+& \int_{\R^d} p_z(\xi,x-z) Q_t f(z) \, dz\\
\label{tchange}
&+& \int_0^{t} \int_{\R^d}  \frac{\partial p_z(t-s+\xi,x-z)}{\partial t} Q_sf(z) \, dz \, ds
\end{eqnarray}
Using this, (\ref{tderivative}), Propositions \ref{pyintegral} and \ref{qestimate} we obtain (\ref{tderivative_estimate}). We also obtain that for every $f \in C_0(\R^d)$, $\xi \in (0,1]$ the function $U_t^{(\xi)} f(x)$ belongs to $C^1((0,\infty))$ as a function of $t$. 

The fact that $U_t^{(\xi)} f \in C_0^2(\R^d)$ for $\xi \in (0,1]$ follows by Lemmas \ref{pycontinuity}, \ref{properties_pytx}, Proposition \ref{qestimate} and Lemma \ref{qtf_uniform}.

(ii) Fix $f \in C_0(\R^d)$. For any $\xi \in [0,1]$, $t \ge 0$, $t + \xi > 0$, $x \in \R^d$ we have
$$
U_t^{(\xi)} f(x) = \int_{\R^d} p_y(t + \xi, x - y) f(y) \, dy + \Phi_t^{(\xi)} f(x).
$$
For any $\xi \in [0,1]$, $t \in [0,\tau]$, $\xi + t > 0$, $x \in \R^d$ by Proposition \ref{qestimate} and Proposition \ref{pyintegral} we get
\begin{equation}
\label{phitxi}
\left|\Phi_t^{(\xi)} f(x)\right| \le c \|f\|_{\infty} \int_0^t s^{-1/2} \, ds \le c \|f\|_{\infty} t^{1/2}.
\end{equation}
By (\ref{supsup}), Proposition  \ref{pyintegral} and the fact that $f$ is uniformly continuous on $\R^d$ we obtain
$$
\lim_{t,\xi \to 0^+} \int_{\R^d} p_y(t + \xi, x - y) f(y) \, dy - f(x) = 0
$$
uniformly with respect to $x \in \R^d$. This and (\ref{phitxi}) gives (ii).

(iii) This follows easily from (ii) and Corollary \ref{ueps_estimate}.

(iv) Fix $f \in C_0(\R^d)$. By Lemma \ref{pycontinuity}, Corollary \ref{estimate_pytx} and the dominated convergence theorem we obtain that 
$$
(t,x) \to \int_{\R^d} p_y(t,x-y) f(y) \, dy
$$
is continuous on $(0,\tau+1] \times \R^d$. It follows that
$$
(\xi,t,x) \to \int_{\R^d} p_y(t + \xi,x-y) f(y) \, dy
$$
is continuous on $[0,1]\times(0,\tau] \times \R^d$. Using Lemma \ref{pycontinuity}, Corollary \ref{estimate_pytx}, Proposition \ref{qestimate} and the dominated convergence theorem we obtain that for any $s \in (0,\tau)$ 
$$
(\xi,t,x) \to \int_{\R^d} p_z(t + \xi - s,x-z) Q_s f(z) \, dz
$$
is continuous on $[0,1]\times(s,\tau] \times \R^d$. Using this, Corollary \ref{estimate_pytx}, Proposition \ref{qestimate} and the dominated convergence theorem we obtain that
$$
(\xi,t,x) \to \Phi_t^{(\xi)}f(x) = \int_0^{\tau} 1_{(0,t)}(s) \int_{\R^d} p_z(t + \xi - s,x-z) Q_s f(z) \, dz \, ds
$$
is continuous on $[0,1] \times (0,\tau] \times \R^d$. Hence $(\xi,t,x) \to U_t^{(\xi)}f(x)$ is continuous on $[0,1] \times (0,\tau] \times \R^d$. Using (\ref{phitxi}) we obtain that
\begin{equation}
\label{ut_continuity}
(\xi,t,x) \to U_t^{(\xi)}f(x) \quad \text{is continuous on $[0,1]\times [0,\tau] \times \R^d$.}
\end{equation}
This and (iii) implies (iv).
\end{proof}

By the same arguments as in the proof of Lemma \ref{regularity_uteps} (iv) we obtain the following result.
\begin{lemma}
\label{u_continuity}
For any $f \in \Bb$ the function $(t,x) \to U_t f(x)$ is continuous on $(0,\infty) \times \R^d$. For any $\xi \in (0,1]$, $f \in \Bb$ the function $(t,x) \to U_t^{(\xi)} f(x)$ is continuous on $[0,\infty) \times \R^d$.
\end{lemma}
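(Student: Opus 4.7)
\noindent\emph{Proof plan.}
The plan is to follow the structure of the proof of Lemma \ref{regularity_uteps}(iv) verbatim, after observing that every step there relies only on boundedness of $f$ rather than on any continuity of $f$. I would decompose
\[
  U_t^{(\xi)} f(x) \;=\; \int_{\R^d} p_y(t+\xi, x-y)\, f(y)\, dy \;+\; \Phi_t^{(\xi)} f(x)
\]
and handle the two summands separately; at the end I specialise to $\xi = 0$ for the first statement and to $\xi \in (0,1]$ for the second.

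For the first summand, Lemma \ref{pycontinuity} gives joint continuity of $(t, x, y, \xi) \mapsto p_y(t + \xi, x-y)$ on the region $\{t + \xi > 0\}$, and Corollary \ref{estimate_pytx} supplies an exponential-in-$|x-y|$ upper bound that is locally uniform in $(t, x, \xi)$ once $t + \xi$ stays bounded away from $0$. Dominated convergence then yields continuity of this summand on $(0, \infty) \times \R^d$ when $\xi = 0$ and on $[0, \infty) \times \R^d$ when $\xi \in (0,1]$.

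For $\Phi_t^{(\xi)} f(x)$ I would use Fubini to rewrite
\[
  \Phi_t^{(\xi)} f(x) \;=\; \int_0^{\tau} \mathbf{1}_{(0,t)}(s) \int_{\R^d} p_z(t+\xi-s, x-z)\, Q_s f(z)\, dz\, ds,
\]
and note that (\ref{integralq1}) gives the pointwise bound $|Q_s f(z)| \le c\, s^{-1/2} \|f\|_\infty$. For each fixed $s \in (0, \tau)$, a repeat of the dominated-convergence step above shows that the inner $z$-integral is continuous in $(\xi, t, x)$ on $[0,1] \times (s, \tau] \times \R^d$. In the outer variable $s$ the integrand is bounded by $c\, s^{-1/2} \|f\|_\infty$, which is integrable on $(0, \tau)$, so a second application of dominated convergence yields continuity of $\Phi_t^{(\xi)} f$ on $[0,1] \times (0, \tau] \times \R^d$. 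The bound (\ref{phitxi}) then extends continuity up to $t = 0$ when $\xi > 0$.

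The main point to justify --- and really the only subtlety --- is that the arguments in Lemma \ref{regularity_uteps}(iv), though stated there for $f \in C_0(\R^d)$, actually need nothing beyond boundedness of $f$. Continuity of the inner integrals comes from continuity of $p_z$ in its arguments combined with integrable uniform pointwise bounds, with $Q_s f(z)$ appearing only as a measurable factor that is dominated pointwise by $c\, s^{-1/2} \|f\|_\infty$. No continuity of $Q_s f$ (hence no invocation of Lemma \ref{qtf_uniform}) is required, which is precisely why the same arguments transfer to arbitrary $f \in \Bb$.
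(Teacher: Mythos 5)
Your proof is correct and takes essentially the same route as the paper, which simply says ``by the same arguments as in the proof of Lemma~\ref{regularity_uteps}~(iv)''; your proposal makes explicit the observation that those dominated-convergence arguments use only the pointwise bound $|Q_s f(z)| \le c\,s^{-1/2}\|f\|_\infty$ from Proposition~\ref{qestimate} and never any continuity of $f$ or of $Q_s f$, which is exactly why they transfer verbatim to $f\in\Bb$.
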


Heuristically, now our aim is to show that if $\xi$ is small then $\frac{\partial}{\partial t} (U_t^{(\xi)} f)(x) - \calL (U_t^{(\xi)} f)(x)$ is small.
For any $t > 0$, $\xi \in (0,1]$, $x \in \R^d$ we put 
$$
\Lambda_t^{(\xi)} f(x) = \frac{\partial}{\partial t} (U_t^{(\xi)} f)(x) - \calL (U_t^{(\xi)} f)(x).
$$

\begin{lemma}
\label{Lambda_teps}
$\calL (U_t^{(\xi)} f)(x)$ is well defined for every $f \in C_0(\R^d)$, $t \in (0,\tau]$, $\xi \in (0,1]$ and $x \in \R^d$ and  we have
\begin{equation}
\label{Lambdat_formula}
\Lambda_t^{(\xi)} f(x) = \int_{\R^d} p_z(\xi,x-z) Q_t f(z) \, dz - Q_{t+\xi} f(x) 
+ \int_t^{t+\xi} \int_{\R^d} q_0(t-s+\xi,x,z) Q_s f(z) \, dz \, ds.
\end{equation}
Moreover we have
\begin{equation}
\label{Lzeta_estimate}
\left|\calL_{\zeta} (U_t^{(\xi)} f)(x)\right| \le c({\xi}) \|f\|_{\infty}, \quad \zeta > 0,
\end{equation}
\begin{equation}
\label{L_estimate}
\left|\calL (U_t^{(\xi)} f)(x)\right| \le c({\xi}) \|f\|_{\infty}.
\end{equation}
for each $f \in C_0(\R^d)$, $x \in \R^d$, $t \in (0,\tau]$, $\xi \in (0,1]$, where $c(\xi)$ is a constant depending on $\xi, \tau, \alpha, d, \eta_1, \eta_2, \eta_3, \eps, \delta$.
\end{lemma}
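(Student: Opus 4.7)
The plan is to compute $\calL(U_t^{(\xi)}f)(x)$ directly and then subtract from the formula \eqref{tchange} already derived for $\frac{\partial}{\partial t}(U_t^{(\xi)}f)(x)$. Write $U_t^{(\xi)}f(x) = \int_{\R^d} p_y(t+\xi,x-y) f(y)\,dy + \Phi_t^{(\xi)}f(x)$. The first step is to justify passing $\calL$ under the integral sign in each piece. For both pieces the inner time argument of $p_\cdot$ is bounded below by $\xi > 0$, so Lemma \ref{Lxpyestimate} (estimate \eqref{est1}) applies and produces the integrable majorant $c(\xi) e^{-c|x-\cdot|}$. This same bound, combined with $\|Q_s f\|_\infty \le c s^{-1/2}\|f\|_\infty$ coming from \eqref{integralq1}, makes the double integrals absolutely convergent, so Fubini yields
\begin{equation*}
\calL(U_t^{(\xi)}f)(x) = \int_{\R^d} \calL p_y(t+\xi,\cdot-y)(x) f(y)\,dy + \int_0^t\!\!\int_{\R^d} \calL p_z(t-s+\xi,\cdot-z)(x) Q_s f(z)\,dz\,ds.
\end{equation*}

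Next, I would use the identity $\calL p_y(T,\cdot-y)(x) = \calL^x p_y(T,\cdot)(x-y) = \calL^y p_y(T,\cdot)(x-y) + q_0(T,x,y)$, where the first equality is a direct substitution $v = x-y$ in the definition of $\calL$, and the second is just the definition of $q_0$. Since $p_y$ is the frozen heat kernel, $\calL^y p_y(T,\cdot)(x-y) = \partial_T p_y(T,x-y)$. Substituting this into the two integrals above and subtracting from \eqref{tchange} produces
\begin{equation*}
\Lambda_t^{(\xi)}f(x) = \int_{\R^d} p_z(\xi,x-z) Q_t f(z)\,dz - \int_{\R^d} q_0(t+\xi,x,y) f(y)\,dy - \int_0^t\!\!\int_{\R^d} q_0(t-s+\xi,x,z) Q_s f(z)\,dz\,ds.
\end{equation*}
The target formula \eqref{Lambdat_formula} now follows from the integral equation satisfied by $q$: iterating \eqref{defqn} gives $q(T,x,y) = q_0(T,x,y) + \int_0^T\!\int q_0(T-s,x,z) q(s,z,y)\,dz\,ds$, and integrating against $f(y)$ yields $Q_{t+\xi} f(x) = \int q_0(t+\xi,x,y) f(y)\,dy + \int_0^{t+\xi}\!\int q_0(t+\xi-s,x,z) Q_s f(z)\,dz\,ds$. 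Splitting the $s$-integral at $s=t$ and rearranging gives \eqref{Lambdat_formula}.

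For the $L^\infty$ bounds \eqref{Lzeta_estimate} and \eqref{L_estimate}, I would argue directly from the Fubini representation: estimate \eqref{est1} of Lemma \ref{Lxpyestimate} gives $|\calL p_y(t+\xi,\cdot-y)(x)| \le c(\xi) e^{-c|x-y|}$, so the first term is bounded by $c(\xi)\|f\|_\infty \int e^{-c|x-y|}\,dy = c(\xi)\|f\|_\infty$. For the second term, the same pointwise bound combined with $\|Q_s f\|_\infty \le c s^{-1/2}\|f\|_\infty$ yields $c(\xi)\|f\|_\infty \int_0^t s^{-1/2}\,ds \le c(\xi)\|f\|_\infty$. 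The identical argument with $\calL$ replaced by $\calL_\zeta$ (which only removes the near-origin part of an already absolutely convergent integral) gives \eqref{Lzeta_estimate} uniformly in $\zeta > 0$.

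The main obstacle is a bookkeeping rather than analytic one: carefully justifying the Fubini interchanges for the $\Phi_t^{(\xi)}$ piece, since one is exchanging three integrals ($w$, $z$, $s$) that all become singular in various regimes. The uniform bound $c(\xi) e^{-c|x-z|}$ from \eqref{est1} (valid precisely because $t-s+\xi \ge \xi$) combined with the $s^{-1/2}$ singularity of $\|Q_s f\|_\infty$ makes the triple integral absolutely convergent, but verifying this requires care. Once the interchange is done, the rest is a clean algebraic consequence of the heat equation for $p_y$ and the renewal identity for $q$.
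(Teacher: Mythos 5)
Your proposal is correct and follows essentially the same route as the paper: you pass $\calL$ under the integrals using the majorant from Lemma \ref{Lxpyestimate} (estimate \eqref{est1}) and the $s^{-1/2}$ bound on $\|Q_s f\|_\infty$, rewrite $\calL^x p_z(T,\cdot)(x-z)$ via the heat equation for the frozen kernel as $\partial_T p_z(T,x-z) + q_0(T,x,z)$, subtract from \eqref{tchange}, and finally invoke the renewal identity for $q$ with a split of the $s$-integral at $t$. The only cosmetic difference is the order in which $\calL$ versus $\calL_\zeta$ is treated (the paper establishes the formula for $\calL_\zeta$ first and passes to the limit by dominated convergence, while you note the $\calL_\zeta$ bound at the end as an afterthought), but the underlying estimates and algebra coincide.
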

\begin{proof}
Let $f \in C_0(\R^d)$, $t \in (0,\tau]$, $\xi \in (0,1]$, $x \in \R^d$ and $\zeta > 0$. We have 
\begin{eqnarray}
\nonumber
\calL_{\zeta} (U_t^{(\xi)} f)(x)
&=& \int_{\R^d} \calL_{\zeta}^x p_z(t+\xi,\cdot)(x-z) f(z) \, dz\\
\label{calLzeta}
&+& \int_0^{t} \int_{\R^d}  \calL_{\zeta}^x p_z(t-s+\xi,\cdot)(x-z) Q_sf(z) \, dz \, ds.
\end{eqnarray}
Using this, Lemma \ref{Lxpyestimate} and Proposition \ref{qestimate} we obtain (\ref{Lzeta_estimate}). By (\ref{calLzeta}), the dominated convergence theorem, Lemma \ref{Lxpyestimate} and Proposition \ref{qestimate} one gets
\begin{eqnarray}
\nonumber
\calL (U_t^{(\xi)} f)(x)
&=& \lim_{\zeta \to 0^+} \calL_{\zeta}^x(U_t^{(\xi)} f)(x)\\
\nonumber
&=& \int_{\R^d} \calL^x p_z(t+\xi,\cdot)(x-z) f(z) \, dz\\
\label{Lchange}
&+& \int_0^{t} \int_{\R^d}  \calL^x p_z(t-s+\xi,\cdot)(x-z) Q_sf(z) \, dz \, ds.
\end{eqnarray}
Using this and again Lemma \ref{Lxpyestimate} and Proposition \ref{qestimate} we obtain (\ref{L_estimate}).

Note that for  $s \in [0,t)$, $z \in \R^d$ we have
$$
\frac{\partial p_z(t-s+\xi,x-z)}{\partial t} - \calL^x p_z(t-s+\xi,\cdot)(x-z)
= -q_0(t-s+\xi,x,z).
$$
Using this, (\ref{tchange}) and (\ref{Lchange}) we get
\begin{eqnarray}
\nonumber
\Lambda_t^{(\xi)} f(x) &=& \int_{\R^d} p_z(\xi,x-z) Q_t f(z) \, dz\\
\nonumber
&& -\int_{\R^d} q_0(t +\xi,x,z) f(z) \, dz\\
\label{tLchange}
&&
- \int_0^t \int_{\R^d} q_0(t-s+\xi,x,z) Q_s f(z) \, dz \, ds.
\end{eqnarray}
For $\xi \in (0,1]$, $t \in (0,\tau]$, $x \in \R^d$ by the definition of $q(t,x,y)$ we obtain
\begin{equation*}
\int_{\R^d} q_0(t +\xi,x,z) f(z) \, dz
= Q_{t+\xi} f(x) - 
\int_0^{t+\xi} \int_{\R^d} q_0(t-s+\xi,x,z) Q_s f(z) \, dz \, ds.
\end{equation*}
Using this and (\ref{tLchange}) we obtain (\ref{Lambdat_formula}).
\end{proof}

The next lemma is similar to \cite[Lemma 4.2]{KK2018}.
\begin{lemma}
\label{heat_u}
(i) For any $f \in C_0(\R^d)$ we have
\begin{equation*}
\Lambda_t^{(\xi)} f(x) \to 0, \quad \text{as} \quad \xi \to 0^+,
\end{equation*}
uniformly in $(t,x) \in [\tau_1,\tau_2] \times \R^d$ for every $\tau \ge \tau_2 > \tau_1 > 0$.

(ii) For any $f \in C_0(\R^d)$ we have
\begin{equation}
\label{Lambda_s}
\int_0^t \Lambda_s^{(\xi)} f(x) \, ds \to 0, \quad \text{as} \quad \xi \to 0^+,
\end{equation}
uniformly in $(t,x) \in (0,\tau] \times \R^d$.
\end{lemma}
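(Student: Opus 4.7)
The plan is to leverage the explicit formula \eqref{Lambdat_formula} for $\Lambda_t^{(\xi)} f(x)$, inserting and subtracting $Q_t f(x)$ to expose a $\xi \to 0^+$ smoothing of $Q_t f$ by the kernel $p_z(\xi, x-z)$. For (i), I would decompose
\begin{align*}
\Lambda_t^{(\xi)} f(x) &= \left(\int_{\R^d} p_z(\xi,x-z) Q_t f(z) \, dz - Q_t f(x)\right) + \bigl(Q_t f(x) - Q_{t+\xi} f(x)\bigr) \\
&\quad + \int_t^{t+\xi} \int_{\R^d} q_0(t-s+\xi,x,z) Q_s f(z) \, dz \, ds.
\end{align*}
Lemma \ref{qtf_uniform} makes $(t,x) \mapsto Q_t f(x)$ bounded and uniformly continuous on $[\tau_1/2, \tau_2+1] \times \R^d$ and vanishing at infinity uniformly in $t$. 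Lemma \ref{pyeps_limit} then forces the first bracket to $0$ uniformly on $[\tau_1,\tau_2] \times \R^d$, while uniform continuity in $t$ does the same for the second. For the third piece, Proposition \ref{qestimate} gives $\|Q_s f\|_\infty \le c s^{-1/2}\|f\|_\infty$ and Proposition \ref{integralq0} gives $\int |q_0(\tau',x,z)| \, dz \le c(\tau')^{-1/2}$, whence the double integral is bounded by $c \tau_1^{-1/2}\|f\|_\infty \xi^{1/2} \to 0$.

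For (ii), I would integrate \eqref{Lambdat_formula} in $s \in (0,t)$ and use the identity $\int_0^t Q_{s+\xi}f(x)\, ds = \int_0^t Q_s f(x)\, ds + \int_t^{t+\xi} Q_s f(x)\, ds - \int_0^\xi Q_s f(x)\, ds$ to arrive at
\[
\int_0^t \Lambda_s^{(\xi)} f(x) \, ds = \int_0^t \left[\int_{\R^d} p_z(\xi,x-z) Q_s f(z) \, dz - Q_s f(x)\right] ds + E(\xi,t,x),
\]
where $E(\xi,t,x)$ collects the two boundary terms and the triple integral coming from the $q_0$-piece of \eqref{Lambdat_formula}. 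Using $\|Q_s f\|_\infty \le c s^{-1/2}\|f\|_\infty$ together with Proposition \ref{integralq0}, plus the Beta-type bound $\int_0^\xi (s+v)^{-1/2}(\xi-v)^{-1/2}\, dv \le c\min\{1, \xi^{1/2}s^{-1/2}\}$ for the triple integral, I expect $|E(\xi,t,x)| \le c\|f\|_\infty \xi^{1/2}$ uniformly in $(t,x) \in (0,\tau] \times \R^d$. For the principal integral, I would split at a small $\sigma > 0$: on $(0,\sigma]$, Proposition \ref{pyintegral} and the bound $\|Q_s f\|_\infty \le c s^{-1/2}\|f\|_\infty$ give a contribution of order $c\sigma^{1/2}\|f\|_\infty$; on $[\sigma,\tau]$, Lemma \ref{qtf_uniform} makes $Q_s f$ bounded and uniformly continuous, so Lemma \ref{pyeps_limit} yields uniform convergence of the bracket to $0$. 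An $\varepsilon$--$\sigma$--$\xi$ argument (first fix $\sigma$ small, then $\xi$) closes the proof.

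The hard part is (ii): the pointwise bound for $\Lambda_t^{(\xi)}$ from (i) degenerates as $t \to 0^+$ because $\|Q_t f\|_\infty \lesssim t^{-1/2}$. The saving feature is that this singularity is $L^1$ on $(0,\tau]$, so integrating in $s$ converts the $\sigma$-cutoff into a prefactor $\sigma^{1/2}$ that can be chosen arbitrarily small independently of $\xi$. This is precisely what allows the uniform convergence to be pushed down to $t = 0^+$, and it is the reason Lemma \ref{heat_u}(ii) is stated in integrated form rather than pointwise.
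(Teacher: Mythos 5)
Your proof follows essentially the same route as the paper: the paper splits $\Lambda_t^{(\xi)}$ into the same two pieces $\Lambda_t^{(\xi,1)}$ and $\Lambda_t^{(\xi,2)}$ (your first bracket plus the $Q_t f-Q_{t+\xi}f$ term is $\Lambda_t^{(\xi,1)}$, your third piece is $\Lambda_t^{(\xi,2)}$), handles the first via Lemma~\ref{pyeps_limit}, Lemma~\ref{qtf_uniform} and the $s^{-1/2}$ domination from Proposition~\ref{qestimate}, and handles the second by integrating the bound in~(\ref{lambdatf2}). Your treatment of (ii) spells out more explicitly the index shift in $Q_{s+\xi}f$ and the $O(\xi^{1/2})$ boundary terms that the paper bundles under ``dominated convergence'', but this is the same argument written in more detail.
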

\begin{proof}
Let $f \in C_0(\R^d)$ and $0 < \tau_1 < \tau_2 \le \tau$. For any $t > 0$, $x \in \R^d$, $\xi \in (0,1]$ we put
\begin{equation*}
\Lambda_t^{(\xi,1)} f(x) = \int_{\R^d} p_z(\xi,x-z) Q_t f(z) \, dz - Q_{t+\xi} f(x).
\end{equation*}
\begin{equation*}
\Lambda_t^{(\xi,2)} f(x) = 
\int_t^{t+\xi} \int_{\R^d} q_0(t-s+\xi,x,z) Q_s f(z) \, dz \, ds.
\end{equation*}
By Lemma \ref{qtf_uniform} we get
$$
\sup_{t \in [\tau_1,\tau_2], \, x \in \R^d} \left|Q_{t+\xi} f(x) -Q_t f(x)\right| \to 0 \quad \text{as} \quad \xi \to 0^+.
$$
By Lemmas \ref{pyeps_limit} and \ref{qtf_uniform} we obtain
$$
\sup_{t \in [\tau_1,\tau_2], \, x \in \R^d} 
\left|\int_{\R^d} p_z(\xi,x-z) Q_t f(z) \, dz - Q_t f(x)\right| 
\to 0 \quad \text{as} \quad \xi \to 0^+.
$$
This gives (i) for $\Lambda_t^{(\xi,1)} f(x)$ instead of $\Lambda_t^{(\xi)} f(x)$. 

By Proposition \ref{qestimate} for any $t \in (0,\tau]$, $x \in \R^d$, $\xi \in (0,1]$ we get
\begin{equation}
\label{lambdatf}
\left|\Lambda_t^{(\xi,1)} f(x)\right| \le c \|f\|_{\infty} t^{-1/2}.
\end{equation}
This allows to use the dominated convergence theorem in the integral (\ref{Lambda_s}) with $\Lambda_t^{(\xi)} f(x)$ replaced by $\Lambda_t^{(\xi,1)} f(x)$. So (ii) for $\Lambda_t^{(\xi,1)} f(x)$ follows from (i) for $\Lambda_t^{(\xi,1)} f(x)$.

For any $t \in (0,\tau]$, $x \in \R^d$, $\xi \in (0,1]$ by Propositions \ref{integralq0} and \ref{qestimate} we get
\begin{equation}
\label{lambdatf2}
\left|\Lambda_t^{(\xi,2)} f(x)\right| \le c \|f\|_{\infty} \int_t^{t+\xi} ((t-s+\xi)s)^{-1/2} \, ds.
\end{equation}
This implies (i) and (ii) for $\Lambda_t^{(\xi,2)} f(x)$.
\end{proof}

\begin{lemma}
\label{lowerbound}
There exist $\eps_1 \in (0,1]$ and $t_1 \in (0,1]$ such that for any $t \in (0,t_1]$, $x, y \in \R^d$, $|x-y| \le \eps_1 t^{1/\alpha}$ we have
$$
u(t,x,y) \ge c_1 t^{-d/\alpha}.
$$
$\eps_1$, $t_1$ depend on $\alpha, d, \eta_1, \eta_2, \eta_3, \eps, \delta$.
\end{lemma}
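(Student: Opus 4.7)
Starting from the decomposition (\ref{defu}), write $u(t,x,y) = p_y(t,x-y) + \varphi_y(t,x)$ with $\varphi_y(t,x) = \int_0^t \int_{\R^d} p_z(t-s,x-z) q(s,z,y)\,dz\,ds$. The plan is a dominance argument: I will prove $p_y(t,x-y) \ge c_1 t^{-d/\alpha}$ on $\{|x-y| \le \eps_1 t^{1/\alpha}\}$ while $|\varphi_y(t,x)|$ is of strictly smaller order as $t \to 0^+$, so that $u(t,x,y) \ge \tfrac{c_1}{2} t^{-d/\alpha}$ after shrinking $t_1$.

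For the lower bound on $p_y$, I exploit the factorization $p_y(t,x-y) = \det B(y) \prod_{i=1}^d g_t^{(\delta)}(b_i(y)(x-y))$. The determinant is uniformly controlled: since $|\det A(y)| \le d!\,\eta_1^d$, we have $\det B(y) \ge (d!\,\eta_1^d)^{-1}$. Each scalar argument satisfies $|b_i(y)(x-y)| \le \sqrt{d}\,\eta_1\,|x-y|$, so choosing $\eps_1 \le (\sqrt{d}\,\eta_1)^{-1}$ places every argument in $[-t^{1/\alpha}, t^{1/\alpha}]$. A matching two-sided estimate $g_t^{(\delta)}(z) \asymp t^{-1/\alpha}$ on $|z| \le t^{1/\alpha}$ for $t \le t_1$ follows from $g_t^{(\delta)}(0) \asymp t^{-1/\alpha}$ (obtained by Fourier inversion using the symbol bounds $\Phi_\delta(u) \asymp \delta^{-\alpha}((\delta|u|)^2 \wedge (\delta|u|)^\alpha)$ from the proof of Lemma \ref{gtht}) combined with the modulus of continuity (\ref{gdelta3}). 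This delivers $p_y(t,x-y) \ge c_1 t^{-d/\alpha}$ on the required set.

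For the upper bound on $|\varphi_y(t,x)|$, I split $\int_0^t = \int_0^{t/2} + \int_{t/2}^t$. On $(0,t/2)$, combining $\sup_z p_z(t-s,\cdot) \le c\,t^{-d/\alpha}$ with $\int |q(s,z,y)|\,dz \le c\,s^{-1/2}$ from Proposition \ref{qestimate} yields a contribution bounded by $c\,t^{1/2-d/\alpha}$, already $o(t^{-d/\alpha})$. On $(t/2,t)$, the crude bounds $\sup_z |q(s,z,y)| \le c\,s^{-d/\alpha-1}$ and $\int p_z(t-s,x-z)\,dz \le c$ only give $c\,t^{-d/\alpha}$ with a fixed (not small) constant, which need not dominate $c_1$; this is the main obstacle. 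To extract genuine smallness I would use the diagonal vanishing $q_0(s,y,y)=0$: applying the mean value theorem inside $q_0(s,z,y) = \sum_i \int [p_y(s,z-y+a_i(z)w)-p_y(s,z-y+a_i(y)w)]\mu(w)\,dw$ together with (\ref{Lipschitz}) extracts a factor $|z-y|$, which under the constraint $|x-y| \le \eps_1 t^{1/\alpha}$ couples with a first-moment-type integral of $p_z(t-s,\cdot)$ to yield an extra $t^{1/\alpha}$ gain. The higher-order terms $q_n$ with $n \ge 1$ contribute successively smaller terms through the Neumann-series bounds of Lemma \ref{integralqn}. Summing, $|\varphi_y(t,x)| \le c\,t^{1/\alpha-d/\alpha}$, so choosing $t_1$ so small that $c\,t_1^{1/\alpha} \le c_1/2$ gives $|\varphi_y(t,x)| \le \tfrac{c_1}{2}t^{-d/\alpha}$ and thus $u(t,x,y) \ge \tfrac{c_1}{2}t^{-d/\alpha}$ for $t \in (0,t_1]$, completing the proof.
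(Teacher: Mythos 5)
Your lower bound on $p_y(t,x-y)$ is essentially the paper's argument (Fourier inversion and lower scaling give $g_t^{(\delta)}(0)\ge ct^{-1/\alpha}$, then the modulus of continuity \eqref{gdelta3} extends this to a neighborhood of $0$, and the determinant and the linear map $B(y)$ are uniformly controlled). The divergence is in the treatment of $\varphi_y$. You split at $t/2$, correctly observe that the tail $\int_{t/2}^t$ only yields a multiple $c\,t^{-d/\alpha}$ with a fixed constant that need not be dominated by the $p_y$-lower bound, and then propose to rescue this by extracting a factor $|z-y|$ from $q_0$ via the Lipschitz continuity of $A$ and a mean-value argument. This is the step that is not actually carried out: the factor you want to extract is $\min(1, s^{-1/\alpha}|z-y||w|)$ (cf.\ Lemma~\ref{pyholder} and the computation inside Corollary~\ref{intAl}), which is \emph{not} globally linear in $|z-y|$; for $|z-y|\gtrsim s^{1/\alpha}$ the minimum is simply~$1$ and the pointwise bound on $q_0$ returns to the crude $s^{-1-d/\alpha}$. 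Controlling the resulting integral then requires either genuine spatial decay of $q_0$ (only available once $|z-y|\ge\eps_0$, via \eqref{global}) or some other mechanism, and none of this is verified; the ``extra $t^{1/\alpha}$ gain'' you announce (which should in any case be a $t$-gain, not a $t^{1/\alpha}$-gain, if one tracks the exponents) therefore stands unproved.

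The paper avoids this entirely with a simpler device: split $\int_0^t=\int_0^{ta}+\int_{ta}^t$ where $a\in[1/2,1)$ is a \emph{free} parameter. The first piece is bounded by $c\,(1-a)^{-d/\alpha}t^{1/2}\,t^{-d/\alpha}$, the second by $c\,(1-a)\,t^{-d/\alpha}$, so
\[
|\varphi_y(t,x)|\le c\,t^{-d/\alpha}\bigl(t^{1/2}(1-a)^{-d/\alpha}+(1-a)\bigr).
\]
Now first fix $a$ close enough to $1$ that the second summand is small, then shrink $t_1$ so that the first is small. No refined $q_0$ estimate is needed, no use of the on-diagonal constraint $|x-y|\le\eps_1 t^{1/\alpha}$ in the $\varphi_y$-bound, and nothing new beyond Propositions~\ref{pyintegral} and \ref{qestimate}. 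I would reconsider along these lines; as written, your second half has a real gap.
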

\begin{proof}
By the weak lower scaling property of the symbol $\Phi_{\delta}^{(1)}$ (see proof of Lemma \ref{gtht}) and by \cite[formula (23)]{BGR2014} we get that $g_t(0) \ge c t^{-1/\alpha}$. Using this and Lemma \ref{gtht} there exist $\eps_2 > 0$, $t_2 > 0$ such that for $|y| \le \eps_2 t^{1/\alpha}$, $t \le t_2$ we have $g_t(y) \ge c t^{-1/\alpha}$. It follows that there exist $\eps_3 > 0$, $t_3 > 0$ such that for $x, y \in \R^d$, $|x - y| \le \eps_3 t^{1/\alpha}$, $t \le t_3$ we have $p_y(t,x-y) \ge c t^{-d/\alpha}$. By Lemma \ref{qestimate} and Proposition \ref{pyintegral} for $t \in (0,\tau]$, $x, y \in \R^d$ and $a \in [1/2,1)$ we get
\begin{eqnarray*}
|\varphi_y(t,x)| &=& \left|\int_0^{t a} \int_{\R^d} p_z(t-s,x-z) q(s,z,y) \, dz \, ds \right. \\
&& + \left. \int_{ta}^{t} \int_{\R^d} p_z(t-s,x-z) q(s,z,y) \, dz \, ds
\right|\\
&\le& c (t-ta)^{-d/\alpha} \int_0^{ta} s^{-1/2} \, ds + c t^{-d/\alpha - 1} t (1-a)\\
&\le& c t^{-d/\alpha} (t^{1/2} (1-a)^{-d/\alpha} + 1-a).
\end{eqnarray*} 
By an appropriate choice of $a$ there exists $\eps_1$ and $t_1 > 0$ such that for any $x, y \in \R^d$, $|x-y| \le \eps_1 t^{1/\alpha}$, $t \le t_1$ we have
$$
u(t,x,y) = p_y(t,x-y) - \varphi_y(t,x)
\ge ct^{-d/\alpha} - c_2 t^{-d/\alpha} (t^{1/2} (1-a)^{-d/\alpha} + 1-a)
\ge c_1 t^{-d/\alpha}.
$$
\end{proof}

\section{Construction and properties of the semigroup of $X_t$}

Let us intruduce the following notation
$$
\nu(x) = \frac{\Aa}{|x|^{1 + \alpha}} - \mu(x), \quad x\in \R,
$$
$$
\lambda = d \int_{\R} \nu(x) \, dx<\infty.
$$

Note that by (\ref{KLR}), for any $x\in \R^d$ and  $f \in \Bb$, we have
$$
\calR f(x) =  \sum_{i = 1}^d   \int_{\R} \left[f(x + a_{i}(x) w) - f(x)\right] \, \nu(w) \, dw.
$$ 
We denote, for any $x\in \R^d$ and  $f \in \Bb$, 
$$
\calN f(x) = \sum_{i = 1}^d  \int_{\R} \left[f(x + a_{i}(x) w)\right] \, \nu(w) \, dw. 
$$
It is clear that

\begin{equation}
\label{N_bound}
||\calN f||_\infty \le  \lambda|| f||_\infty. 
\end{equation}

For any $t \ge 0$, $x \in \R^d$ and $n \in \N$, $n \ge 1$, $f \in \Bb$ we define
\begin{eqnarray}
\label{Psi_nt0}
\Psi_{0,t} f(x) &=& U_t f(x),\\
\label{Psi_nt}
\Psi_{n,t} f(x) &=& \int_0^t  U_{t-s}(\calN(\Psi_{n-1,s}f))(x) \, ds, \quad n \ge 1.
\end{eqnarray}

For any $t \ge 0$, $\xi \in [0,1]$, $x \in \R^d$ and $n \in \N$,  $f \in \Bb$ we define
\begin{eqnarray}
\label{Psi_nteps0}
\Psi_{0,t}^{(\xi)} f(x) &=& U_t^{(\xi)} f(x), \\
\label{Psi_nteps}
\Psi_{n,t}^{(\xi)} f(x) &=& \int_0^t  U_{t-s}^{(\xi)}(\calN(\Psi_{n-1,s}^{(\xi)}f))(x) \, ds, \quad n \ge 1.
\end{eqnarray}
We remark that  $$\Psi_{n,t}=\Psi_{n,t}^{(0)}.$$

\begin{lemma}
\label{psint}
$\Psi_{n,t} f(x)$ and $\Psi_{n,t}^{(\xi)} f(x)$ are well defined for any $t > 0$,  $f \in \Bb$, $x\in \R^d$, $n \in \N$ and $\xi \in [0, 1]$. For any  $f \in \Bb$, $x \in \R^d$, $n \in \N$ we have
\begin{eqnarray}
\label{psint1}
\left|\Psi_{n,t} f(x)\right| &\le& \frac{c_1^{n+1} t^n}{n!} \|f\|_{\infty}, \quad t \in (0,\tau],\\
\label{psint2}
\left|\Psi_{n,t}^{(\xi)} f(x) \right| &\le&  \frac{c_1^{n+1} t^n}{n!} \|f\|_{\infty}, 
\quad \xi \in (0,1],\,\,t \in [0,\tau].
\end{eqnarray}
\end{lemma}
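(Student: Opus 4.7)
The plan is to prove both bounds \eqref{psint1} and \eqref{psint2} simultaneously by induction on $n$, with the constant $c_1$ chosen once at the end to absorb all the constants arising from Corollaries \ref{uintegral}, \ref{ueps_estimate} and from \eqref{N_bound}.

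For the base case $n=0$, the functions $\Psi_{0,t}f = U_t f$ and $\Psi_{0,t}^{(\xi)}f = U_t^{(\xi)}f$ are well-defined Borel functions of $x$ (they are even continuous by Lemma \ref{u_continuity}). Using \eqref{integralu1} from Corollary \ref{uintegral} we get $|U_t f(x)| \le \|f\|_\infty \int_{\R^d} |u(t,x,y)|\,dy \le c\|f\|_\infty$ for $t\in(0,\tau]$, and likewise from Corollary \ref{ueps_estimate} we get $|U_t^{(\xi)}f(x)| \le c\|f\|_\infty$ uniformly in $\xi\in(0,1]$, $t\in[0,\tau]$. These bounds cover the $n=0$ case provided $c_1$ dominates the constant $c$ from those corollaries.

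For the inductive step, assume $\Psi_{n-1,s}f$ is a Borel bounded function of $x$ (for each fixed $s$) satisfying $\|\Psi_{n-1,s}f\|_\infty \le c_1^{n}s^{n-1}/(n-1)!\,\|f\|_\infty$. By \eqref{N_bound} applied pointwise, $\calN(\Psi_{n-1,s}f)$ is again Borel bounded with
\begin{equation*}
\|\calN(\Psi_{n-1,s}f)\|_\infty \le \lambda\|\Psi_{n-1,s}f\|_\infty \le \lambda\,\frac{c_1^{n}s^{n-1}}{(n-1)!}\|f\|_\infty.
\end{equation*}
Applying the base-case bound on $U_{t-s}$ inside the integrand of \eqref{Psi_nt} and using Fubini (justified by the next estimate being finite) we obtain
\begin{equation*}
|\Psi_{n,t}f(x)| \le \int_0^t \|U_{t-s}(\calN(\Psi_{n-1,s}f))\|_\infty\,ds \le c\lambda\int_0^t \frac{c_1^{n}s^{n-1}}{(n-1)!}\|f\|_\infty\,ds = \frac{c\lambda\,c_1^{n}t^n}{n!}\|f\|_\infty.
\end{equation*}
Choosing once and for all $c_1 \ge \max(c,\,c\lambda)$ yields \eqref{psint1}; the identical argument with $U_{t-s}^{(\xi)}$ in place of $U_{t-s}$ yields \eqref{psint2} (with $t=0$ giving zero trivially and the base-case bound for $U_t^{(\xi)}$ valid on the closed interval $[0,\tau]$ by Corollary \ref{ueps_estimate}). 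Borel measurability of $x \mapsto \Psi_{n,t}f(x)$ follows from Fubini together with the measurability of the integrand in $(s,x)$, which is inherited at each step.

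There is no serious obstacle: the statement is a standard Picard-iteration bound driven by the operator-norm estimate $\|U_t\|_{\infty\to\infty}\le c$ (and its $\xi$-perturbed version) combined with $\|\calN\|_{\infty\to\infty}\le \lambda$. The only mild bookkeeping point is the choice of $c_1$ large enough to dominate both the initial bound from Corollaries \ref{uintegral}, \ref{ueps_estimate} and the product $c\lambda$ coming from one iteration; once $c_1$ is fixed, the factorial $n!$ in the denominator is produced entirely by the $\int_0^t s^{n-1}\,ds$ in \eqref{Psi_nt}, \eqref{Psi_nteps}.
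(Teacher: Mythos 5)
Your proof is correct and follows essentially the same route as the paper: an induction on $n$ driven by the operator bounds $\|U_t\|_{\infty\to\infty},\|U_t^{(\xi)}\|_{\infty\to\infty}\le c$ from Corollaries \ref{uintegral}, \ref{ueps_estimate} and $\|\calN\|_{\infty\to\infty}\le\lambda$ from \eqref{N_bound}, with the factorial produced by $\int_0^t s^{n-1}\,ds$. The paper fixes $c_1=(\lambda\vee 1)c$ at the outset rather than at the end, but this is the same constant as your $c_1\ge\max(c,c\lambda)$, so the arguments coincide.
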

\begin{proof}
We will only  show the result for $\Psi_{n,t} f(x)$ using the induction.  The proof  for $\Psi_{n,t}^{(\xi)} f(x)$ is almost the same.

Let $c$ be the constant from (\ref{integralu1}) and put $c_1 = (\lambda \vee 1) c$. For $n = 0$ (\ref{psint1}) follows from (\ref{integralu1}). 
 Assume that (\ref{psint1}) holds for $n \ge 0$, we will show it for $n+1$. 
Indeed, applying (\ref{integralu1}) and (\ref{N_bound}),  we get
$$
\left|\Psi_{n+1,t} f(x)\right| \le \int_0^t  \int_{\R^d}  |u(t-s,x,z)| \, dz \frac{\lambda c_1^{n+1} s^n}{n!} \, ds 
\le \frac{c_1^{n+2} t^{n+1}}{(n+1)!}.
$$
\end{proof}

For any $x \in \R^d$ we define
\begin{eqnarray*}
T_t f(x) &=& e^{-\lambda t} \sum_{n = 0}^{\infty} \Psi_{n,t} f(x), \quad t > 0,\\
T_0 f(x) &=& f(x),\\
T_t^{(\xi)} f(x) &=& e^{-\lambda t} \sum_{n = 0}^{\infty}  \Psi_{n,t}^{(\xi)} f(x), \quad t \ge 0, \,\, \xi \in [0,1].
\end{eqnarray*}

Our ultimate aim will be to show that for any $t > 0$ we have $T_t = P_t$, where $P_t$ is given by (\ref{semigroup}).

By Lemma \ref{psint} we obtain
\begin{corollary}
\label{ptxy}
$T_t f(x)$ and  $T_t^{(\xi)} f(x)$ are  well defined for any $t \ge 0$,  $f \in \Bb$, $x \in \R^d$, $\xi \in [0,1]$  and for $t \in [0,\tau]$ we have $\max\{|T_t f(x)|,|T_t^{(\xi)} f(x)|\} \le c \|f\|_{\infty}$.
\end{corollary}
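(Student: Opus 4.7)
The statement is a direct consequence of the pointwise estimates already proved in Lemma \ref{psint}. The plan is to apply those bounds term-by-term and recognize the resulting majorant as the Taylor series of an exponential, which gives both the well-definedness and the uniform norm bound. I do not anticipate any real obstacle here, since all the inductive work has been done in Lemma \ref{psint}; the present corollary is just a routine packaging of those bounds into the defining series.

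Concretely, for $f \in \Bb$, $x \in \R^d$, and $t \in (0,\tau]$, applying (\ref{psint1}) gives
$$\sum_{n=0}^{\infty} |\Psi_{n,t} f(x)| \le \|f\|_\infty \sum_{n=0}^{\infty} \frac{c_1^{n+1} t^n}{n!} = c_1 e^{c_1 t} \|f\|_\infty \le c_1 e^{c_1 \tau} \|f\|_\infty,$$
so the defining series converges absolutely and uniformly in $x$. Multiplying by $e^{-\lambda t} \le 1$ produces the required estimate for $T_t f(x)$. The boundary case $t = 0$ is covered by the definition $T_0 f(x) = f(x)$, which trivially satisfies $|T_0 f(x)| \le \|f\|_\infty$.

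The argument for $T_t^{(\xi)} f(x)$ with $\xi \in (0,1]$, $t \in [0,\tau]$ is identical, using (\ref{psint2}) in place of (\ref{psint1}). The only remaining case is $\xi = 0$; a side-by-side comparison of the recursive formulas (\ref{Psi_nteps0})--(\ref{Psi_nteps}) and (\ref{Psi_nt0})--(\ref{Psi_nt}), combined with the obvious equality $U_t^{(0)} = U_t$, yields by induction on $n$ that $\Psi_{n,t}^{(0)} f = \Psi_{n,t} f$ for every $n \in \N$, whence $T_t^{(0)} f = T_t f$ and the bound already established covers this case as well. Taking $c = c_1 e^{c_1 \tau}$ gives the stated constant.
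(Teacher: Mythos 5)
Your proposal matches the paper's approach exactly: the paper derives Corollary \ref{ptxy} directly from Lemma \ref{psint} by summing the geometric/exponential majorant $\sum_n c_1^{n+1}t^n/n!$, which is precisely what you do. Your extra care with the boundary cases $t=0$ and $\xi=0$ (the latter via the identity $\Psi_{n,t}^{(0)}=\Psi_{n,t}$, which the paper records just after (\ref{Psi_nteps})) is correct and consistent with the paper.
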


Next, we obtain the following regularity results concerning operators $T_t$.
\begin{theorem} 
\label{TtL1Linfty} 
For any $\gamma \in (0,\alpha/d)$, $t \in (0,\tau]$, $x \in \R^d$ and $f \in L^1(\R^d) \cap L^{\infty}(\R^d)$ we have
\begin{equation*}
|T_t f(x)| \le c t^{-\gamma d/\alpha}  \|f\|_\infty^{1-\gamma}  \|f\|_1^{\gamma}.
\end{equation*}
\end{theorem}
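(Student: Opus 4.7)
The plan is to establish the theorem by a Duhamel-type interpolation applied term-by-term to the series $T_t f = e^{-\lambda t}\sum_{n=0}^{\infty}\Psi_{n,t}f$ defined in (\ref{Psi_nt0})--(\ref{Psi_nt}). The sup-norm bound $|T_t f(x)| \le c\|f\|_\infty$ is already at hand from Corollary \ref{ptxy}, so the heart of the argument is to produce, for each $n$, a pointwise estimate that blends $\|f\|_\infty$ and $\|f\|_1$ in the geometric-mean form $\|f\|_\infty^{1-\gamma}\|f\|_1^{\gamma}$.

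For the base case $n=0$, Corollary \ref{uintegral} supplies both $|u(t,x,y)|\le c\,t^{-d/\alpha}$ and $\int_{\R^d}|u(t,x,y)|\,dy\le c$, which give at once
$$|U_t f(x)| \le c\|f\|_\infty \quad\text{and}\quad |U_t f(x)| \le c\,t^{-d/\alpha}\|f\|_1.$$
Raising the first to the $(1-\gamma)$-th power and the second to the $\gamma$-th power, and multiplying, yields
$$|\Psi_{0,t}f(x)| = |U_t f(x)| \le c\,t^{-\gamma d/\alpha}\|f\|_\infty^{1-\gamma}\|f\|_1^{\gamma}, \qquad t\in(0,\tau],\ x\in\R^d.$$
For the inductive step, assume the claim at level $n-1$ with a prefactor of the form $C_{n-1}\,s^{n-1-\gamma d/\alpha}/\Gamma(n-\gamma d/\alpha)$. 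The recursion (\ref{Psi_nt}), together with $\|U_{t-s}g\|_\infty \le c\|g\|_\infty$ from the integral bound above and $\|\calN g\|_\infty \le \lambda\|g\|_\infty$ from (\ref{N_bound}), gives
$$|\Psi_{n,t}f(x)| \le c\lambda\int_0^t \|\Psi_{n-1,s}f\|_\infty\,ds \le \frac{c\lambda\,C_{n-1}\|f\|_\infty^{1-\gamma}\|f\|_1^{\gamma}}{\Gamma(n-\gamma d/\alpha)}\int_0^t s^{n-1-\gamma d/\alpha}\,ds.$$
The integral equals $t^{n-\gamma d/\alpha}/(n-\gamma d/\alpha)$ and is \emph{finite already at $n=1$} precisely because $\gamma d/\alpha<1$; invoking $(n-\gamma d/\alpha)\Gamma(n-\gamma d/\alpha)=\Gamma(n+1-\gamma d/\alpha)$ closes the induction with $C_n = K C_{n-1}$ for some $K$ independent of $n$.

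Summation is then routine: since $\Gamma(n+1-\gamma d/\alpha)\sim n!$, the series $\sum_n K^n t^n/\Gamma(n+1-\gamma d/\alpha)$ converges uniformly on $t\in[0,\tau]$ to a bounded function, and the remaining factor $t^{-\gamma d/\alpha}\|f\|_\infty^{1-\gamma}\|f\|_1^{\gamma}$ combined with $e^{-\lambda t}\le 1$ delivers the theorem.

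The main obstacle, and the reason the admissible range of exponents degrades from $(0,\alpha)$ in Theorem \ref{mainthm} to $(0,\alpha/d)$ here, is the absence of any $L^1$-boundedness for $\calN$: the change of variables $y=x+a_i(x)w$ is in general neither injective nor of comparable Jacobian for $w$ in the unbounded support of $\nu$, so one cannot simply push $\calN$ through an $L^1$-norm estimate. Performing the interpolation one Duhamel step at a time avoids this issue, but the price is that each step produces a time integral $\int_0^t s^{n-1-\gamma d/\alpha}\,ds$, whose convergence at $n=1$ is exactly the constraint $\gamma d/\alpha<1$.
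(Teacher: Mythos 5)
Your proposal is correct and takes essentially the same route as the paper: interpolate the two Corollary~\ref{uintegral} bounds $|U_tf(x)|\le c\|f\|_\infty$ and $|U_tf(x)|\le c\,t^{-d/\alpha}\|f\|_1$ to get $|U_tf(x)|\le c\,t^{-\gamma d/\alpha}\|f\|_\infty^{1-\gamma}\|f\|_1^\gamma$, then feed this through the recursion~(\ref{Psi_nt}) using~(\ref{N_bound}) and $\int|u(t,x,\cdot)|\le c$. The paper instead absorbs the $t^{-\gamma d/\alpha}$ singularity at step $n=1$ and then invokes the factorial scheme of Lemma~\ref{psint} verbatim, whereas you propagate it through a Gamma-function prefactor; this is a cosmetic bookkeeping difference, not a different argument.
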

\begin{proof}
For any $t \in (0,\tau]$, $x \in \Rd$ by Corollary \ref{uintegral} we get $|U_t f(x)| \le c \|f\|_{\infty}$, $|U_t f(x)| \le c t^{-d/\alpha}\|f\|_1$. Fix $\gamma \in (0,\alpha/d)$. It follows that for any $t \in (0,\tau]$, $x \in \Rd$ we have $|U_t f(x)| \le c t^{-\gamma d/\alpha} \|f\|_\infty^{1-\gamma}  \|f\|_1^{\gamma}$. Hence $|\Psi_{1,t} f(x)| \le c \|f\|_\infty^{1-\gamma} \|f\|_1^{\gamma}$. Using the same arguments as in Lemma \ref{psint} for any $t \in (0,\tau]$, $x \in \Rd$, $n \in \N$, $n \ge 1$ one gets $|\Psi_{n,t} f(x)| \le c^n t^{n-1} \|f\|_\infty^{1-\gamma} \|f\|_1^{\gamma}/(n-1)!$, which implies the assertion of the theorem.
\end{proof}

\begin{theorem}
\label{Holdermain}
Choose $\gamma \in (0,\alpha)$. For any $t \in (0,\tau]$, $x, x' \in \R^d$, $f \in \Bb$ we have
$$
|T_tf(x) - T_tf(x')| \le c t^{-\gamma/\alpha} |x - x'|^{\gamma} \|f\|_{\infty}.
$$
\end{theorem}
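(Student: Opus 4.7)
The plan is to establish the H\"older estimate by induction on the index $n$ appearing in the series defining $T_t$, since we already have the key H\"older bound for the zero-th order term $\Psi_{0,t} f = U_t f$ from Proposition \ref{utfholder}. More precisely, I will prove the claim
\begin{equation*}
|\Psi_{n,t} f(x) - \Psi_{n,t} f(x')| \le C_n\, t^{n-\gamma/\alpha}\, |x-x'|^{\gamma}\, \|f\|_{\infty}, \qquad n\ge 0,\ t\in(0,\tau],
\end{equation*}
with constants $C_n$ summable enough that $\sum_n C_n \tau^n < \infty$. Summing against $e^{-\lambda t}$ then yields the theorem, because the $t^{-\gamma/\alpha}$ factor is uniform in $n$.

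The base case $n=0$ is precisely Proposition \ref{utfholder}, which gives $C_0 = c$. For the inductive step, I use the recursion (\ref{Psi_nt}) and estimate
\begin{equation*}
|\Psi_{n,t} f(x) - \Psi_{n,t} f(x')| \le \int_0^t \bigl|U_{t-s}\bigl(\calN(\Psi_{n-1,s}f)\bigr)(x) - U_{t-s}\bigl(\calN(\Psi_{n-1,s}f)\bigr)(x')\bigr|\, ds.
\end{equation*}
Since $\calN(\Psi_{n-1,s}f) \in \Bb$ with $\|\calN(\Psi_{n-1,s}f)\|_\infty \le \lambda\|\Psi_{n-1,s}f\|_\infty \le \lambda c_1^{n}\,s^{n-1}\|f\|_\infty/(n-1)!$ by (\ref{N_bound}) and Lemma \ref{psint}, applying Proposition \ref{utfholder} to the function $U_{t-s}$ gives
\begin{equation*}
|\Psi_{n,t} f(x) - \Psi_{n,t} f(x')| \le c\lambda\, |x-x'|^{\gamma}\,\frac{c_1^{n}\|f\|_\infty}{(n-1)!}\int_0^t (t-s)^{-\gamma/\alpha}\, s^{n-1}\, ds.
\end{equation*}
The elementary bound (\ref{gammatheta}) (with $\gamma$ there equal to $1-\gamma/\alpha$ and $\theta=n$) yields $\int_0^t (t-s)^{-\gamma/\alpha} s^{n-1}\, ds \le c\, t^{n-\gamma/\alpha}/n^{1-\gamma/\alpha}$, which gives the induction step with $C_n \le c\lambda\, c_1^n / (n-1)!$.

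Summing, $\sum_{n=0}^{\infty} C_n t^n \le C_0 + c\lambda\, t\sum_{n=1}^{\infty} (c_1 t)^{n-1}/(n-1)! \le C_0 + c\lambda\tau\, e^{c_1\tau}$, uniformly in $t\in(0,\tau]$. Therefore
\begin{equation*}
|T_t f(x) - T_t f(x')| \le e^{-\lambda t}\sum_{n=0}^{\infty} |\Psi_{n,t} f(x) - \Psi_{n,t} f(x')| \le c\, t^{-\gamma/\alpha} |x-x'|^{\gamma}\|f\|_\infty,
\end{equation*}
which is the desired estimate. There is no real obstacle here since all the heavy lifting (the H\"older bound for $U_t$, the iterated $L^\infty$ bound on $\Psi_{n,t}$, and the bound $\|\calN f\|_\infty \le \lambda\|f\|_\infty$) is already available; the only delicate point is verifying that the beta-type integral produces a factor that keeps the series convergent, which is handled by (\ref{gammatheta}).
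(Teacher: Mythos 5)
Your proposal is correct and coincides with the paper's own proof: both split off the $n=0$ term handled by Proposition \ref{utfholder}, and for $n\ge 1$ both combine Lemma \ref{psint} and the bound $\|\calN g\|_\infty\le\lambda\|g\|_\infty$ with the H\"older estimate for $U_{t-s}$ and the beta-type integral (\ref{gammatheta}) to get a summable bound. The only cosmetic difference is that you package it as an induction on $n$, while the paper estimates each $\Psi_{n,t}f(x)-\Psi_{n,t}f(x')$ directly; the computation is the same.
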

\begin{proof}
We have 
$$
T_t f(x) = e^{-\lambda t} U_t f(x) + e^{-\lambda t} \sum_{n = 1}^{\infty}  \Psi_{n,t} f(x).
$$
By Lemma \ref{utfholder} it remains to show 
\begin{equation}
\label{holder_sum}
\left| \sum_{n = 1}^{\infty}  \Psi_{n,t} f(x) - \sum_{n = 1}^{\infty}  \Psi_{n,t} f(x')\right|
\le c t^{-\gamma/\alpha} |x - x'|^{\gamma} \|f\|_{\infty}.
\end{equation}
Let $n \in \N$, $n \ge 1$. By (\ref{Psi_nt}) we get 
\begin{equation}
\label{psintf}
\Psi_{n,t} f(x) - \Psi_{n,t} f(x') = 
\int_0^t  \left( U_{t-s}(\calN(\Psi_{n-1,s}f))(x) - U_{t-s}(\calN(\Psi_{n-1,s}f))(x')\right) \, ds.
\end{equation}
By Lemma \ref{psint} we have 
$$
\left|\Psi_{n,t} f(x)\right| \le \frac{c_1^{n+1} t^n}{n!} \|f\|_{\infty}.
$$
Hence for $s \in (0,\tau]$, by (\ref{N_bound}), we arrive at
$$
\left|\calN(\Psi_{n-1,s}f))(x)\right| \le \frac{\lambda c_1^{n} s^{n-1}}{(n - 1)!} \|f\|_{\infty}.
$$
Using this and Lemma \ref{utfholder} we get
$$
\left| U_{t-s}(\calN(\Psi_{n-1,s}f))(x) - U_{t-s}(\calN(\Psi_{n-1,s}f))(x')\right| 
\le c (t - s)^{-\gamma/\alpha} |x - x'|^{\gamma} \frac{c_1^{n} s^{n-1}}{(n - 1)!} \|f\|_{\infty}.
$$
Combining this with (\ref{psintf}) we obtain
$$
\left| \Psi_{n,t} f(x) - \Psi_{n,t} f(x')\right| \le
c \frac{c_1^{n} }{(n - 1)!} \|f\|_{\infty} |x - x'|^{\gamma} t^{n -\gamma/\alpha}.
$$
This implies (\ref{holder_sum}), which finishes the proof.
\end{proof}

Clearly, we have, by applying (\ref{integralu1}) and (\ref{N_bound}), the following lemma. 
\begin{lemma}
\label{integral_u}
There exists $a \ge 1$ such that for any $f \in \Bb$, $x \in \Rd$, $\xi \in [0,1]$, $t \in (0,\tau]$ we have
$$
\left|\int_0^t  U_{t-s}^{(\xi)}(\calN(f))(x) \, ds\right| \le a \|f\|_{\infty},
$$
where $a$ depends on $\tau, \alpha, d, \eta_1, \eta_2, \eta_3$.
\end{lemma}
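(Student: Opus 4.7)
The plan is to establish the bound as a direct consequence of two ingredients already in hand: the uniform boundedness of $\calN$ on $\Bb$, and the uniform $L^1$-in-$y$ bound on the kernel $u^{(\xi)}(r,x,y)$ supplied by Corollary \ref{ueps_estimate}. There is no delicate cancellation or integration-by-parts hiding in the statement; it is an $L^\infty \to L^\infty$ operator bound that follows from the triangle inequality.

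First, by \eqref{N_bound}, $\calN$ sends $\Bb$ into itself with $\|\calN f\|_\infty \le \lambda \|f\|_\infty$, where $\lambda$ is the finite constant $d\int_\R \nu(x)\,dx$ defined at the start of Section 4. Next, for $r \in (0,\tau]$ and $\xi \in [0,1]$, Corollary \ref{ueps_estimate} gives $\int_{\R^d} |u^{(\xi)}(r,x,y)|\,dy \le c$, where $c$ depends only on $\tau, \alpha, d, \eta_1, \eta_2, \eta_3$. Consequently, for any $g \in \Bb$,
\[
|U_{r}^{(\xi)} g(x)| \le \int_{\R^d} |u^{(\xi)}(r,x,y)|\,|g(y)|\,dy \le c \|g\|_\infty.
\]
The possible boundary issue at $r = 0$ is harmless: when $\xi = 0$ we have $U_0^{(0)} g = g$ by definition, while for $\xi \in (0,1]$ we have $U_0^{(\xi)} g(x) = \int p_y(\xi, x-y) g(y)\,dy$, which is controlled by $c\|g\|_\infty$ via \eqref{pyintegral1}.

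Applying this with $g = \calN f$ gives
\[
|U_{t-s}^{(\xi)}(\calN f)(x)| \le c \|\calN f\|_\infty \le c\lambda \|f\|_\infty
\]
uniformly in $s \in [0,t]$, $\xi \in [0,1]$, $x \in \R^d$. Integrating over $s \in (0,t)$ and using $t \le \tau$,
\[
\left|\int_0^t U_{t-s}^{(\xi)}(\calN f)(x)\,ds\right| \le c\lambda t \|f\|_\infty \le c\lambda\tau \|f\|_\infty,
\]
so the constant $a := c\lambda\tau$ has the required dependence. The only conceptual point worth checking (and essentially the only "obstacle") is that the argument is valid uniformly at the boundary $\xi = 0$ and at the endpoints $s = 0, s = t$; this is handled by the two cases in the previous paragraph together with the fact that $s \mapsto U_{t-s}^{(\xi)}(\calN f)(x)$ is a bounded measurable function of $s$, so the integral is well-defined.
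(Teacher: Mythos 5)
Your proof is correct and follows the same route the paper intends (which it leaves implicit with the remark ``by applying (\ref{integralu1}) and (\ref{N_bound})''): bound $\mathcal{N}$ on $L^\infty$ via (\ref{N_bound}), bound $U_r^{(\xi)}$ on $L^\infty$ via the uniform kernel estimate of Corollary \ref{ueps_estimate}, and integrate in $s$. Your use of Corollary \ref{ueps_estimate} in place of (\ref{integralu1}) is in fact the more precise citation since the $\xi$-dependent kernel is involved, and the check at $r=0$ is a harmless extra care; nothing is missing.
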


\begin{lemma}
\label{eps_q}
Assume that $f \in \Bb$. Put $\beta = 1/ (4(d/\alpha+1))$. For any  $t \in (0,\tau]$, $x \in \R^d$, we have 
$$
|Q_t f(x)| \le \frac{c \|f\|_{\infty}}{t^{3/4} (\dist(x,\supp(f))  +1)^{\beta}}.
$$
\end{lemma}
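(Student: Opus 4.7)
The plan is to reduce the estimate on $Q_t f(x)$ to an integral bound on $|q(t,x,\cdot)|$ over the complement of a ball around $x$, and then interpolate between the two bounds on $q$ provided by Proposition~\ref{qestimate}. Let $r=\dist(x,\supp(f))$. Since $f$ vanishes off $\supp(f)$,
$$
|Q_t f(x)| \le \|f\|_\infty \int_{|x-y|\ge r}|q(t,x,y)|\,dy \;=:\; \|f\|_\infty\, I(t,r).
$$
Thus it is enough to prove $I(t,r)\le c\,t^{-3/4}(1+r)^{-\beta}$ for $t\in(0,\tau]$, $r\ge 0$.

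From Proposition~\ref{qestimate} we extract two complementary bounds on $I(t,r)$. First, the global integral estimate \eqref{integralq1} gives
$$
I(t,r)\;\le\;\int_{\R^d}|q(t,x,y)|\,dy\;\le\;c\,t^{-1/2}.
$$
Second, the pointwise bound $|q(t,x,y)|\le c\,t^{-(d/\alpha+1)}(1+|x-y|)^{-(d+1)}$ combined with an integration in polar coordinates in $|x-y|$ yields
$$
I(t,r)\;\le\;\frac{c}{t^{d/\alpha+1}}\int_{r}^{\infty}\frac{s^{d-1}}{(1+s)^{d+1}}\,ds\;\le\;\frac{c}{t^{d/\alpha+1}(1+r)}.
$$

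Now I interpolate the two bounds using $I(t,r) = I(t,r)^{1-\beta} I(t,r)^{\beta}$ with $\beta=1/(4(d/\alpha+1))$:
$$
I(t,r)\;\le\;\bigl(c\,t^{-1/2}\bigr)^{1-\beta}\!\left(\frac{c}{t^{d/\alpha+1}(1+r)}\right)^{\!\beta}\!=\;\frac{c}{t^{(1-\beta)/2+\beta(d/\alpha+1)}(1+r)^{\beta}}.
$$
The choice of $\beta$ gives $\beta(d/\alpha+1)=1/4$, hence the exponent of $t$ equals
$$
\tfrac{1}{2}-\tfrac{\beta}{2}+\tfrac{1}{4}\;=\;\tfrac{3}{4}-\tfrac{\beta}{2}\;\le\;\tfrac{3}{4}.
$$
Since $t\in(0,\tau]$, we have $t^{-(3/4-\beta/2)}\le \tau^{\beta/2}\,t^{-3/4}$, so
$$
I(t,r)\;\le\;\frac{c\,\tau^{\beta/2}}{t^{3/4}(1+r)^{\beta}},
$$
which combined with the initial reduction yields the claimed estimate.

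There is no real obstacle here; the entire argument is just a careful two-bound interpolation, the only point to watch being that the specific choice $\beta=1/(4(d/\alpha+1))$ is tuned precisely so that the $t$-exponent lands at (or just below) $3/4$.
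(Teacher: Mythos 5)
Your proof is correct and follows essentially the same route as the paper: take the $t^{-1/2}$ bound from \eqref{integralq1} and the pointwise decay bound $|q(t,x,y)|\le c\,t^{-(d/\alpha+1)}(1+|x-y|)^{-(d+1)}$ from Proposition \ref{qestimate}, integrate the latter over $\{|x-y|\ge r\}$ to get the $(1+r)^{-1}$ factor, and then interpolate with weights $1-\beta$ and $\beta$. The only cosmetic difference is that the paper interpolates the two bounds on $|Q_t f(x)|$ directly, whereas you interpolate on the auxiliary integral $I(t,r)$ before multiplying by $\|f\|_\infty$; the calculations are identical.
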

\begin{proof}
Let $t \in (0,\tau]$ be arbitrary. By Proposition \ref{qestimate} we get for $x \in \Rd$
\begin{eqnarray*}
|Q_t f(x)| &\le& c t^{-1/2} \|f\|_{\infty},\\
|Q_t f(x)| &\le& \frac{c \|f\|_{\infty}}{t^{d/\alpha + 1} (\dist(x,\supp(f))  +1)}.
\end{eqnarray*}
It follows that 
\begin{eqnarray*}
|Q_t f(x)|^{1 -\beta} &\le& c t^{(-1/2)(1-\beta)} \|f\|_{\infty}^{1-\beta} \le c t^{-1/2} \|f\|_{\infty}^{1-\beta},\\
|Q_t f(x)|^{\beta} &\le& \frac{c \|f\|_{\infty}^{\beta}}{t^{1/4} (\dist(x,\supp(f))  +1)^{\beta}}.
\end{eqnarray*}
This implies the assertion of the lemma.
\end{proof}

\begin{lemma}
\label{eps_u}
Assume that $f \in \Bb$. For any $\eps_1 > 0$ there exists $r \ge 1$ (depending on $\eps_1, \tau, \alpha, d, \eta_1, \eta_2, \eta_3$) such that for any $\xi \in [0,1]$, $t \in [0,\tau]$, $x \in \R^d$, if $\dist(x, \supp(f)) \ge r$ then $\left| U_{t}^{(\xi)} f(x) \right| \le \eps_1 \|f\|_{\infty}$.
\end{lemma}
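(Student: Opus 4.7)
The plan is to decompose $U_t^{(\xi)} f(x)$ via Fubini as
\[
U_t^{(\xi)} f(x) = \int_{\R^d} p_y(t+\xi, x-y) f(y) \, dy + \int_0^t \int_{\R^d} p_z(t-s+\xi, x-z) Q_s f(z) \, dz \, ds,
\]
the exchange being justified by the integrability bounds in Corollary \ref{estimate_pytx}, Proposition \ref{qestimate} and Proposition \ref{pyintegral}. I will show that each summand can be made arbitrarily small in absolute value, uniformly in $\xi\in[0,1]$ and $t\in(0,\tau]$, provided $r$ is chosen sufficiently large. The edge case $t=\xi=0$ is trivial, since then $U_0^{(0)} f(x) = f(x) = 0$ whenever $\dist(x,\supp f)\ge 1$.

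For the first summand, fix $r \ge 2\eps\eta_1 d^{3/2}$, so that $f(y)\ne 0$ forces $|x-y| \ge r \ge \eps \eta_1 d^{3/2}$. Combining the pointwise estimate $p_y(t+\xi,x-y)\le c\,\det(B(y))\, r_y(t+\xi,x-y)$ (which follows from (\ref{gdelta1})) with Corollary \ref{estimate_pytx} (\ref{htht2}) gives $p_y(t+\xi,x-y)\le c(\tau+1) e^{-c_1|x-y|}$ on the effective integration region. Integrating the right side over $\{|x-y|\ge r\}$ in $\R^d$ therefore yields a bound of the form $C_1\|f\|_\infty e^{-c_1 r/2}$, which vanishes as $r\to\infty$.

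For the second summand, the key step is to split the spatial integration at $|x-z|=r/2$. On the outer region $\{|x-z|>r/2\}$ I use the same exponential decay of $p_z$, together with the pointwise bound $|Q_s f(z)|\le c\|f\|_\infty s^{-1/2}$ (obtained at once from (\ref{integralq1})) and $\int_0^t s^{-1/2}\,ds\le 2\sqrt{\tau}$, which produces a contribution of order $\|f\|_\infty e^{-c_1 r/4}$. On the inner region $\{|x-z|\le r/2\}$ one has $\dist(z,\supp f)\ge r-r/2 = r/2$, so Lemma \ref{eps_q} supplies the polynomial tail
\[
|Q_s f(z)|\le \frac{c\,\|f\|_\infty}{s^{3/4}(1+r/2)^{\beta}};
\]
combined with $\int_{\R^d} p_z(t-s+\xi,x-z)\,dz\le c$ from Proposition \ref{pyintegral} (\ref{pyintegral1}) and $\int_0^t s^{-3/4}\,ds\le 4\tau^{1/4}$, this contributes at most $C_2\|f\|_\infty (1+r/2)^{-\beta}$. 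Summing,
\[
|U_t^{(\xi)} f(x)|\le C\|f\|_\infty\bigl(e^{-c_1 r/2}+e^{-c_1 r/4}+(1+r/2)^{-\beta}\bigr),
\]
which is $\le \eps_1\|f\|_\infty$ once $r$ is chosen large enough in terms of $\eps_1,\tau,\alpha,d,\eta_1,\eta_2,\eta_3$ only. I do not anticipate a genuine obstacle; the only delicate step is choosing the splitting radius $r/2$ large enough to simultaneously activate the exponential decay of $p_z$ in (\ref{htht2}) and to ensure $\dist(z,\supp f)\ge r/2$ on the inner region so that Lemma \ref{eps_q} is effective.
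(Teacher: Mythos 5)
Your proof is correct and follows essentially the same route as the paper's: decompose $U_t^{(\xi)}f$ into the frozen term plus $\Phi_t^{(\xi)}f$, control the former via Corollary \ref{estimate_pytx} and the latter via Lemma \ref{eps_q} and Proposition \ref{pyintegral}. The paper actually elides the split at $|x-z|=r/2$ (its displayed bound for $\Phi_t^{(\xi)}f(x)$ silently replaces $\dist(z,\supp f)$ from Lemma \ref{eps_q} by $\dist(x,\supp f)$), so your explicit inner/outer decomposition is filling in an implicit step rather than taking a different path.
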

\begin{proof}
Let $\xi \in [0,1]$, $t \in [0,\tau]$ be arbitrary. Assume that $\dist(x, \supp(f)) \ge 1$ and $t + \xi > 0$. By Lemmas \ref{pyintegral}, \ref{eps_q} we get
\begin{eqnarray*}
|\Phi_t^{(\xi)} f(x)| &=& \left|\int_0^t \int_{\R^d} p_z(t-s + \xi,x-z) Q_s f(z) \, dz \, ds\right| \\
&\le& 
\frac{c \|f\|_{\infty}}{(\dist(x,\supp(f))  +1)^{\beta}} \int_0^t \frac{1}{s^{3/4}} \, ds.
\end{eqnarray*}
By Corollary \ref{estimate_pytx} we get
$$
\int_{\Rd} p_z(t + \xi,x-z) f(z) \, dz \le c \|f\|_{\infty} e^{-c_1 \dist(x, \supp(f))}.
$$
This gives the assertion of the lemma.
\end{proof}

The proof of the next lemma is standard and it is omitted.
\begin{lemma}
\label{eps_n}
Assume that $f \in \Bb$. For any $\eps_1 > 0$ there exists $r \ge 1$ (depending on $\eps_1, \tau, \alpha, d, \eta_1, \eta_2, \eta_3$) such that, for any $x \in \R^d$, if $\dist(x, \supp(f)) \ge r$, then $\left| \calN f(x) \right| \le \eps_1 \|f\|_{\infty}$.
\end{lemma}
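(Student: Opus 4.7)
The plan is to exploit that $\calN f(x)$ involves integration against the measure $\nu(w)\,dw$ which, by construction, is supported in $\{|w|\ge \delta\}$ and has a polynomially decaying tail bounded by $\calA_{\alpha}|w|^{-1-\alpha}$, combined with the fact that the argument of $f$ inside $\calN$ differs from $x$ by $a_i(x)w$, a vector of length at most $\sqrt{d}\,\eta_{1}|w|$. Once $\dist(x,\supp(f))$ is much larger than $\sqrt{d}\,\eta_{1}$ times the relevant scale, the only $w$'s contributing lie far out in the tail.

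To carry this out, fix $x\in\R^{d}$ with $\dist(x,\supp(f))\ge r$. For each $i\in\{1,\ldots,d\}$, since $|a_{ji}(x)|\le \eta_{1}$, we have $|a_{i}(x)|\le \sqrt{d}\,\eta_{1}$, so $f(x+a_{i}(x)w)\ne 0$ only when
$$
|w|\ge \frac{|a_{i}(x)w|}{\sqrt{d}\,\eta_{1}}\ge \frac{\dist(x,\supp(f))}{\sqrt{d}\,\eta_{1}}\ge \frac{r}{\sqrt{d}\,\eta_{1}}.
$$
Using $\nu(w)\le \calA_{\alpha}|w|^{-1-\alpha}$ pointwise, it follows that
$$
|\calN f(x)|\le \|f\|_{\infty}\sum_{i=1}^{d}\int_{|w|\ge r/(\sqrt{d}\eta_{1})}\nu(w)\,dw
\le d\,\|f\|_{\infty}\int_{|w|\ge r/(\sqrt{d}\eta_{1})}\frac{\calA_{\alpha}}{|w|^{1+\alpha}}\,dw
= \frac{2d\,\calA_{\alpha}(\sqrt{d}\,\eta_{1})^{\alpha}}{\alpha\,r^{\alpha}}\|f\|_{\infty}.
$$
Choosing
$$
r=\max\!\left\{1,\;\left(\frac{2d\,\calA_{\alpha}(\sqrt{d}\,\eta_{1})^{\alpha}}{\alpha\,\eps_{1}}\right)^{\!1/\alpha}\right\}
$$
yields $|\calN f(x)|\le \eps_{1}\|f\|_{\infty}$, with $r$ depending only on $\eps_{1},\alpha,d,\eta_{1}$ (hence, a fortiori, on the allowed parameters).

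There is no real obstacle here: the argument only uses (a) the uniform bound on the entries of $A(x)$ to bound $|a_{i}(x)|$, and (b) the explicit polynomial tail of $\nu$. Both are available by the hypothesis \eqref{bounded} and the definition of $\nu(w)=\calA_{\alpha}|w|^{-1-\alpha}-\mu(w)$. This is precisely the reason the authors flagged the proof as standard.
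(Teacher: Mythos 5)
The paper omits this proof, declaring it standard; your argument is precisely the standard one the authors had in mind, and it is correct. You use only the uniform bound $|a_i(x)|\le\sqrt{d}\,\eta_1$ from \eqref{bounded}, the inclusion $\{f\ne 0\}\subset\supp(f)$ to restrict to $|w|\ge r/(\sqrt{d}\eta_1)$, and the tail bound $\nu(w)\le\calA_\alpha|w|^{-1-\alpha}$; the resulting explicit $r$ depends only on $\eps_1,\alpha,d,\eta_1$, which is within the allowed dependence.
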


\begin{lemma}
\label{eps_integral_u}
Assume that $f \in \Bb$. For any $\eps_1 > 0$ there exists $r \ge 1$ (depending on $\eps_1, \tau, \alpha, d, \eta_1, \eta_2, \eta_3$) such that for any $\xi \in [0,1]$, $t \in [0,\tau]$, $x \in \R^d$, if $\dist(x, \supp(f)) \ge r$ then $\left|\int_0^t  U_{t-s}^{(\xi)}(\calN(f))(x) \, ds\right| \le \eps_1 \|f\|_{\infty}$.
\end{lemma}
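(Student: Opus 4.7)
The plan is to reduce Lemma~\ref{eps_integral_u} to Lemma~\ref{integral_u} and Lemma~\ref{eps_u} by writing $g:=\calN f$ as a sum of two pieces: one with small sup-norm, the other with support located near $\supp(f)$.

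First I would record that by (\ref{N_bound}), $\|g\|_\infty\le\lambda\|f\|_\infty$, so $g\in\Bb$. Given $\eps_1>0$, set $\eps_2=\eps_1/(2a)$ and $\eps_3=\eps_1/(2\tau\lambda\vee 1)$, where $a$ is the constant from Lemma~\ref{integral_u}. By Lemma~\ref{eps_n} applied to $f$ with tolerance $\eps_2$, there is $r_1\ge 1$ such that $|g(y)|\le\eps_2\|f\|_\infty$ whenever $\dist(y,\supp(f))\ge r_1$. Split
\begin{equation*}
g=g_1+g_2,\qquad g_1(y)=g(y)\,\mathbf{1}_{\{\dist(y,\supp(f))\le r_1\}},\quad g_2=g-g_1.
\end{equation*}
Then $\|g_2\|_\infty\le\eps_2\|f\|_\infty$ and $\supp(g_1)\subset\supp(f)+\overline{B(0,r_1)}$.

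Next I would handle each piece. For $g_2$, Lemma~\ref{integral_u} directly gives, for every $\xi\in[0,1]$, $t\in(0,\tau]$ and every $x\in\R^d$,
\begin{equation*}
\left|\int_0^t U_{t-s}^{(\xi)}(\calN g_2)(x)\,ds\right|\ \text{— wait, we want }\int_0^t U_{t-s}^{(\xi)}(g_2)(x)\,ds,
\end{equation*}
so I use Lemma~\ref{integral_u} with the function $g_2$ in place of $\calN f$; more precisely, the proof of Lemma~\ref{integral_u} applies verbatim using (\ref{integralu1}) to give $\left|\int_0^t U_{t-s}^{(\xi)}(g_2)(x)\,ds\right|\le c\,\tau\|g_2\|_\infty\le c\tau\eps_2\|f\|_\infty$ (absorb the constant into $a$ by adjusting $\eps_2$ at the start). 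For $g_1$, which is in $\Bb$, Lemma~\ref{eps_u} applied with tolerance $\eps_3$ yields $r_2\ge 1$ such that $|U_{t-s}^{(\xi)}g_1(x)|\le\eps_3\|g_1\|_\infty\le\eps_3\lambda\|f\|_\infty$ whenever $\dist(x,\supp(g_1))\ge r_2$.

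Finally I would combine these: set $r=r_1+r_2$. If $\dist(x,\supp(f))\ge r$ then for every $y\in\supp(g_1)$ there is $z\in\supp(f)$ with $|y-z|\le r_1$, so $|x-y|\ge|x-z|-|z-y|\ge r_2$, giving $\dist(x,\supp(g_1))\ge r_2$. Integrating the uniform bound on $U_{t-s}^{(\xi)}g_1$ over $s\in[0,t]\subset[0,\tau]$, and adding the contribution from $g_2$,
\begin{equation*}
\left|\int_0^t U_{t-s}^{(\xi)}(\calN f)(x)\,ds\right|\le\tau\lambda\eps_3\|f\|_\infty+a\eps_2\|f\|_\infty\le\eps_1\|f\|_\infty,
\end{equation*}
which is the required estimate. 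There is no real obstacle here beyond bookkeeping; the only point to double-check is that the splitting preserves measurability (clear, since $y\mapsto\dist(y,\supp(f))$ is continuous) and that the constants $r_1,r_2$ depend only on $\eps_1,\tau,\alpha,d,\eta_1,\eta_2,\eta_3$, which follows from the corresponding dependence statements in Lemmas~\ref{eps_n} and~\ref{eps_u}.
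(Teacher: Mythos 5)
Your proof is correct and mirrors the paper's argument: both split $\calN f$ --- directly in your case, via pointwise indicator upper bounds in the paper's --- into a piece supported near $\supp(f)$ (handled by Lemma~\ref{eps_u}) and a piece with small sup-norm far from $\supp(f)$ (handled by the uniform kernel bound $\int|u^{(\xi)}(t,x,y)|\,dy\le c$), with Lemma~\ref{eps_n} supplying the smallness. The only minor imprecision is invoking Lemma~\ref{integral_u} for the $g_2$ contribution, when what is really used is Corollary~\ref{ueps_estimate} directly (there is no $\calN$ in front of $g_2$), as you in fact acknowledge yourself.
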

\begin{proof}
Let $\xi \in [0,1]$, $t \in (0,\tau]$ be arbitrary. Choose $\eps_1 > 0$. There exists $a > 0$ such that for any $g \in \Bb$ we have $\|U_t^{(\xi)} g\|_{\infty} \le a \|g\|_{\infty}$, where $a$ depends on $\tau, \alpha, d, \eta_1, \eta_2, \eta_3$.  
By Lemma \ref{eps_n} there exists $r_1 \ge 1$ such that, if $\dist(x,\supp(f)) \ge r_1$, then
$$
|\calN f(x)| \le \frac{\eps_1 \|f\|_{\infty}}{2 a \tau},
$$ 
where $r_1$ depends on $\eps_1, \tau, \alpha, d, \eta_1, \eta_2, \eta_3$. Put $A = \{x \in \R^d: \, \dist(x,\supp(f)) \ge r_1\}$.
We have, applying also (\ref{N_bound}), 
$$
\left| \calN f(x) \right|
=  \left| \calN f(x) 1_{A^c}(x)
+ \calN f(x) 1_{A}(x)\right|
\le \lambda \|f\|_{\infty} 1_{A^c}(x) + \frac{\eps_1 \|f\|_{\infty}}{2 a \tau} 1_{A}(x).
$$
Put $\tilde{f}(x) = \lambda \|f\|_{\infty} 1_{A^c}(x)$, 
$\tilde{\tilde{f}}(x) =  \frac{\eps_1 \|f\|_{\infty}}{2 a \tau} 1_{A}(x)$.

We have 
\begin{equation}
\label{int_u1}
\left| \int_0^t  U_{t-s}^{(\xi)}(\calN(f))(x) \, ds \right| 
\le \int_0^t  U_{t-s}^{(\xi)}(\tilde{f})(x) \, ds
+ \int_0^t  U_{t-s}^{(\xi)}(\tilde{\tilde{f}})(x) \, ds.
\end{equation}
For any $x \in \R^d$ we get
\begin{equation}
\label{int_u2}
\int_0^t  U_{t-s}^{(\xi)}(\tilde{\tilde{f}})(x) \, ds
\le \frac{\eps_1 \|f\|_{\infty}}{2}.
\end{equation}
By Lemma \ref{eps_u}, there exists $r_{2} \ge r_1$ such that, if $\dist(x,\supp(f)) \ge r_{2}$ and $s \in (0,t)$, then
$$
\left| U_{t-s}^{(\xi)}(\tilde{f})(x) \right| \le \frac{\eps_1 \|f\|_{\infty}}{2 \tau},
$$
where $r_{2}$ depends on $\eps_1, r_1, \tau, \alpha, d, \eta_1, \eta_2, \eta_3$.
It follows that, if $\dist(x,\supp(f)) \ge r_{2}$, then
\begin{equation}
\label{int_u3}
\int_0^t  U_{t-s}^{(\xi)}(\tilde{f})(x) \, ds
\le  \frac{\eps_1 \|f\|_{\infty}}{2}.
\end{equation}
Finally, (\ref{int_u1}-\ref{int_u3}) imply the assertion of the lemma.
\end{proof}

\begin{lemma}
\label{distant_support}
Assume that $f \in \Bb$. For any $\eps_1 > 0$ there exists $r \ge 1$ (depending on $\eps_1, \tau, \alpha, d, \eta_1, \eta_2, \eta_3$), such that for any $\xi \in [0,1]$, $t \in [0,\tau]$, $x \in \R^d$, if $\dist(x, \supp(f)) \ge r$, then 
$|T_t^{(\xi)} f(x)| \le \sum_{n = 0}^{\infty} |\Psi_{n,t}^{(\xi)} f(x)| \le \eps_1 \|f\|_{\infty}$.
\end{lemma}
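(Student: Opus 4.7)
The first inequality $|T_t^{(\xi)}f(x)| \le \sum_{n=0}^\infty |\Psi_{n,t}^{(\xi)}f(x)|$ is immediate from the definition of $T_t^{(\xi)}$ together with $|e^{-\lambda t}| \le 1$. To bound the series, my plan is to split at an index $N = N(\eps_1) \in \N$ chosen using Lemma \ref{psint}, which gives $|\Psi_{n,t}^{(\xi)}f(x)| \le (c_1^{n+1}\tau^n/n!)\|f\|_\infty$, so that $\sum_{n \ge N} c_1^{n+1}\tau^n/n! \le \eps_1/2$. It then suffices to prove the following inductive claim: for every $n \in \N$ and every $\eta > 0$ there exists $r_n(\eta) \ge 1$, depending only on $\eta$, $n$ and the fixed parameters, such that $|\Psi_{n,t}^{(\xi)}f(x)| \le \eta \|f\|_\infty$ whenever $\dist(x,\supp(f)) \ge r_n(\eta)$, uniformly in $\xi \in [0,1]$ and $t \in [0,\tau]$. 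Setting $r := \max_{0 \le n < N} r_n(\eps_1/(2N))$ will then deliver the desired bound.

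The base case $n=0$ of the inductive claim is exactly Lemma \ref{eps_u}. For the induction step, set $M_{n-1} := c_1^n\tau^{n-1}/(n-1)!$, so that Lemma \ref{psint} yields $\|\Psi_{n-1,s}^{(\xi)}f\|_\infty \le M_{n-1}\|f\|_\infty$. Given $\eta > 0$, I choose auxiliary $\eta', \eta'' > 0$ with $a\eta' + \eta'' M_{n-1} \le \eta$, where $a$ is the constant from Lemma \ref{integral_u}, and by the inductive hypothesis I select $\rho$ so that $|\Psi_{n-1,s}^{(\xi)}f(y)| \le \eta'\|f\|_\infty$ for $\dist(y,\supp(f)) \ge \rho$, uniformly in $s,\xi$. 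Decomposing $\Psi_{n-1,s}^{(\xi)}f = g_{1,s}^{(\xi)} + g_{2,s}^{(\xi)}$ via the cutoff at the closed set $K_\rho := \{y : \dist(y,\supp(f)) \le \rho\}$, the recursion gives
\[
\Psi_{n,t}^{(\xi)}f(x) = \int_0^t U_{t-s}^{(\xi)}\bigl(\calN g_{1,s}^{(\xi)}\bigr)(x)\, ds + \int_0^t U_{t-s}^{(\xi)}\bigl(\calN g_{2,s}^{(\xi)}\bigr)(x)\, ds.
\]
The second integral will be bounded by $a\eta'\|f\|_\infty$ via the argument of Lemma \ref{integral_u}, which uses only the uniform bound $\sup_s\|g_{2,s}^{(\xi)}\|_\infty \le \eta'\|f\|_\infty$. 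The first integral will be handled by rerunning the proof of Lemma \ref{eps_integral_u} with the single function $f$ there replaced by the family $g_{1,s}^{(\xi)}$, which has common support in $K_\rho$ and uniform $L^\infty$ bound $M_{n-1}\|f\|_\infty$; this produces $r^*$ such that the integral is at most $\eta'' M_{n-1}\|f\|_\infty$ whenever $\dist(x,K_\rho) \ge r^*$. Combining yields the inductive bound with $r_n(\eta) = \rho + r^*$.

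The only real technical subtlety I anticipate is that Lemmas \ref{integral_u}, \ref{eps_integral_u}, \ref{eps_u}, and \ref{eps_n} are stated for a single fixed function, whereas here they must be applied to the time-dependent family $\{g_{i,s}^{(\xi)}\}_{s \in [0,\tau],\,\xi \in [0,1]}$. This adaptation is routine because each of those proofs uses only a uniform $L^\infty$ bound and a common support of the input, both of which are available here by construction. Beyond this bookkeeping, no new estimates will be required to combine the tail bound from Lemma \ref{psint} with the inductively controlled initial segment and conclude $\sum_{n=0}^\infty |\Psi_{n,t}^{(\xi)} f(x)| \le \eps_1 \|f\|_\infty$.
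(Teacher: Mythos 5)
Your proof is correct and follows essentially the same strategy as the paper: the tail of $\sum_n \Psi_{n,t}^{(\xi)}f$ is cut off via Lemma \ref{psint}, the base case comes from Lemma \ref{eps_u}, and the inductive step uses a cutoff decomposition of $\Psi_{n-1,s}^{(\xi)}f$ together with Lemmas \ref{integral_u} and \ref{eps_integral_u} applied (as you note) to a time-dependent family through their uniform $L^\infty$ bound and common support. Your formulation of the inductive claim with a free parameter $\eta$, fixing the tail index $N$ from $\eps_1$ first and only then taking $\eta = \eps_1/(2N)$, is a cleaner way to organize the same quantifier bookkeeping that the paper carries out by tracking the factor $2^n a^n \eps_2$.
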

\begin{proof} Fix $f \in \Bb$.
Put 
$$
M = \sup_{n \in \N} \frac{c_1^{n+1} \tau^n}{n!},
$$
where $c_1$ is a constant from Lemma \ref{psint}.
Let $\xi \in [0,1]$, $t \in [0,\tau]$ be arbitrary such that $t + \xi > 0$. Choose $\eps_2 > 0$. By Lemma \ref{psint} there exists $n_0$ such that for any $x \in \Rd$ we have
\begin{equation}
\label{n0}
\sum_{n = n_0}^{\infty}  \left| \Psi_{n,t}^{(\xi)} f(x)\right| \le \eps_2 \|f\|_{\infty}.
\end{equation}

Put $r_{-1} = 1$. Now we will show that for any $n \in \N$ there exists $r_n \ge r_{n-1}$ such that, if $\dist(x,\supp(f)) \ge r_n$, then 
\begin{equation}
\label{induction_psint}
 \left|\Psi_{n,t}^{(\xi)} f(x) \right| \le 2^n a^n \eps_2 \|f\|_{\infty},
\end{equation}
where $a \ge 1$ is a constant from Lemma \ref{integral_u} and $r_n$ depends on $r_{n - 1},\eps_2, \tau, \alpha, d, \eta_1, \eta_2, \eta_3$.

By Lemma \ref{eps_u} there exists $r_0 \ge r_{-1}$ 
such that, if $\dist(x,\supp(f)) \ge r_0$, then 
\begin{equation}
\left| \Psi_{0,t}^{(\xi)} f(x) \right| =
\left| U_t^{(\xi)} f(x) \right| \le \eps_2 \|f\|_{\infty},
\end{equation}
where $r_0$ depends on $\eps_2, \tau, \alpha, d, \eta_1, \eta_2, \eta_3$.

Assume that (\ref{induction_psint}) holds for $n \in \N$. We will show it for $n + 1$. Put $A_n = \{x \in \R^d: \, \dist(x,\supp(f)) \ge r_n\}$. We have
\begin{eqnarray*}
 \left| \Psi_{n,t}^{(\xi)} f(x) \right|
&=&  \left|  \Psi_{n,t}^{(\xi)} f(x) 1_{A_n^c}(x)
+   \Psi_{n,t}^{(\xi)} f(x) 1_{A_n}(x)\right|\\
&\le& M \|f\|_{\infty} 1_{A_n^c}(x) + 2^n a^n \eps_2 \|f\|_{\infty} 1_{A_n}(x).
\end{eqnarray*}
Put $f_n(x) = M \|f\|_{\infty} 1_{A_n^c}(x)$, $\tilde{f}_n(x) =  2^n a^n \eps_2 \|f\|_{\infty} 1_{A_n}(x)$.

We have 
$$
\Psi_{n+1,t}^{(\xi)} f(x) 
= \int_0^t  U_{t-s}^{(\xi)}(\calN(\Psi_{n,s}^{(\xi)}f))(x) \, ds.
$$
Hence,
\begin{equation}
\label{ind_psi1}
\left| \Psi_{n+1,t}^{(\xi)} f(x) \right| 
\le \int_0^t  U_{t-s}^{(\xi)}(\calN(f_n))(x) \, ds
+ \int_0^t  U_{t-s}^{(\xi)}(\calN(\tilde{f}_n))(x) \, ds.
\end{equation}
By Lemma \ref{integral_u}, for any $x \in \R^d$, we get
\begin{equation}
\label{ind_psi2}
\int_0^t  U_{t-s}^{(\xi)}(\calN(\tilde{f}_n))(x) \, ds
\le a \|\tilde{f}_{n}\|_{\infty}
\le 2^n a^{n+1} \eps_2 \|f\|_{\infty}.
\end{equation}
By Lemma \ref{eps_integral_u}, there exists $r_{n+1} \ge r_n$ such that, if $\dist(x,\supp(f)) \ge r_{n+1}$, then
\begin{equation}
\label{ind_psi3}
\int_0^t  U_{t-s}^{(\xi)}(\calN(f_n))(x) \, ds
\le  \eps_2 \|f\|_{\infty},
\end{equation}
where $r_{n+1}$ depends on $\eps_2, r_n, \tau, \alpha, d, \eta_1, \eta_2, \eta_3$.

By (\ref{ind_psi1}-\ref{ind_psi3}) we obtain (\ref{induction_psint}) for $n+1$.
By (\ref{induction_psint}), we obtain that, if $\dist(x,\supp(f)) \ge r_{n_0}$, then
$$
\sum_{n=0}^{n_0}  \left| \Psi_{n,t}^{(\xi)} f(x) \right|
\le \eps_2  \sum_{n=0}^{n_0} 2^n a^n \|f\|_{\infty}.
$$
Using this and (\ref{n0}) we get the assertion of the lemma.
\end{proof}

By Lemma \ref{distant_support} and Theorem \ref{TtL1Linfty} one easily obtains the following result.
\begin{corollary}
\label{convergence}
Assume that $f \in \Bb$, for any $n \in \N$, $n \ge 1$ we have $f_n \in \Bb$, $\sup_{n \in \N, n \ge 1} \|f_n\|_{\infty} < \infty$ and $\lim_{n \to \infty} f_n(x) = f(x)$ for almost all $x \in \R^d$ with respect to the Lebesgue measure. Then, for any $t > 0$, $x \in \R^d$, we have
$\lim_{n \to \infty} T_t f_n(x) = T_t f(x)$.
\end{corollary}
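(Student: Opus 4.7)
The plan is to use the linearity of $T_t$ together with a near/far decomposition of $f_n - f$. First I would observe that $T_t$ is linear in $f$: from the definition $T_t f(x) = e^{-\lambda t} \sum_{n=0}^{\infty} \Psi_{n,t} f(x)$, linearity follows by induction on $n$ from \eqref{Psi_nt0}--\eqref{Psi_nt}, since $U_t$ is an integral operator with kernel $u(t,x,y)$ and $\calN$ is manifestly linear. Setting $g_n = f_n - f$, it therefore suffices to prove that $T_t g_n(x) \to 0$ under the assumption that $g_n \to 0$ almost everywhere and $\sup_n \|g_n\|_\infty \le M < \infty$ for some $M$.

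Fix $t > 0$, $x \in \R^d$, and $\eps > 0$, and choose some $\gamma \in (0,\alpha/d)$. For a parameter $R > |x|$ to be determined, I would split
$$
g_n = g_n \mathbf{1}_{B(0,R)} + g_n \mathbf{1}_{B(0,R)^c}
$$
and estimate each piece separately. For the far piece, Lemma \ref{distant_support} applied with $\xi = 0$ (noting that $T_t^{(0)} = T_t$ and $\Psi_{n,t}^{(0)} = \Psi_{n,t}$) provides an $r = r(\eps, M)$ such that every $h \in \Bb$ with $\dist(x, \supp(h)) \ge r$ satisfies $|T_t h(x)| \le (\eps/(2M))\|h\|_\infty$; taking $R = |x| + r$ then gives
$$
|T_t(g_n \mathbf{1}_{B(0,R)^c})(x)| \le \frac{\eps}{2M}\cdot M = \frac{\eps}{2},
$$
uniformly in $n$.

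For the near piece, I would keep this $R$ fixed, note that $|g_n| \mathbf{1}_{B(0,R)} \le M \mathbf{1}_{B(0,R)} \in L^1(\R^d)$, and invoke dominated convergence to obtain $\|g_n \mathbf{1}_{B(0,R)}\|_1 \to 0$. Since $g_n \mathbf{1}_{B(0,R)} \in L^1(\R^d) \cap L^\infty(\R^d)$, Theorem \ref{TtL1Linfty} then yields
$$
|T_t(g_n \mathbf{1}_{B(0,R)})(x)| \le c\, t^{-\gamma d/\alpha}\, M^{1-\gamma}\, \|g_n \mathbf{1}_{B(0,R)}\|_1^{\gamma} \longrightarrow 0,
$$
so this term is below $\eps/2$ for all large $n$. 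Combining the two bounds yields $|T_t g_n(x)| < \eps$ eventually, which is the desired conclusion.

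I do not expect any serious obstacle: the argument is a bounded-convergence plus concentration-at-infinity combination of the two tools already in hand. The only points to verify are that Lemma \ref{distant_support} genuinely covers the case $\xi = 0$ (which it does, the range being $\xi \in [0,1]$), that Theorem \ref{TtL1Linfty} applies because each truncated function $g_n \mathbf{1}_{B(0,R)}$ belongs to $L^1 \cap L^\infty$, and that $t$ lies in $(0,\tau]$, which may be arranged since $\tau$ was fixed arbitrarily at the start of Section 2 and can be taken at least as large as the given $t$.
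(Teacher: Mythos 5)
Your argument is correct and is precisely the route the paper indicates (the paper omits the proof and simply says it follows from Lemma \ref{distant_support} and Theorem \ref{TtL1Linfty}): split $f_n - f$ into a far piece handled uniformly by Lemma \ref{distant_support} with $\xi = 0$ and a compactly supported piece whose $L^1$ norm tends to zero by dominated convergence, then apply Theorem \ref{TtL1Linfty}. Nothing to add.
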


\begin{lemma}
\label{regularity_pteps}
(i) For every $f \in C_0(\R^d)$ we have
\begin{equation*}
\lim_{t,\xi \to 0^+}\|T_t^{(\xi)} f - f\|_{\infty} = 0.
\end{equation*}

(ii) For every $f \in C_0(\R^d)$ we have
\begin{equation*}
T_t^{(\xi)} f(x) \to 0, \quad \text{as} \quad |x| \to \infty,
\end{equation*}
uniformly in $t \in [0,\tau]$, $\xi \in [0,1]$.

(iii) For every $f \in C_0(\R^d)$ we have
\begin{equation*}
\|T_t^{(\xi)} f - T_t f\|_{\infty} \to 0, \quad \text{as} \quad \xi \to 0^+,
\end{equation*}
uniformly in $t \in [0,\tau]$.
\end{lemma}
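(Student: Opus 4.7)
The plan is to treat the three parts separately, relying on the bounds on $\Psi_{n,t}^{(\xi)}$ from Lemma \ref{psint}, the regularity statements for $U_t^{(\xi)}$ in Lemma \ref{regularity_uteps}, and the distant-support bound in Lemma \ref{distant_support}.

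For (i), I will split
\[
T_t^{(\xi)} f - f = (e^{-\lambda t}-1)U_t^{(\xi)} f + e^{-\lambda t}(U_t^{(\xi)} f - f) + e^{-\lambda t}\sum_{n\ge 1}\Psi_{n,t}^{(\xi)} f.
\]
The first piece is $O(t)\|f\|_\infty$ (using the uniform bound $\|U_t^{(\xi)} f\|_\infty \le c\|f\|_\infty$ from Corollary \ref{uintegral}); the second goes to zero in sup norm by Lemma \ref{regularity_uteps}(ii); and the tail sum is bounded by $\|f\|_\infty \sum_{n\ge 1} c_1^{n+1}t^n/n! = c_1\|f\|_\infty(e^{c_1 t}-1)$ via Lemma \ref{psint}.

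For (ii), I will approximate $f\in C_0(\R^d)$ by a compactly-supported continuous $g$ with $\|f-g\|_\infty$ as small as wanted. Corollary \ref{ptxy} provides a uniform bound $\|T_t^{(\xi)} h\|_\infty \le c\|h\|_\infty$ which controls $T_t^{(\xi)}(f-g)$. Lemma \ref{distant_support} applied to $g$ (whose support is now compact, so $\dist(x,\supp g)\to\infty$ as $|x|\to\infty$) yields uniform smallness of $T_t^{(\xi)} g(x)$, and hence of $T_t^{(\xi)} f(x)$, for $|x|$ large, uniformly in $t\in [0,\tau]$ and $\xi\in [0,1]$.

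For (iii), the bound in Lemma \ref{psint} lets me truncate the defining series and reduce to showing, for each fixed $n$, that $\sup_{t\in[0,\tau]}\|\Psi_{n,t}^{(\xi)} f - \Psi_{n,t} f\|_\infty \to 0$ as $\xi\to 0^+$. I will prove this by induction on $n$, in parallel with the auxiliary statement that $s\mapsto \Psi_{n,s} f$ is continuous from $[0,\tau]$ into $C_0(\R^d)$ in sup norm. The base case $n=0$ uses Lemma \ref{regularity_uteps}(iv) for the first and Lemma \ref{u_continuity} together with Lemma \ref{regularity_uteps}(ii),(iii) (uniform decay at infinity) for the second. The inductive step proceeds via
\[
\Psi_{n,t}^{(\xi)} f - \Psi_{n,t} f = \int_0^t U_{t-s}^{(\xi)}\calN\bigl(\Psi_{n-1,s}^{(\xi)} f - \Psi_{n-1,s} f\bigr)ds + \int_0^t (U_{t-s}^{(\xi)} - U_{t-s})h_s\, ds,
\]
with $h_s := \calN(\Psi_{n-1,s} f)\in C_0(\R^d)$ (that $\calN$ maps $C_0$ to $C_0$ is an easy consequence of the boundedness of $a_i$ and the finiteness of $\int\nu$). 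The first integral is handled by the inductive hypothesis and the operator bound for $U_{t-s}^{(\xi)}$. The main obstacle is the second integral: Lemma \ref{regularity_uteps}(iv) gives convergence only for each fixed $h\in C_0$, while here $h_s$ varies with $s$. The parallel continuity statement resolves this, because continuity of $s\mapsto h_s$ makes $\{h_s\}_{s\in[0,\tau]}$ a compact subset of $C_0(\R^d)$, and a standard equicontinuity argument (finite $\eps$-net in $C_0$ plus the uniform operator-norm bound on $U_r^{(\xi)} - U_r$) upgrades Lemma \ref{regularity_uteps}(iv) to $\sup_{s,r\in[0,\tau]}\|(U_r^{(\xi)} - U_r)h_s\|_\infty \to 0$, closing the induction. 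The continuity assertion at level $n$ then follows from the integral representation of $\Psi_{n,t} f$, the inductive continuity at level $n-1$, and the base-case continuity of $r\mapsto U_r g$ for $g\in C_0(\R^d)$.
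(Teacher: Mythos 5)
Your proofs of (i) and (ii) match the paper's proof essentially verbatim: (i) is the termwise estimate from Lemma \ref{psint} together with Lemma \ref{regularity_uteps}(ii), and (ii) is the split of $f$ into a compactly supported piece (handled by Lemma \ref{distant_support}) and a sup-small remainder (handled by the uniform operator bound from Corollary \ref{ptxy}).

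For (iii), however, your route is genuinely different from the paper's. The paper first establishes that $(\xi,t,x)\mapsto\Psi_{n,t}^{(\xi)}f(x)$ is jointly continuous on $[0,1]\times[0,\tau]\times\R^d$ (using the same machinery as in the proof of Lemma \ref{regularity_uteps}(iv), via the explicit iterated-integral representation), sums over $n$ via Lemma \ref{psint} to get joint continuity of $T_t^{(\xi)}f(x)$, and then combines this with the decay at infinity from (ii) to turn continuity on a noncompact parameter space into uniform convergence as $\xi\to0^+$. You instead estimate $\Psi_{n,t}^{(\xi)}f-\Psi_{n,t}f$ directly by induction on $n$, splitting the defining integral into a piece controlled by the inductive hypothesis plus the uniform operator bound, and a piece $\int_0^t\bigl(U_{t-s}^{(\xi)}-U_{t-s}\bigr)h_s\,ds$ whose uniformity in $s$ you obtain by proving in parallel that $s\mapsto h_s=\calN(\Psi_{n-1,s}f)$ is norm-continuous into $C_0(\R^d)$ and upgrading Lemma \ref{regularity_uteps}(iv) from a single source function to a compact family via a finite $\varepsilon$-net. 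Both arguments are correct and both ultimately rest on the same ingredients (Lemma \ref{regularity_uteps}(iv), Lemma \ref{psint}, the uniform $L^\infty$-bound); the paper's version is shorter because it reuses wholesale the continuity analysis already carried out for Lemma \ref{regularity_uteps}(iv) and lets spatial decay do the compactification, while yours is more self-contained and makes explicit the uniformity-over-the-source-function issue and its resolution by equicontinuity, at the cost of carrying a parallel continuity statement through the induction. One small point you should not gloss over too quickly: the claim that $\calN$ maps $C_0(\R^d)$ into $C_0(\R^d)$ does require a short dominated-convergence argument using $\int\nu<\infty$ and the uniform bound on $a_i$, since $\nu$ is not compactly supported; this is true but a bit more than ``easy'' in the strictest sense.
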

\begin{proof}
(i) This follows from Lemma \ref{regularity_uteps} (ii) and Lemma \ref{psint}.

(ii) Note that $T_0^{(\xi)} f = U_0^{(\xi)}$ so (ii) for $t = 0$ follows from Lemma \ref{regularity_uteps}. So we may assume that $t > 0$. Let $t \in (0,\tau]$, $\xi \in [0,1]$ be arbitrary. By Lemma \ref{ptxy} we have
\begin{equation}
\label{ttxi}
\left\|T_t^{(\xi)} f\right\|_{\infty} \le c_1 \|f\|_{\infty}.
\end{equation}
Choose $\eps_1 > 0$. Since $f \in C_0(\R^2)$ there exists $r_1 > 0$ such that if $|x| \ge r_1$ then $|f(x)| \le \eps_1/(2 c_1)$, where $c_1$ is a constant from (\ref{ttxi}). Put $f_1(x) = f(x) 1_{B(0,r_1)}(x)$, $f_2(x) = f(x) 1_{B^c(0,r_1)}(x)$. By Lemma \ref{distant_support}, there exists $r_2 > r_1$ such that, if $|x| \ge r_2$, then $|T_t^{(\xi)} f_1(x)| \le \eps_1/2$. Hence for any $|x| \ge r_2$ we have $|T_t^{(\xi)} f(x)| \le |T_t^{(\xi)} f_1(x) + T_t^{(\xi)} f_2(x)| \le \eps_1/2 + (\eps_1/(2 c_1)) c_1 = \eps_1$. 

(iii) Let $\xi \in (0,1]$, $n \in \N$, $n \ge 1$, $x \in \R^d$, $f \in C_0(\R^d)$.
Note that for any $t \in (0,\tau]$ we have
\begin{eqnarray}
\nonumber
\Psi_{n,t}^{(\xi)} f(x) 
&=& \int_0^t \int_{\R^d} p_y(t-s+\xi,x-y) \calN(\Psi_{n-1,s}^{(\xi)}f)(y) \, dy \, ds\\
\label{formula_psinteps}
&+& \int_0^t \int_0^{t-s} \int_{\R^d} p_z(t-s-r+\xi,x-z) Q_r(\calN(\Psi_{n-1,s}^{(\xi)}f))(z) \, dz \, dr \, ds.
\end{eqnarray}
By the same arguments as in the proof of Lemma \ref{regularity_uteps} (iv) we obtain that $(\xi,t,x) \to \Psi_{n,t}^{(\xi)} f(x)$ is continuous on $[0,1] \times [0,\tau] \times \R^d$ for any $n \in \N$, $n \ge 1$. Using this, (\ref{ut_continuity}) and Lemma \ref{psint} we obtain that $(\xi,t,x) \to T_t^{(\xi)} f(x)$ is continuous on $[0,1] \times [0,\tau] \times \R^d$. This and (ii) implies (iii).
\end{proof}

By the same arguments as in the proof of Lemma \ref{regularity_uteps} (iv) we obtain the following result.
\begin{lemma}
\label{psi_continuity}
For any $f \in \Bb$, $n \in \N$, the function $(t,x) \to \Psi_{n,t} f(x)$ is continuous on $(0,\infty) \times \R^d$. For any 
$\xi \in (0,1]$, $f \in \Bb$, $n \in \N$, the function $(t,x) \to \Psi_{n,t}^{(\xi)} f(x)$ is continuous on $[0,\infty) \times \R^d$.
\end{lemma}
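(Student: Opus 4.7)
The plan is to proceed by induction on $n$. The base case $n = 0$ reduces to $\Psi_{0,t} f = U_t f$ and $\Psi_{0,t}^{(\xi)} f = U_t^{(\xi)} f$, so the claim is precisely Lemma \ref{u_continuity}.

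For the inductive step, suppose both statements hold at index $n-1$ for every $f \in \Bb$. Set $h_{s,\xi}(y) := \Psi_{n-1,s}^{(\xi)} f(y)$. By the inductive hypothesis $h_{s,\xi}(y)$ is jointly continuous on $[0,\tau] \times \R^d$ (resp.\ on $(0,\tau] \times \R^d$ for $\xi=0$), and by Lemma \ref{psint} it is uniformly bounded by $c_1^{n} \tau^{n-1} \|f\|_\infty/(n-1)!$. Since the restriction of $\nu$ to each bounded region is a finite measure and $\nu$ is supported in $[-2\delta,2\delta]$ with $\nu(w)\le c|w|^{-1-\alpha}$, the dominated convergence theorem applied to the definition of $\calN$ yields that $g_{s,\xi}(y) := \calN(h_{s,\xi})(y)$ is also jointly continuous and (by (\ref{N_bound})) uniformly bounded by $\lambda$ times the bound for $h_{s,\xi}$.

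Next, following exactly the strategy of Lemma \ref{regularity_uteps}(iv), I would pass from $\Psi_{n,t}^{(\xi)} f(x) = \int_0^t U_{t-s}^{(\xi)}(g_{s,\xi})(x)\,ds$ to the explicit integral representation obtained by expanding $u^{(\xi)} = p_y(t-s+\xi, x-y) + \varphi_y^{(\xi)}(t-s,x)$ and unwinding $\varphi_y^{(\xi)}$; this is precisely formula (\ref{formula_psinteps}) applied with $h_{s,\xi}$ replaced by the (continuous, bounded) $g_{s,\xi}$. The first term splits as $\int_0^t \int_{\R^d} p_y(t-s+\xi, x-y) g_{s,\xi}(y)\, dy\, ds$; for the integrand, Lemma \ref{pycontinuity} gives joint continuity of $p_y$ in all three arguments, while Corollary \ref{estimate_pytx} together with Proposition \ref{pyintegral} provides an integrable majorant. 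Dominated convergence then gives joint continuity of this term in $(\xi,t,x) \in [0,1] \times [0,\tau] \times \R^d$. The second (triple-integral) term is handled analogously, now using the continuity and decay of $Q_r g_{s,\xi}(z)$ — the joint continuity in $(r,z)$ follows by the same DCT argument as in the proof of Lemma \ref{qtf_uniform} applied to the continuous bounded function $g_{s,\xi}$, and the decay estimates from Proposition \ref{qestimate} furnish the required majorants. For the $\xi=0$ case one runs identical estimates on $(0,\tau] \times \R^d$; the only difference is that $p_y(t-s, x-y)$ and $p_z(t-s-r, x-z)$ force us to stay away from $t=0$, which is why the continuity is only claimed on the open half-line.

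The main obstacle is purely bookkeeping: providing uniform integrable majorants for dominated convergence over the double and triple integrals, in particular controlling the time singularities at $s=t$ (and $s+r = t$) coming from $|Q_r g| \lesssim r^{-1/2}\|g\|_\infty$. These are absorbed by the estimates $\int_0^t (t-s)^{-\gamma/\alpha}s^{-1/2}\,ds < \infty$ used repeatedly earlier, so no new analytic difficulty arises — the induction on $n$ simply repackages the same continuity/integrability estimates that proved Lemma \ref{regularity_uteps}(iv).
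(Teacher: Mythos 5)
Your proof is correct and unpacks what the paper compresses into a single citation (``by the same arguments as in the proof of Lemma~\ref{regularity_uteps}~(iv)''): an induction on $n$, with base case Lemma~\ref{u_continuity} and inductive step a dominated-convergence pass through the recursive definition (either via the explicit representation~(\ref{formula_psinteps}), as you do, or directly by viewing $U_{t-s}^{(\xi)}$ as a black-box continuous map on bounded functions), using the uniform bounds of Lemma~\ref{psint} as a dominating function in~$s$.

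One factual slip worth correcting: you state that $\nu$ is supported in $[-2\delta,2\delta]$ with $\nu(w)\le c|w|^{-1-\alpha}$. In fact it is $\mu=\mu^{(\delta)}$ that is supported in $[-2\delta,2\delta]$; the complementary density $\nu(w)=\Aa|w|^{-1-\alpha}-\mu(w)$ vanishes for $|w|\le\delta$ and is supported on $\{|w|\ge\delta\}$, extending to infinity. The integrability of $\nu$ (i.e.\ $\lambda=d\int_\R\nu<\infty$, which is what you use in the dominated convergence step for $\calN$) holds because $\nu$ vanishes near the origin and decays like $|w|^{-1-\alpha}$ at infinity, not because of compact support. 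The downstream conclusion is unaffected.
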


\begin{lemma}
\label{tderivative_psi_eps}
$\frac{\partial}{\partial t} \left(\Psi_{n,t}^{(\xi)} f\right)(x)$, $\calL \left(\Psi_{n,t}^{(\xi)} f\right)(x)$ are well defined for any $t > 0$, $\xi \in (0,1]$, $x \in \R^d$, $n \in \N$, $n \ge 1$ and $f \in C_0(\R^d)$ and we have
\begin{eqnarray*}
 \frac{\partial}{\partial t} \left(\Psi_{n,t}^{(\xi)} f\right)(x) 
- \calL \left(\Psi_{n,t}^{(\xi)} f\right)(x) 
&=& \int_{\R^d} p_z(\xi,x-z) \calN \left(\Psi_{n-1,t}^{(\xi)} f\right)(z) \, dz\\
&& + \int_0^t \Lambda_{t-s}^{(\xi)} \left(\calN \left(\Psi_{n-1,s}^{(\xi)} f\right)\right) (x) \, ds.
\end{eqnarray*}
Moreover, $\frac{\partial}{\partial t} \Psi_{n,t}^{(\xi)} f(x)$ is continuous as a function of $t$ for $t > 0$.
\end{lemma}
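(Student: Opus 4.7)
The plan is to differentiate (\ref{Psi_nteps}) under the integral sign. Setting $g_s(x) = \calN(\Psi_{n-1,s}^{(\xi)} f)(x)$, we obtain from (\ref{N_bound}) and Lemma \ref{psint} the uniform bound $\|g_s\|_\infty \le \lambda c_1^n s^{n-1}/(n-1)!\,\|f\|_\infty$, while Lemma \ref{psi_continuity} together with Lemma \ref{distant_support} and Lemma \ref{eps_n} yield $g_s \in C_0(\R^d)$ with $(s,x) \mapsto g_s(x)$ continuous. Lemma \ref{regularity_uteps}(i) supplies the integrable majorant $|\partial_r (U_r^{(\xi)} g_s)(x)| \le c(\xi)\, r^{-1/2}\|g_s\|_\infty$, which justifies Leibniz's rule and gives
\begin{equation*}
\frac{\partial}{\partial t}(\Psi_{n,t}^{(\xi)} f)(x) = U_0^{(\xi)}(g_t)(x) + \int_0^t \partial_r(U_r^{(\xi)} g_s)(x)\big|_{r=t-s}\, ds.
\end{equation*}
Since $u^{(\xi)}(0,x,z) = p_z(\xi,x-z)$ (the integral defining $\varphi_z^{(\xi)}(0,x)$ is empty), the boundary term equals $\int_{\R^d} p_z(\xi, x-z)\, g_t(z)\, dz$, which matches the first term in the claim. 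Inserting the identity $\partial_r(U_r^{(\xi)} g_s)(x) = \calL(U_r^{(\xi)} g_s)(x) + \Lambda_r^{(\xi)}(g_s)(x)$ that comes from the definition of $\Lambda_r^{(\xi)}$ isolates the $\Lambda$-term of the claim.

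The main obstacle is the interchange of $\calL$ with the $s$-integration, that is, the identification $\int_0^t \calL(U_{t-s}^{(\xi)} g_s)(x)\, ds = \calL(\Psi_{n,t}^{(\xi)} f)(x)$. Since $\calL$ is a singular integral operator, standard Fubini does not apply directly, so I would first work with the truncation $\calL_\zeta$: for each $\zeta > 0$, $\mu$ restricted to $\{|w|>\zeta\}$ has finite total mass, hence Fubini straightforwardly produces
\begin{equation*}
\calL_\zeta(\Psi_{n,t}^{(\xi)} f)(x) = \int_0^t \calL_\zeta(U_{t-s}^{(\xi)} g_s)(x)\, ds.
\end{equation*}
Sending $\zeta \to 0^+$, dominated convergence on the right with the uniform majorant $c(\xi)\|g_s\|_\infty$ from (\ref{Lzeta_estimate}) (integrable in $s$) yields $\int_0^t \calL(U_{t-s}^{(\xi)} g_s)(x)\, ds$, while on the left the same limit defines $\calL(\Psi_{n,t}^{(\xi)} f)(x)$, thereby simultaneously showing that this quantity is well defined. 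Combining the pieces produces the stated identity.

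Finally, continuity of $t \mapsto \frac{\partial}{\partial t}(\Psi_{n,t}^{(\xi)} f)(x)$ on $(0,\infty)$ will follow term by term from the established formula: the $p_z(\xi,\cdot)$ boundary term is continuous by continuity of $s \mapsto g_s$ together with Lemma \ref{pycontinuity} and dominated convergence; the $\Lambda$-integral is handled via Lemma \ref{heat_u} together with the integrable majorants (\ref{lambdatf})--(\ref{lambdatf2}); and the $\calL$-term inherits continuity from that of $t \mapsto \Psi_{n,t}^{(\xi)} f$, where Lemmas \ref{psi_continuity} and \ref{Lxpyestimate} allow passage of the limit through the singular integral.
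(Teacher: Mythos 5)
Your proposal is correct and follows essentially the same route as the paper: differentiate (\ref{Psi_nteps}) under the integral sign using the $t^{-1/2}$ majorant from (\ref{tderivative_estimate}), identify the boundary term $U_0^{(\xi)}(\calN(\Psi_{n-1,t}^{(\xi)}f))(x) = \int_{\R^d}p_z(\xi,x-z)\calN(\Psi_{n-1,t}^{(\xi)}f)(z)\,dz$, split the integrand via $\partial_t U_{t-s}^{(\xi)} = \calL\, U_{t-s}^{(\xi)} + \Lambda_{t-s}^{(\xi)}$, and then justify $\calL(\Psi_{n,t}^{(\xi)}f)(x) = \int_0^t \calL\,U_{t-s}^{(\xi)}(\calN(\Psi_{n-1,s}^{(\xi)}f))(x)\,ds$ by first working with $\calL_\zeta$ and passing $\zeta\to 0^+$ via dominated convergence with the majorant (\ref{Lzeta_estimate}). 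You are somewhat more explicit than the paper about the Fubini step for $\calL_\zeta$ and about the term-by-term continuity argument, but the decomposition, the key estimates invoked, and the logical structure coincide with the paper's proof.
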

\begin{proof} Let $\xi \in (0,1]$, $n \in \N$, $n \ge 1$, $x \in \R^d$, $f \in C_0(\R^d)$.
Note that for any $t \in (0,\tau]$, $s \in (0,t)$, we have
\begin{eqnarray*}
U_{t-s}^{(\xi)}(\calN(\Psi_{n-1,s}^{(\xi)}f))(x)
&=& \int_{\R^d} p_y(t-s+\xi,x-y) \calN(\Psi_{n-1,s}^{(\xi)}f)(y) \, dy\\
&+& \int_0^{t-s} \int_{\R^d} p_z(t-s-r+\xi,x-z) Q_r(\calN(\Psi_{n-1,s}^{(\xi)}f))(z) \, dz \, dr.
\end{eqnarray*}

By similar arguments as in the proof of Lemma \ref{regularity_uteps} (i) we obtain that \\ $\frac{\partial}{\partial t} U_{t-s}^{(\xi)}(\calN(\Psi_{n-1,s}^{(\xi)}f))(x)$ is well defined and continuous as a function of $t$ for $t \in (s,\tau]$. Note that for any $g \in C_0(\R^d)$, and $t \ge 0$ we have $U_t^{(\xi)} g \in C_0(\R^d)$, $\calN g \in C_0(\R^d)$. By (\ref{Psi_nteps}), (\ref{tderivative_estimate}), Lemmas \ref{u_continuity}, \ref{psi_continuity}, \ref{regularity_uteps} (i) and standard arguments we get 
\begin{eqnarray}
\nonumber
&& \frac{\partial}{\partial t} \left(\Psi_{n,t}^{(\xi)} f\right)(x)
=U_0^{(\xi)} \left(\calN\left(\Psi_{n-1,t}^{(\xi)} f\right)\right)(x) + 
\int_0^t \frac{\partial}{\partial t} U_{t-s}^{(\xi)} \left(\calN \left(\Psi_{n-1,s}^{(\xi)} f\right)\right)(x) \, ds\\
\nonumber
&& = \int_{\R^d} p_z(\xi,x-z) \calN(\Psi_{n-1,t}^{(\xi)} f)(z) \, dz
+ \int_0^t \Lambda_{t-s}^{(\xi)}(\calN(\Psi_{n-1,s}^{(\xi)} f))(x) \, ds\\
\label{psintt}
&& + \int_0^t \calL^x(U_{t-s}^{(\xi)}(\calN(\Psi_{n-1,s}^{(\xi)} f)))(x) \, ds
\end{eqnarray}
This implies that $\frac{\partial}{\partial t} \Psi_{n,t}^{(\xi)} f(x)$ is continuous as a function of $t$ for $t \in (0,\tau]$. 

For any $\zeta > 0$ we have
$$
\calL_{\zeta} \left(\Psi_{n,t}^{(\xi)} f\right)(x)
= \int_0^t \calL_{\zeta}^x U_{t-s}^{(\xi)} \left(\calN \left(\Psi_{n-1,s}^{(\xi)} f\right)\right)(x) \, ds.
$$
By the dominated convergence theorem and (\ref{Lzeta_estimate}) we obtain
$$
\calL \left(\Psi_{n,t}^{(\xi)} f\right)(x) = \lim_{\zeta \to 0^+} \calL_{\zeta} \left(\Psi_{n,t}^{(\xi)} f\right)(x)
= \int_0^t \calL^x U_{t-s}^{(\xi)} \left(\calN \left(\Psi_{n-1,s}^{(\xi)} f\right)\right)(x) \, ds.
$$
This and (\ref{psintt}) gives the assertion of the lemma.
\end{proof}

\begin{lemma}
\label{change}
For any $t > 0$, $\xi \in (0,1]$, $x \in \R^d$, $i, j \in \{1,\ldots,d\}$, $k \in \N$ and $f \in C_0(\R^d)$ we have $\Psi_{k,t}^{(\xi)} f(x) \in C^2(\R^d)$ (as a function of $x$) and 
\begin{eqnarray}
\label{sumt}
\frac{\partial}{\partial t} \left(\sum_{n=0}^{\infty}  \Psi_{n,t}^{(\xi)} f \right) (x) &=&
\sum_{n=0}^{\infty}  \frac{\partial}{\partial t} \left(\Psi_{n,t}^{(\xi)} f \right) (x),\\
\label{sumx}
\frac{\partial}{\partial x_i} \left(\sum_{n=0}^{\infty}  \Psi_{n,t}^{(\xi)} f \right) (x) &=&
\sum_{n=0}^{\infty}  \frac{\partial}{\partial x_i} \left(\Psi_{n,t}^{(\xi)} f \right) (x),\\
\label{sumxx}
\frac{\partial^2}{\partial x_i \partial x_j} \left(\sum_{n=0}^{\infty}  \Psi_{n,t}^{(\xi)} f \right) (x) &=&
\sum_{n=0}^{\infty}  \frac{\partial^2}{\partial x_i \partial x_j} \left(\Psi_{n,t}^{(\xi)} f \right) (x),\\
\label{sumL}
\calL \left(\sum_{n=0}^{\infty}  \Psi_{n,t}^{(\xi)} f \right) (x) &=&
\sum_{n=0}^{\infty}  \calL \left(\Psi_{n,t}^{(\xi)} f \right) (x),\\
\label{sumN}
\calN \left(\sum_{n=0}^{\infty}  \Psi_{n,t}^{(\xi)} f \right) (x) &=&
\sum_{n=0}^{\infty}  \calN \left(\Psi_{n,t}^{(\xi)} f \right) (x).
\end{eqnarray}
\end{lemma}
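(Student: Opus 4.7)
\emph{Plan.} The strategy is to prove $C^2$ regularity of each $\Psi_{k,t}^{(\xi)}f$ in $x$ by induction on $k$, establish bounds of the form $C(\xi)\, c^n t^n/n!$ on the various derivatives and on $\calL,\calN,\partial/\partial t$ applied to each $\Psi_{n,t}^{(\xi)}f$, and then invoke the standard theorem on differentiation of a uniformly convergent series to justify (\ref{sumt})--(\ref{sumN}).

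For the $C^2$ regularity, the base case $k=0$ is $\Psi_{0,t}^{(\xi)}f = U_t^{(\xi)}f \in C_0^2(\R^d)$ by Lemma \ref{regularity_uteps}(i). For the induction step, first observe that $\calN$ maps $C_0(\R^d)$ into itself: by Lipschitz continuity of the $a_{ij}$, continuity of $g$, and the integrability of $\nu$, $\calN g \in C(\R^d)$; and if $g \in C_0(\R^d)$ then $\calN g$ vanishes at infinity by dominated convergence in $\int \nu(w)\,dw$. Then, using (\ref{Psi_nteps}), we differentiate under the integral provided quantitative pointwise bounds on $\partial^\beta_x U_{t-s}^{(\xi)} g$ for $|\beta|\le 2$ are in hand; these follow from the decomposition $U_t^{(\xi)} g = \int p_y(t+\xi,\cdot-y)g(y)\,dy + \int_0^t \!\!\int p_z(t-s+\xi,\cdot-z)Q_s g(z)\,dz\,ds$ via Lemma \ref{properties_pytx} (for derivatives of $p_y(t+\xi,\cdot)$, exploiting $t+\xi\ge\xi>0$) and (\ref{integralq1}) of Proposition \ref{qestimate} (for $\|Q_s g\|_\infty$-type control), yielding $\|\partial_x^\beta U_{t-s}^{(\xi)}g\|_\infty \le C(\xi)\|g\|_\infty$.

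Iterating the bounds $\|\partial^\beta_x U_{t-s}^{(\xi)}g\|_\infty \le C(\xi)\|g\|_\infty$, $|\calL U_{t-s}^{(\xi)}g|\le C(\xi)\|g\|_\infty$ (from (\ref{L_estimate})), and $\|\calN g\|_\infty \le \lambda\|g\|_\infty$ with Lemma \ref{psint} produces, for $|\beta|\le 2$, the estimates $|\partial_x^\beta \Psi_{n,t}^{(\xi)}f(x)|, |\calL\Psi_{n,t}^{(\xi)}f(x)|, |\calN\Psi_{n,t}^{(\xi)}f(x)| \le C(\xi)c^{n+1}t^n/n!\,\|f\|_\infty$. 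For the $t$-derivative, the representation of Lemma \ref{tderivative_psi_eps} combined with the $\Lambda$-bounds (\ref{lambdatf}) and (\ref{lambdatf2}), and again Lemma \ref{psint}, gives a summable bound of order $D(\xi)c^n t^{n-1}/(n-1)!\,\|f\|_\infty$. All of these series therefore converge absolutely and uniformly in $x\in\R^d$, and uniformly in $t$ on any compact subinterval of $(0,\infty)$. The termwise identities (\ref{sumx}), (\ref{sumxx}), (\ref{sumt}) follow from the standard theorem on differentiation of uniformly convergent series; (\ref{sumN}) follows from linearity of $\calN$ together with dominated convergence in $\int\nu(w)\,dw$; and (\ref{sumL}) is obtained by first applying $\calL_\zeta$ termwise (using its boundedness on $L^\infty$ via (\ref{Lzeta_estimate}) and uniform convergence of $\sum_n\Psi_{n,t}^{(\xi)}f$) and then letting $\zeta\to 0^+$, where the uniform-in-$n$ estimate (\ref{Lzeta_estimate}) allows the limit to be taken inside the sum by dominated convergence for series.

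The only genuinely delicate point is securing the $\xi$-dependent but $n$-independent constants in the derivative and $\calL$-bounds for $U_t^{(\xi)}g$: once those are in place, the factorial $t^n/n!$ coming from Lemma \ref{psint} makes every interchange of limits, sums, and operators routine.
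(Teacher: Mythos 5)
Your proposal is correct and follows essentially the same route as the paper: base case from Lemma~\ref{regularity_uteps}(i), $x$-derivative bounds of the form $c(\xi)\|g\|_\infty$ for $U_{t-s}^{(\xi)}g$ obtained from Lemma~\ref{properties_pytx} and Proposition~\ref{qestimate} (exploiting $t-s+\xi\ge\xi>0$ to stay away from the singular time), factorial decay from Lemma~\ref{psint}, Lemma~\ref{tderivative_psi_eps} with the $\Lambda$-bounds for the $t$-series, and $\calL_\zeta\to\calL$ for (\ref{sumL}). The only cosmetic difference is that the paper works directly from the expanded representation (\ref{formula_psinteps}) and states the bound in the form $|\partial_x^\beta\Psi_{n,t}^{(\xi)}f|\le c(\xi)\sup_{s\le t}\|\Psi_{n-1,s}^{(\xi)}f\|_\infty$, whereas you package the same estimate as a uniform bound on $\partial_x^\beta U^{(\xi)}_{t-s}$ and iterate; the two are interchangeable.
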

\begin{proof}
Fix $f \in C_0(\R^d)$, $\xi \in (0,1]$. By Lemma \ref{tderivative_psi_eps} we know that $t \to \frac{\partial}{\partial t} (\Psi_{n,t}^{(\xi)} f)(x)$ is continuous on $(0,\tau]$ for each fixed $n \in \N$, $x \in \R^d$. Using this, Lemma \ref{psint}, (\ref{psintt}) and (\ref{tderivative_estimate}) we get (\ref{sumt}).

Let $t \in (0,\tau]$, $n \in \N$, $i, j \in \{1,\ldots,d\}$. The fact that $\frac{\partial}{\partial x_i} \left(\Psi_{n,t}^{(\xi)} f \right) (x)$ is well defined and continuous as a function of $x \in \R^d$ follows from (\ref{formula_psinteps}), Lemmas \ref{pycontinuity}, \ref{properties_pytx}, Proposition \ref{qestimate} and Lemma \ref{regularity_uteps}. By the above arguments we also get 
$$
\left|\frac{\partial}{\partial x_i} \left(\Psi_{n,t}^{(\xi)} f \right) (x)\right| \le c(\xi) \sup_{s \in (0,t]} \left\|\Psi_{n-1,s}^{(\xi)} f \right\|_{\infty}, \quad n \ge 1.
$$
Using this, Lemma \ref{regularity_uteps} and Lemma \ref{psint} we arrive at  (\ref{sumx}). By similar arguments we obtain that $\frac{\partial^2}{\partial x_i \partial x_j} \left(\Psi_{n,t}^{(\xi)} f \right) (x)$ is well defined and continuous as a function of $x \in \R^d$ and
\begin{equation}
\label{psint_xx}
\left|\frac{\partial^2}{\partial x_i \partial x_j} \left(\Psi_{n,t}^{(\xi)} f \right) (x)\right| \le c(\xi) \sup_{s \in (0,t]} \left\|\Psi_{n-1,s}^{(\xi)} f \right\|_{\infty}, \quad n \ge 1.
\end{equation}
Using this and Lemma \ref{psint} we get (\ref{sumxx}).

For any $\zeta > 0$ we have 
$$
\calL_{\zeta} \left(\sum_{n=0}^{\infty}  \Psi_{n,t}^{(\xi)} f \right) (x) =
\sum_{n=0}^{\infty}  \calL_{\zeta} \left(\Psi_{n,t}^{(\xi)} f \right) (x)
$$
This, Lemma \ref{psint}, (\ref{Lzeta_estimate}) and (\ref{L_estimate}) implies (\ref{sumL}). (\ref{sumN}) is easy.
\end{proof}

\begin{corollary}
\label{regularity_pteps1}
For every $f \in C_0(\R^d)$, $\xi \in (0,1]$ the function $T_t^{(\xi)} f(x)$ belongs to $C^1((0,\infty))$ as a function of $t$ and to $C_0^2(\R^d)$ as a function of $x$.
\end{corollary}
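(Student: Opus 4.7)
The plan is to read off the regularity of $T_t^{(\xi)} f(x) = e^{-\lambda t} \sum_{n=0}^\infty \Psi_{n,t}^{(\xi)} f(x)$ from the corresponding regularity of the individual summands, using Lemma \ref{change} to justify termwise differentiation. The smoothness of each term has already been established: the summand with $n=0$ is $U_t^{(\xi)} f$, which sits in $C^1((0,\infty);t) \cap C_0^2(\R^d;x)$ by Lemma \ref{regularity_uteps}(i), and for $n \ge 1$ the summand $\Psi_{n,t}^{(\xi)} f$ is $C^1$ in $t$ by Lemma \ref{tderivative_psi_eps} and $C^2$ in $x$ by the derivation carried out inside the proof of Lemma \ref{change} (in particular, the bound \eqref{psint_xx}).

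For the $t$-regularity I would proceed as follows. First, by Lemma \ref{tderivative_psi_eps}, $\partial_t \Psi_{n,t}^{(\xi)} f(x)$ is continuous in $t$ on $(0,\tau]$ for each $n$, and the explicit formula there bounds it by a sum of terms that are each controlled by $\|\Psi_{n-1,\cdot}^{(\xi)} f\|_\infty$ (up to a factor depending on $\xi$ via Lemmas \ref{regularity_uteps}(i) and \ref{Lambda_teps}). Combining with the geometric-type decay $\|\Psi_{n,t}^{(\xi)} f\|_\infty \le c_1^{n+1} t^n/n!$ from Lemma \ref{psint} produces a summable majorant for $\sum_n |\partial_t \Psi_{n,t}^{(\xi)} f(x)|$ on any compact subinterval of $(0,\infty)$. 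Formula \eqref{sumt} of Lemma \ref{change} then lets us interchange $\partial_t$ and the sum, and multiplication by $e^{-\lambda t}$ preserves $C^1$ in $t$.

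For the $x$-regularity I would use the other half of Lemma \ref{change}: formulas \eqref{sumx} and \eqref{sumxx} together with the bound \eqref{psint_xx} and Lemma \ref{psint} at level $n-1$ show that the series $\sum_n \Psi_{n,t}^{(\xi)} f(x)$ and its formal first- and second-order partial derivatives in $x$ converge uniformly on $\R^d$ with summable majorants (at fixed $t,\xi$). Hence $x \mapsto T_t^{(\xi)} f(x)$ belongs to $C^2(\R^d)$. That it lies in $C_0(\R^d)$ follows directly from Lemma \ref{regularity_pteps}(ii), which gives $T_t^{(\xi)} f(x) \to 0$ as $|x|\to \infty$.

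There is no substantial obstacle here: all delicate analysis (continuity in $(t,x)$, estimates on derivatives involving $\xi$-singular constants, and the termwise differentiation itself) has been packaged into the preceding lemmas, in particular Lemmas \ref{regularity_uteps}, \ref{tderivative_psi_eps}, \ref{change} and \ref{regularity_pteps}. The only step requiring a little care is verifying that the bounds $c(\xi)$ on the $t$- and $x$-derivatives of $\Psi_{n,t}^{(\xi)} f$ grow in $n$ slowly enough (essentially like $\|\Psi_{n-1,\cdot}^{(\xi)} f\|_\infty$) to be dominated by the factorial decay of Lemma \ref{psint}; this is exactly what \eqref{psint_xx} and \eqref{tderivative_estimate} (applied at level $n-1$) deliver.
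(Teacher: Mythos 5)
Your proposal is correct and follows essentially the same route as the paper: termwise differentiation of the series $\sum_n \Psi_{n,t}^{(\xi)} f$ via Lemma~\ref{change}, with summability supplied by the factorial bounds of Lemma~\ref{psint} together with \eqref{psint_xx} and \eqref{tderivative_estimate}, and vanishing at infinity. The only cosmetic difference is that you invoke Lemma~\ref{regularity_pteps}(ii) for the $C_0$ property, whereas the paper cites Lemma~\ref{distant_support} directly (which is precisely what Lemma~\ref{regularity_pteps}(ii) is built on), so the argument is the same.
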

\begin{proof}
Fix $f \in C_0(\R^d)$, $\xi \in (0,1]$. The fact that $T_t^{(\xi)} f(x)$ belongs to $C^1((0,\infty))$ as a function of $t$ follows from (\ref{sumt}), Lemma \ref{tderivative_psi_eps}, (\ref{psintt}), (\ref{tderivative_estimate}), Lemma \ref{ueps_estimate} and Lemma \ref{psint}. From the proof of Lemma \ref{change} we know that for each $t > 0$, $n \in \N$, $i, j \in \{1, \ldots, d\}$ the function $\frac{\partial^2}{\partial x_i \partial x_j} \left(\Psi_{n,t}^{(\xi)} f \right) (x)$ is continuous as a function of $x \in \R^d$. The fact that $T_t^{(\xi)} f(x)$ belongs to $C_0^2(\R^d)$ as a function of $x$ follows from (\ref{sumx}), (\ref{sumxx}), (\ref{psint_xx}), Lemma \ref{psint} and Lemma \ref{distant_support}. 
\end{proof}

Heuristically, now our aim is to show that if $\xi$ is small then $\frac{\partial}{\partial t} (T_t^{(\xi)} f)(x) - \calK (T_t^{(\xi)} f)(x)$ is small.
For any $t > 0$, $\xi \in (0,1]$, $x \in \R^d$ and $f \in C_0(\R^d)$ let us denote
$$
\Upsilon_t^{(\xi)} f(x) = 
\frac{\partial}{\partial t} \left(T_{t}^{(\xi)} f\right)(x) 
- \calK \left(T_{t}^{(\xi)} f\right)(x),
$$
$$
\Upsilon_t^{(\xi,1)} f(x) = 
e^{-\lambda t} \sum_{n=1}^{\infty}  
\left[\int_{\R^d} p_z(\xi,x-z) \calN \left(\Psi_{n-1,t}^{(\xi)} f\right)(z) \, dz
- \calN \left(\Psi_{n-1,t}^{(\xi)} f\right)(x) \right],
$$
$$
\Upsilon_t^{(\xi,2)} f(x) = 
e^{-\lambda t} \sum_{n=1}^{\infty} 
\int_0^t \Lambda_{t-s}^{(\xi)} \left(\calN \left(\Psi_{n-1,s}^{(\xi)} f\right)\right) (x) \, ds.
$$
By Lemma \ref{psint}, (\ref{lambdatf}), (\ref{lambdatf2}) and the boundedness of $\calN: L^{\infty}(\R^d) \to L^{\infty}(\R^d)$ the above series are convergent.

\begin{lemma}
\label{upsilon_t}
For any $t > 0$, $\xi \in (0,1]$, $x \in \R^d$ and $f \in C_0(\R^d)$ we have
$$
\Upsilon_t^{(\xi)} f(x) = e^{-\lambda t} \Lambda_{t}^{(\xi)} f(x) + \Upsilon_t^{(\xi,1)} f(x) +
\Upsilon_t^{(\xi,2)} f(x).
$$
\end{lemma}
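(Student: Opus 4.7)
The plan is to compute $\partial_t T_t^{(\xi)} f(x)$ term by term in the defining series $T_t^{(\xi)} f = e^{-\lambda t}\sum_{n=0}^{\infty}\Psi_{n,t}^{(\xi)} f$ and then subtract $\calK T_t^{(\xi)} f = \calL T_t^{(\xi)} f + \calR T_t^{(\xi)} f$, exploiting the identity $\calR = \calN - \lambda\,\mathrm{Id}$ (which follows directly from the definitions of $\calR$ and $\calN$ and the fact that $\lambda = d\int_\R \nu(w)\,dw$).

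First I would use Lemma \ref{change}, part (\ref{sumt}), to pull $\partial_t$ inside the sum:
$$
\partial_t T_t^{(\xi)} f(x) = -\lambda T_t^{(\xi)} f(x) + e^{-\lambda t}\sum_{n=0}^{\infty}\partial_t \Psi_{n,t}^{(\xi)} f(x).
$$
For the $n=0$ term, by the very definition of $\Lambda_t^{(\xi)}$ we have $\partial_t \Psi_{0,t}^{(\xi)} f = \partial_t U_t^{(\xi)} f = \calL U_t^{(\xi)} f + \Lambda_t^{(\xi)} f$. For $n\ge1$, Lemma \ref{tderivative_psi_eps} gives
$$
\partial_t \Psi_{n,t}^{(\xi)} f(x) = \calL \Psi_{n,t}^{(\xi)} f(x) + \int_{\R^d} p_z(\xi,x-z)\calN(\Psi_{n-1,t}^{(\xi)} f)(z)\,dz + \int_0^t \Lambda_{t-s}^{(\xi)}(\calN(\Psi_{n-1,s}^{(\xi)} f))(x)\,ds.
$$

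Next, I would sum over $n$ and use Lemma \ref{change}, parts (\ref{sumL}) and (\ref{sumN}), to collect the $\calL$ contributions into $\calL T_t^{(\xi)} f$ (up to the factor $e^{-\lambda t}$) and to rewrite the full-sum integral term as
$$
e^{-\lambda t}\sum_{n=1}^{\infty}\int_{\R^d} p_z(\xi,x-z)\calN(\Psi_{n-1,t}^{(\xi)} f)(z)\,dz = \Upsilon_t^{(\xi,1)} f(x) + \calN T_t^{(\xi)} f(x),
$$
by adding and subtracting $\calN(\Psi_{n-1,t}^{(\xi)} f)(x)$ inside each summand and then using the exchange of $\calN$ and the series. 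The remaining contribution for $n\ge 1$ assembles exactly into $\Upsilon_t^{(\xi,2)} f(x)$. Substituting back produces
$$
\partial_t T_t^{(\xi)} f(x) = -\lambda T_t^{(\xi)} f(x) + \calL T_t^{(\xi)} f(x) + \calN T_t^{(\xi)} f(x) + e^{-\lambda t}\Lambda_t^{(\xi)} f(x) + \Upsilon_t^{(\xi,1)} f(x) + \Upsilon_t^{(\xi,2)} f(x).
$$
Since $\calK T_t^{(\xi)} f = \calL T_t^{(\xi)} f + \calN T_t^{(\xi)} f - \lambda T_t^{(\xi)} f$, subtracting this from both sides yields precisely the claimed identity for $\Upsilon_t^{(\xi)} f$.

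There is no single ``hard'' step here — the proof is essentially a bookkeeping exercise — but the one point needing care is the justification for interchanging differential operators ($\partial_t$, $\calL$, $\calN$) with the infinite series. All of these are supplied by Lemma \ref{change}, whose hypotheses require the pointwise absolute convergence bounds already established in Lemma \ref{psint} together with the continuity/derivative bounds from Lemmas \ref{tderivative_psi_eps} and \ref{regularity_uteps}; thus one only needs to verify that the relevant hypotheses apply to each sum as it is manipulated.
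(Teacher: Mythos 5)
Your proof is correct and follows essentially the same route as the paper: differentiate the series for $T_t^{(\xi)}f$ term by term via Lemma \ref{change}, apply Lemma \ref{tderivative_psi_eps} for $n\ge 1$ and the definition of $\Lambda_t^{(\xi)}$ (with well-definedness of $\calL U_t^{(\xi)}f$ from Lemma \ref{Lambda_teps}) for $n=0$, then reassemble using $\calN = \calR + \lambda\,\mathrm{Id}$ and $\calK = \calL + \calR$. The only stylistic difference is that you absorb the $\calN T_t^{(\xi)}f$ correction directly into the $\Upsilon_t^{(\xi,1)}$ rearrangement, while the paper keeps the full sum $e^{-\lambda t}\sum_{n\ge1}\calN(\Psi_{n-1,t}^{(\xi)}f)$ explicit before collapsing it.
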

\begin{proof}
By Lemmas \ref{Lambda_teps}, \ref{tderivative_psi_eps} and \ref{change} we get
\begin{eqnarray*}
&&\frac{\partial}{\partial t} \left(T_{t}^{(\xi)} f\right)(x)
= -\lambda e^{-\lambda t} \Psi_{0,t}^{(\xi)} f(x) 
+ e^{-\lambda t} \calL \left(\Psi_{0,t}^{(\xi)} f\right) (x)
+ e^{-\lambda t} \Lambda_{t}^{(\xi)} f(x)\\
&& \quad -\lambda e^{-\lambda t} \sum_{n=1}^{\infty}  \Psi_{n,t}^{(\xi)} f(x)
+ e^{-\lambda t} \sum_{n=1}^{\infty}  \calL \left(\Psi_{n,t}^{(\xi)} f\right) (x)\\
&& \quad + e^{-\lambda t} \sum_{n=1}^{\infty}  \calN \left(\Psi_{n-1,t}^{(\xi)} f\right) (x) 
+ \Upsilon_t^{(\xi,1)} f(x) + \Upsilon_t^{(\xi,2)} f(x).
\end{eqnarray*}
Again by Lemma \ref{change}, this is equal to
\begin{eqnarray*}
&& \calL \left(e^{-\lambda t} \sum_{n=0}^{\infty}  \Psi_{n,t}^{(\xi)} f \right)(x)
-\lambda \left(e^{-\lambda t} \sum_{n=0}^{\infty}  \Psi_{n,t}^{(\xi)} f(x) \right)\\
&& \quad + \calN \left(e^{-\lambda t} \sum_{n=0}^{\infty}  \Psi_{n,t}^{(\xi)} f \right)(x)
+ e^{-\lambda t} \Lambda_{t}^{(\xi)} f(x)
+ \Upsilon_t^{(\xi,1)} f(x) + \Upsilon_t^{(\xi,2)} f(x).
\end{eqnarray*}
Using the definition of $T_{t}^{(\xi)} f$ and noting that $\calN g(x) - \lambda g(x) = \calR g(x)$ and $\calL g(x) + \calR g(x) = \calK g(x)$ we obtain the assertion of the lemma.
\end{proof}

\begin{lemma}
\label{heat_p}
(i) For any $f \in C_0(\R^d)$ we have
\begin{equation*}
\Upsilon_t^{(\xi)} f(x) \to 0, \quad \text{as} \quad \xi \to 0^+,
\end{equation*}
uniformly in $(t,x) \in [\tau_1,\tau] \times \R^d$ for every $\tau_1 \in (0,\tau)$.

(ii) For any $f \in C_0(\R^d)$ we have
\begin{equation}
\label{Upsilon_s}
\int_0^t \Upsilon_s^{(\xi)} f(x) \, ds \to 0, \quad \text{as} \quad \xi \to 0^+,
\end{equation}
uniformly in $(t,x) \in (0,\tau] \times \R^d$.
\end{lemma}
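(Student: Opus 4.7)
The plan is to apply Lemma \ref{upsilon_t} to decompose
\[
\Upsilon_t^{(\xi)} f = e^{-\lambda t} \Lambda_t^{(\xi)} f + \Upsilon_t^{(\xi,1)} f + \Upsilon_t^{(\xi,2)} f,
\]
and to establish (i) and (ii) for each summand separately. The first summand is handled directly by Lemma \ref{heat_u}. For the remaining two, the first step is to bring $\calN$ outside the infinite series using \eqref{sumN}, which collapses each series into an expression featuring $T_t^{(\xi)}f$ itself.

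For the middle piece, \eqref{sumN} yields
\[
\Upsilon_t^{(\xi,1)} f(x) = \int_{\R^d} p_z(\xi,x-z)\,\calN(T_t^{(\xi)} f)(z)\,dz - \calN(T_t^{(\xi)} f)(x).
\]
I would then replace $T_t^{(\xi)} f$ by $T_t f$ inside this expression; the resulting error vanishes uniformly as $\xi \to 0^+$ by Lemma \ref{regularity_pteps}(iii) combined with \eqref{pyintegral1} and \eqref{N_bound}. The residual term is then handled by Lemma \ref{pyeps_limit} applied with $f_t := \calN(T_t f)$. Its hypothesis is verified as follows: $(t,x)\mapsto T_t f(x)$ is continuous on $(0,\infty)\times\R^d$ (being the uniform-in-$t$ limit of the continuous functions $T_t^{(\xi)} f$ by Lemma \ref{regularity_pteps}(iii)), is bounded by Corollary \ref{ptxy}, and decays uniformly at infinity by Lemma \ref{distant_support}; the operator $\calN$ preserves these properties because the $a_i$ are bounded Lipschitz and $\nu$ is a finite kernel. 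Thus $\calN(T_t f)$ is bounded and uniformly continuous on $[\tau_1,\tau]\times\R^d$, which gives (i) for this piece; (ii) then follows since the integrand is uniformly bounded.

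For the third piece, the analogous rearrangement gives
\[
\Upsilon_t^{(\xi,2)} f(x) = \int_0^t e^{-\lambda(t-s)}\,\Lambda_{t-s}^{(\xi)}\bigl(\calN(T_s^{(\xi)} f)\bigr)(x)\,ds.
\]
Combining \eqref{lambdatf}, \eqref{lambdatf2}, \eqref{N_bound} and Corollary \ref{ptxy} furnishes the uniform integrable bound $|\Lambda_{t-s}^{(\xi)}(\calN(T_s^{(\xi)}f))(x)| \le c\|f\|_\infty (t-s)^{-1/2}$ valid for all $s\in(0,t)$, $x\in\R^d$, $\xi\in(0,1]$. For pointwise convergence at each fixed $r = t-s > 0$, I would split $\calN(T_s^{(\xi)}f) = \calN(T_s f) + \calN(T_s^{(\xi)}f - T_s f)$; the second term contributes $O(r^{-1/2}\|T_s^{(\xi)}f - T_s f\|_\infty)$ which vanishes uniformly in $s$ by Lemma \ref{regularity_pteps}(iii), while $\Lambda_r^{(\xi)}(\calN(T_s f))(x) \to 0$ uniformly in $(s,x)$ by a parameter-dependent refinement of Lemma \ref{heat_u}(i). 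Part (i) then follows by dominated convergence in $s$ after restricting to $t\in[\tau_1,\tau]$; for part (ii) the standard splitting $\int_0^t = \int_0^{\tau_1'} + \int_{\tau_1'}^t$ with $\tau_1'$ small, together with the $(t-s)^{-1/2}$ majorant, delivers uniformity down to $t\to 0^+$.

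The main obstacle is the parameter-dependent refinement of Lemma \ref{heat_u}(i) needed in the third piece: one must show that $\Lambda_r^{(\xi)}(\calN(T_s f))(x) \to 0$ uniformly in $(s,x)\in[0,\tau]\times\R^d$ for each fixed $r>0$. This reduces to checking that the ingredients of the original proof—namely the uniform continuity of $Q_r g$ in $r$ and uniform decay of $Q_r g$ as $|x|\to\infty$ (used through Lemmas \ref{pyeps_limit} and \ref{qtf_uniform})—persist when applied to the parametric family $g_s := \calN(T_s f)$. The key observation is that $s\mapsto g_s$ is continuous from $[0,\tau]$ into $C_0(\R^d)$, hence $\{g_s\}$ is a norm-compact subset of $C_0(\R^d)$, on which the original pointwise-in-$g$ estimates of Lemma \ref{heat_u}(i) become uniform.
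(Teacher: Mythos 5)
Your proof is correct and follows essentially the same route as the paper: split $\Upsilon_t^{(\xi)} f$ via Lemma~\ref{upsilon_t}, dispose of the $e^{-\lambda t}\Lambda_t^{(\xi)} f$ term by Lemma~\ref{heat_u}, and reduce the remaining two pieces to the convergence statements for $\Lambda^{(\xi)}$ applied to parametric families built from $\calN$. Where the paper (whose proof is a one-line list of citations including Lemma~\ref{psint}) appears to work termwise with the series $\sum_n \Psi_{n,t}^{(\xi)} f$ and control tails by Lemma~\ref{psint}, you first collapse the series via \eqref{sumN} to $T_t^{(\xi)} f$ and then replace $T_t^{(\xi)} f$ by $T_t f$ using Lemma~\ref{regularity_pteps}(iii); this is a legitimate and somewhat cleaner packaging. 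The one genuine subtlety you correctly single out is that Lemma~\ref{heat_u}(i) gives pointwise-in-$g$ convergence, while you need it uniformly over the time-indexed family $g_s=\calN(T_s f)$. Your compactness argument is sound: the operators $g\mapsto (\Lambda_\cdot^{(\xi)}g)$ are linear from $C_0(\R^d)$ into $L^\infty([\tau_1,\tau]\times\R^d)$ with norm bounded by $c\tau_1^{-1/2}$ uniformly in $\xi$ (via \eqref{lambdatf}, \eqref{lambdatf2}), so pointwise convergence to $0$ upgrades to uniform convergence on the norm-compact set $\{g_s\}_{s\in[0,\tau]}$; and the compactness of $\{g_s\}$ indeed follows from joint continuity of $(s,x)\mapsto T_s f(x)$ together with the uniform decay at infinity furnished by Lemma~\ref{distant_support} and the fact that $\calN$ maps $C_0(\R^d)$ to itself (finite total mass of $\nu$, bounded Lipschitz $a_i$). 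It would be worth stating explicitly the uniform operator-norm bound on $\Lambda^{(\xi)}_t$ for $t\ge\tau_1$, since that is what makes the compactness argument work; otherwise the reasoning is complete.
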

\begin{proof}
The lemma follows from Lemma \ref{heat_u}, Proposition \ref{pyintegral}, Lemma \ref{distant_support}, Lemma \ref{psint}, (\ref{lambdatf}), (\ref{lambdatf2}) and the boundedness of $\calN: L^{\infty}(\R^d) \to L^{\infty}(\R^d)$.
\end{proof}

The next result (positive maximum principle) is based on the ideas from \cite[Section 4.2]{KK2018}. Its proof is very similar to the proof of \cite[Lemma 4.3]{KK2018} and it is omitted.
\begin{lemma}
\label{max_principle}
Let us consider the function $v: [0,\infty) \times \R^d \to \R$ and the family of functions $v^{(\xi)}: [0,\infty) \times \R^d \to \R$, $\xi \in (0,1]$. Assume that for each $\xi \in (0,1]$ $\sup_{t \in (0,\tau], x \in \R^d} |v^{(\xi)}(x,t)| < \infty$, $v^{(\xi)}$ is $C^1$ in the first variable and $C^2$ in the second variable. We also assume that (for any $\tau > 0$)

(i)
$$
v^{(\xi)}(t,x) \to v(t,x) \quad \text{as} \quad \xi \to 0^+,
$$
uniformly in $t \in [0,\tau]$, $x \in \R^d$;

(ii)
$$
v^{(\xi)}(t,x) \to 0 \quad \text{as} \quad |x| \to \infty,
$$
uniformly in $t \in [0,\tau]$, $\xi \in (0,1]$;

(iii) for any $0 < \tau_1 < \tau_2 \le \tau$
$$
\frac{\partial}{\partial t} v^{(\xi)}(t,x)
- \calK v^{(\xi)}(t,x) \to 0 \quad \text{as} \quad \xi \to 0^+,
$$
uniformly in $t \in [\tau_1,\tau_2]$, $x \in \R^d$;

(iv)
$$
v^{(\xi)}(t,x) \to v(0,x) \quad \text{as} \quad (\xi \to 0^+\,\,\, \text{and}\,\,\, t \to 0^+),
$$
uniformly in $x \in \R^d$;

(v) for any $x \in \R^d$ $v(0,x) \ge 0$.

Then for any $t \ge 0$, $x \in \R^d$ we have $v(t,x) \ge 0$.
\end{lemma}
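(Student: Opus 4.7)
The plan is to argue by contradiction via a positive maximum principle for the nonlocal operator $\calK$. Suppose some $(t_0, x_0) \in (0, \tau] \times \R^d$ satisfies $v(t_0, x_0) < 0$, and set $\eta = -v(t_0, x_0) > 0$; the case $t_0 = 0$ is immediate from (v). I would introduce the perturbation $\phi^{(\xi)}(t, x) = v^{(\xi)}(t, x) + \eps_1 t$ for a small parameter $\eps_1 > 0$ to be chosen. By (ii) one has $\phi^{(\xi)}(t, x) \to \eps_1 t \ge 0$ as $|x| \to \infty$, uniformly in $(t, \xi) \in [0, \tau] \times (0, 1]$. Since $v^{(\xi)}$ is continuous on $[0, \tau] \times \R^d$ by the regularity assumptions, this guarantees that whenever the infimum of $\phi^{(\xi)}$ on $[0, \tau] \times \R^d$ is negative, it is attained at some point $(t^*, x^*)$.

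By (i), for all sufficiently small $\xi$ we have $\phi^{(\xi)}(t_0, x_0) \le -\tfrac{\eta}{2} + \eps_1 \tau < -\tfrac{\eta}{4}$ once $\eps_1 \tau < \tfrac{\eta}{4}$, so the infimum is attained. Next I would rule out $t^* = 0$ by using (iv): choose $\eps_0 \in (0, \tfrac{\eta}{8})$; then by (iv) together with (v) there exist $s_0, \xi_0 > 0$ such that $v^{(\xi)}(t, x) \ge v(0,x) - \eps_0 \ge -\eps_0$ for every $t \in [0, s_0]$, $\xi \in (0, \xi_0]$ and $x \in \R^d$. Shrinking $s_0$ so that $\eps_1 s_0 < \tfrac{\eta}{8}$ gives $\phi^{(\xi)}(t, x) \ge -\tfrac{\eta}{4} > \phi^{(\xi)}(t^*, x^*)$ for $t \in [0, s_0]$, hence $t^* \in (s_0, \tau]$.

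At $(t^*, x^*)$, either $t^* \in (s_0, \tau)$ and $\partial_t \phi^{(\xi)}(t^*, x^*) = 0$, or $t^* = \tau$ and $\partial_t \phi^{(\xi)}(t^*, x^*) \le 0$; in either case $\partial_t v^{(\xi)}(t^*, x^*) \le -\eps_1$. Moreover, since $x \mapsto v^{(\xi)}(t^*, x)$ attains its global minimum at $x^*$, for each $i \in \{1, \dots, d\}$ and each $w \in \R$ the difference $v^{(\xi)}(t^*, x^* + a_i(x^*) w) - v^{(\xi)}(t^*, x^*)$ is nonnegative, so every truncated integral in the definition of $\calK v^{(\xi)}(t^*, x^*)$ is nonnegative; the $C^2$-regularity of $v^{(\xi)}(t^*, \cdot)$ ensures the principal-value limits exist, giving $\calK v^{(\xi)}(t^*, x^*) \ge 0$. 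Therefore
$$
\frac{\partial}{\partial t} v^{(\xi)}(t^*, x^*) - \calK v^{(\xi)}(t^*, x^*) \le -\eps_1.
$$

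On the other hand, condition (iii) applied with $\tau_1 = s_0$ and $\tau_2 = \tau$ forces the left-hand side to tend to $0$ as $\xi \to 0^+$, uniformly in $(t, x) \in [s_0, \tau] \times \R^d$. For $\xi$ small enough this contradicts the bound $-\eps_1$, and so $v(t_0, x_0) \ge 0$, completing the proof. The main delicate point is the step separating $t^*$ away from $0$ (needed so that assumption (iii) applies on a fixed time interval $[\tau_1, \tau_2]$ independent of $\xi$), which is why both the perturbation $\eps_1 t$ and the uniform-in-$x$ convergence in (iv) are essential; the rest is the classical positive maximum principle for integro-differential operators of purely jump type.
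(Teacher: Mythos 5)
Your proof is correct and follows precisely the route the paper intends: the paper omits its own argument, labelling the lemma a ``positive maximum principle'' and deferring to \cite[Lemma 4.3]{KK2018}, and your argument is exactly that --- perturb by $\eps_1 t$, use (ii) and (iv)--(v) to force a minimizer at a strictly positive time, exploit the sign of $\partial_t$ and of $\calK$ at a global spatial minimum, and contradict the uniform convergence in (iii). No gaps.
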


\begin{proposition}
\label{borelbounded}
$T_t: \Bb \to \Bb$ is a linear, bounded operator for any $t \in (0,\tau]$. For each $t \in (0,\tau]$, $f \in \Bb$ and $R \ge 1$ there exists a sequence $f_k \in C_0(\R^d)$, $k \in \N$ such that $\lim_{k \to \infty} f_k(x) = f(x)$ for almost all $x \in B(0,R)$; for any $k \in \N$ we have $\|f_k\|_{\infty} \le \|f\|_{\infty}$ and for any $x \in B(0,R)$ we have $\lim_{k \to \infty} T_t f_k(x) = T_t f(x)$.
\end{proposition}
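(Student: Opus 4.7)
The plan is to assemble the proposition from ingredients already established. Linearity and the sup-norm bound are immediate from the construction of $T_t$; Borel measurability of $T_t f$ drops out of Theorem \ref{Holdermain}; and the approximation claim is obtained by a standard mollification-with-cutoff combined with Corollary \ref{convergence}. There is no genuine analytic obstacle here; the role of the statement is to package these facts in the form needed for the forthcoming identification $T_t = P_t$.

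First I would verify linearity, boundedness, and Borel measurability. By induction on $n$, using the linearity of $U_t$ and $\calN$, every $\Psi_{n,t}$ is linear in $f$, and by Lemma \ref{psint} the defining series $T_t f = e^{-\lambda t}\sum_{n=0}^{\infty} \Psi_{n,t} f$ converges absolutely in the sup-norm, so $T_t$ is linear. The estimate $\|T_t f\|_\infty \le c\|f\|_\infty$ for $t \in (0,\tau]$ is precisely Corollary \ref{ptxy}. Moreover, Theorem \ref{Holdermain} gives $|T_t f(x) - T_t f(y)| \le c t^{-\gamma/\alpha}|x-y|^{\gamma}\|f\|_\infty$ for every $f \in \Bb$, so $T_t f$ is H\"older continuous in $x$, hence Borel. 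Thus $T_t$ maps $\Bb$ into $\Bb$ linearly and boundedly.

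For the approximation statement, fix $f \in \Bb$ and $R \ge 1$. Let $\phi_k$ be a nonnegative $C^\infty$ mollifier with $\int \phi_k = 1$ and $\supp(\phi_k) \subset B(0,1/k)$, and let $\psi_k \in C_c(\R^d)$ satisfy $0 \le \psi_k \le 1$ and $\psi_k \equiv 1$ on $B(0,k)$. Set $f_k := (\phi_k * f)\psi_k \in C_0(\R^d)$; then $\|f_k\|_\infty \le \|\phi_k\|_1 \|f\|_\infty = \|f\|_\infty$. Since $f$ is bounded and Borel, Lebesgue's differentiation theorem gives $(\phi_k * f)(x) \to f(x)$ at almost every $x \in \R^d$, and combined with $\psi_k \to 1$ pointwise this yields $f_k \to f$ a.e.\ on $\R^d$, in particular on $B(0,R)$. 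Corollary \ref{convergence} then delivers $T_t f_k(x) \to T_t f(x)$ for every $x \in \R^d$, a fortiori for $x \in B(0,R)$, completing the proof.
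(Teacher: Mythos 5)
Your proof is correct and amounts to the same strategy the paper uses: approximate $f$ by a bounded sequence in $C_0(\R^d)$ converging almost everywhere, then transfer the pointwise convergence through $T_t$ via a localization (Lemma \ref{distant_support}) plus $L^1\to L^\infty$ smoothing (Theorem \ref{TtL1Linfty}) argument, which you invoke in packaged form as Corollary \ref{convergence}. The paper instead builds $f_k$ by first truncating $f$ to a large ball $B(0,R_k)$ chosen via Lemma \ref{distant_support}, then approximating in $L^1$ by a continuous compactly supported function, and finally passing to a subsequence to secure a.e.\ convergence; your mollifier-and-cutoff construction is slightly tidier in that almost-everywhere convergence of the full sequence on all of $\R^d$ comes for free (via the Lebesgue differentiation theorem), so Corollary \ref{convergence} applies directly without any subsequence extraction. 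Both arguments rest on identical ingredients, so this is essentially the same proof presented a bit more cleanly.
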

\begin{proof}
Fix $t \in (0,\tau]$. The fact that $T_t: \Bb \to \Bb$ is a linear, bounded operator follows by the definition of $T_t$ and Lemma \ref{psint}. 

Fix $f \in \Bb$, $R \ge 1$ and $k \in \N$, $k \ge 1$. By Lemma \ref{distant_support} there exists 
$R_k \ge R$ such that for any $x \in B(0,R)$ we have
\begin{equation}
\label{large}
|T_t(f 1_{B^c(0,R_k)})(x)| \le \frac{1}{k}.
\end{equation}
Put $g_{1,k}(x) = 1_{B(0,R_k)}(x) f(x)$, $g_{2,k}(x) = 1_{B^c(0,R_k)}(x) f(x)$. By standard methods there exists $f_k \in C_0(\R^d)$ such that 
$$
\|f_k - g_{1,k}\|_{1} \le \frac{1}{k}.
$$
and $\supp(f_k) \subset B(0,R_k + 1)$, $\|f_k\|_{\infty} \le \|f\|_{\infty}$. 
By Theorem \ref{TtL1Linfty}, for any $x \in \R^d$, we have
$$
|T_t(f_k - g_{1,k})(x)| \le 
\frac{c \|f\|_{\infty}^{1-\alpha/(2d)}}{k^{\alpha/(2d)} t^{1/2}}.
$$
This and (\ref{large}) imply that for any $x \in B(0,R)$ we have $\lim_{k \to \infty} T_t f_k(x) = T_t f(x)$. We also have $\|f_k 1_{B(0,R)} - f 1_{B(0,R)}\|_1 \le 1/k$. Hence, there exists a subsequence $k_m$ such that $\lim_{m \to \infty} f_{k_m}(x) = f(x)$ for almost all $x \in B(0,R)$.
\end{proof}

\begin{proposition}
\label{martingaleproblem}
For any $t \in (0,\infty)$, $x \in \R^d$ and $f \in C_b^2(\R^d)$ we have
\begin{equation}
\label{martingaleproblem1}
T_t f(x) = f(x) + \int_0^t T_s (\calK f)(x) \, ds.
\end{equation}
\end{proposition}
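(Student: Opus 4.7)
The plan is to integrate the approximate heat equation provided by Lemma \ref{upsilon_t}, take $\xi \to 0^+$, and identify the limit via the positive maximum principle (Lemma \ref{max_principle}), applied both to $v$ and to $-v$. I would first reduce to $f \in C_0(\R^d) \cap C_b^2(\R^d)$ by a cutoff $f_n := \chi_n f$ with $\chi_n$ a smooth cutoff equal to $1$ on $B(0, n)$ and supported in $B(0, 2n)$ and with $\|\chi_n\|_{C^2}$ bounded uniformly in $n$. Since $\mu$ is compactly supported and $\nu$ has an integrable tail decreasing like $|w|^{-1-\alpha}$, dominated convergence gives $\calK f_n(x) \to \calK f(x)$ pointwise with $\|\calK f_n\|_\infty$ uniformly bounded; Corollary \ref{convergence} and dominated convergence then let us pass $n \to \infty$. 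For $f$ in this reduced class, $g := \calK f$ lies in $C_0(\R^d)$, because $\calL f$ has compact support (since $\mu$ is compactly supported) and $\calR f \in C_0$.

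Set
$$v(t, x) := T_t f(x) - f(x) - \int_0^t T_s g(x) \, ds, \qquad v^{(\xi)}(t, x) := T_t^{(\xi)} f(x) - U_0^{(\xi)} f(x) - \int_0^t T_s^{(\xi)} g(x) \, ds,$$
and verify the five hypotheses of Lemma \ref{max_principle}. Hypothesis (i) (uniform convergence $v^{(\xi)} \to v$) follows from Lemma \ref{regularity_pteps}(iii) applied to both $f$ and $g$, plus (\ref{limp_y}) for the term $U_0^{(\xi)} f \to f$. Hypothesis (ii) (decay as $|x| \to \infty$, uniform in $t, \xi$) follows from Lemma \ref{distant_support} applied to $f$ and $g$ and from the exponential decay in Corollary \ref{estimate_pytx} for the term $U_0^{(\xi)} f$. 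Hypothesis (iv) is immediate since $v^{(\xi)}(0, x) = T_0^{(\xi)} f - U_0^{(\xi)} f \equiv 0$ (because $\Psi_{n,0}^{(\xi)} f = 0$ for $n \ge 1$), combined with (i), and (v) is trivial. The required regularity of $v^{(\xi)}$ in $(t, x)$ comes from Corollary \ref{regularity_pteps1} and the smoothness of $(t, x) \mapsto p_y(t + \xi, x - y)$.

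The crux is hypothesis (iii). Differentiating in $t$ and applying Lemma \ref{upsilon_t} to $f$ yields $\partial_t T_t^{(\xi)} f = \calK T_t^{(\xi)} f + \Upsilon_t^{(\xi)} f$; applying it to $g \in C_0(\R^d)$ and integrating gives $\int_0^t \calK T_s^{(\xi)} g(x)\, ds = T_t^{(\xi)} g(x) - U_0^{(\xi)} g(x) - \int_0^t \Upsilon_s^{(\xi)} g(x)\, ds$, where the interchange of $\calK$ with the $s$-integral is justified by Fubini using the bounds of Lemma \ref{Lambda_teps} and Proposition \ref{qestimate}. Combining these produces
$$\partial_t v^{(\xi)}(t, x) - \calK v^{(\xi)}(t, x) = \Upsilon_t^{(\xi)} f(x) - \int_0^t \Upsilon_s^{(\xi)} g(x) \, ds + \bigl[\calK U_0^{(\xi)} f(x) - U_0^{(\xi)}(\calK f)(x)\bigr].$$
By Lemma \ref{heat_p} the first two terms vanish uniformly on $[\tau_1, \tau] \times \R^d$. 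The main obstacle is the uniform estimate of the commutator $\calK U_0^{(\xi)} f - U_0^{(\xi)} \calK f$: the expected argument is that $U_0^{(\xi)} g \to g$ uniformly by the approximate identity (\ref{limp_y}) since $g \in C_0(\R^d)$, and that $\calK U_0^{(\xi)} f \to \calK f = g$ uniformly, using the $C_b^2$-regularity of $f$ together with the explicit smoothing effect of $U_0^{(\xi)}$ and the integrability of $\mu$ and $\nu$ against $|w|^2 \wedge 1$ (splitting $\calK = \calL + \calR$ and treating each piece separately). Once (iii) is verified, Lemma \ref{max_principle} gives $v \ge 0$; rerunning the argument with $-f$ yields $-v \ge 0$, whence $v \equiv 0$, which is the claimed identity.
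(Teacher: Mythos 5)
Your overall strategy is the same as the paper's: reduce to $f\in C_0^2(\R^d)$ by a cutoff (Step~2), then apply the positive maximum principle (Lemma~\ref{max_principle}) to $v$ and a family $v^{(\xi)}$, invoking Lemma~\ref{heat_p} for hypothesis~(iii) and finishing by running the argument for $\pm v$. The cutoff argument and the use of Corollary~\ref{convergence} in Step~2 are fine.

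The gap is in your choice of $v^{(\xi)}$, which differs from the paper's in one crucial place: you put $U_0^{(\xi)}f$ where the paper puts $f$. Your choice makes hypothesis~(iv) automatic, since $T_0^{(\xi)}f=U_0^{(\xi)}f$ gives $v^{(\xi)}(0,\cdot)\equiv0$, but it produces in hypothesis~(iii) the commutator
\[
\calK U_0^{(\xi)}f - U_0^{(\xi)}\calK f,
\]
which you flag as ``the main obstacle'' and then only sketch a plan for. That plan is not straightforward. To get $\calK U_0^{(\xi)}f\to\calK f$ uniformly you need, after splitting the $w$-integral at $|w|=1$, a bound like $\|\nabla(U_0^{(\xi)}f-f)\|_\infty\to0$, and this is not an ``approximate identity'' statement: $U_0^{(\xi)}$ is not a convolution operator because $p_y(\xi,x-y)$ carries $y$ in two places, so $\nabla_x U_0^{(\xi)}f$ cannot simply be moved onto $f$; the extra terms coming from $\nabla_y B(y)$ and $\nabla_y\det B(y)$ are $O(1)$, not $o(1)$, and must be shown to cancel. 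You would also need to control second derivatives of $U_0^{(\xi)}f$ uniformly in $\xi$, and the available estimate (Lemma~\ref{properties_pytx}) blows up like $\xi^{-(d+2)/\alpha}$. None of this is developed in the paper, and I don't think it is trivially true.

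The paper avoids all of this by taking simply
\[
v^{(\xi)}(t,x)=T_t^{(\xi)}f(x)-f(x)-\int_0^t T_s^{(\xi)}(\calK f)(x)\,ds.
\]
With this choice the same computation you carry out (integrating $\partial_s T_s^{(\xi)}g=\calK T_s^{(\xi)}g+\Upsilon_s^{(\xi)}g$ for $g=\calK f$) gives
\[
\partial_t v^{(\xi)}-\calK v^{(\xi)}=\Upsilon_t^{(\xi)}f + \bigl(g-U_0^{(\xi)}g\bigr) - \int_0^t\Upsilon_s^{(\xi)}g\,ds,
\]
so the problematic commutator is replaced by $g-U_0^{(\xi)}g$ with $g=\calK f\in C_0(\R^d)$, which tends to zero uniformly directly by Lemma~\ref{pyeps_limit} (or Lemma~\ref{regularity_uteps}(ii)); no differentiation of $U_0^{(\xi)}f$ is required. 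Hypothesis~(iv) for the paper's $v^{(\xi)}$ is then not automatic, but it follows at once from Lemma~\ref{regularity_pteps}(i), since $\|T_t^{(\xi)}f-f\|_\infty\to0$ as $t,\xi\to0^+$ and the time integral is $O(t)$. Also, a small citation slip: the uniform approximate identity you want is Lemma~\ref{pyeps_limit}, not the normalization statement~(\ref{limp_y}). With the paper's choice of $v^{(\xi)}$ substituted, your argument closes.
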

\begin{proof}
Step 1. $f \in C_0^2(\R^d)$.

For any $t \ge 0$, $x \in \R^d$, $\xi \in (0,1]$ put
\begin{eqnarray*}
v(t,x) &=& T_t f(x) - f(x) - \int_0^t T_s (\calK f)(x) \, ds,\\
v^{(\xi)}(t,x) &=& T_t^{(\xi)} f(x) - f(x) - \int_0^t T_s^{(\xi)} (\calK f)(x) \, ds.
\end{eqnarray*}
Note that $\calK f \in C_0(\R^d)$. By Lemmas \ref{regularity_pteps}, \ref{heat_p}  and Corollary \ref{regularity_pteps1} we obtain that $v(t,x)$, $v^{(\xi)}(t,x)$ satisfy the  assumptions of Lemma \ref{max_principle}. Note that $v(0,x) = 0$ for all $x \in \R^d$. The assertion of the proposition for $f \in C_0^2(\R^d)$  follows from Lemma \ref{max_principle}.

Step 2. $f \in C_b^2(\R^d)$.

By standard methods there exists a sequence $f_n \in C_0^2(\R^d)$, $n = 1, 2, \ldots$ such that 
$$ 
\sup_{n \in \N} \max_{i,j \in \{1,\ldots, d\}} \sup_{x \in \R^d} \left(|f_n(x)| + \left|\frac{\partial f_n}{\partial x_i} (x)\right|
+\left|\frac{\partial^2 f_n}{\partial x_i \partial x_j} (x)\right| \right)< \infty.
$$
and for any $r \ge 1$ we have $\lim_{n \to \infty} (\sup_{|x| \le r} |f_n(x) - f(x)|) = 0$. It follows that $\sup_{n \ge 1, x \in \R^d} |\calK f_n(x)| < \infty$ and for each $x \in \R^d$ we have $\calK f_n(x) \to \calK f(x)$. By Corollary \ref{convergence} it follows that for each $x \in \R^d$, $t > 0$ and $s \in (0,t]$ we have $T_t f_n(x) \to T_t f(x)$ and $T_s (\calK f_n)(x) \to T_s(\calK f)(x)$. By Step 1 and the dominated convergence theorem we obtain the assertion of the proposition.
\end{proof}

The following result shows that $\{T_t\}$ is a Feller semigroup.
\begin{theorem}
\label{FellerT}
We have

(i) $T_t: C_0(\R^d) \to C_0(\R^d)$ for any $ t \in (0,\infty)$,

(ii) $T_t f (x) \ge 0$ for any $t > 0$, $x \in \R^d$ and $f \in C_0(\R^d)$ such that $f(x) \ge 0$ for all $x \in \R^d$,

(iii) $T_t 1_{\R^d}(x) = 1$ for any $t > 0$, $x \in \R^d$,

(iv) $T_{t + s} f(x) = T_t(T_s f)(x)$ for any $s, t > 0$, $x \in \R^d$, $f \in C_0(\R^d)$,

(v) $\lim_{t \to 0^+} ||T_t f - f||_{\infty} = 0$ for any $f \in C_0(\R^d)$.

(vi) there exists a nonnegative function $p(t,x,y)$ in $(t,x,y) \in (0,\infty)\times\Rd\times\Rd$; for each fixed $t > 0$, $x \in \R^d$ the function $y \to p(t,x,y)$ is Lebesgue measurable, $\int_{\R^d} p(t,x,y) \, dy = 1$ and $T_t f(x) = \int_{\R^d} p(t,x,y) f(y) \, dy$ for $f \in C_0(\R^d)$.
\end{theorem}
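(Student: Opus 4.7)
My plan is to reduce the six claims to the positive maximum principle of Lemma \ref{max_principle}, combined with the approximate heat equation in Lemma \ref{heat_p}, the regularity properties from Lemma \ref{regularity_pteps} and Corollary \ref{regularity_pteps1}, the martingale identity in Proposition \ref{martingaleproblem}, the H\"older estimate of Theorem \ref{Holdermain}, and the $L^1$--$L^\infty$ bound of Theorem \ref{TtL1Linfty}. The general recipe is: build a quantity $v(t,x)$ out of $T_t$ whose initial value is zero or nonnegative, attach the approximating family $v^{(\xi)}$ built out of $T_t^{(\xi)}$, verify the hypotheses of Lemma \ref{max_principle}, and conclude.

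Parts (i), (iii), (v) are direct. For (i), continuity of $T_t f$ comes from Theorem \ref{Holdermain}, while decay at infinity for $f \in C_0(\R^d)$ follows by splitting $f = f_1 + f_2$ with $f_1$ compactly supported and $\|f_2\|_\infty < \eps_1$, then combining Lemma \ref{distant_support} applied to $f_1$ with the sup-norm bound of Corollary \ref{ptxy} applied to $f_2$. Part (iii) is Proposition \ref{martingaleproblem} for the constant function $f \equiv 1 \in C_b^2(\R^d)$, using $\calK 1 \equiv 0$. Part (v) is Lemma \ref{regularity_pteps}(i) specialised to $\xi = 0$, noting that $T_t^{(0)} = T_t$ by the definitions of $\Psi_{n,t}^{(\xi)}$.

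Parts (ii) and (iv) use the maximum principle. For (ii), with $f \in C_0(\R^d)$, $f \ge 0$, I take $v(t,x) = T_t f(x)$ and $v^{(\xi)}(t,x) = T_t^{(\xi)} f(x)$: Corollary \ref{ptxy} supplies boundedness, Corollary \ref{regularity_pteps1} the required regularity, Lemma \ref{regularity_pteps}(iii), (ii), (i) and Lemma \ref{heat_p}(i) supply hypotheses (i)--(iv) of Lemma \ref{max_principle}, while (v) is immediate from $v(0,x) = f(x) \ge 0$. For (iv), fix $s > 0$ and $f \in C_0^2(\R^d)$; by part (i), $g := T_s f \in C_0(\R^d)$, and I apply the maximum principle to both $v(t,x) := T_{t+s} f(x) - T_t g(x)$ and $-v$, using $v^{(\xi)}(t,x) := T_{t+s}^{(\xi)} f(x) - T_t^{(\xi)} g(x)$. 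Since $v(0,x) = T_s f(x) - T_0 g(x) = 0$, both $v$ and $-v$ are nonnegative, so $T_{t+s} f = T_t(T_s f)$ for $f \in C_0^2$. I then extend to arbitrary $f \in C_0(\R^d)$ by uniform approximation through $C_0^2$ functions, using Corollary \ref{ptxy} so that $T_{t+s} f_n$, $T_s f_n$, and $T_t(T_s f_n)$ all converge uniformly.

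Finally, (vi) follows from (i) and (ii) by Riesz--Markov (yielding, for each $(t,x)$, a positive measure $m_{t,x}$ with $T_t f(x) = \int f \, dm_{t,x}$ for $f \in C_0(\R^d)$), from (iii) together with Proposition \ref{borelbounded} (giving $m_{t,x}(\R^d) = 1$), and from Theorem \ref{TtL1Linfty}, which implies that $|A| = 0$ forces $T_t 1_A(x) = 0$ by $L^1$-approximation through $C_0$ functions and Proposition \ref{borelbounded}, yielding absolute continuity and hence the density $p(t,x,y)$. The main obstacle lies in (iv): the natural approximation $v^{(\xi)}$ does not satisfy $v^{(\xi)}(0,x) = 0$, because $T_0^{(\xi)} \ne \mathrm{id}$ for $\xi > 0$, so hypothesis (iv) of Lemma \ref{max_principle} must be verified via the joint continuity of $(\xi,t,x) \mapsto T_t^{(\xi)} g(x)$ at the corner $(\xi,t) = (0,0)$ uniformly in $x$, which one extracts from the proof of Lemma \ref{regularity_pteps}(iii); moreover the density extension requires that the smooth approximations be handled carefully since $T_t$ is not a priori known to preserve $C_0^2(\R^d)$.
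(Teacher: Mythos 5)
Your proposal is correct and follows the same overall architecture as the paper's proof: maximum-principle arguments for (ii) and (iv), regularity and martingale facts for the others. A few points of comparison.

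For (iii), your shortcut of applying Proposition~\ref{martingaleproblem} directly to $f \equiv 1 \in C_b^2(\R^d)$, with $\calK 1 \equiv 0$, is valid and cleaner than the paper's argument, which re-runs the approximation $f_n(x) = f(x/n)$ together with Corollary~\ref{convergence} even though Step~2 of Proposition~\ref{martingaleproblem} already performs precisely that extension to $C_b^2$. For (v), you invoke Lemma~\ref{regularity_pteps}(i) at $\xi=0$ (legitimate, since $\Psi_{n,t}^{(0)} = \Psi_{n,t}$ and the proofs of Lemma~\ref{regularity_uteps}(ii) and Lemma~\ref{regularity_pteps}(i) track $t+\xi$ and so cover $\xi=0$); the paper instead gives a direct splitting argument near and far from $x$, estimating the near part by uniform continuity and the far part via Proposition~\ref{pyintegral}, Proposition~\ref{qestimate}, and Lemma~\ref{psint}. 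Both routes are fine.

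In (iv) you insert an unnecessary detour through $C_0^2(\R^d)$ followed by a density extension. The paper applies Lemma~\ref{max_principle} directly with $f \in C_0(\R^d)$ and $v^{(\xi)}(t,x) = T_{t+s}^{(\xi)} f(x) - T_t^{(\xi)}(T_s f)(x)$: the regularity hypotheses of the maximum principle concern $v^{(\xi)}$, not $v$ or $g = T_s f$, and Corollary~\ref{regularity_pteps1} guarantees that $T_t^{(\xi)} h$ is $C^1$ in $t$ and $C_0^2$ in $x$ for \emph{every} $h \in C_0(\R^d)$ when $\xi>0$ --- so there is no need for $T_s f$ to lie in $C_0^2$, and your worry about ``$T_t$ not preserving $C_0^2$'' is immaterial. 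You are right that hypothesis~(iv) of Lemma~\ref{max_principle} has to be checked through the joint continuity of $(\xi,t,x) \mapsto T_t^{(\xi)} h(x)$ at the corner $(\xi,t)=(0,0)$, established in the proof of Lemma~\ref{regularity_pteps}(iii); the paper absorbs this into its citation of Lemmas~\ref{regularity_pteps}, \ref{heat_p} and Corollary~\ref{regularity_pteps1}. Your sketch of (vi) (Riesz--Markov plus $L^1$--$L^\infty$ bound to force absolute continuity) matches the level of detail the paper gives.
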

\begin{proof}
(i) This follows from Theorem \ref{Holdermain} and Lemma \ref{regularity_pteps} (ii).

(ii) Let $f \in C_0(\R^d)$ be such that $f(x) \ge 0$ for all $x \in \R^d$. For $t \ge 0$, $x \in \R^d$, $\xi \in (0,1]$ put $v(t,x) = T_t f(x)$, $v^{(\xi)}(t,x) = T_t^{(\xi)} f(x)$. By Lemmas \ref{regularity_pteps}, \ref{heat_p} and Corollary \ref{regularity_pteps1} we obtain that $v(t,x)$, $v^{(\xi)}(t,x)$ satisfy the  assumptions of Lemma \ref{max_principle}. The assertion of Theorem \ref{FellerT} (ii) follows from Lemma \ref{max_principle}.

(iii) The proof is very similar to the proof of \cite[Lemma 4.5 b]{KK2018}. Let $f \in C_0^2(\R^2)$ be such that $f \equiv 1$ on $B(0,1) \subset \R^d$ and let $f_n(x) = f(x/n)$, $x \in \R^d$, $n \in \N$, $n \ge 1$. For any $x \in \R^d$ we have $\lim_{n \to \infty} f_n(x) = 1$, 
$\lim_{n \to \infty} \calK f_n(x) = 0$ and $\sup_{n \in \N, n\ge 1} (\|f_n\|_{\infty} \vee \|\calK f_n\|_{\infty}) < \infty$. By Corollary \ref{convergence}, for any $s, t > 0$ and $x \in \R^d$, we get
\begin{equation}
\label{convergence1}
\lim_{n \to \infty} T_t f_n(x) = T_t 1_{\R^d}(x), \quad \quad \lim_{n \to \infty} T_s (\calK f_n)(x) = 0.
\end{equation}
Using (\ref{martingaleproblem1}) for $f_n$ and (\ref{convergence1}) we obtain (iii).

(iv) Let $f \in C_0(\R^d)$. For $s, t \ge 0$, $x \in \R^d$, $\xi \in (0,1]$ put 
$v(t,x) = T_{t+s} f(x) - T_t (T_s f)(x)$, 
$v^{(\xi)}(t,x) = T_{t+s}^{(\xi)} f(x) - T_t^{(\xi)} (T_s f)(x)$. By Lemmas \ref{regularity_pteps}, \ref{heat_p}  and Corollary \ref{regularity_pteps1} we obtain that $v(t,x)$, $v^{(\xi)}(t,x)$ satisfy the assumptions of Lemma \ref{max_principle}. Note that $v(0,x) = 0$ for all $x \in \R^d$. The assertion of Theorem \ref{FellerT} (iv) follows from Lemma \ref{max_principle}.

(v) Choose $\eps_1 > 0$. Since $f \in C_0(\R^d)$ there exists $\delta_1 > 0$ such that 
$$
\forall x,y \in \R^d \quad |x - y| < \delta_1 \Rightarrow |f(x) - f(y)| < \eps_1. 
$$
Fix arbitrary $x \in \R^d$, $t \in (0,\tau]$. Put $f_1(y) = 1_{B(x,\delta_1)}(y) (f(y) - f(x))$, $f_2(y) = 1_{B^c(x,\delta_1)}(y) (f(y) - f(x))$, $y \in \R^d$. By (iii) we have 
$$
T_t f(x) - f(x) = T_t f_1(x) + T_t f_2(x).
$$
We  also have
$$
\left|T_t f_1(x)\right| < c \eps_1, 
$$
$$
\left|T_t f_2(x)\right| \le 2 \|f\|_{\infty} T_t 1_{B^c(x,\delta_1)}(x)
$$
and
\begin{eqnarray*}
T_t 1_{B^c(x,\delta_1)}(x) &=&
e^{-\lambda t}\int_{B^c(x,\delta_1)} p_y(t,x-y) \, dy
+ e^{-\lambda t} \Phi_t 1_{B^c(x,\delta_1)}(x)\\
&& + e^{-\lambda t} \sum_{n=1}^{\infty} \Psi_{n,t}1_{B^c(x,\delta_1)}(x).
\end{eqnarray*}
By Proposition \ref{pyintegral} there exists $\tau_1 \in (0,\tau]$ such that 
$$
\forall t \in (0,\tau_1] \quad \int_{B^c(x,\delta_1)} p_y(t,x-y) \, dy < \eps_1.
$$
By Proposition \ref{qestimate} we obtain that
$$
\forall t \in (0,\tau_1]  \quad \left|\Phi_t 1_{B^c(x,\delta_1)}(x)\right| \le c \tau_1^{1/2}.
$$
By Lemma \ref{psint} we obtain that 
$$
\forall t \in (0,\tau_1] \quad \left| e^{-\lambda t} \sum_{n=1}^{\infty} \Psi_{n,t}1_{B^c(x,\delta_1)}(x) \right| \le c t.
$$
This implies (v).

(vi) This follows from (i), (ii), (iii) and Theorem \ref{TtL1Linfty}.
\end{proof}

We are now in a position to provide the proof of Theorems  \ref{mainthm} and \ref{PtL1Linfty}.
\begin{proof}[proof of Theorem \ref{mainthm}] 
From Theorem \ref{FellerT}  we conclude that there is a Feller process $\tilde{X}_t$ with the semigroup $T_t$ on $C_0(\R^d)$. 
Let  $\tilde{\p}^x, \tilde{\E}^x $ be the distribution and expectation of the process $\tilde{X}_t$ starting from $x\in \R^d$. 

By Theorem \ref{FellerT} (vi), Proposition \ref{borelbounded} and Lemma \ref{distant_support} we get
\begin{equation}
\label{semigroup_t}
\tilde{\E}^x f(\tilde{X}_t) = T_t f(x) = \int_{\R^d} p(t,x,y) f(y) \, dy \quad f \in \Bb, \, t > 0, \, x \in \R^d.
\end{equation}
By Proposition \ref{martingaleproblem}, for any function $f \in C_b^2(\R^d)$, 
the process
\begin{equation*} \label{martingale0}
M_t^{\tilde{X},f}=f(\tilde{X}_t)-f(\tilde{X}_0)- \int_0^t \calK f(\tilde{X}_s)ds 
\end{equation*}
is a  $(\tilde{\p}^x, \calF_t)$ martingale, where $\calF_t  $ is a natural filtration. That is  $\tilde{\p}^x$ solves the martingale problem for 
$(\calK, C_b^2(\R^d))$. On the other hand, according to  \cite[Theorem 6.3]{BC2006}, the unique solution $X$ to the stochastic equation
(\ref{main}) has the law which is the unique solution to the  martingale problem for 
$(\calK, C_b^2(\R^d))$. Hence $\tilde{X}$ and $X$ have the same law so for any $t > 0$, $x \in \R^d$  and any Borel bounded set $A \subset \R^d$ we have
$$
\sigma_t(x,A) = \int_{A} p(t,x,y) \, dy,
$$
where $\sigma_t(x,A)$ is defined by (\ref{sigmatx}). Using this, (\ref{semigroup_p}) and (\ref{semigroup_t}) we obtain
\begin{equation}
\label{pttt}
P_t f(x) = T_t f(x), \quad t > 0, \, x \in \R^d, \, f \in \Bb.
\end{equation}
Now the assertion of Theorem \ref{mainthm} follows from Theorem \ref{Holdermain} and (\ref{pttt}).
\end{proof}

\begin{proof}[proof of Theorem \ref{PtL1Linfty}] 
The result follows from Theorem \ref{TtL1Linfty} and (\ref{pttt}).
\end{proof}

\begin{remark}
\label{example1}
For any $\alpha \in (0,1)$, $d \ge 2$ there exist $A(x)$ satisfying (\ref{bounded}-\ref{Lipschitz}) and $t > 0$ such that $P_t: L^1(\R^d) \to L^{\infty}(\R^d)$ is not bounded. For simplicity we will present an example for $d = 2$ but similar examples can be constructed for $d > 2$.
\end{remark}
\begin{proof}
First we define $A(x_1,x_2)$. Let $\kappa(r): [0,\infty) \to [0,\infty)$ be defined by $\kappa(r) = 0$ for $r \in [0,1]$, $\kappa(r) = r-1$ for $r \in (1,1+\pi/4]$, $\kappa(r) = \pi/4$ for $r > 1+\pi/4$. It is easy to check that $\kappa(r) = ((r-1)\vee 0)\wedge(\pi/4)$ and that it is a Lipschitz function. Now let us introduce standard polar coordinates $(r,\varphi)$, $r \in [0,\infty)$, $\varphi \in [0,2\pi)$ by $x_1 = r \cos \varphi$, $x_2 = r \sin \varphi$. 

We put
\begin{math}
A(x_1,x_2) = \tilde{A}(r,\varphi) = \left[\begin{array}{cc}      
\cos(\tilde{\theta}(r,\varphi)) & - \sin(\tilde{\theta}(r,\varphi))\\ 
\sin(\tilde{\theta}(r,\varphi)) & \cos(\tilde{\theta}(r,\varphi))  
 \end{array}\right],
\end{math}
where $\tilde{\theta}(r,\varphi)$ is defined in the following way. $\tilde{\theta}(r,\varphi) = 0$ for $r \in [0,1]$, $\varphi \in [0,2\pi)$, $\tilde{\theta}(r,\varphi) = \varphi$ for $r \in (1,1+\pi/4]$, $\varphi \in [0,\kappa(r)]$, $\tilde{\theta}(r,\varphi) = 2 \kappa(r) - \varphi$ for $r \in (1,1+\pi/4]$, $\varphi \in (\kappa(r),2\kappa(r)]$, $\tilde{\theta}(r,\varphi) = 0$ for $r \in (1,1+\pi/4]$, $\varphi \in [2\kappa(r),2\pi)$, $\tilde{\theta}(r,\varphi) = \varphi$ for $r > 1+\pi/4$, 
$\varphi \in [0,\pi/4]$, $\tilde{\theta}(r,\varphi) = \pi/2 - \varphi$ for $r > 1+\pi/4$, 
$\varphi \in (\pi/4,\pi/2]$, $\tilde{\theta}(r,\varphi) = 0$ for $r > 1+\pi/4$, 
$\varphi \in (\pi/2,2\pi)$.

One can check that \begin{math}
A(x_1,x_2) =  \left[\begin{array}{cc}      
\cos({\theta}(x_1,x_2)) & - \sin({\theta}(x_1,x_2))\\ 
\sin({\theta}(x_1,x_2)) & \cos({\theta}(x_1,x_2))  
 \end{array}\right],
\end{math}
where $\theta(0,0)=0$ and for $(x_1,x_2) \ne (0,0)$
$$
\theta(x_1,x_2) = \left(\kappa\left(\sqrt{x_1^2+x_2^2}\right) - 
\left|\left(\left(\Arg(x_1+ix_2) \vee 0\right) \wedge \frac{\pi}{2} \right) - 
\kappa\left(\sqrt{x_1^2+x_2^2}\right)\right|\right) \vee 0.
$$
It is clear that $A(x)$ satisfies (\ref{bounded}-\ref{Lipschitz}).

Put $D = B((3,1),1)$. Note that for any $x \in \R^2$ such that $x_2 \in [0,x_1]$ and $|x| \ge \pi/4 + 1$ we have 
\begin{math}
A(x) = A(x_1,x_2) = |x|^{-1} \left[\begin{array}{cc}      
x_1 & - x_2 \\ 
x_2  & x_1  
 \end{array}\right].
\end{math}
In particular, this holds for $x \in D$. 

For any $f \in \Bb$, $t > 0$, $x \in \R^d$ we have $P_t f(x) = T_t f(x) = e^{-\lambda t} \sum_{n = 0}^{\infty} \Psi_{n,t} f(x)$. For our purposes it is enough to study $\Psi_{1,t}$. We have 
\begin{eqnarray}
\nonumber
&&\Psi_{1,t} f(x) = \int_0^t U_{t-s}(\calN(U_s f))(x) \, ds = \\
\label{psi1t}
&& \sum_{i= 1}^2 \int_0^t \int_{\R^2} u(t-s,x,z) \int_{\R} \int_{\R^2} u(s,z+a_i(z)w,y) f(y) dy
 \nu(w)  dw dz ds.
\end{eqnarray}
By arguments similar to the proof of Theorem \ref{FellerT} one can show that for any $t> 0$, $x \in \R^d$ and almost all $y \in \R^d$ we have $u(t,x,y) \ge 0$ and for any $s, t > 0$, $x \in \R^d$, $f \in \Bb$ we have $U_{t +s} f(x) = U_t(U_s f)(x)$, (we omit the details here). Put 
\begin{equation}
\label{u1txy}
u_1(t,x,y) = \sum_{i= 1}^2 \int_0^t \int_{\R^2} u(t-s,x,z) \int_{\R} u(s,z+a_i(z)w,y) \nu(w) \, dw \, dz \, ds.
\end{equation}
By (\ref{psi1t}) we have
$$
\Psi_{1,t} f(x) = \int_{\R^2} u_1(t,x,y) f(y) \, dy.
$$
By Lemma \ref{lowerbound} and the semigroup property of $U_t$ one can easily obtain that there exists $t_2 > 1$ such that for any $t \in [t_2 - 1,t_2]$
\begin{equation}
\label{positivebound}
\int_D u(t,0,z) \, dz \ge c.
\end{equation}

Now our aim will be to estimate from below $u_1(t_2,0,y)$ for $y$ which are sufficiently close to $0$. Let $c_1,\eps_1,t_1$ be the constants from Lemma \ref{lowerbound}. First, note that for $z \in D$ we have $a_1(z) = (a_{11}(z),a_{21}(z)) = |z|^{-1} (z_1,z_2) = z/|z|$. 
Hence $|z+a_1(z)w| = |z| + w$ for any $z \in D$, $w \in \R$. It follows that for $z \in D$ we have
$$
w \in (-|z|-\eps_1 s^{1/\alpha}/2, -|z|+\eps_1 s^{1/\alpha}/2) \iff |z+a_1(z)w| < \eps_1 s^{1/\alpha}/2.
$$
Therefore, for $z \in D$, $|y| \le \eps_1 s^{1/\alpha}/2$, $w \in (-|z|-\eps_1 s^{1/\alpha}/2, -|z|+\eps_1 s^{1/\alpha}/2)$, we have
$|z+a_i(z)w -y| \le |z+a_i(z)w| + |y| \le \eps_1 s^{1/\alpha}$. Note also that 
$$
|y| \le \eps_1 s^{1/\alpha}/2 \iff (2|y|/\eps_1)^{\alpha} \le s.
$$
Hence, by Lemma \ref{lowerbound}, for $(2|y|/\eps_1)^{\alpha} \le t_1/2$, $s \in [(2|y|/\eps_1)^{\alpha},t_1]$, $z \in D$ we have
\begin{equation}
\label{uwintegral}
\int_{-|z|-\eps_1 s^{1/\alpha}/2}^{-|z|+\eps_1 s^{1/\alpha}/2} u(s,z+a_1(z)w,y) \nu(w) \, dw
\ge c \eps_1 s^{1/\alpha} \frac{c_1}{s^{2/\alpha}} = \frac{c c_1 \eps_1}{s^{1/\alpha}}.
\end{equation}
Recall that $t_1 \in (0,1]$ and $t_2 > 1 \ge t_1$. By (\ref{positivebound}) for any $s \in (0,t_1]$ we have
\begin{equation}
\label{positivebound1}
\int_{D} u(t_2 - s,0,z) \, dz \ge c.
\end{equation}
Therefore, by (\ref{u1txy}), nonnegativity of $u(\cdot,\cdot,\cdot)$, (\ref{uwintegral}), (\ref{positivebound1}) for $(2|y|/\eps_1)^{\alpha} \le t_1/2$ we have
\begin{eqnarray}
\nonumber
u_1(t_2,0,y)
&\ge&
\int_{(2 |y|/\eps_1)^{\alpha}}^{t_1} \int_D u(t_2-s,0,z) 
\int_{-|z|-\eps_1 s^{1/\alpha}/2}^{-|z|+\eps_1 s^{1/\alpha}/2} u(s,z+a_1(z)w,y) \\
\nonumber
&& \times \,\, \nu(w) \, dw \, dz \, ds\\
\nonumber 
&\ge& c \int_{(2 |y|/\eps_1)^{\alpha}}^{t_1} s^{-1/\alpha} \,ds\\
\label{u1estimate}
&\ge& c |y|^{\alpha - 1}.
\end{eqnarray}
(One can show that in similar examples for $d > 2$ we have $u_1(t_2,0,y) \ge c |y|^{ \alpha + 1 - d}$.)

Observe that $(2 |y|/\eps_1)^{\alpha} \le t_1/2 \iff |y| \le t_1^{1/\alpha} \eps_1 2^{-1-1/\alpha}$. For $r \in (0,t_1^{1/\alpha} \eps_1 2^{-1-1/\alpha})$ we get by (\ref{u1estimate})
\begin{equation}
\label{Ttrestimate}
T_{t_2} 1_{B(0,r)}(0) \ge e^{-\lambda t_2} \Psi_{1,t_2} 1_{B(0,r)}(0) = 
e^{-\lambda t_2} \int_{B(0,r)} u_1(t_2,0,y) \, dy
\ge c r^{\alpha+1}.
\end{equation}
By (\ref{pttt}) we have $T_t = P_t$. By Theorem \ref{mainthm} $x \to P_t 1_{B(0,r)}(x)$ is continuous so $\|P_t 1_{B(0,r)}\|_{\infty} \ge P_t 1_{B(0,r)}(0)$. Using this and (\ref{Ttrestimate}) for $r \in (0,t_1^{1/\alpha} \eps_1 2^{-1-1/\alpha})$ we get
$$
\frac{\|P_{t_2} 1_{B(0,r)}\|_{\infty}}{\|1_{B(0,r)}\|_{1}} \ge 
\frac{P_{t_2} 1_{B(0,r)}(0)}{\|1_{B(0,r)}\|_{1}} \ge c r^{\alpha -1},
$$
which implies the assertion of the remark. (One can show that in similar examples for $d > 2$ we have $\|P_{t_2} 1_{B(0,r)}\|_{\infty}/\|1_{B(0,r)}\|_{1}\ge c r^{\alpha+1-d}$.)
\end{proof}

\begin{remark}
\label{example1Uwaga} From Theorem \ref{FellerT} (vi) and (\ref{pttt}) we infer that transition densities $p(t,x,y)$ for $X_t$ exist. We point out that the existence of transition densities is already well known, see \cite{DF2013}. In the above example (in $\R^2$) we showed that the transition density $p(t,0,y)$, for some $t >0$, is an unbounded function. In fact, the following estimate holds $y$ almost surely
$$p(t,0,y)\ge c |y|^{\alpha -1},\quad  |y|\le \eps_1,$$
where  $ c, \eps_1$ are some positive constants possibly dependent on $t$. 

Hence, we can not expect a general result saying that, with our assumptions, we have the standard estimates for $p(t,x,y)$ of the form 
$$p(t,x,y)\le C t^{-d/\alpha },$$ 
as for example in the case of diagonal matrices \cite{KR2017}, or matrices satisfying some further  regularity assumptions \cite{P1997}. 
On the other hand, the assumption $\alpha<1$ plays an important role (in $\R^2$), since for $\alpha>1$, by the results of \cite{BSK2017},
the transition density is bounded. 

\end{remark}

\textbf{ Acknowledgements.} 
We thank prof.  J. Zabczyk for communicating to us the problem of the strong Feller property for solutions of SDEs driven by cylindrical $\alpha$-stable processes. We also thank A. Kulik for discussions on the problem treated in the paper.

\end{document}